\documentclass[11pt,a4paper]{amsart}
\usepackage[utf8]{inputenc}
\usepackage[english]{babel}
\usepackage[T1]{fontenc}
\usepackage{amsmath}
\usepackage{amsthm}
\usepackage{amsfonts}
\usepackage{amssymb}
\usepackage{mathtools}
\usepackage{mathdots}
\usepackage{lmodern}
\usepackage{mathrsfs}
\usepackage{physics}
\usepackage[dvipsnames]{xcolor}
\usepackage[colorlinks=true,citecolor=blue,linkcolor=BrickRed]{hyperref}
\usepackage{graphicx}
\usepackage[left=2.5cm,right=2.5cm,top=2.5cm,bottom=2.5cm]{geometry}
\usepackage{enumerate}
\usepackage{tikz-cd}

\usepackage{bm}
\usetikzlibrary{backgrounds}
\usetikzlibrary{calc}
\usetikzlibrary{hobby}
\usetikzlibrary{decorations.markings}
\usetikzlibrary{arrows.meta}
\usetikzlibrary{patterns}
\usepackage{caption}
\usepackage{subcaption}
\usepackage{array}
\usepackage{float}
\usepackage{afterpage}
\usepackage{hhline}

\allowdisplaybreaks

\DeclareRobustCommand{\SkipTocEntry}[5]{}
\usepackage{xpatch}
\makeatletter   
\xpatchcmd{\@tocline}
{\hfil\hbox to\@pnumwidth{\@tocpagenum{#7}}\par}
{\ifnum#1<0\hfill\else\dotfill\fi\hbox to\@pnumwidth{\@tocpagenum{#7}}\par}
{}{}
\makeatother
\makeatletter
\def\l@subsection{\@tocline{2}{0pt}{4pc}{6pc}{}}
\def\l@subsubsection{\@tocline{3}{0pt}{8pc}{8pc}{}}
\makeatother

\numberwithin{equation}{section}

\DeclareMathOperator{\Ram}{Ram}
\DeclareMathOperator{\Irr}{Irr}
\DeclareMathOperator{\Slope}{Slope}
\DeclareMathOperator{\Hom}{Hom}

\DeclareMathOperator{\Sto}{\mathbb{S}to}

\DeclareMathOperator{\Id}{Id}
\DeclareMathOperator{\Ker}{Ker}

\DeclareMathOperator{\GL}{GL}

\DeclareMathOperator{\vmod}{mod}
\DeclareMathOperator{\Sect}{Sect}

\renewcommand{\mod}{\textup{ mod }}

\newcommand{\defeq}{\vcentcolon=}
\newcommand{\eqdef}{=\vcentcolon}

\newcommand{\rec}{\mathrm{rec}}
\newcommand{\sub}{\mathrm{sub}}

\newcommand{\Ical}{\mathcal{I}}

\newcommand\iso{\xrightarrow{
   \,\smash{\raisebox{-0.35ex}{\ensuremath{\scriptstyle\sim}}}\,}}

\newcommand{\cir}[1]{\langle #1 \rangle}
\newcommand{\sslash}{\mathbin{/\mkern-6mu/}}

\begin{document}

\renewcommand{\proofname}{Proof}
\renewcommand{\Re}{\operatorname{Re}}
\renewcommand{\Im}{\operatorname{Im}}
\renewcommand{\labelitemi}{$\bullet$}

\newtheorem{theorem}{Theorem}[section]
\newtheorem{proposition}[theorem]{Proposition}
\newtheorem{lemma}[theorem]{Lemma}
\newtheorem{corollary}[theorem]{Corollary}
\newtheorem{conjecture}[theorem]{Conjecture}

\theoremstyle{definition}
\newtheorem{definition}[theorem]{Definition}
\newtheorem{lemdef}[theorem]{Lemma-Definition}
\newtheorem{notation}[theorem]{Notation}
\newtheorem{example}[theorem]{Example}
\newtheorem{remark}[theorem]{Remark}
\newtheorem{assumption}[theorem]{Assumption}
\newtheorem*{claim}{Claim}

\title[Fourier transform, Stokes data, Gale duality and frieze patterns]{Combinatorics of the Fourier transform:\\ Stokes data, Gale duality and frieze patterns}

\begin{abstract}
We study the action of the Fourier transform on the Stokes data of irregular connections on the complex affine line with symmetric irregular classes at infinity, both from the point of view of Stokes filtered local systems and of Stokes local systems, 
and we show that it is governed by a rich combinatorial structure: 

(1) Observing that, in this setup, a Stokes filtration is fully determined by the data of either its recessive or subdominant solution spaces, and making the link with results of T.\ Mochizuki, we show that the Fourier transform amounts to exchanging recessive and subdominant solutions via the Gale transform of configurations of points in projective spaces.

(2) We show that the equivalence between recessive solutions and Stokes local systems is deeply connected with the triality relating point configurations, superperiodic linear difference equations and frieze patterns obtained by Morier-Genoud--Ovsienko--Schwartz--Tabachnikov: Up to signs, the coefficients of the difference equations and friezes coincide with the nontrivial Stokes matrix entries. It follows from this Stokes--frieze correspondence that the Fourier transform of Stokes representations is given by their combinatorial Gale transform, leading to explicit closed formulas.

\end{abstract}
\author{Jean Douçot}
\address{\parbox{0.97\linewidth}{(J.D.) Simion Stoilow Institute of Mathematics of the Romanian Academy, 21 Calea Grivitei, 010702 Bucharest, Romania.\\ Technische Universität Chemnitz, Fakultät für Mathematik, 09107 Chemnitz, Germany.}}
\email{jeandoucot@gmail.com}

\author{Andreas Hohl}
\address{(A.H.) Technische Universität Chemnitz, Fakultät für Mathematik, 09107 Chemnitz, Germany.}
\email{andreas.hohl@math.tu-chemnitz.de}

\maketitle

\begin{flushright}
\textit{Mais elle refusait le nom de Madame Dame.}\\
J.~Demy. Les Demoiselles de Rochefort
\end{flushright}

\tableofcontents

\section{Introduction}

\subsection{General context: Fourier transform of Stokes data}

The main motivation for this work is the question of determining the Stokes data of the Fourier transform of irregular connections on the Riemann sphere. The problem naturally arises from the theory of complex linear differential equations: Consider a linear ODE with polynomial coefficients
\begin{equation}
P(z,\partial_z)y=0,
\label{eq:linear_ode_polynomial}
\end{equation}
where
$P(z,\partial_z) = \sum_{k=1}^n a_k(z)\partial_z^k\in \mathbb C[z]\langle \partial_z\rangle$
and the unknown $y$ is a meromorphic function in the variable $z$. Mimicking the fact that the usual Fourier/Laplace transform for appropriate classes of functions in a real variable exchanges multiplication by the variable and differentiation up to a sign, one defines the Fourier (or Laplace, or Fourier--Laplace) transform of \eqref{eq:linear_ode_polynomial} as the equation
\[
P(\partial_\xi, -\xi)\widehat y=0
\]
in the new variable $\xi$.
One can then ask how the solutions of the Fourier transform are related to those of the initial equation. The answer is far from straightforward. Indeed, although certain solutions of the Fourier transform can be obtained by taking integrals of the form $\widehat y(\xi)=\int_\gamma e^{z\xi} \,y(z)dz$ for suitable paths $\gamma$ and solutions $y$ of \eqref{eq:linear_ode_polynomial}, not all such integrals converge. Moreover, since the Fourier transform generally changes the order of the equation, there is in general no easy one-to-one correspondence between solutions of the initial equation and  those of its Fourier transform. \\

In order to treat this question more systematically, one usually considers its \emph{topological} counterpart, which arises from the Riemann--Hilbert correspondence and its extension to the case of irregular singularities: To any equation of the form \eqref{eq:linear_ode_polynomial}, one can associate generalised monodromy data, called \emph{Stokes data}, that encode topological information about the behaviour of its solutions at their singularities. This can be formulated as an equivalence of categories, if on the differential side one considers slightly more general objects than polynomial scalar ODEs, namely algebraic connections, or even more generally holonomic D-modules. The right-hand side of such an equivalence then consists of objects representing the associated Stokes data. In the case of regular singularities, this correspondence was obtained by Deligne for connections \cite{deligne1970equations}, and by Kashiwara \cite{kashiwara1984riemann} and Mebkhout \cite{mebkhout1984equivalence} for D-modules. For irregular singularities, the correspondence for connections was obtained by Deligne--Malgrange \cite{deligne1978lettre,malgrange1982classification}; more recently, the picture for D-modules has also been extended to the irregular case: A fully faithful functor from irregular D-modules to a category of \emph{enhanced ind-sheaves} was constructed by D'Agnolo--Kashiwara \cite{dagnolo2016riemann}, and its essential image as well as applications to Stokes phenomena have been studied in several works (see e.g.\ \cite{mochizuki2022curve,kuwagaki2021irregular}).

In this setup, the Fourier transform is defined as the auto-equivalence of the category of D-modules on the affine line $\mathbb A^1(\mathbb C)$ induced by the automorphism $z\mapsto \partial_z$, $\partial_z\mapsto -z$ of the Weyl algebra $\mathbb C[z]\langle \partial_z\rangle$ of differential operators.  This induces a transform $\mathscr F$ for (irreducible) irregular connections on (Zariski open subsets of) $\mathbb P^1(\mathbb C)$. The Fourier transform plays an important role in various contexts: For instance, it is an essential ingredient of the Katz--Deligne--Arinkin algorithm  giving a classification of (irreducible) \emph{rigid} irregular connections on $\mathbb P^1$ \cite{katz1996rigid, deligne2006letter, arinkin2010rigid}, and it underlies the fact that Painlevé-type equations admit several \emph{Lax representations}, i.e.\ they can be obtained via isomonodromic deformations from connections with different types of singularities, see e.g. \cite{joshi2007linearization, boalch2012simply, yamakawa2016fourier, doucot2026basic, eper2025rank3deRham}.
The main motivation of the present work is to understand how the Fourier transform acts on the topological (or ``Betti'') side of the irregular Riemann--Hilbert  correspondence: If $(E,\nabla)$  is an irreducible irregular connection on $\mathbb P^1$ and $(\widehat E, \widehat \nabla)$ is its Fourier transform, can we describe the Stokes data of $(\widehat E, \widehat \nabla)$ in terms of those of $(E,\nabla)$?

This problem has attracted significant interest over the years. If we restrict to the formal level, i.e.\ to formal series solutions of the equations, the situation is well-understood: By the stationary phase formula, the \emph{exponential factors} of the Fourier transform can be explicitly obtained from those of the initial connection by a Legendre transform \cite{malgrange1991equations, garcia2004microlocalization, fang2009calculation, sabbah2008explicit, graham2013calculation}.

In general, going beyond this formal level has proved challenging. Explicit expressions for the Fourier transform of Stokes data have been obtained only for a relatively limited set of cases. The case of connections and D-modules with only regular singularities has been studied from many perspectives \cite{balser1981reduction, malgrange1991equations,dagnolo2020topological, yue_yu2024topological, sabbah2026three}. Other known cases include the case of pure Gaussian type \cite{sabbah2016differential, hohl2022d_modules}, and some explicit examples of Stokes data on both sides of the Fourier transform can be computed via results on generalised Airy equations \cite{hohl2022stokes}. On the other hand, general frameworks have been developed to describe the Fourier transform of Stokes data, notably by Malgrange \cite{malgrange1991equations}, and more recently by T.\ Mochizuki \cite{mochizuki2010note, mochizuki2018stokes}. However, it is not straightforward to use the machinery developed in these works to compute the Stokes data of the Fourier transform in an explicit way in concrete examples.

A notable feature of the subject is that there exist various different ways of encoding Stokes data, each with its advantages and drawbacks. Although they are ultimately equivalent, passing from one framework to another is typically nontrivial, but often useful for addressing questions about irregular singularities.

\subsection{Our setup: The case of pure slope $>1$ at infinity}

In recent work \cite{doucot2025topological}, we made some progress on this problem in a large new class of cases, namely the case of connections with a single singularity located at infinity, and with a ramified irregular type of pure slope $>1$. Using results of T.\ Mochizuki \cite{mochizuki2018stokes}, we obtained an algorithm to compute explicitly the Fourier transform of Stokes data in that case, under some mild technical assumptions. The framework used for Stokes data in that work is the language of \emph{Stokes local systems} due to Boalch and Yamakawa \cite{boalch2014geometry,boalch2015twisted}. This approach can be traced back to Stokes' original work \cite{stokes1857discontinuity}, and it relies substantially on the theory of multisummation of formal solutions of meromorphic connections, notably including works of Martinet--Ramis \cite{martinet1991elementary} and Loday-Richaud \cite{loday-richaud1994stokes}. In that picture, the discontinuities that characterise the Stokes phenomenon appear at the \emph{singular directions}, also known as \emph{anti-Stokes} directions. 
One advantage of this framework is that it leads to a description of the moduli spaces of Stokes data as \emph{wild character varieties}, which generalise the usual character varieties parametrising representations of the fundamental group of a Riemann surface and admit similar rich geometric structures  \cite{boalch2014geometry,boalch2015twisted}.

Although the result of \cite{doucot2025topological} provides an explicit, algorithmic way of computing the Fourier transform of Stokes data in any case satisfying its assumptions, a limitation of this approach is that it does not yield closed formulas. 
This raises the question of whether one might obtain an even simpler answer to our main problem in the setup of loc.~cit.\ using a different description of Stokes data. In the present work, we thus investigate the question from the perspective of the other main framework for Stokes data, namely \emph{Stokes filtered local systems}. This approach, due to Deligne--Malgrange \cite{deligne1978lettre, malgrange1982classification}, can be traced back to the work of Poincaré on asymptotics of solutions of differential equations. The basic idea is that in almost any direction around a singularity (with the exception of finitely many \emph{Stokes directions}), the local system of solutions of a meromorphic connection admits a canonical filtration indexed by the possible growth rates of its exponential factors. These filtrations are locally constant, and they may jump when crossing a Stokes direction. We refer the reader to \cite{boalch2021topology} for an in-depth review of those two main approaches to the Stokes phenomenon and their equivalence. \\

Here, we consider again the setup of \cite{doucot2025topological}. We restrict further to the case where the connection has a \emph{symmetric irregular class}  $\Theta_{a,b}$ at infinity, defined by a pair of integers $a,b$ with $b>a$ (up to isomorphisms of wild character varieties induced by isomonodromic deformations, this entails no loss of generality from the setup of loc.~cit.). We study the Fourier transform of Stokes data from the point of view of Stokes filtered local systems and their interplay with Stokes local systems. We find that, remarkably, it admits a simple description in terms of some known combinatorial transformations appearing in seemingly very different contexts: 
\begin{itemize}
    \item For Stokes filtered local systems, the Fourier transform is given by the \emph{Gale transform} of point configurations in projective spaces (studied in \cite{coble1923associated, dolgachev1988point}).
    \item
    For Stokes local systems, the Fourier transform is given by the \emph{combinatorial Gale transform} of superperiodic linear difference equations and frieze patterns introduced by Morier-Genoud--Ovsienko--Schwartz--Tabachnikov \cite{morier2014linear}. (This result requires a certain assumption on the \emph{formal monodromy} of the Stokes local system, which is automatically satisfied if $a$ and $b$ are coprime.)
\end{itemize}

\subsection{Stokes filtrations from recessive and subdominant solutions} 

Let us now describe these results in more detail, starting with the case of Stokes filtered local systems.

An algebraic connection $(E,\nabla)$ on the affine line $\mathbb A^1(\mathbb C)$ has irregular class $\Theta_{a,b}$ at infinity if its \emph{exponential factors}, that is, the exponents $q_i$ such that the exponential terms $e^{q_i}$ appear in its formal series solutions, are given by
\[
q_i=e^{\frac{2\pi\sqrt{-1}}{a}i}z^{b/a} \quad \text{ for } i\in \{1, \dots, a\}
\]
(with each $q_i$ of multiplicity 1). In particular, $(E,\nabla)$ is then of rank $a$ and of pure slope equal to $\frac{b}{a}$ at infinity. In this case, its Fourier transform $(\widehat E, \widehat \nabla)$ also has a symmetric irregular class, namely  $\widehat\Theta_{b-a,b}$, which differs from $\Theta_{b-a, b}$ by the presence of a minus sign in front of all exponential factors (this does not change the shape of the Stokes data). 

The starting point of our approach is the observation that, in the case of symmetric irregular classes, a Stokes filtered local system can actually be reconstructed from simpler data, namely from its spaces of \emph{recessive solutions}, or alternatively from its \emph{subdominant solutions}. In brief, consider a Stokes filtered local system $\mathbb V$ with irregular class $\Theta_{a,b}$ at  infinity, corresponding to a connection $(E,\nabla)$. The underlying local system has trivial monodromy, so it just amounts to a complex vector space $V$ of dimension $a$. For any direction $d$ at infinity which is not a Stokes direction, its recessive subspace at $d$ is the line $V_d^{\mathrm{rec}}$ in $V$ corresponding to the solutions of $(E,\nabla)$ which grow at the smallest possible speed when approaching $\infty$ along the direction $d$. Similarly, the subdominant subspace at $d$ is the hyperplane $V_d^{\mathrm{sub}}$  in $V$ corresponding to the solutions which do not grow at the maximal possible speed at $d$. 

For the irregular class $\Theta_{a,b}$, there are $b$ a priori different recessive lines and $b$ a priori different subdominant hyperplanes. In turn, the collections of recessive lines and of subdominant hyperplanes both define (by passing in the subdominant case to the dual vector space $V^*$ and taking the annihilators of the hyperplanes) an object in the category $\mathbf P_{a,b}$ defined as the groupoid of collections of $b$ lines $(V_1,\dots, V_b)$ in some $a$-dimensional vector space $V$ such that any $a$ cyclically consecutive $V_i$ (viewing indices modulo $b$) span $V$. We thus obtain two equivalences of categories $\Phi_{a,b}^{\mathrm{rec}}\colon \mathbf{SF}_{a,b}\iso \mathbf P_{a,b}$ and $\Phi_{b-a,b}^{\mathrm{sub}}\colon \mathbf{SF}_{a,b}\iso \mathbf P_{a,b}$, where $\mathbf{SF}_{a,b}$ denotes the category of Stokes filtered local systems with irregular class $\Theta_{a,b}$, which is equivalent  via the irregular Riemann--Hilbert correspondence to the category $\mathbf{Conn}_{a,b}$ of connections on $\mathbb A^1(\mathbb C)$ with irregular class $\Theta_{a,b}$ at $\infty$. Moreover, the relation between recessive and subdominant solutions corresponds to \emph{projective duality} for polytopes in projective spaces. This yields an auto-equivalence $*\colon \mathbf P_{a,b}\to \mathbf P_{a,b}$, which interchanges $\Phi_{a,b}^{\mathrm{rec}}$ and $\Phi_{a,b}^{\mathrm{sub}}$, cf.\ \eqref{eq:summary_isomorphisms}.

Note that the fact that Stokes filtrations can be reconstructed from their spaces of recessive solutions is known to experts. It is, for instance, briefly mentioned by Goncharov--Kontsevich in \cite[Lemma 8.30]{goncharov2024spectral}, by Fock in \cite[\S3]{fock2023singularities}, which also hints at a relation with Gale duality, and in a different language by Shende--Treumann--Williams--Zaslow \cite[Theorem 3.9]{shende2019cluster}. These works do, however, not discuss the dual reconstruction from subdominant solutions and projective duality. For our purposes here, it is useful to discuss both cases in detail. \\

\subsection{Fourier transform of Stokes filtrations from (categorical) Gale transform}

On the other hand, the (classical) Gale duality is a one-to-one correspondence between projective equivalence classes of configurations of $b$ points in $\mathbb P^{a-1}$ and configurations of $b$ points in $\mathbb P^{b-a-1}$ \cite{gale1956neighbouring,coble1923associated, dolgachev1988point}. For our purposes, it is useful to introduce a more intrinsic variant of Gale duality, taking the form of a (categorical) auto-equivalence $\mathscr G\colon \mathbf P_{a,b}\iso \mathbf P_{a,b}$.

Our first main result is that the recessive solutions of the Fourier transform are given by the (categorical) Gale transform of the subdominant solutions of the initial connection. 
More precisely: The Legendre transform induces a natural bijection between the sectors corresponding to the subdominant solutions of $(E,\nabla)$ and those corresponding to the recessive solutions of $(\widehat E, \widehat\nabla)$, and vice versa.
Then, defining the functors $\widehat \Phi_{b-a,b}^{\mathrm{rec}}\colon \widehat{\mathbf{SF}}_{a,b}\iso \mathbf P_{b-a,b}$, and $\widehat{\Phi}_{b-a,b}^{\mathrm{sub}}\colon \widehat{\mathbf{SF}}_{a,b}\iso \mathbf P_{b-a,b}$ analogously to $\Phi_{a,b}^{\mathrm{rec}}$ and $\Phi_{b-a,b}^{\mathrm{sub}}$ and in such a way that the numbering of recessive and subdominant solutions is compatible with this bijection, we will prove the following theorem.

\begin{theorem}[Theorem \ref{thm:fourier=gale} and Corollary \ref{cor:fourier=gale_projective_duality}]
\label{thm:fourier=gale_intro}
Let $(E,\nabla)$ be an algebraic connection on $\mathbb A^1$ with irregular class $\Theta_{a,b}$ at $\infty$, let $\mathbb V$ be its Stokes filtered local system, $P^{\rec }\defeq \Phi^{\rec}_{a,b}(\mathbb V)\in \mathbf P_{a,b}$ the collection of its recessive lines, and $P^{\sub}\defeq \Phi^{\sub}_{a,b}(\mathbb V)\in \mathbf P_{a,b}$ the collection of (the annihilators of) its subdominant hyperplanes. Let $(\widehat E,\widehat \nabla)=\mathscr F(E,\nabla)$ be the Fourier transform of $(E,\nabla)$, with (rescaled) irregular class $\widehat\Theta_{b-a, b}$, let $\widehat{\mathbb V}$ be its Stokes filtered local system, and set $\widehat{P}^{\rec}\defeq \widehat{\Phi}_{b-a,b}^\rec(\widehat{\mathbb V})\in \mathbf P_{b-a, b}$, $\widehat{P}^{\sub}\defeq \widehat{\Phi}_{b-a,b}^\sub(\widehat{\mathbb V})\in \mathbf P_{b-a, b}$. There exist  natural isomorphisms
\begin{align*}
\widehat{P}^{\rec}&\simeq \mathscr G(P^{\sub}),\\
\widehat{P}^{\sub}&\simeq \mathscr G(P^{\rec}).    
\end{align*}
\end{theorem}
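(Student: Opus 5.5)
We plan to derive the statement from T.\ Mochizuki's description of the Fourier transform of Stokes structures \cite{mochizuki2018stokes}, specialised to the case of a single singularity at $\infty$ with pure slope $b/a>1$. Because the connection has no finite singularities, the Fourier transform $\widehat E$ can be described by rapid-decay homology (a Betti-side Fourier transform). The space of solutions of $\widehat E$ near $\widehat\infty$ in a generic direction $\theta$ is identified with the relative homology
\[
\widehat V \simeq H_1\bigl(\mathbb C, \{\Re(z\xi)\ll 0\}; \mathbb V^\vee\bigr),
\]
for $\xi$ of large modulus in direction $\theta$, or with its dual version using $\mathbb V$. Concretely, $\widehat V$ will be realised as the cokernel or kernel of a map built from the $b$ sectors at infinity where $e^{-z\xi+q_i}$ decays. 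Our first step is to show that the dimension count $b-a$ is compatible with this picture. We then fix, via the stationary phase/Legendre bijection stated before the theorem, the numbering of sectors on both sides.

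The second step is to write $\widehat V$ explicitly. Place $b$ ``decay sectors'' $S_1,\dots,S_b$ at infinity, cyclically ordered. A rapid-decay cycle is a path going from $S_j$ to $S_k$ carrying a solution that decays in both sectors. Modulo boundaries, such cycles are combinations of elements $\gamma_j\otimes v_j$ with $v_j$ in the subdominant space $V^{\sub}_j$ (for the dual local system, equivalently, elements pairing to zero with the annihilator line $P^{\sub}_j$). Taking the total boundary then gives an exact sequence
\[
0\to \widehat V \to \bigoplus_{j=1}^b \mathbb C\,\ell_j \xrightarrow{\ \mu\ } V \to 0,
\]
where $\ell_j\subset V$ are the lines $P^{\sub}_j$ and $\mu$ is the sum map. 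The sum map is surjective because cyclically consecutive lines span $V$. This is exactly the definition of the categorical Gale transform: $\mathscr G(P)$ is the kernel of $\bigoplus_j P_j\to V$ (or its dual), with lines given by the images of the coordinate functionals. The recessive line $\widehat P^{\rec}_j$ of $\widehat V$ at the $j$-th direction should then be the image of the cycle class supported near the single sector attached to $j$ by the Legendre bijection. That cycle decays maximally in $\xi$ precisely because its integrand is localised where $\Re(-z\xi+q_j)$ is most negative. Checking this identifies $\widehat P^{\rec}\simeq \mathscr G(P^{\sub})$. Naturality follows because every construction is functorial in $\mathbb V$.

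For the second isomorphism, we avoid repeating the computation. Apply the first isomorphism to the dual connection, using that Fourier transform commutes with duality up to the sign $\xi\mapsto -\xi$ (already absorbed in $\widehat\Theta$). Combine this with the fact that duality of connections exchanges recessive and subdominant data via the projective duality auto-equivalence $*$. Alternatively, apply the first isomorphism to $\widehat E$ and use Fourier inversion $\mathscr F\circ\mathscr F\simeq [-1]^*$ together with $\mathscr G\circ\mathscr G\simeq \mathrm{id}$. Either route reduces the claim to checking that $\mathscr G$ commutes with $*$ and that the sector numbering is compatible under inversion.

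We expect the main obstacle to be the second step. The difficulty is to identify precisely which rapid-decay cycles give the recessive line at each Stokes-generic direction of $\widehat\infty$, and to verify that the Legendre bijection matches indices without a cyclic shift. To handle this, we would first work out the case $a=1$, where $V$ is one-dimensional and $\widehat E$ has rank $b-1$ (an Airy-type equation). There we compute the asymptotics of the integrals $\int_{\gamma_j}e^{-z\xi+q(z)}dz$ by steepest descent at the $b-1$ saddle points and fix the conventions. We would then reduce the general case to this one by the local nature of the steepest-descent estimates.
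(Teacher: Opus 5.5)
\textbf{There is a genuine gap, in your second step, which carries the content of the theorem.} In your presentation $0\to\widehat V\to\bigoplus_j\ell_j\xrightarrow{\mu}V\to 0$, with $\mu$ the sum map, a kernel element supported in a single summand $\ell_j$ is zero, because $\ell_j\to V$ is injective. So there is no nonzero ``cycle class supported near the single sector attached to $j$''. In a kernel presentation, a single sector $j$ only produces the hyperplane $\{x_j=0\}$ of cycles avoiding it, which is the natural candidate for a subdominant hyperplane. The Gale-dual lines (spans of the coordinate functionals) live in $\widehat V^*$, not in $\widehat V$.

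Your steepest-descent heuristic is also off. Already for $a=1$, the recessive solution of $\widehat E$ in a given direction is the thimble through the saddle of lowest critical value. Its class joins two adjacent valleys, i.e.\ it has the form $e_j-e_{j'}$ in your coordinates, not a single-sector class.

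The coefficient data are inconsistent as well:
\begin{itemize}
\item $P^{\sub}_j$ is a line in $V^*$, not in $V$.
\item If the coefficients at sector $j$ were the hyperplanes $V^{\sub}_j$, the kernel would have dimension $b(a-1)-a$, not $b-a$.
\item $H_1(\mathbb C,\{\Re(z\xi)\ll0\};\mathbb V^\vee)$ vanishes; for slope $b/a>1$ the decay region is dictated by the $q_i$, not by $z\xi$.
\end{itemize}
For rapid-decay cycles with coefficients in $\mathbb V$, the boundary data reduce to the recessive lines $V^{\rec}_j$. A corrected kernel picture is therefore suited to the \emph{second} isomorphism $\widehat P^{\sub}\simeq\mathscr G(P^{\rec})$. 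To see recessive lines of $\widehat{\mathbb V}$ as images of single summands, you need the dual cokernel presentation $\widehat V=W/\Im\alpha$ with $W_j=V/V^{\sub}_j$, which is how $\mathscr G(P^{\sub})$ is defined in the paper.

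\textbf{The missing input.} Even with the right presentation, you must show that these single-sector classes are exactly the recessive (resp.\ subdominant) pieces of the Stokes filtration of $\widehat{\mathbb V}$. They must sit at the directions matched by the Legendre bijection, with the correct indices, and the gluing map must be $\alpha$. Your plan to compute $a=1$ and then appeal to ``the local nature of the steepest-descent estimates'' does not reduce the general case to rank one. For $a\geq 2$ the integrand involves solutions of $E$ that themselves jump across Stokes directions. Which sectors a recessive cycle of $\widehat E$ must join is a global property of $\mathbb V$.

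The paper does not redo this analysis; it imports it from \cite[\S10.5, Propositions~8.8.4 and 8.8.5]{mochizuki2018stokes}:
\begin{itemize}
\item $\widehat{\mathbb V}$ is obtained from $\mathfrak Q^\infty_!$ by removing the $0$-th graded piece.
\item The spaces $H^0(J,L_{J,>0})$ are the quotients $V/V^{\sub}_j$.
\item The maps $A^{J_\pm}_\infty$ coincide with $\alpha$.
\item The maps $\nu^\pm_0$ realise the Legendre bijection.
\item The recessive pieces are the images of the summands $V/V^{\sub}_j$.
\end{itemize}
Without this input, or an asymptotic argument that genuinely replaces it, the first isomorphism is not established. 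Your treatment of the second isomorphism, via commutation of $\mathscr G$ with projective duality or via the inverse transform, is fine and agrees with the paper.
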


The situation is summarised in Fig.~\ref{fig:fourier_gale_big_diagram}.

\begin{figure}[h]
\centering
    \begin{tikzpicture}
        \node (Conn) at (0,3) {$\mathbf{Conn}_{a,b}$};
        \node (Conn') at (10,3) {$\widehat{\mathbf{Conn}}_{b-a,b}$};
        \node (SF) at (0,0) {$\mathbf{SF}_{a,b}$};
        \node (SF') at (10,0) {$\widehat{\mathbf{SF}}_{b-a,b}$};
        \node (rec) at (3,1) {$\mathbf{P}_{a,b}$};
        \node (sub) at (3,-1) {$\mathbf{P}_{a,b}$};
        \node (sub') at (7,1) {$\mathbf{P}_{b-a,b}$};
        \node (rec') at (7,-1) {$\mathbf{P}_{b-a,b}$};
        \draw[->] (Conn) -- node[midway, above] {$\mathscr F$} (Conn');
        \draw[<->] (Conn) -- node[midway, left] {} (SF);
        \draw[<->] (Conn') -- node[midway, right] {} (SF');
        \draw[->] (SF) -- node[midway, above] {$\Phi^{\rec}_{a,b}$} (rec);
        \draw[->] (SF) -- node[midway, below] {$\Phi^{\sub}_{a,b}$} (sub);
        \draw[->] (SF') -- node[midway, below] {$\widehat{\Phi}^{\rec}_{b-a,b}$} (rec');
        \draw[->] (SF') -- node[midway, above] {$\widehat\Phi^{\sub}_{b-a,b}$} (sub');
        \draw[->] (rec) -- node[midway, above] {$\mathscr G$} (sub');
        \draw[->] (sub) -- node[midway, below] {$\mathscr G$} (rec');
        \draw[<->] (rec) -- node[midway, right] {$*$} (sub);
        \draw[<->] (sub') -- node[midway, left] {$*$} (rec');

    \end{tikzpicture}
    \caption{The Fourier transform of irregular connections with symmetric irregular classes and the (categorical) Gale transform of their recessive/subdominant solutions. All arrows correspond to equivalences of categories.} 
    \label{fig:fourier_gale_big_diagram}
\end{figure}

An important ingredient in the proof of Theorem \ref{thm:fourier=gale_intro} are results of T.\ Mochizuki \cite{mochizuki2018stokes}.

\subsection{Stokes representations, superperiodic linear difference equations and frieze patterns}

In the second part of the article, we use Theorem~\ref{thm:fourier=gale_intro} to say more about the Fourier transform from the point of view of Stokes local systems in the case of symmetric irregular classes: In contrast to our results in \cite{doucot2025topological}, we obtain closed formulas for the Stokes matrices of the Fourier transform. 

To this end, we analyse how to reconstruct a Stokes local system from the recessive solutions of the corresponding Stokes filtered local system. We find that, somewhat surprisingly, this reconstruction is governed by the combinatorics of the theory developed in \cite{morier2014linear}, which relates configurations of points in projective space, superperiodic linear difference equations and frieze patterns. Namely, up to some signs, the nontrivial entries of the Stokes matrices coincide with the coefficients of the associated difference equations and friezes. In rank 2 (i.e.\ for $a=2$), the relation between Stokes filtered local systems and Stokes local systems was already discussed in detail by Boalch in \cite{boalch2018wild_points} in the untwisted case (i.e.\ for $b$ even), and the link with friezes had been observed \cite{morier-genoud2021counting}. The situation is much simpler in rank 2 than in the general case since the Stokes filtration simply amounts to a collection of lines (which are both the recessive and subdominant solutions) in a $2$-dimensional vector space. 

One of the main results of the work \cite{morier2014linear} is a ``triality'' relating three different types of objects: superperiodic linear difference equations, which are systems of linear difference equations satisfying some (anti)periodicity conditions both on their coefficients and on their solutions; (tame) frieze patterns, which are arrays of (complex) coefficients $\alpha_{i}^j$ satisfying some determinantal conditions as well as some boundary conditions; and nondegenerate polygons in projective spaces, i.e.\ objects in $\mathbf P_{a,b}$. For any pair of integers $(a,b)$ with $b>a$, this gives rise to three moduli spaces $\mathcal E_{a,b}$, $\mathcal F_{a,b}$ and $\mathcal P_{a,b}$, respectively. The triality is formulated as follows: There exist natural morphisms of complex algebraic varieties $\Phi^{\mathcal E\to\mathcal P}_{a,b}\colon\mathcal E_{a,b}\to \mathcal P_{a,b}$, $\Phi^{\mathcal F\to\mathcal P}_{a,b}\colon\mathcal F_{a,b}\to \mathcal P_{a,b}$, and an isomorphism $\Phi^{\mathrm{sol}}_{a,b}\colon\mathcal E_{a,b}\iso \mathcal F_{a,b}$ associating to a superperiodic difference equation its \emph{frieze of solutions}. The isomorphisms are compatible with each other, i.e.\ the natural diagram (see \eqref{eq:triality_frieze_diagram}) commutes. When $a$ and $b$ are not coprime, the morphisms $\Phi^{\mathcal F\to\mathcal P}_{a,b}$ and $\Phi^{\mathcal E\to\mathcal P}_{a,b}$ are neither injective nor surjective (our results imply that they correspond to a quasi-Hamiltonian reduction above a particular symplectic leaf of $\mathcal P_{a,b}$), but in the coprime case they are isomorphisms and we have $\mathcal E_{a,b}\simeq \mathcal F_{a,b}\simeq \mathcal P_{a,b}$.\\

The idea of the Stokes--frieze correspondence is now as follows. The relation between Stokes local systems and Stokes filtered local systems corresponding to a connection $(E,\nabla)$ is characterised by the property that a Stokes local system defines a preferred grading of the local system $V$ of solutions of $(E,\nabla)$ outside the singular directions that splits the Stokes filtration outside the Stokes directions \cite{boalch2021topology}. In particular, if one chooses a \emph{framing} for the Stokes local system, leading to a \emph{Stokes representation} $\rho$ involving explicit Stokes matrices, this defines a particular collection of vectors $v_i$ generating the lines of recessive solutions. The important observation is that for the irregular class $\Theta_{a,b}$, such recessive \emph{Stokes vectors} satisfy linear recurrence relations of the form 
\[
v_i=\sigma_i^1 v_{i-1}+\dots+\sigma_i^{a-1} v_{i-a+1}+ v_{i-a}, 
\]
where the coefficients $\sigma_{i}^j$ are equal to the nontrivial entries of the Stokes matrices (sometimes up to multiplication by some coefficients of the \emph{formal monodromy}). The fact that the coefficient of $v_{i-a}$ is 1 here reflects the unipotency of the Stokes matrices. 
Up to signs, those are precisely the linear recurrence relations 
\[
v_i=\alpha_i^1 v_{i-1}-\dots+(-1)^{a-2}\alpha_i^{a-1} v_{i-a+1}+ (-1)^{a-1} v_{i-a}, 
\]
defining superperiodic linear difference equations.

More precisely, the correspondence requires the formal monodromy of the Stokes representations to have a particular value, which we refer to as the frieze-compatibility condition. (When $a$ and $b$ are coprime, it is always satisfied.) Then the result can be stated as follows.

\begin{theorem} [Theorem \ref{thm:correspondence_stokes_representations_difference_eq}]
\label{thm:correspondence_stokes_representations_difference_eq_intro}
There is an algebraic isomorphism $\Phi^{\mathcal R\to \mathcal E}_{a,b}\colon \mathcal R^{F}_{a,b}\iso \mathcal E_{a,b}$ between the \emph{frieze-compatible, reduced wild representation variety} $\mathcal R^{F}_{a,b}$, parametrising frieze-compatible (reduced) Stokes representations with irregular 
class $\Theta_{a,b}$ and the moduli space $\mathcal E_{a,b}$ of superperiodic difference equations, given as follows: If $\rho\in \mathcal R_{a,b}^F$ has nontrivial Stokes matrix entries $s_{i}^j$ for $i\in \mathbb Z/b\mathbb Z$ and $j\in \{1, \dots, a-1\}$, the coefficients $\alpha_{i}^j$ of the corresponding superperiodic linear difference equation are given by
\begin{equation*}
    \alpha^j_i=\varepsilon_i^j\,s_i^j, \quad \text{with }\varepsilon_i^j\in \{\pm 1\}
\end{equation*}
for $i\in \{a+1, \dots, a+b\}$, $j\in \{1,\dots, a-1\}$, with the signs $\varepsilon_{i}^j$ explicitly determined. 

This isomorphism lifts the isomorphism $\Phi^{\mathcal B\to \mathcal E}_{a,b}\colon\mathcal B_{a,b}\simeq \mathcal P_{a,b}$ between the wild character variety $\mathcal B_{a,b}$, parametrising isomorphism classes of Stokes local systems with irregular class $\Theta_{a,b}$, and $\mathcal P_{a,b}$, induced by the equivalence between Stokes local systems and (recessive solutions of) Stokes filtered local systems with irregular class $\Theta_{a,b}$. Namely, we have the commutative diagram:
\begin{equation*}
\begin{tikzcd}[row sep=1.2cm, column sep=1.2cm]
    \mathcal R^F_{a,b} \arrow[r, "\sim"] \arrow[d] & \mathcal E_{a,b} \arrow[d] \\
    \mathcal B_{a,b} \arrow[r, "\sim"] & \mathcal P_{a,b} 
\end{tikzcd}
\label{eq:diagram_stokes_diff_eq_correspondence_intro}
\end{equation*}
where the left vertical map is the quasi-Hamiltonian reduction from Stokes representations to Stokes local systems (amounting to forgetting the framing).

Moreover, if $a$ and $b$ are coprime, we have ${\mathcal R}^F_{a,b}=\mathcal B_{a,b}$,  hence $\mathcal F_{a,b}\simeq \mathcal E_{a,b}\simeq \mathcal B_{a,b}$. In this case, the vertical maps in the diagram are isomorphisms. 
\end{theorem}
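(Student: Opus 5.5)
The plan is to build $\Phi^{\mathcal R\to\mathcal E}_{a,b}$ directly from the recessive Stokes vectors of a framed Stokes representation, and to check the remaining properties afterwards.

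\textbf{Setup.} Fix a reduced Stokes representation $\rho$ with irregular class $\Theta_{a,b}$. It consists of the framing $V\simeq\mathbb C^a$, the unipotent Stokes matrices $S_1,\dots,S_b$ attached to the $b$ singular directions, and the formal monodromy $h$. The ordering of the exponential factors $q_i$ changes cyclically from one singular direction to the next.

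\textbf{Step 1: Stokes vectors and the recurrence.} On each sector between consecutive singular directions, the grading given by the framing singles out a basis vector generating the recessive line $V^{\rec}_d$. Transporting these vectors across the singular directions with the Stokes matrices produces vectors $v_i$ for all $i\in\mathbb Z$. The key computation is that, for $\Theta_{a,b}$, each Stokes matrix $S_i$ has nontrivial entries only in one row (or column), of length $a-1$. This is because at each singular direction only one exponential factor changes its dominance relative to the others. Writing the recessive vector on the far side of the $i$-th singular direction in the basis adapted to the near side then gives
\[
v_i=\sigma_i^1v_{i-1}+\dots+\sigma_i^{a-1}v_{i-a+1}+v_{i-a}.
\]
The coefficients $\sigma_i^j$ are the entries $s_i^j$, possibly twisted by the ordering conventions. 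The coefficient $1$ in front of $v_{i-a}$ comes from unipotency.

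\textbf{Step 2: periodicity from formal monodromy.} Going once around infinity, the product of the Stokes matrices and $h$ equals the identity, because the local system has trivial monodromy. This forces $v_{i+b}=\pm h$-twisted multiples of $v_i$. The frieze-compatibility condition is exactly the requirement that these multiples equal $(-1)^{a-1}$. Under this condition, the superperiodicity conditions of \cite{morier2014linear} hold, both the periodicity of the coefficients and the antiperiodicity of the solutions.

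\textbf{Step 3: the signs.} Absorbing the alternating signs of the normal form requires rescaling $v_i\mapsto \pm v_i$ along a consistent pattern, and this determines the $\varepsilon_i^j$. The sign bookkeeping, together with the cyclic reindexing over $i\in\{a+1,\dots,a+b\}$, is the main obstacle I expect. I would first work out the cases $a=2,3$ to guess the pattern, and then prove it by induction on $i$.

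\textbf{Step 4: isomorphism and the diagram.} The map is algebraic and invertible, since from $\alpha_i^j$ one reads off $s_i^j$ and hence $\rho$. The dimensions match, and both spaces are smooth affine varieties parametrised by the same coordinates. Commutativity of the diagram holds by construction: the lines $[v_i]$ are the recessive lines, which is what $\mathcal B_{a,b}\to\mathcal P_{a,b}$ records. Forgetting the framing corresponds to forgetting the chosen normalisation of the $v_i$, which is what $\mathcal E_{a,b}\to\mathcal P_{a,b}$ does.

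\textbf{Coprime case.} When $\gcd(a,b)=1$, the torus of framings acts through the formal monodromy constraint in a way that leaves only the frieze-compatible value. The quotient is then free and trivial, so $\mathcal R^F_{a,b}=\mathcal B_{a,b}$. Combining this with the isomorphism $\mathcal E_{a,b}\simeq\mathcal F_{a,b}$ of \cite{morier2014linear} gives the remaining isomorphisms.
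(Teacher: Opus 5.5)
\textbf{Step 1 misdescribes the Stokes data.} Your Step 1 rests on a false description of the Stokes data, and the mechanism that actually produces the recurrence is missing. For $\Theta_{a,b}$ with $a\ge 3$ there are $2b$ singular directions, not $b$, carrying $b(a-1)$ Stokes arrows. No Stokes matrix has a row or column of $a-1$ nontrivial entries. The $a-1$ arrows $\downarrow^j_{i,l}$ whose tails lie on a given increasing interval $J_{i,l+1}$ sit at $a-1$ \emph{distinct} singular directions $\theta^1_{i,l}<\dots<\theta^{a-1}_{i,l}$; for $\widehat\Theta_{3,5}$, each of the ten Stokes matrices has a single off-diagonal entry. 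So $\sigma_i^1,\dots,\sigma_i^{a-1}$ cannot be read off from one Stokes matrix.

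The idea you need is to follow a single Stokes vector along one strand. Since $q_i=q_{i-a}$, the vector $v_{[q_i,\theta]}$ equals the recessive vector $v_{i,l}$ at $\theta_{i,l}$ and $v_{i-a,l}$ at $\theta_{i-a,l}$. In between it traverses $J_{i,l+1}$ and jumps at each $\theta^j_{i,l}$ by $s^j_{i,l}$ times the head vector. Because arrows start only on increasing intervals, the head vector at $[q_{i-j},\theta^j_{i,l}]$ is still the recessive vector $v_{i-j,l}$. Summing the jumps gives the recurrence, with coefficients from $a-1$ different Stokes matrices and coefficient $1$ on $v_{i-a,l}$. When this stretch crosses the reference direction $\mathtt d$, the formal-monodromy jump must be cancelled by \emph{defining} the last $a$ vectors as $h^{-1}$-multiples of the first ones. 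This is where the factors $\eta^j_i$ inside $\varepsilon^j_i$ come from.

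\textbf{Frieze compatibility, surjectivity, and the coprime case.} Step 2 puts frieze compatibility in the wrong place. With coefficient $1$ on $v_{i-a}$ one has $v_{i+b}=h_i^{-1}v_i$, where $h=(\breve h_1,\dots,\breve h_p,1,\dots,1)$. On $\mathcal R^F_{a,b}$ these multiples are $(-1)^{(s-r)(a-1)}$ and $1$, not $(-1)^{a-1}$: for $(a,b)=(2,6)$, $\breve h^F=(1,1)$ while $(-1)^{a-1}=-1$. The correct value appears only as the solvability condition for a rescaling $v_i\mapsto\lambda_i^{-1}v_i$ that simultaneously turns the coefficient $1$ into $(-1)^{a-1}$ and the periodicity into antiperiodicity. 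The $\lambda_i$ are forced along chains of forward $a$-steps and backward $b$-steps. Closing a chain around a Stokes circle ($s$ forward and $r-1$ backward steps) is consistent exactly when $\breve h_k=(-1)^{(s-r)(a-1)}$. The same count yields $\varepsilon_i=(-1)^{(a-1)(k_i+l_i)}$, and Cramer's rule then gives $\varepsilon^j_i=\varepsilon_i\varepsilon_{i-j}(-1)^{j-1}\eta^j_i$. This is exactly what your Step 3 defers to guessing, so the proposal does not show that the map is defined precisely on $\mathcal R^F_{a,b}$.

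In Step 4, injectivity is clear, but ``same dimension, same coordinates'' does not give surjectivity. You must run the construction backwards (basis of solutions, then sign rescaling, then Stokes matrices) and check $hS_{d_N}\cdots S_{d_1}=1$. This holds because the successive changes of basis around the circle, followed by the formal monodromy, return the initial basis.

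In the coprime case, the $\breve H=\mathbb C^*$-action is trivial, not free, since rescaling $v_1$ rescales the whole lift. Moreover $\det h=1$ pins $\breve h$ to $(-1)^{r-1}$, which equals $(-1)^{(s-r)(a-1)}$ there.
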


\subsection{Fourier transform of Stokes representations from combinatorial Gale transform}

This Stokes--frieze correspondence allows us to use for our purposes the main result of \cite{morier2014linear}, providing a lift of the classical Gale transform to superperiodic linear difference equations and frieze patterns: This \emph{combinatorial Gale transform} gives isomorphisms $\mathscr G\colon\mathcal E_{a,b}\iso\mathcal E_{b-a,b}$ and $\mathscr G:\mathcal F_{a,b}\iso\mathcal F_{b-a,b}$ (we denote all versions of the Gale transform by $\mathscr G$ to simplify notations). Moreover, the projective duality $*\colon \mathcal P_{a,b}\iso\mathcal P_{b-a, b}$ also admits a lift to superperiodic difference equations and friezes. 

Combining the combinatorial Gale transform, the combinatorial projective duality, the Stokes--frieze correspondence and Theorem~\ref{thm:fourier=gale_intro}, we obtain our second main result, which gives an explicit way to compute the Fourier transform of Stokes representations (in the frieze-compatible case).

\begin{theorem} [Theorem \ref{thm:fourier_transform_stokes_matrices_using_friezes}]
\label{thm:fourier_transform_stokes_matrices_using_friezes_intro}
The composition of isomorphisms 
\[
\mathscr F\defeq(\Phi^{\mathcal R\to \mathcal E}_{b-a,b})^{-1}\circ (\mathscr G\circ *)\circ\Phi^{\mathcal R\to \mathcal E}_{a,b}\colon \mathcal R^F_{a,b}\iso\mathcal R^F_{b-a,b}
\]
corresponds to the Fourier transform of Stokes representations of connections of type $\Theta_{a,b}$, that is, it lifts the isomorphism $\mathcal B_{a,b}\iso \mathcal B_{b-a, b}$ induced by the Fourier transform via the equivalence of categories between Stokes local systems and connections.  
\end{theorem}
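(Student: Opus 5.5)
The plan is a diagram chase that assembles the compatibilities already established. The goal is to show that the square
\[
\begin{tikzcd}
\mathcal R^F_{a,b} \arrow[r, "\mathscr F"] \arrow[d] & \mathcal R^F_{b-a,b} \arrow[d]\\
\mathcal B_{a,b} \arrow[r] & \mathcal B_{b-a,b}
\end{tikzcd}
\]
commutes, where the bottom arrow is induced by the Fourier transform of connections. To do this, I will extend it to the right by the vertical isomorphisms $\Phi^{\mathcal R\to\mathcal E}$ and $\Phi^{\mathcal B\to\mathcal P}$ of Theorem \ref{thm:correspondence_stokes_representations_difference_eq_intro}.

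\textbf{Step 1: reduction to the level of configurations.} Using the commutative square of Theorem \ref{thm:correspondence_stokes_representations_difference_eq_intro} for both $(a,b)$ and $(b-a,b)$, I reduce the claim to two facts.
\begin{enumerate}[(i)]
\item The combinatorial map $\mathscr G\circ *\colon \mathcal E_{a,b}\to\mathcal E_{b-a,b}$ lifts the geometric map $\mathscr G\circ *\colon\mathcal P_{a,b}\to\mathcal P_{b-a,b}$ under the projections $\Phi^{\mathcal E\to\mathcal P}$.
\item The Fourier transform $\mathcal B_{a,b}\to\mathcal B_{b-a,b}$, transported through $\Phi^{\mathcal B\to\mathcal P}$ (which is built from recessive lines), equals $\mathscr G\circ *$ on $\mathcal P$.
\end{enumerate}
Given (i) and (ii), the map $\mathscr F$ on $\mathcal R^F$ projects to the Fourier transform on $\mathcal B$. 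This is because every map in the composition is compatible with the vertical projections, and $\Phi^{\mathcal R\to\mathcal E}_{b-a,b}$ is an isomorphism; so its inverse is also compatible with the projections, since $\mathcal R^F\to\mathcal B$ and $\mathcal E\to\mathcal P$ are matched by the horizontal isomorphism on $\mathcal B\simeq\mathcal P$.

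\textbf{Step 2: fact (i).} This is essentially the content of \cite{morier2014linear}. Their combinatorial Gale transform on $\mathcal E$ is designed so that the projective polygon of the transformed equation is the Gale dual of the original one. Likewise, the lift of projective duality to difference equations, via the dual equation or adjoint recurrence, maps to $*$ on polygons. I would cite these statements, checking only the index conventions: the labelling of vertices by $i\in\mathbb Z/b$, and any shift relating the indexing of the dual recurrence to that of the subdominant hyperplanes.

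\textbf{Step 3: fact (ii).} By Theorem \ref{thm:fourier=gale_intro}, $\widehat P^{\rec}\simeq\mathscr G(P^{\sub})$. By the projective duality statement of \S1.3, $P^{\sub}\simeq *P^{\rec}$. Hence $\widehat P^{\rec}\simeq \mathscr G(*P^{\rec})$, naturally in $\mathbb V$. Passing to isomorphism classes gives (ii), since $\Phi^{\mathcal B\to\mathcal P}$ is defined through recessive lines.

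\textbf{Main obstacle.} I expect the main difficulty to be the bookkeeping of labels and of the frieze-compatibility condition. I must check two things:
\begin{itemize}
\item The numbering of recessive sectors of $\widehat{\mathbb V}$, fixed via the Legendre bijection, agrees with the numbering used in $\Phi^{\mathcal R\to\mathcal E}_{b-a,b}$. A cyclic shift or reversal here would change $\mathscr F$ by a rotation of indices.
\item The image of $\mathcal R^F_{a,b}$ indeed lies in $\mathcal R^F_{b-a,b}$, so that the formal monodromy is preserved.
\end{itemize}
The second point is automatic, since $\mathscr G\circ *$ lands in $\mathcal E_{b-a,b}$ and $\Phi^{\mathcal R\to\mathcal E}_{b-a,b}$ is an isomorphism onto it. For the first point, I would first verify it in the coprime rank-$2$ case against the known friezes, and then argue that the conventions are uniform in $(a,b)$.
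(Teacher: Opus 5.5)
Your proposal is correct and takes essentially the same route as the paper. The paper's proof is the same diagram chase: Theorem~\ref{thm:fourier=gale} plus projective duality on $\mathcal P$, the combinatorial Gale transform and projective duality lifting their classical counterparts (Theorem~\ref{thm:combinatorial_gale_transform}, Proposition~\ref{proposition:combinatorial_projective_duality}), and the commutative square of Theorem~\ref{thm:correspondence_stokes_representations_difference_eq}. The orientation issue you flag is real, since $\mathcal L(J_{i,l})=\widehat J_{-i,l}$ and $\widehat\Phi^{\rec}_{b-a,b}$ lists the lines $\widehat V^{\rec}_{-1,0},\dots,\widehat V^{\rec}_{-b,0}$; the paper handles it by re-indexing $i\mapsto -i$ in the proof of Theorem~\ref{thm:fourier_transform_stokes_matrices_explicit_formula}, rather than by checking special cases.
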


We thus finally arrive at the situation summarised in Fig.~\ref{fig:diagram_fourier_transform_stokes_data_using_gale}.

\begin{figure}[h]
    \centering
    
    \begin{tikzpicture}
    \node (P) at (0,0) {$\mathcal P_{a,b}$};
    \node (B) at (-3,0) {$\mathcal B_{a,b}$};
    \node (R) at (-3,2) {$\mathcal R^F_{a,b}$};
    \node (E) at (0,2) {$\mathcal E_{a,b}$};
    \node (F) at (0,4) {$\mathcal F_{a,b}$};

    \node (P') at (4,0) {$\mathcal P_{b-a,b}$};
    \node (B') at (7,0) {$\mathcal B_{b-a,b}$};
    \node (R') at (7,2) {$\mathcal R^F_{b-a,b}$};
    \node (E') at (4,2) {$\mathcal E_{b-a,b}$};
    \node (F') at (4,4) {$\mathcal F_{b-a,b}$};

    \draw (R)[->] -- node[midway, above, yshift=-2pt]{$\sim$} (E);
    \draw (R)[->] -- (B);
    \draw (B)[->]  -- node[midway, above, yshift=-2pt]{$\sim$} (P);
    \draw (E)[->] -- (P);
    \draw (E)[->] --node[midway, above, sloped,  yshift=-2pt]{$\sim$} (F);

    \draw (R')[->] -- node[midway, above, yshift=-2pt]{$\sim$} (E');
    \draw (R')[->] -- (B');
    \draw (B')[->]  -- node[midway, above, yshift=-2pt]{$\sim$} (P');
    \draw (E')[->] -- (P');
    \draw (E')[->] --node[midway, above, sloped,  yshift=-2pt]{$\sim$} (F');

    \draw (P)[->] --  node[midway, above, yshift=-2pt]{$\sim$} node[midway, below]{$\mathscr G\circ *$} (P');
    \draw (E)[->] --  node[midway, above, yshift=-2pt]{$\sim$} node[midway, below]{$\mathscr G\circ *$} (E');
        \draw (F)[->] --  node[midway, above, yshift=-2pt]{$\sim$} node[midway, below]{$\mathscr G\circ *$} (F');

\node (C) at (-1.5, -2) {$\mathbf{Conn}_{a,b}$};
\node (C') at (5.5, -2) {$\widehat{\mathbf{Conn}}_{b-a,b}$};

\draw[<->, dotted] (C)-- (B);
\draw[<->, dotted] (C)-- (P);

\draw[<->, dotted] (C')-- (B');
\draw[<->, dotted] (C')-- (P');

\draw[->] (C) --node[midway, above] {$\mathscr F$} (C');
\end{tikzpicture}

\caption{Fourier transform of Stokes data from the (combinatorial) Gale transform. The dotted arrows indicate the equivalence of categories between meromorphic connections and Stokes local systems and (recessive solutions of) Stokes filtered local systems.}
\label{fig:diagram_fourier_transform_stokes_data_using_gale}
\end{figure}

The formulas of \cite{morier2014linear} for the combinatorial Gale transform then lead to explicit closed formulas for the Fourier transform of Stokes representations:

\begin{theorem}[Theorem \ref{thm:fourier_transform_stokes_matrices_explicit_formula}]
\label{thm:fourier_transform_stokes_matrices_explicit_formula_intro}
Let $\rho\in \mathcal R^F_{a,b}$ with nontrivial Stokes matrix entries $s_{i,l}^j$ and corresponding frieze entries $\alpha_{i,l}^j=\varepsilon_{i,l}^j\,s_{i,l}^j$ with $\varepsilon_{i,l}^j\in \{\pm 1\}$, indexed  by $(i,l)\in \mathbb Z^2$ with $l$ odd, and $j\in \{1,\dots, a-1\}$ in the notations of \S\ref{sec:fourier_from_combinatorial_gale}. Then the Stokes matrix entries $\widehat s_{i,l}^j$ of its Fourier transform $\widehat \rho\defeq\mathscr F(\rho)$ are given by
\begin{equation*}
\widehat s_{i,l}^j=\widehat{\varepsilon}_{i,l}^j 
\begin{vmatrix}
\alpha_{a-i, l+1}^{a-1} & 1 &  & \\
\vdots & \ddots &  \ddots& \\
\vdots & & \ddots & 1\\
\alpha_{a-i, l+1} ^{a-j} & \dots & \dots &  \alpha_{a-i+j-1,l+1}^{a-1}    
\end{vmatrix} 
\end{equation*}
for any pair $(i,l)\in \mathbb Z^2$ with $l$ even and $j\in \{1, \dots, b-a-1\}$, where $\widehat \varepsilon_{i,l}^j\in \{\pm  1\}$ is an explicitly determined sign.
\end{theorem}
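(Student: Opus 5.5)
The plan is to derive the formula by chasing a frieze-compatible representation $\rho$ around the diagram of Theorem~\ref{thm:fourier_transform_stokes_matrices_using_friezes_intro}. By that theorem,
\[
\widehat\rho=(\Phi^{\mathcal R\to\mathcal E}_{b-a,b})^{-1}\bigl(\mathscr G(*(\Phi^{\mathcal R\to\mathcal E}_{a,b}(\rho)))\bigr),
\]
so it suffices to make each of the three maps explicit on coefficients.

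\textbf{Step 1.} By Theorem~\ref{thm:correspondence_stokes_representations_difference_eq_intro}, $\Phi^{\mathcal R\to\mathcal E}_{a,b}(\rho)$ is the superperiodic difference equation with coefficients $\alpha_{i,l}^j=\varepsilon_{i,l}^j s_{i,l}^j$. I will use the reindexing by pairs $(i,l)$ with $l$ odd from \S\ref{sec:fourier_from_combinatorial_gale}, which records which Stokes matrix and which entry is meant.

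\textbf{Step 2.} Compute the combinatorial projective duality $*$ on $\mathcal E_{a,b}$. I expect it to act on coefficients by a reversal of indices $(i,j)\mapsto(a-i,\,a-j)$ up to signs, together with a shift $l\mapsto l+1$ in the second index. This accounts for the subscripts $a-i$, $l+1$ and superscripts $a-1,\dots,a-j$ in the claimed determinant.

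\textbf{Step 3.} Apply the explicit formula of \cite{morier2014linear} for the combinatorial Gale transform $\mathcal E_{a,b}\to\mathcal E_{b-a,b}$. This expresses the coefficients of the dual equation as $j\times j$ Hessenberg-type determinants in the original coefficients, with $1$'s on the superdiagonal. These determinants come from solving the recurrence for the dual solution basis, equivalently from the frieze entries in rows $j$ of the frieze of solutions. Substituting the output of Step 2 into this formula gives exactly the displayed lower Hessenberg matrix, whose diagonal entries are $\alpha^{a-1}_{a-i+k,l+1}$ for $k=0,\dots,j-1$.

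\textbf{Step 4.} Apply $(\Phi^{\mathcal R\to\mathcal E}_{b-a,b})^{-1}$, which just multiplies by the signs $\varepsilon$ for the pair $(b-a,b)$. All signs then collect into one factor $\widehat\varepsilon_{i,l}^j$: the sign from Step 2, the signs $(-1)^{\cdot}$ in the Gale formula, and the signs from Step 4. Because the $\varepsilon$'s only rescale entries, any leftover sign pattern inside the determinant can be absorbed by conjugating by a diagonal $\pm1$ matrix. This is what allows the determinant to be written in the $\alpha$'s with a single global sign.

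I expect the main obstacle to be bookkeeping rather than concept. One must match the indexing conventions of \cite{morier2014linear} (indices modulo $b$, with the normalisation $v_{i-a}$ coefficient $(-1)^{a-1}$) to the $(i,l)$ labelling of Stokes matrices on the two sides, which have different parities of $l$ and ranks $a$ versus $b-a$. The key check is that the indices $(a-i,l+1)$ really are the correct ones, and I will verify it first in small cases such as $a=2$ or $b-a=1$.
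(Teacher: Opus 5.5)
Your overall strategy (make $*$, $\mathscr G$ and the two Stokes--frieze isomorphisms explicit, then compose) is the paper's. However, there is a genuine missing step, and your Step~2 attributes its effect to the wrong map.

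The combinatorial projective duality does not reverse the index $i$. Matching Corollary~\ref{corollary:projective_duality_formula} with \cite[Definition 4.4.1]{morier2014linear}, the frieze-adapted subdominant vectors $v^*_{i,l}$ ($l$ even) satisfy a recurrence with the same orientation as the recessive ones, with coefficients $\alpha^j_{i,l}\defeq\alpha^{a-j}_{i+(a-j),l+1}$. The superscript is reversed, but the subscript is only shifted. The map $(i,j)\mapsto(a-i,a-j)$ you guess is exactly what reversing the orientation of an order-$a$ recurrence does.

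The reversal that actually produces the subscripts $a-i$ comes from the Legendre transform, which your proposal never uses. Since $\mathcal L(J_{i,l})=\widehat J_{-i,l}$, the $i$-th line of $\mathscr G(P^{\sub})$ is $\widehat V^{\rec}_{-i,l}$; this is built into the definition of $\widehat\Phi^{\rec}_{b-a,b}$. The Gale dual recurrence therefore expresses $\widehat v_{-i,l}$ through $\widehat v_{-i+1,l},\dots,\widehat v_{-i+(b-a),l}$. That is the wrong orientation relative to the Stokes arrows of $\widehat\Theta_{b-a,b}$. Before applying $(\Phi^{\mathcal R\to\mathcal E}_{b-a,b})^{-1}$ you must rewrite it (multiply by $(-1)^{b-a-1}$ and solve for the other end). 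This gives $\widehat\alpha^j_{i,l}=\widetilde\alpha^{(b-a)-j}_{(b-a)-i,l}$, where $\widetilde\alpha$ are the Gale dual coefficients. So your Step~4 is not a mere multiplication by signs.

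This changes the shape of the formula, not just its signs, and cannot be absorbed by conjugating with a diagonal $\pm1$ matrix. Consider the Gale formula of \cite[Proposition 5.1.1]{morier2014linear}, after the shift $i\mapsto i+a$. That shift is needed because the combinatorial Gale transform lifts classical Gale duality only up to it (\cite[Proposition 4.3.1]{morier2014linear}); your proposal does not address this either. The formula gives $\widetilde\alpha^j_{i,l}$ as a $(b-a-j)\times(b-a-j)$ determinant with first column $\alpha^1_{i+a+1,l},\dots,\alpha^{b-a-j}_{i+b-j,l}$, not a $j\times j$ one as you claim in Step~3. Only the substitution $(i,j)\mapsto((b-a)-i,(b-a)-j)$, combined with the duality formula above and $(i+b,l+1)\sim(i,l+1)$, yields the $j\times j$ determinant with first column $\alpha^{a-1}_{a-i,l+1},\dots,\alpha^{a-j}_{a-i,l+1}$.

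Moreover, Gale duality commutes with relabelling. So the chain as you wrote it (a reversal in place of $*$, nothing on the Fourier side) would output a representation whose recessive configuration is $\mathscr G(P^{\rec})$ up to relabelling. That is essentially $\widehat P^{\sub}$ rather than $\widehat P^{\rec}$. The two coincide up to relabelling when projective duality is itself a relabelling, e.g.\ for $a=2$. Hence your proposed test cases ($a=2$, and $b-a=1$, where the statement is vacuous) would not reveal the problem.
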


As mentioned in \cite{morier2014linear}, such determinantal formulas essentially go back to the 19th century with the so-called André method for solving linear difference equations \cite{andre1878terme}. 

We will discuss in detail the case $(a,b)=(2,5)$ in Section \ref{sec:painleve_I_example}, which is related to the Painlevé~I equation and to Gauss' pentagramma mirificum.

\subsection{Other applications}

The Stokes--frieze correspondence (Theorem \ref{thm:correspondence_stokes_representations_difference_eq_intro}) may be of independent interest, see \S\ref{subsec:applications_stokes_frieze_correspondence}. Indeed, it sheds light on various properties of frieze patterns by giving a simple interpretation in terms of Stokes data. For instance, the periodicity of frieze patterns, and of the related birational maps such as the Gauss map, corresponds to the cyclic structure of singular directions and recessive solutions, and the change of order of the superperiodic linear difference equations under Fourier transform corresponds to the change of rank of the connections under Fourier transform.  

Another application concerns $T$-systems, which are collections of relations playing an important role in the study of quantum integrable systems, see e.g.\ \cite{kuniba2011t-systems}. Combined with the known fact that friezes are in correspondence with solutions of (type $A$, fully discrete) $T$-systems with certain boundary conditions, Theorem \ref{thm:correspondence_stokes_representations_difference_eq_intro} leads to a correspondence between Stokes data of connections with symmetric irregular classes and such solutions of $T$-systems that commutes with the Fourier/combinatorial Gale transform, see Corollary \ref{cor:connections_T_systems_correspondence}. Interestingly, this may be viewed as a version of an ODE/IM-type correspondence that does not rely on the WKB approximation. 

\subsection{Outlook} Our results suggest many natural questions for further investigation. A first direction is to study the Fourier transform of Stokes data in more general setups than the one considered here, for instance, for connections with several irregular singularities. Another aim would be to better understand the interplay between the Stokes--frieze correspondence, the Fourier and Gale transforms, and Poisson, symplectic or cluster structures on wild character varieties and moduli spaces of frieze patterns and nondegenerate polygons. Indeed, one of the main aspects of the theory of frieze patterns has been its close relation with cluster algebras, see e.g.\ \cite{morier2015coxeter, pressland2023frieze}, and relations between cluster structures and Stokes data have been the object of several works, see e.g.\ \cite{iwaki2014exact, chekhov2017painleve, allegretti2021stability, goncharov2024spectral, bertola2022stokes}. For instance, the periodicity of frieze patterns of type $(a,b)$ corresponds to the $A_{a-1}\times A_{b-a-1}$ case of the Zamolodchikov periodicity conjecture for $Y$-systems defined by a pair of Dynkin diagrams, proven in the general case in \cite{keller2013periodicity}, which raises the question of whether the periodicity could also be understood in terms of Stokes data for other cases. We hope to address such questions in future work. 

\subsection*{Structure of the article} The article is organised as follows. The first part, comprising Sections \ref{sec:stokes_filtered_local_systems} to \ref{sec:fourier=gale}, discusses the Fourier transform of Stokes data for symmetric irregular classes from the point of view of Stokes filtered local systems. After recalling some background on Stokes filtered local systems in Section \ref{sec:stokes_filtered_local_systems}, we introduce the symmetric irregular classes in Section \ref{sec:symmetric_irreg_classes} and describe the structure of their Stokes data. In Section \ref{sec:stokes_filtrations_from_rec_sub_solutions}, we show that the full Stokes filtration can be reconstructed either from the recessive or the subdominant solutions, and that recessive and subdominant solutions are related by projective duality. In Section \ref{sec:gale_transform}, we recall the definition of the classical Gale transform and introduce a more intrinsic categorical variant. In Section \ref{sec:fourier=gale}, we discuss the Fourier transform and prove Theorem \ref{thm:fourier=gale_intro}. 

In the second part of the article, we consider the point of view of Stokes local systems. We start by recalling some background on Stokes local systems and Stokes representations in Section \ref{sec:stokes_local_systems}, and introduce the notion of collections of Stokes vectors, which we then use in Section \ref{sec:from_recessive_soluions_to_stokes_representations} to determine how to obtain a Stokes representation from the recessive solutions of the associated Stokes filtered local system. In Section \ref{sec:relation_to_friezes_difference_eq}, we prove Theorem \ref{thm:correspondence_stokes_representations_difference_eq_intro}, and then use this Stokes--frieze correspondence in Section \ref{sec:fourier_from_combinatorial_gale} to prove Theorems \ref{thm:fourier_transform_stokes_matrices_using_friezes_intro} and \ref{thm:fourier_transform_stokes_matrices_explicit_formula_intro}.

Finally, in Section \ref{sec:painleve_I_example}, as a concrete illustration of our general discussion we discuss the explicit case $(a,b)=(2,5)$ related to the Painlevé I equation. 

Note that, for simplicity,  here we only discuss the ``commutative'' case where the multiplicity of all exponential factors is 1, so that the nontrivial entries in the Stokes matrices are complex numbers (instead of more general rectangular matrices). However, we expect that our results could be extended to the noncommutative case as well in a relatively straightforward way.

\subsection*{Acknowledgements} We thank P.\ Boalch for several useful discussions. J.~D.\ was funded by the PNRR Grant CF 44/14.11.2022 ``Cohomological Hall Algebras of smooth surfaces and applications'', and by the Visiting Scholar Program of TU Chemnitz, where this work was carried out.

\section{Stokes filtered local systems}
\label{sec:stokes_filtered_local_systems}
In this section, we briefly recall a few facts about the description of Stokes data of irregular connections on curves \textit{à la} Deligne--Malgrange in terms of Stokes filtered local systems. We refer the reader to \cite{boalch2021topology} for more details and references. For our purposes here, we are only interested in the case where the base curve is $\Sigma\defeq \mathbb P^1(\mathbb C)$ and there is a single singularity, located at $\infty$.

\subsection{Exponential factors, Stokes circles, and irregular classes}

Let $\varpi\colon \widehat{\Sigma}\to \Sigma$ be the real oriented blow-up at $\infty$ of $\Sigma$. The preimage $\partial\defeq \varpi^{-1}(\infty)$ is a circle whose points correspond to the directions around $\infty$. Let $z$ be the standard complex coordinate on $\mathbb C$. Then $1/z$ provides a local coordinate on $\Sigma$ at $\infty$. If we write $z=R\,e^{\sqrt{-1}\theta}$ in polar coordinates, any point $d\in\partial$ is identified with some equivalence class $[\theta]\in \mathbb R/2\pi\mathbb Z$. We will tacitly make this identification throughout the article.

The \emph{exponential local system} $\Ical$ is the local system of sets (that is, the covering space) on $\partial$ whose local sections on open sectors are (germs of) holomorphic functions of the form
\begin{equation}
q=\sum_{j=1}^n a_j z^{k_j}
\label{eq:def_exp_factor}
\end{equation}
where $n\geq 0$ is an integer, $k_j\in \mathbb{Q}_{>0}$, and $a_j\in\mathbb C$ for $j\in \{1, \dots, n\}$, with some local choice of determination of $\log(z)$. We say that such a local section of $\mathcal I$ is an \emph{exponential factor}. We denote by $\pi\colon \mathcal I\to \partial$ the projection. For any $d\in\partial$, we denote by $\rho\colon \mathcal I_d\to \mathcal I_d$ the monodromy of $\mathcal I$ given by the parallel transport in $\mathcal I$ along a loop based at $d$ going once around $\partial$ in the positive direction.

Given an exponential factor $q$ as in \eqref{eq:def_exp_factor}, the smallest integer $r\geq 1$ such that $\sum_{j} a_j z^{k_j}$ is a polynomial in $z^{1/r}$, is the \emph{ramification order} $r\eqdef \Ram(q)$ of $q$. The degree $s$ of $q$ as a polynomial in $z^{1/r}$ is the \textit{irregularity} $s\eqdef \Irr(q)$, and the quotient $s/r\eqdef \Slope(q)$ is the \textit{slope} of $q$.

Let $\cir{q}$ denote the connected component of $q$ in $\mathcal I$. It is homeomorphic to a circle and the restriction $\pi_{|\cir{q}}\colon \cir{q}\to \partial$ is a covering of $\partial$ of order $r$. We say that a connected component $\mathtt I$ of $\mathcal I$ is a \emph{Stokes circle}. We call the circle $\cir{0}$ the \emph{tame} circle, and refer to the other ones as \emph{wild} Stokes circles. For a Stokes circle $\mathtt I=\cir{q}$, the numbers $\Ram(q)$, $\Irr(q)$ and $\Slope(q)$ do not depend on the choice of local section $q$ of $\mathtt I$, so there are well-defined corresponding numbers $\Ram(\mathtt I)$, $\Irr(\mathtt I)$ and $\Slope(\mathtt I)$.

\begin{notation}
Note that the expression \eqref{eq:def_exp_factor} is not unique since it depends on a choice of determination of the $r$-th root $z^{1/r}$. To remedy this, in the rest of the article we adopt the following conventions. We fix once and for all a choice of determination of $\log(z)$ on the universal cover $\widetilde{\mathbb C^*}$, by setting $\log(e^{\sqrt{-1}\theta})\defeq \theta$ for $|\theta|\ll 1$.

Let $\mathcal P(z)\defeq \bigcup_{r\geq 1} z^{1/r}\mathbb C[z^{1/r}]$ denote the set of all Puiseux polynomials in $z$ without constant term. For $q=\sum_{j=1}^s a_j z^{j/r}\in \mathcal P(z)$, with a slight abuse of notation we set
\[
\rho(q)\defeq \sum_{j=1}^s a_j e^{2\pi\sqrt{-1}j/r} z^{j/r}.
\]
Let $\sim$ be the equivalence relation on $\mathcal P(z)\times \mathbb R$ induced by
\[
(q, \theta+2\pi)\sim(\rho(q), \theta).
\]

Then by using our choice of determination of $\log(z)$, points $\mathtt i$ in $\Ical$ are in one-to-one correspondence with classes $[q, \theta]$ in $(\mathcal P(z)\times \mathbb R)/\sim $.

Under this identification, given $q\in \mathcal P(z)$, the Stokes circle $\mathtt I=\cir{q}$ is the set $\{[q, \theta]\; | \;\theta\in \mathbb R$\}. 
The map 
\begin{align*}
    \mathbb R/2\pi r\mathbb Z &\to \cir{q}\\
    [\theta \vmod 2\pi r] &\mapsto [q, \theta]
\end{align*}
is a homeomorphism.  

Moreover, for $d=[\theta \vmod 2\pi]\in \partial$, the fibre $\mathtt I_d$ is given by 
\[
\mathtt I_d=\left\{[\rho^k(q), \theta]\;|\; k\in \{0, \dots, r-1\}\right\}, 
\]
and the monodromy $\rho:\mathcal I_d\to \mathcal I_d$ corresponds to the map $[q, \theta]\mapsto [\rho(q), \theta]=[q, \theta+2\pi]$ (justifying our abuse of notation for $\rho$).

\end{notation}

\begin{definition}
An irregular class (at infinity on $\mathbb P^1(\mathbb C)$) is a function $\Theta\colon \pi_0(\Ical)\to \mathbb N$ with finite support.
\end{definition}

Concretely, an irregular class $\Theta$ can be written as a formal sum of Stokes circles with integer multiplicities
\[
\Theta=\sum_{i=1}^p n_i \cir{q_i},
\]
where $p\geq 0$ is an integer, $n_i\geq 1$ is an integer for $i\in\{1, \dots, p\}$, and the Stokes circles $\cir{q_i}$ for $i\in \{1, \dots, p\}$ are pairwise distinct. 

The rank of $\Theta$ is the integer $\rank(\Theta)\defeq \sum_{i=1}^p n_i\Ram(q_i)$. We say that the $\cir{q_i}$ are the Stokes circles, or the active circles, of $\Theta$.

If $\mathtt i\in \mathcal I$ and $\mathtt I$ is the Stokes circle containing $\mathtt i$, we set $\Theta(\mathtt i)\defeq \Theta(\mathtt I)$.

A \emph{finite subcover} is a sublocal system $I\subset \mathcal I$ of finite order. Any irregular class $\Theta$ hence defines a finite subcover $I$, given by the disjoint union of the Stokes circles $\mathtt I$ such that $\Theta(\mathtt I)\neq 0$.\\

It follows from the formal classification of connections that any algebraic connection $(E,\nabla)$ on $\mathbb A^1(\mathbb C)$ defines in a canonical way an irregular class $\Theta$ at infinity. The basic idea is that there exists a basis of formal series solutions of the connection at infinity involving exponential terms $e^{q_i}$ for some exponential factors $q_i$, and $\Theta$ is the collection of the corresponding Stokes circles counted with multiplicities.

\subsection{Stokes directions and Stokes diagrams}

\begin{definition}
Let $d=[\theta]\in \partial$ be a direction at $\infty$. We define a partial order $<_d$ on the fibre $\mathcal I_d\defeq \pi^{-1}(d)$ of $\mathcal I$ as follows: If $\mathtt i=[q, \theta]$, $\mathtt i'=[q',\theta]$ are points of $\mathtt I_d$, if $\mathtt i\neq \mathtt i'$, we say that $\mathtt i <_d \mathtt i'$ if and only if $e^{q-q'}$ has a zero asymptotic expansion on an open sector around $d$. We say that $\mathtt i\leq_d \mathtt i'$ if $\mathtt i=\mathtt i'$ or $\mathtt i<_d \mathtt i'$.
\end{definition}
Concretely, writing in polar coordinates $z=R\, e^{\sqrt{-1}\theta}$ with $R>0$ and $\theta\in \mathbb R$, when $q\neq q'$ we have $q<_d q'$ if and only if the leading coefficient of $(q-q')(R\, e^{\sqrt{-1}\theta})$ has a negative real part when $\arg \theta$ corresponds to $d$ and $R\to +\infty$. 

\begin{definition}
Let $\Theta$ be an irregular class, $I$ the corresponding finite subcover. If $d\in \partial$, we say that $d$ is a \emph{Stokes direction} for $\Theta$ if the order $<_d$ on $I_d$ is not total, that is there exist distinct points $\mathtt i$, $\mathtt i'$ in $I_d$ such that $\mathtt i$ and $\mathtt i'$ are not comparable for $<_d$. Concretely, writing $\mathtt i=[q, \theta]$, $\mathtt i'=[q',\theta]$,  this happens if and only if the real part of (the leading coefficient in $R$ of) $(q-q')(R\, e^{\sqrt{-1}\theta})$ vanishes when $\arg \theta$ corresponds to $d$ and $R\to +\infty$.
\end{definition}

Given an irregular class $\Theta$, we will denote by $\mathbb S\subset \partial$ the (always finite) set of corresponding Stokes directions.\\

In this work, we will only consider cases where all active Stokes circles have the same slope. In this situation, the structure of the dominance orderings of the exponential factors can be conveniently represented by drawing the \emph{Stokes diagram} of $\Theta$ (see Fig.~\ref{fig:example_stokes_diagram} for an example).

It is obtained by plotting for every Stokes circle $\mathtt I=\cir{q}$ of $\Theta$ in polar coordinates the graph of the function
\begin{align*}
    g_q\colon \mathbb R &\to \mathbb R_{>0}\\
    \theta &\mapsto e^{\Re(q(R\, e^{\sqrt{-1}\theta}))}
\end{align*}
for some $R\gg 0$.

\begin{figure}[h]
\centering
\begin{tikzpicture}[scale=1.5]
\draw[dotted] (0,0) circle (1);
\draw[domain=0:(2*360),scale=1,samples=500] plot (\x:{exp(cos(-5/2*\x)/exp(1))});
\draw[<-, bend right] (40:1.7) to (50:1.7);
\draw (45:1.9) node {$\theta$};
\end{tikzpicture}
\caption{Stokes diagram for the Stokes circle $\mathtt I=\cir{z^{5/2}}$ }
\label{fig:example_stokes_diagram}
\end{figure}

\subsection{Stokes filtrations}

Let $V\to \widehat\Sigma$ be a local system of finite-dimensional vector spaces, and $n=\rank(V)$. In our situation, $\widehat\Sigma$ is contractible, so $V$ has trivial monodromy around $\infty$, hence $V$ simply amounts to the data of a vector space, that we will also denote by $V$.

\begin{definition}
Let $(X,\leq)$ be a totally ordered finite set, $\Theta\defeq X\to \mathbb Z_{\geq 1}$ a function, and $V$ a vector space of dimension $n\defeq \sum_{\mathtt i\in X}\Theta(\mathtt i)$. A filtration of dimension $\Theta$ on $(X, \leq)$ consists of the data of a linear subspace $F(\mathtt i)\subset V$ for all $\mathtt i\in X$, such that 
\begin{itemize}
\item $F(\mathtt i)\subset F( \mathtt j)$ if $\mathtt i< \mathtt j$
\item $\dim  F(\mathtt i)/F(< \mathtt i)=\Theta(\mathtt i)$, where $F(<\mathtt i)\defeq \bigoplus_{\mathtt j<\mathtt i} F(\mathtt j)$.
\end{itemize}
\end{definition}

\begin{definition} Let $\Theta$ be an irregular class of rank $n$, $I\subset \mathcal I$ the corresponding finite subcover, and $V\to \widehat\Sigma$ a local system of dimension $n$. A \emph{Stokes filtration} on $V$ of type $\Theta$ consists of the data of a filtration $F_d$ indexed by $(I_d, \leq_d)$ of $V_d$ for each $d\in \partial\smallsetminus \mathbb S$, satisfying the following conditions:
\begin{enumerate}
    \item The filtrations $F_d$ are locally constant.
    \item For any $d\in \mathbb S$, there exists an open neighbourhood $U\subset \partial$ of $d$, and a locally constant direct sum decomposition
    \[
V_{d'}=\bigoplus_{\mathtt i\in I_{d'}} V(\mathtt i)_{d'}
    \]
    (that is, the $V(\mathtt i)_{d'}$ are locally constant subspaces of $V_{d'}$ for all $d'\in U$), such that for any $d'\in U\smallsetminus \mathbb S$ we have
\[
F_{d'}(\mathtt i)=\bigoplus_{\mathtt j\in I_{d'},\; \mathtt j\leq_{d'} \mathtt i} V(\mathtt j)_{d'}
\]
\end{enumerate}
The condition (2) is called the \emph{Stokes condition} at $d$.
\end{definition}

If $\Theta$ is an irregular class, and $n=\rank\Theta$, a \emph{Stokes filtered local system} of type $\Theta$ is a pair $\mathbb V=(V, F)$ where $V\to\widehat \Sigma$ is a rank $n$ local system and $F$ a Stokes filtration of type $\Theta$ on $V$.

Given an  irregular class $\Theta$ whose Stokes circles all have the same slope, the form of a Stokes filtered local system of type $\Theta$ can be explicitly visualised in terms of the Stokes diagram.

\begin{theorem}[\cite{malgrange1982classification}]
Let $\Theta$ be an irregular class. The category of algebraic connections on $\mathbb A^1(\mathbb C)$ with irregular class $\Theta$ at infinity is equivalent to the category of Stokes filtered local systems of type $\Theta$.
\end{theorem}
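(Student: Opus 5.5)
This is Malgrange's theorem, the irregular Riemann--Hilbert correspondence in dimension one. I would prove it by composing two equivalences, with the formal classification and the Hukuhara--Turrittin asymptotic existence theorem as the analytic inputs.

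\textbf{Step 1: reduction to local data at $\infty$.} An algebraic connection $(E,\nabla)$ on $\mathbb A^1(\mathbb C)$ has no singularities at finite distance. So its sheaf of horizontal sections is a local system on $\mathbb C$, which is trivial because $\mathbb C$ is simply connected. It therefore extends uniquely to a local system $V$ on $\widehat\Sigma$. By GAGA and the algebraicity of meromorphic connections with a pole only at $\infty$ (Deligne), the category $\mathbf{Conn}$ with irregular class $\Theta$ is equivalent to the category of analytic meromorphic connections on $\mathbb P^1$ with a pole at $\infty$ of class $\Theta$. It is thus enough to show that such a connection amounts to its trivial local system on $\mathbb C$ glued with a germ of meromorphic connection at $\infty$. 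Germs at $\infty$ of class $\Theta$ then have to be matched with Stokes filtered local systems on $\partial$.

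\textbf{Step 2: the functor.} For each direction $d\in\partial\smallsetminus\mathbb S$ and each $\mathtt i=[q,\theta]\in I_d$, define $F_d(\mathtt i)\subset V_d$ as the space of horizontal sections $y$ on a small sector around $d$ such that $e^{-q}y$ has moderate growth. These subspaces are locally constant. The Hukuhara--Turrittin theorem provides, on sufficiently small sectors, analytic lifts of a formal decomposition
\[
\widehat E\simeq\bigoplus_{\mathtt I} \bigl(\mathcal E^{q}\otimes R_{\mathtt I}\bigr),
\]
with each $R_{\mathtt I}$ regular. These lifts give the local splittings $V=\bigoplus V(\mathtt i)$ required by the Stokes condition, including near Stokes directions, since sectors of opening smaller than $\pi/\mathrm{slope}$ suffice. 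Comparing growth rates via $<_d$ then shows that $F_d$ is a filtration of dimension $\Theta$. Functoriality is immediate because the construction is intrinsic.

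\textbf{Step 3: fully faithfulness and essential surjectivity.} This is the main obstacle, and I would follow Malgrange--Sibuya. Fix a formal model $\widehat E_0$ with class $\Theta$. Isomorphism classes of connections formally isomorphic to $\widehat E_0$ correspond to $H^1(\partial,\Lambda)$, where $\Lambda$ is the sheaf of automorphisms of $\widehat E_0$ asymptotic to the identity; this is the Malgrange--Sibuya theorem, which uses Borel--Ritt and a Cauchy--Heine type cocycle-splitting argument. On the topological side, Stokes filtered local systems graded-isomorphic to the given gradation correspond to $H^1(\partial,\mathrm{Aut}^{<}(\mathrm{gr}))$, the sheaf of automorphisms of the graded local system that are unipotent with respect to the Stokes filtration. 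Taking horizontal sections identifies $\Lambda$ with $\mathrm{Aut}^{<}$, because a flat section asymptotic to the identity is determined by its exponential-order off-diagonal blocks. This gives a bijection on isomorphism classes. For morphisms, $\mathrm{Hom}$ spaces are flat sections of $\mathcal{H}om$, i.e.\ morphisms of local systems preserving filtrations; a morphism compatible with filtrations has moderate growth, so it extends meromorphically at $\infty$ by Riemann's extension theorem. Combined with Step 1, this yields the equivalence.
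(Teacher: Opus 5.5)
The paper gives no proof of its own here (it simply quotes Malgrange, referring to Boalch's survey for background). Your outline is the classical Deligne--Malgrange argument behind that citation: the moderate-growth filtration, Hukuhara--Turrittin lifts for the Stokes condition, Malgrange--Sibuya with $\Lambda$ identified with the Stokes-unipotent automorphisms, and moderate growth plus Riemann extension for morphisms, so it is essentially the same approach. Two small precisions: Malgrange--Sibuya identifies $H^1(\partial,\Lambda)$ with isomorphism classes of \emph{marked} pairs $(E,\hat\phi\colon \widehat E\simeq\widehat E_0)$, with $\Lambda$ built from a convergent elementary model and unmarked classes being the quotient by $\mathrm{Aut}(\widehat E_0)\cong\mathrm{Aut}(\mathrm{gr})$; and for essential surjectivity you must let the formal model vary, using Levelt--Turrittin plus regular Riemann--Hilbert for germs to realise every graded local system of type $\Theta$ as the graded of some $\widehat E_0$.
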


\subsection{Distinguished intervals}

Another notion that will be important is the notion of distinguished intervals on Stokes circles. 

\begin{definition}
Let $\mathtt I$ be a Stokes circle that is not the tame circle $\cir{0}$. Let $S_0(\mathtt I)$ denote the finite subset of points of $\mathtt i\in \mathtt I$, such that if $\mathtt i$ is represented by a germ of a function $q$ as in \eqref{eq:def_exp_factor} around the direction $d\defeq \pi(\mathtt i)$, the exponential factors $q$ and 0 are not comparable at $d$ by the order $<_d$. (Equivalently, this is the set of Stokes directions of the irregular class $\Theta\defeq \cir{0}+\mathtt I$.)
A \emph{distinguished interval} of $\mathtt I$ is a connected component $J$ of $\mathtt I\smallsetminus S_0(\mathtt I)$.
\end{definition}

If $J$ is a distinguished interval, we clearly have:  
\begin{itemize}
    \item either $\mathtt i<_d 0$ for all $\mathtt i\in I$
    \item or $0<_d \mathtt i$ for all $\mathtt i\in I$.
\end{itemize}
In the first case, we say that $J$ is decreasing, in the second case, we say that $J$ is increasing. 

\begin{definition}
A distinguished sector for $\mathtt I$ is an open subset of $\partial$ of the form $\pi(J)$, where $J$ is a distinguished interval for $\mathtt I$. 
\end{definition}

In terms of the Stokes diagram, the points of $S_0(\mathtt I)$ correspond to the points where the plot of the function $\theta \mapsto e^{\Re(q(re^{\sqrt{-1}\theta}))}$ intersects the dotted circle, corresponding to the growth rate of the tame circle.

\begin{notation} To describe explicitly distinguished intervals in concrete examples, it will be convenient to introduce some notations:
\begin{itemize}
    \item Let $q\in\mathcal P(z)$, $r=\Ram(q)$, $\theta\in \mathbb R$, and $\alpha\in (0, 2\pi r)$. We denote by $\Sect_q(\theta,\alpha)$ the open interval in the Stokes circle $\cir{q}$ with centre $[q, \theta]$ and diameter $\alpha$, that is with extremities $[q, \theta-\frac{\alpha}{2}]$ and $[q, \theta+\frac{\alpha}{2}]$.
\item Let $\theta\in \mathbb R$ and $\alpha\in (0, 2\pi)$. We denote by $\Sect_\partial(\theta,\alpha)$ the open interval in $\partial$ with centre $[\theta]$ and diameter $\alpha$, that is with extremities $[\theta-\frac{\alpha}{2}]$ and $[\theta+\frac{\alpha}{2}]$.
\end{itemize}
\end{notation}

\begin{lemma}
\label{lemma:distinguished_intervals_one_circle}
Let $r, s$ be two coprime positive integers with $s>r$. If $\mathtt I$ is a Stokes circle of slope $s/r$, then it has $2s$ distinguished intervals, alternately increasing and decreasing, and the projection of a distinguished interval to $\partial$ is a sector of length $\pi r/s$.

More precisely, if $q=\lambda z^{s/r}\in \mathcal P(z)$ with $\lambda\in \mathbb C^*$, then if we set $\varphi\defeq \arg \lambda$, the distinguished intervals are 
\begin{equation*}
J_l(\varphi)=\Sect_q\Big(\!-\frac{r}{s}\varphi+l\frac{r}{s}\pi, \frac{r}{s}\pi\Big) \quad \text{for } l\in\{0,\dots, 2s-1\}.
\end{equation*}
Furthermore, for $l\in\{0,\dots, 2s-1\}$, $J_l(\varphi)$ is increasing if $l$ is even, and decreasing if $l$ is odd.
\end{lemma}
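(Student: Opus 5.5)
We reduce to the explicit representative $q=\lambda z^{s/r}$: every Stokes circle of slope $s/r$ with $\gcd(r,s)=1$ and the form relevant here is $\cir{\lambda z^{s/r}}$, and the general statement about counting and alternation follows from the explicit description. (If $\mathtt I$ carries lower-order terms, these do not affect the leading-order comparison with $0$, so $S_0(\mathtt I)$ is determined by the leading term alone; this is what we use.) By the definition of $<_d$, the point $[q,\theta]\in\cir{q}$ lies in $S_0(\mathtt I)$ exactly when the real part of the leading coefficient of $q(Re^{\sqrt{-1}\theta})$ vanishes. Moreover $[q,\theta]$ lies in an increasing interval exactly when this real part is positive, since then $0<_d \mathtt i$ because $e^{0-q}$ decays.

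First we parametrise $\cir{q}$ by $\theta\in\mathbb R/2\pi r\mathbb Z$ via $\theta\mapsto[q,\theta]$, as in the Notation. With the fixed determination of $\log$ continued along $\theta$, we compute
\[
q(Re^{\sqrt{-1}\theta})=|\lambda|R^{s/r}e^{\sqrt{-1}(\varphi+\frac{s}{r}\theta)},
\]
whose real part has the sign of $\cos(\varphi+\frac{s}{r}\theta)$. This formula is consistent on the circle. Indeed, shifting $\theta$ by $2\pi$ replaces $q$ by $\rho(q)$, and $\rho(q)=\lambda e^{2\pi\sqrt{-1}s/r}z^{s/r}$ gives the same expression. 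The function is therefore $2\pi r$-periodic in $\theta$, which matches $s\cdot 2\pi$-periodicity of the phase.

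Next we find the zero set. $\cos(\varphi+\frac{s}{r}\theta)=0$ exactly when $\theta=-\frac{r}{s}\varphi+\frac{r}{s}(\frac{\pi}{2}+k\pi)$. On the circle $\mathbb R/2\pi r\mathbb Z$ these points are spaced by $\frac{r}{s}\pi$, so there are $2\pi r/(\frac{r}{s}\pi)=2s$ of them. The complementary arcs are therefore the intervals of length $\frac{r}{s}\pi$ centred at $-\frac{r}{s}\varphi+l\frac{r}{s}\pi$ for $l\in\{0,\dots,2s-1\}$, which are exactly the $J_l(\varphi)$. At the centre of $J_l(\varphi)$ the phase equals $l\pi$, so the cosine equals $(-1)^l$. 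Hence $J_l(\varphi)$ is increasing for $l$ even and decreasing for $l$ odd, and the intervals alternate.

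Finally, $\pi$ restricted to $\cir{q}$ is the local homeomorphism $[q,\theta]\mapsto[\theta\bmod 2\pi]$. Since $\frac{r}{s}\pi<\pi<2\pi$ (using $s>r$), each arc projects injectively onto a sector of $\partial$ of length $\pi r/s$. The only delicate point is the bookkeeping of the branch of $z^{s/r}$ across $\theta\mapsto\theta+2\pi$, which is handled by the consistency check above. Coprimality is only needed to ensure $\Ram(q)=r$, so that the parametrising circle really has length $2\pi r$.
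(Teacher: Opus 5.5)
Your proposal is correct and follows the paper's own argument, which simply notes that everything is governed by the sign of $\Re(q(R\,e^{\sqrt{-1}\theta}))=|\lambda|R^{s/r}\cos(\varphi+\theta\frac{s}{r})$. You carry out that sign analysis in more detail: zeros spaced by $\frac{r}{s}\pi$ on a circle of length $2\pi r$, the sign $(-1)^l$ at the centres, and injectivity of the projection since $\frac{r}{s}\pi<2\pi$. Your remark that the count $2s$ requires $\Ram(\mathtt I)=r$ is a fair caveat on the lemma's literal wording, and restricting to $q=\lambda z^{s/r}$, as the paper does, handles it.
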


\begin{proof}
This follows directly from how the sign of $\Re(q(R\, e^{\sqrt{-1}\theta}))=|\lambda|\,R^{s/r}\cos(\varphi+\theta\frac{s}{r})$ varies as a function of $\theta$.
\end{proof}

\section{Symmetric irregular classes}
\label{sec:symmetric_irreg_classes}

In this section, we introduce the particular irregular classes studied in this article, and describe the structure of their Stokes diagrams\footnote{We invite the reader to experiment with the web app on P.\ Boalch's webpage, which allows one to plot the Stokes diagrams for symmetric irregular classes, see \url{https://webusers.imj-prg.fr/~philip.boalch/stokesdiagrams.html}.}.

\begin{definition}
Let $a,b$ be two positive integers. Let $p\defeq \gcd(a,b)$ be their greatest common divisor, and set $r\defeq a/p$, $s\defeq b/p$, so that $\frac{s}{r}$ is the reduced fraction of $\frac{b}{a}$. We define the \emph{symmetric irregular class} $\Theta_{a,b}$ by
\begin{align}
  \Theta_{a,b}&\defeq \sum_{i=0}^{p-1} \cir{e^{\frac{2\pi\sqrt{-1}}{a}i} z^{s/r}}.
\end{align}

\end{definition}

Let us make a few basic observations about the irregular class $\Theta_{a,b}$. It is of rank $a$, and has $p$ Stokes circles, all of slope $\frac{b}{a}=\frac{s}{r}$.

For $i\in \mathbb Z$, let us set 
\begin{equation*}
q_i\defeq e^{\frac{2\pi\sqrt{-1}}{a}i}z^{s/r}
\in \mathcal P(z),
\end{equation*}
and $\mathtt I_i\defeq \cir{q_i}$.
The fibres of the finite subcover $I\subset \mathcal I$ associated to $\Theta_{a,b}$ at points $d=[\theta]\in \partial$ are given by
\[
I_d=\left\{[q_i, \theta]\;\middle\vert\; i\in \{0, \dots, a-1\}\right\}.
\]
Notice that for $i, i'\in \mathbb Z$, we have $q_i=q_{i'}$ if and only if $i\equiv i' \mod a$, and $\mathtt I_i=\mathtt I_{i'}$ if and only if $i\equiv i' \mod p$. In turn, for any $d=[\theta] \in \partial$, we have
\[
(\mathtt I_i)_d=\left\{[q_{i+mp}, \theta]\;\middle\vert\; m\in \{0, \dots, r-1\}\right\}.
\]

The monodromy of $q_i$ is given by $\rho(q_i)=q_{i+ps}=q_{i+b}$, so for any $i\in \mathbb Z$ and any $\theta\in \mathbb R$ we have
\[
[q_{i+b}, \theta]=[q_i, \theta+2\pi]
\]

Let us describe the main properties of the Stokes diagrams of the symmetric irregular classes $\Theta_{a,b}$. First, we have the following description of the distinguished intervals, see also Fig.~\ref{fig:distinguished_intervals}.

\begin{lemma}
\label{lemma:distinguished_intervals_description}
$\Theta_{a,b}$ has  $2b$ distinguished intervals, all of length $\pi r/s$. More precisely, the distinguished intervals are all the intervals of the form 
\begin{equation*}
    J_{i, l}\defeq \Sect_{q_i}(\theta_{i,l},\frac{r}{s}\pi), \qquad (i, l)\in \mathbb Z^2,
\end{equation*}
where
\begin{equation*}
    \theta_{i, l}\defeq \frac{2\pi}{b}\left(-i+l\,\frac{a}{2}\right).
\end{equation*}
For $(i,l), (i',l')\in \mathbb Z^2$, we have $J_{i,l}=J_{i',l'}$ if and only if  $(i,l)\sim (i',l')$ where $\sim$ is the equivalence relation defined by
\begin{equation}
(i,l)\sim (i', l') \Longleftrightarrow (l'-l) \text{ is even and } (i'-i)\equiv a\, \frac{l'-l}{2} \mod b.
\label{eq:equivalence_relation_intervals}
\end{equation}
In particular, each distinguished interval is enumerated exactly once by taking pairs $(i, l)$ in the range $\{0, \ldots,p-1\}\times \{0,\ldots, 2s-1\}$, or in the range $ \{0, \dots, b-1\}\times\{0,1\}$.

Furthermore, $J_{i, l}$ is increasing if $l$ is even, and decreasing if $l$ is odd. 
\end{lemma}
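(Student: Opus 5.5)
The plan is to reduce everything to Lemma~\ref{lemma:distinguished_intervals_one_circle}, applied to each of the $p$ Stokes circles $\mathtt I_i=\cir{q_i}$, $i\in\{0,\dots,p-1\}$, and then check that the explicit parametrisation by $(i,l)\in\mathbb Z^2$ is consistent with the identifications in $\mathcal I$.

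First, each $\mathtt I_i$ has slope $s/r$ with $r,s$ coprime, and $q_i=\lambda z^{s/r}$ with $\lambda=e^{2\pi\sqrt{-1}i/a}$, so $\varphi=2\pi i/a$. Lemma~\ref{lemma:distinguished_intervals_one_circle} then gives $2s$ distinguished intervals on $\mathtt I_i$, of length $\pi r/s$, centred at
\[
-\frac{r}{s}\cdot\frac{2\pi i}{a}+l\frac{r}{s}\pi=\frac{2\pi}{b}\Big(-i+l\frac a2\Big)=\theta_{i,l},
\]
using $r/(sa)=1/b$ and $r/s=a/b$. They are increasing for $l$ even and decreasing for $l$ odd. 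Since the distinguished intervals of $\Theta_{a,b}$ are by definition those of its active circles, summing over the $p$ circles gives $2ps=2b$ intervals, and the range $\{0,\dots,p-1\}\times\{0,\dots,2s-1\}$ enumerates each exactly once. Note that $S_0(\mathtt I)$ depends only on $\mathtt I$ and not on the other circles.

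Second, I will establish the equivalence relation \eqref{eq:equivalence_relation_intervals} for arbitrary $(i,l)\in\mathbb Z^2$. The interval $J_{i,l}$ is the set of points $[q_i,\theta]$ with $|\theta-\theta_{i,l}|<\frac{r\pi}{2s}$. Using $q_{i+b}=\rho(q_i)$, i.e.\ $[q_{i+b},\theta]=[q_i,\theta+2\pi]$, together with $q_{i+a}=q_i$, two points $[q_i,\theta]$ and $[q_{i'},\theta']$ coincide if and only if $i'-i=ka+mb$ and $\theta=\theta'+2\pi m$ for some integers $k,m$. I compare centres: $J_{i,l}=J_{i',l'}$ if and only if the centres $[q_i,\theta_{i,l}]$ and $[q_{i'},\theta_{i',l'}]$ coincide, since the intervals have the same length.

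Third, the case analysis. If $(l'-l)$ is even and $i'-i=a\frac{l'-l}{2}+mb$, then
\[
\theta_{i',l'}=\theta_{i,l}+\frac{2\pi}{b}\Big(-(i'-i)+\frac{a(l'-l)}{2}\Big)=\theta_{i,l}-2\pi m,
\]
and $q_{i'}=\rho^m(q_{i+a(l'-l)/2})=\rho^m(q_i)$, so the centres agree. Conversely, if the centres agree then the parity of $l$ is preserved, since increasing and decreasing intervals are distinct. Solving $\theta_{i,l}=\theta_{i',l'}+2\pi m$ with $i'-i=ka+mb$ then forces $a(l'-l)/2=ka$, hence $k=(l'-l)/2$, which gives the stated congruence.

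The enumeration by $\{0,\dots,b-1\}\times\{0,1\}$ follows directly: the relation shifts $l$ by $2$ at the cost of shifting $i$ by $a$, and $i$ is defined modulo $b$. This gives $2b$ classes, matching the count above. The only delicate point is the bookkeeping of the monodromy $\rho$ versus the congruence $q_{i+a}=q_i$ in the converse direction; the rest is routine.
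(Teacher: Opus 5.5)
Your proposal is correct and follows the same route as the paper. You apply Lemma~\ref{lemma:distinguished_intervals_one_circle} to each circle $\cir{q_i}$ with $\varphi=2\pi i/a$ to get the centres $\theta_{i,l}$ and the increasing/decreasing pattern, and then determine when two pairs label the same interval; you give more detail than the paper on this last step, through the criterion $[q_i,\theta]=[q_{i'},\theta']$ iff $i'-i=ka+mb$ and $\theta=\theta'+2\pi m$. In the converse direction, your appeal to increasing versus decreasing intervals is redundant, since your own computation already forces $l'-l=2k$.
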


\begin{figure}[h]
\centering
\begin{subfigure}[b]{0.75\textwidth}
\centering
\begin{tikzpicture}[x=6.5cm, y=1cm]
\foreach \l in {0, ...,3}
\draw[domain=-1:1,scale=1,samples=500] plot ({\x},{cos(\x*360+\l*360/4)});
\draw[dotted] (-1,0)--(1,0);
\draw ($ (3*1/4,1) + (0,0.5) $) node {$\theta_{i-3,l}$};
\draw ($ (2*1/4,1) + (0,0.5) $) node {$\theta_{i-2,l}$};
\draw ($ (1*1/4,1) + (0,0.5) $) node {$\theta_{i-1,l}$};
\draw ($ (0*1/4,1) + (0,0.5) $) node {$\theta_{i,l}$};
\draw ($ (-1*1/4,1) + (0,0.5) $) node {$\theta_{i+1,l}$};
\draw ($ (-2*1/4,1) + (0,0.5) $) node {$\theta_{i+2,l}$};

\draw ($ (3*1/4,-1) + (0,-0.5) $) node {$\theta_{i-1,l+1}$};
\draw ($ (2*1/4,-1) + (0,-0.5) $) node {$\theta_{i,l+1}$};
\draw ($ (1*1/4,-1) + (0,-0.5) $) node {$\theta_{i+1,l+1}$};
\draw ($ (0*1/4,-1) + (0,-0.5) $) node {$\theta_{i+2,l+1}$};
\draw ($ (-1*1/4,-1) + (0,-0.5) $) node {$\theta_{i-1,l-1}$};
\draw ($ (-2*1/4,-1) + (0,-0.5) $) node {$\theta_{i,l-1}$};
\end{tikzpicture}
\caption{Case when $a$ is even ($a=4$ in the picture).}
\end{subfigure}
\vspace{0.9cm}

\begin{subfigure}[b]{0.75\textwidth}
\centering
\begin{tikzpicture}[x=5cm, y=1cm]
\foreach \l in {0, ...,2}
\draw[domain=-1:1,scale=1,samples=500] plot ({\x},{cos(\x*360+\l*360/3)});
\draw[dotted] (-1,0)--(1,0);

\draw ($ (2*1/3,1) + (0,0.5) $) node {$\theta_{i-2,l}$};
\draw ($ (1*1/3,1) + (0,0.5) $) node {$\theta_{i-1,l}$};
\draw ($ (0*1/3,1) + (0,0.5) $) node {$\theta_{i,l}$};
\draw ($ (-1*1/3,1) + (0,0.5) $) node {$\theta_{i+1,l}$};
\draw ($ (-2*1/3,1) + (0,0.5) $) node {$\theta_{i+2,l}$};

\draw ($ (2.5*1/3,-1) + (0,-0.5) $) node {$\theta_{i-1,l+1}$};
\draw ($ (1.5*1/3,-1) + (0,-0.5) $) node {$\theta_{i,l+1}$};
\draw ($ (0.5*1/3,-1) + (0,-0.5) $) node {$\theta_{i+1,l+1}$};
\draw ($ (-0.5*1/3,-1) + (0,-0.5) $) node {$\theta_{i+2,l+1}$};
\draw ($ (-1.5*1/3,-1) + (0,-0.5) $) node {$\theta_{i+3,l-1}$};
\end{tikzpicture}
\caption{Case when $a$ is odd ($a=3$ in the picture).}
\end{subfigure}
\caption{Structure of the distinguished intervals. The pictures correspond to ``unfolded'' Stokes diagram: The $x$-coordinate corresponds to the direction $\theta$, and we plot the quantities $\theta\mapsto \nu_{\theta}(q_i)=\Re(q_i(e^{\sqrt{-1}\theta}))$. (The value of $a$ determines the numbers of strands over each (generic) point. The value of $b$, which is not present in these pictures, determines how these unfolded diagrams are ``folded'' around the circle.)}
\label{fig:distinguished_intervals}
\end{figure}

\begin{proof}
This follows from the study of the function $\theta\mapsto\Re(q_i(e^{\sqrt{-1}\theta}))\eqdef \nu_{\theta}(q_i)$. Let us give some more details: Using Lemma \ref{lemma:distinguished_intervals_one_circle}, we have that for any $i\in \mathbb Z$, the distinguished intervals of the circle $\cir{q_i}$ are the intervals $\Sect_{q_i}(\theta_{i,l},\frac{r}{s}\pi)$ for $l\in \{0, 2s-1\}$, where
\[
\theta_{i,l}\defeq -i \frac{2}{ps}\pi+ l \frac{r}{s}\pi=\frac{2\pi}{b}\left(-i+l\,\frac{a}{2}\right)
\]
for any $(i,l)\in \mathbb Z^2$, with the increasing intervals obtained for $l$ even and the decreasing ones for $l$ odd. 
To conclude the proof, it just remains to understand when two pairs $(i,l)$ and $(i', l')$ correspond to the same interval (in the same Stokes circle), which is precisely given by the equivalence relation $\sim$.
\end{proof}

\begin{remark} This implies the following for the distinguished sectors (remember that they are the projections of the distinguished intervals onto $\partial$), see again Fig.~\ref{fig:distinguished_intervals}:
\begin{itemize}
    \item If $a$ is even, then there are $b$ distinguished sectors, all of the form $\Sect_\partial(m\,\frac{2\pi}{b}, \frac{r}{s}\pi)$ for $m\in \{0, \dots, b-1\}$. Each distinguished sector is the projection of both exactly one decreasing and exactly one increasing interval.
    \item If $a$ is odd, there are $2b$ distinguished sectors, all of the form $\Sect_\partial(m\,\frac{\pi}{b}, \frac{r\pi}{s})$ for $m\in \{0, \dots, 2b-1\}$. When $m$ is even, the sector is the projection of exactly one increasing interval, and when $m$ is odd, it is the projection of exactly one decreasing interval. 
\end{itemize}
 In either case, this implies that the Stokes diagram has a rotational symmetry of order $b$ (but not higher).
\end{remark}

Let us also describe the structure of the Stokes directions.
\begin{lemma}
\label{lemma:structure_stokes_directions}
Let $i, i'\in \mathbb Z$, with $q_i\neq q_{i'}$. If $d=[\theta]\in \partial$, then $\nu_d(q_i)=\nu_d(q_{i'})$ if and only if $\theta$ is of the form
\[
\theta=\theta_{i,i',l}\defeq \frac{2\pi}{b}\left( -\frac{i+i'}{2}+l\frac{a}{2}\right)
\]
for some integer $l\in \mathbb Z$. The Stokes directions of the irregular class $\Theta_{a,b}$ are hence given by the $\theta_{i,i',l}$ for any $i,i',l\in \mathbb Z$.

The common growth speed $\nu_d(q_i)=\nu_d(q_{i'})$ at $d=\theta_{i,i',l}$ (corresponding to the ``height'' of the corresponding crossing on the unfolded Stokes diagram) is then equal to
\[
\nu(i,i',l)\defeq \cos\left(\frac{\pi(i-i')}{a}+\pi l\right).
\]
The $\nu(i,i',l)$ take $a-1$ different values, equal to $\cos\left(\frac{\pi}{a}\right)> \dots >\cos\left(\frac{\pi(a-1)}{a}\right)$.
\end{lemma}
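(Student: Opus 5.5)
Write $q_i(Re^{\sqrt{-1}\theta})=R^{s/r}e^{\sqrt{-1}(\frac{2\pi i}{a}+\frac{s}{r}\theta)}$, so that, normalising $R=1$ as in the definition of $\nu$, $\nu_\theta(q_i)=\cos\big(\frac{2\pi i}{a}+\frac{b}{a}\theta\big)$, using $s/r=b/a$. Here the determination of $z^{s/r}$ is the one fixed by our convention on $\log$. Changing the determination only shifts $i$ by multiples of $b$, and $\theta_{i,i',l}$ absorbs such shifts by a change of $l$. The plan is a direct trigonometric computation.

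\textbf{Equality condition.} Set $x=\frac{2\pi i}{a}+\frac{b}{a}\theta$ and $y=\frac{2\pi i'}{a}+\frac{b}{a}\theta$. Then $\cos x=\cos y$ holds iff either $x-y\in 2\pi\mathbb Z$ or $x+y\in 2\pi\mathbb Z$. The first alternative means $i\equiv i'\mod a$, i.e.\ $q_i=q_{i'}$, which is excluded by hypothesis. The second reads $\frac{2\pi(i+i')}{a}+\frac{2b}{a}\theta=2\pi l$, which solves to $\theta=\frac{2\pi}{b}\big(-\frac{i+i'}{2}+l\frac a2\big)=\theta_{i,i',l}$. Since any two points of $I_d$ are of the form $[q_i,\theta],[q_{i'},\theta]$, the characterisation of Stokes directions as the directions where the real parts of leading terms coincide gives the description of $\mathbb S$.

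\textbf{Height of the crossing.} Substituting $\theta_{i,i',l}$ gives $x=\frac{2\pi i}{a}-\frac{\pi(i+i')}{a}+\pi l=\frac{\pi(i-i')}{a}+\pi l$, so the common value is $\nu(i,i',l)=\cos\big(\frac{\pi(i-i')}{a}+\pi l\big)$.

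\textbf{Set of values.} Write $k=i-i'$ with $k\not\equiv 0\mod a$. The value $\cos(\frac{\pi k}{a}+\pi l)=(-1)^l\cos\frac{\pi k}{a}$ depends only on $k+la\mod 2a$, since $\cos(\frac{\pi}{a}(k+la))$. Hence the attainable values are $\cos\frac{\pi m}{a}$ for $m\in\mathbb Z$ with $m\not\equiv 0\mod a$. By evenness of $\cos$ and its $2\pi$-periodicity we may reduce to $m\in\{1,\dots,a-1\}$, and every such $m$ is attained by taking $l=0$, $i'=0$, $i=m$. Since $\cos$ is strictly decreasing on $(0,\pi)$, these $a-1$ values are distinct and ordered as stated.

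The only delicate step is the bookkeeping with branches and the reduction modulo $2a$. No real obstacle is expected there: the key observation is that $\cos(\frac{\pi k}{a}+\pi l)$ depends only on $k+la\mod 2a$, which makes the reduction immediate.
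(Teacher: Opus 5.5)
Your proof is correct and is the direct computation the paper intends (its proof only says to solve $\nu_\theta(q_i)=\nu_\theta(q_{i'})$); you correctly fill in the $\cos x=\cos y$ dichotomy, the height at the crossing, and the reduction of the values modulo $2a$. One minor quibble: your side remark on changing the determination of $z^{s/r}$ is inaccurate, since shifting $(i,i')$ by $(kb,kb)$ changes $\theta_{i,i',l}$ by $-2\pi k$, which is in general not a change of $l$ (that would need $a\mid 2kb$) but simply gives the same direction $[\theta]$; the remark is unnecessary anyway, because with the paper's fixed determination $\nu_\theta(q_i)=\cos\big(\frac{2\pi i}{a}+\frac{b}{a}\theta\big)$ holds for every real $\theta$.
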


\begin{proof}
By direct computation, solving the equation $\nu_\theta(q_i)=\nu_\theta(q_{i'})$.
\end{proof}

\begin{remark}
\label{remark:parity_stokes_directions}
If $d=[\theta]$ is a Stokes direction, there exist several triples $(i,i',l)$ such that $\theta=\theta_{i,i',l}$, but the parity of the sum $i+i'$ is fixed by $d$. We will say that $d$ is even if $i+i'$ is even, and odd if $i+i'$ is odd. 
If $a\geq 3$, there are $2b$ Stokes directions, $b$ even ones and $b$ odd ones, corresponding to $\theta\equiv\frac{k\pi}{b} \mod 2\pi$ for some $k\in \mathbb Z$. On the other hand, when $a=2$, for $i\not\equiv i' \mod 2$, the quantity $\frac{i+i'}{2}$ only takes values in $\mathbb Z+\frac{1}{2}$, so there are just $b$ odd Stokes directions, corresponding to $\theta\equiv \frac{(2k+1)\pi}{b} \mod 2\pi$ for some $k\in \mathbb Z$, and no even Stokes directions.

Let us also note that the Stokes directions are located precisely in the middle between the directions $\theta_{i,l}$.
\end{remark}

We will actually need some more precise information about the structure of the crossings in the Stokes diagram.

\begin{lemma}
\label{lemma:crossings_stokes_digram}
The crossings on the (unfolded) Stokes diagram can be described as follows, see Fig.~\ref{fig:crossings_stokes_diagram}:
\begin{itemize}
    \item For any crossing, there exists a triple $(i,i',l)$ with $l$ odd, $i>i'$, and $1\leq i-i' \leq a-1$, such that the coordinates of the crossing on the unfolded Stokes diagram are $(\theta_{i,i',l}, \nu(i,i',l))$. This triple is unique up to the equivalence relation given by
    \[
(i_1, i_1',l_1)\sim (i_2,i_2', l_2) \Longleftrightarrow (l_2-l_1) \text{ is even and } (i_2-i_1)=(i'_2-i'_1)\equiv a\, \frac{l_2-l_1}{2} \mod b.
\]
We denote this crossing by $c_{i,i',l}$.
    \item For any crossing, there exists a triple $(i,i', l)$ with $l$ even, $i>i'$, and $1\leq i-i' \leq a-1$, such that the coordinates of the crossing on the unfolded Stokes diagram are $(\theta_{i,i',l}, \nu(i,i',l))$. This triple is unique up to the equivalence relation given by
    \[
(i_1, i_1',l_1)\sim (i_2,i_2', l_2) \Longleftrightarrow (l_2-l_1) \text{ is even and } (i_2-i_1)=(i'_2-i'_1)\equiv a\, \frac{l_2-l_1}{2} \mod b.
\]
We denote this crossing by $c_{i,i',l}$.
\end{itemize}
The two labellings are related by $c_{i,i',l}=c_{i'+a, i, l+1}$ (for $l$ both even and odd). 
\end{lemma}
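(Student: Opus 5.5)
The plan is to make every crossing explicit by solving $\nu_\theta(q_i)=\nu_\theta(q_{i'})$ on the unfolded diagram, then normalise the labelling. By Lemma~\ref{lemma:structure_stokes_directions}, every crossing has coordinates $(\theta_{i,i',l},\nu(i,i',l))$ for some triple with $q_i\neq q_{i'}$. Since $q_i$ only depends on $i$ mod $a$, the first step is to choose representatives with $1\le i-i'\le a-1$, using the symmetry $(i,i')\leftrightarrow(i',i)$ where needed.

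The key is to track the effect of the elementary moves on $(\theta,\nu)$. Replacing $i$ by $i+a$ (i.e.\ $(i,i',l)\mapsto(i+a,i',l+1)$) preserves $q_i$. It also preserves
\[
\theta_{i,i',l}=\tfrac{2\pi}{b}\bigl(-\tfrac{i+i'}{2}+l\tfrac a2\bigr)
\]
and preserves $\nu$, since $\cos(\frac{\pi(i-i')}{a}+\pi l)$ gains $\pi+\pi=2\pi$ in its argument. This move flips the parity of $l$, so every crossing admits both a label with $l$ odd and a label with $l$ even. Swapping $i,i'$ leaves $\theta$ unchanged and keeps $\nu$ because cosine is even.

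Combining these moves, $(i,i',l)$ with $1\le i-i'\le a-1$ is sent to $(i'+a,i,l+1)$, and $1\le (i'+a)-i\le a-1$ still holds. This proves both existence statements and gives the relation $c_{i,i',l}=c_{i'+a,i,l+1}$, once uniqueness is known.

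For uniqueness up to $\sim$, fix the parity of $l$ and suppose two normalised triples give the same point on the unfolded diagram, whose $\theta$-coordinate is read modulo the monodromy of $I$. Equality of $\nu$ means $\cos(\frac{\pi(i_1-i_1')}{a}+\pi l_1)=\cos(\frac{\pi(i_2-i_2')}{a}+\pi l_2)$. Since $l_1\equiv l_2$ mod $2$ and $i-i'\in\{1,\dots,a-1\}$, the argument $\pi(i-i')/a$ lies in $(0,\pi)$, where cosine is injective; hence $i_1-i_1'=i_2-i_2'$. Equality of the crossing location on the circle, taking into account the identification $[q_{i+b},\theta]=[q_i,\theta+2\pi]$, then forces $i_2-i_1\equiv a\frac{l_2-l_1}{2}$ mod $b$. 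This mirrors the proof of Lemma~\ref{lemma:distinguished_intervals_description}, where the pair of strands must coincide as points of $I$, not merely as directions. Conversely, triples related by $\sim$ give the same $\theta$ mod $2\pi$ and identical strands $q_i,q_{i'}$ up to the monodromy shift, so they define the same crossing.

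The main obstacle is precisely this last step. One has to be careful that the crossing is identified with a pair of points of $I_d$ rather than with the direction alone. One must also check that the shift $i\mapsto i+a\frac{l_2-l_1}{2}$ together with the compensating rotation by multiples of $2\pi$ (via $q_{i+b}$) exhausts all coincidences. The approach is to reduce modulo $b$ using $\theta_{i+b,i'+b,l}=\theta_{i,i',l}-2\pi$.
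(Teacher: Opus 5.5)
Your proposal is correct and follows essentially the same route as the paper, whose proof consists of direct use of the formulas for $\theta_{i,i',l}$ and $\nu(i,i',l)$ together with the relabelling identity $\theta_{i,i',l}=\theta_{i+a,i',l+1}$, which is exactly your move. The uniqueness step you flag as the main obstacle is in fact immediate: once $\nu$ has fixed $i-i'$, equality of $\theta_{i_1,i_1',l_1}$ and $\theta_{i_2,i_2',l_2}$ modulo $2\pi$ is literally the condition $i_2-i_1\equiv a\,\frac{l_2-l_1}{2}$ modulo $b$. (Incidentally, the paper's other identity should read $\theta_{i,i',l}=\theta_{i+a,i'+a,l+2}$ rather than $\theta_{i+a,i'+a,l}$, and your version avoids this slip.)
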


\begin{proof}
Straightforward, noticing that $\theta_{i,i',l}=\theta_{i+a,i'+a,l}=\theta_{i+a, i', l+1}$.
\end{proof}

We will refer to the labelling $c_{i,i',l}$ with $l$ odd as the decreasing labelling, since it amounts to viewing each crossing as the ``intersection'' of the (suitably extended) decreasing intervals $J_{i,l}$ and $J_{i',l}$. Similarly, we will refer to the labelling $c_{i,i',l}$ with $l$ even as the increasing labelling since it amounts to viewing the crossing as the ``intersection'' of the increasing intervals $J_{i,l}$ and $J_{i',l}$, see Fig.~\ref{fig:crossings_stokes_diagram}.

\begin{figure}[h]
\centering
\begin{subfigure}[b]{0.75\textwidth}
\centering
\begin{tikzpicture}[x=8cm, y=1cm]
\foreach \l in {0, ...,3}
\draw[domain=-0.5:0.5,scale=1,samples=500] plot ({\x},{cos(\x*360+\l*360/4)});

\draw ($ (2*1/4,1) + (0,0.7) $) node {$\theta_{i,l+1}$};
\draw ($ (1*1/4,1) + (0,0.7) $) node {$\theta_{i+1,l+1}$};
\draw ($ (0*1/4,1) + (0,0.7) $) node {$\theta_{i+2,l+1}$};
\draw ($ (-1*1/4,1) + (0,0.7) $) node {$\theta_{i+3,l+1}$};
\draw ($ (-2*1/4,1) + (0,0.7) $) node {$\theta_{i+4,l-1}$};

\draw ($ (2*1/4,-1) + (0,-0.5) $) node {$\theta_{i-2,l}$};
\draw ($ (1*1/4,-1) + (0,-0.5) $) node {$\theta_{i-1,l}$};
\draw ($ (0*1/4,-1) + (0,-0.5) $) node {$\theta_{i,l}$};
\draw ($ (-1*1/4,-1) + (0,-0.5) $) node {$\theta_{i+1,l}$};
\draw ($ (-2*1/4,-1) + (0,-0.5) $) node {$\theta_{i+2,l}$};

\draw ({1.5*1/4},{cos(180-180*1*1/4)}) node {$\bullet$} ++(0,0.1) node[above] {$c_{i-1,i-2,l}$};
\draw ({0.5*1/4},{cos(180-180*1*1/4)}) node {$\bullet$} ++(0,0.1) node[above] {$c_{i,i-1,l}$};
\draw ({-0.5*1/4},{cos(180-180*1*1/4)}) node {$\bullet$} ++(0,0.1) node[above] {$c_{i+1,i,l}$};
\draw ({-1.5*1/4},{cos(180-180*1*1/4)}) node {$\bullet$} ++(0,0.1) node[above] {$c_{i+2,i+1,l}$};

\draw ({1*1/4},{cos(180-180*2*1/4)}) node {$\bullet$} ++(0,0.1) node[above] {$c_{i,i-2,l}$};
\draw ({0*1/4},{cos(180-180*2*1/4)}) node {$\bullet$} ++(0,0.1) node[above] {$c_{i+1,i-1,l}$};
\draw ({-1*1/4},{cos(180-180*2*1/4)}) node {$\bullet$} ++(0,0.1) node[above] {$c_{i+2,i,l}$};

\draw ({0.5*1/4},{cos(180-180*3*1/4)}) node {$\bullet$} ++(0,0.1) node[above] {$c_{i+1,i-2,l}$};
\draw ({-0.5*1/4},{cos(180-180*3*1/4)}) node {$\bullet$} ++(0,0.1) node[above] {$c_{i+2,i-1,l}$};
\end{tikzpicture}
\caption{Decreasing labelling of crossings, as ``intersections'' of two decreasing intervals $J_{i,l}$, $J_{i',l}$, with $l$ odd, in the case $a=4$.}
\end{subfigure}
\vspace{0.5cm}

\begin{subfigure}[b]{0.75\textwidth}
\centering
\begin{tikzpicture}[x=8cm, y=1cm]
\foreach \l in {0, ...,3}
\draw[domain=-0.5:0.5,scale=1,samples=500] plot ({\x},{cos(\x*360+\l*360/4)});

\draw ($ (2*1/4,1) + (0,0.7) $) node {$\theta_{i,l+1}$};
\draw ($ (1*1/4,1) + (0,0.7) $) node {$\theta_{i+1,l+1}$};
\draw ($ (0*1/4,1) + (0,0.7) $) node {$\theta_{i+2,l+1}$};
\draw ($ (-1*1/4,1) + (0,0.7) $) node {$\theta_{i+3,l+1}$};
\draw ($ (-2*1/4,1) + (0,0.7) $) node {$\theta_{i+4,l-1}$};

\draw ($ (2*1/4,-1) + (0,-0.5) $) node {$\theta_{i-2,l}$};
\draw ($ (1*1/4,-1) + (0,-0.5) $) node {$\theta_{i-1,l}$};
\draw ($ (0*1/4,-1) + (0,-0.5) $) node {$\theta_{i,l}$};
\draw ($ (-1*1/4,-1) + (0,-0.5) $) node {$\theta_{i+1,l}$};
\draw ($ (-2*1/4,-1) + (0,-0.5) $) node {$\theta_{i+2,l}$};

\draw ({0.5*1/4},{cos(180-180*1*1/4)}) node {$\bullet$} ++(0,0.1) node[above] {$c_{i+3,i,l+1}$};
\draw ({-0.5*1/4},{cos(180-180*1*1/4)}) node {$\bullet$} ++(0,0.1) node[above] {$c_{i+4,i+1,l}$};

\draw ({1*1/4},{cos(180-180*2*1/4)}) node {$\bullet$} ++(0,0.1) node[above] {$c_{i+2,i,l+1}$};
\draw ({0*1/4},{cos(180-180*2*1/4)}) node {$\bullet$} ++(0,0.1) node[above] {$c_{i+3,i+1,l+1}$};
\draw ({-1*1/4},{cos(180-180*2*1/4)}) node {$\bullet$} ++(0,0.1) node[above] {$c_{i+4,i+2,l+1}$};

\draw ({1.5*1/4},{cos(180-180*3*1/4)}) node {$\bullet$} ++(0,0.1) node[above] {$c_{i+1,i,l+1}$};
\draw ({0.5*1/4},{cos(180-180*3*1/4)}) node {$\bullet$} ++(0,0.1) node[above] {$c_{i+2,i+1,l+1}$};
\draw ({-0.5*1/4},{cos(180-180*3*1/4)}) node {$\bullet$} ++(0,0.1) node[above] {$c_{i+3,i+2,l+1}$};
\draw ({-1.5*1/4},{cos(180-180*3*1/4)}) node {$\bullet$} ++(0,0.1) node[above] {$c_{i+4,i+3,l+1}$};
\end{tikzpicture}
\caption{Increasing labelling of crossings, as ``intersections'' of two increasing intervals $J_{i,{l+1}}$, $J_{i',{l+1}}$, with $l$ odd, in the case $a=4$.}
\end{subfigure}

\caption{Structure of the crossings in the Stokes diagram. Each crossing can be labelled in terms of a triple $(i,i',l)$, with $1\leq i-i'\leq a-1$, in two different ways, either with $l$ odd, viewing it as the ``intersection''  of two (extended) decreasing intervals $J_{i,l}$ and $J_{i',l}$, or alternatively with $l$ even, viewing it as the ``intersection'' of two increasing intervals $J_{i,l}$ and $J_{i',l}$.}

\label{fig:crossings_stokes_diagram}

\end{figure}

\section{Stokes filtrations from recessive and subdominant solutions}
\label{sec:stokes_filtrations_from_rec_sub_solutions}

In this section, we show that, for symmetric irregular classes, the Stokes filtration can be fully reconstructed from the data of either its recessive subspaces or its subdominant subspaces. 

\subsection{Recessive and subdominant subspaces}

We now introduce the notion of recessive and subdominant subspaces of a Stokes local system, restricting again to the case where the underlying pointed curve is ($\mathbb P^1, \{\infty\})$.

\begin{definition}
Let $\mathbb V=(V, F)$ be a Stokes filtered local system of type $\Theta$. Let $d\in \partial\smallsetminus \mathbb S$. 
\begin{itemize}
    \item  The recessive subspace of $V$ at $d$ is the subspace $V^{\rec}_d\defeq F_d(\mathtt i)\subset V_d$, where $\mathtt i\in I_d$ is the minimal element in $I_d$ with respect to the order $<_d$.
    \item The subdominant subspace of $V$ at $d$ is the subspace $V^{\sub}_d\defeq F_d(<\mathtt i)\subset V_d$, where $\mathtt i\in I_d$ is the largest element $I_d$ with respect to the order $<_d$.
\end{itemize}
(Note that all these spaces can be naturally identified with subspaces of a single vector space $V$, since the local system $V$ is a constant.)
\end{definition}

\begin{lemma}
Let $\Theta=\Theta_{a,b}$. The recessive subspaces, which are lines (i.e.\ of dimension $1$) in $V$, are locally constant outside a set of $b$ Stokes directions. One therefore has $b$ possibly distinct recessive subspaces, corresponding to the directions $[\theta_{i, l}]$ with $l$ odd.

Similarly, the subdominant subspaces, which are hyperplanes (i.e.\ of codimension $1$) in $V$, are locally constant outside a set of $b$ Stokes directions. One therefore has $b$ possibly distinct subdominant subspaces, corresponding to the directions $[\theta_{i, l}]$ with $l$ even.
\end{lemma}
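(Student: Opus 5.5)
The plan is to show that the minimal (resp. maximal) element of $(I_d,<_d)$ stays the same on a generic arc of $\partial$, changes exactly at $b$ of the Stokes directions, and that the filtration step it defines is locally constant there. First the dimensions. Every Stokes circle of $\Theta_{a,b}$ has multiplicity $1$, so $\Theta(\mathtt i)=1$ for all $\mathtt i\in I_d$. Hence for $d\notin\mathbb S$, where $<_d$ is a total order on the $a$ elements of $I_d$, the filtration $F_d$ is a full flag. Its first step $F_d(\min)$ is then a line, and $F_d(<\max)$ is a hyperplane.

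Next, on each connected component of $\partial\smallsetminus\mathbb S$ the order $<_d$ is constant, and by condition (1) the filtration $F_d$ is locally constant. So $V^{\rec}_d$ and $V^{\sub}_d$ are constant there, and the only candidate jumps are at Stokes directions. I will show that at a Stokes direction $d_0$ the recessive line can only change if the minimal element of $I_{d}$ changes when crossing $d_0$. If the minimum is the same element $\mathtt i$ on both sides, use the Stokes condition (2) at $d_0$: it gives a locally constant splitting with $F_{d'}(\mathtt i)=V(\mathtt i)_{d'}$ on both sides (nothing lies below $\mathtt i$), and this space is constant across $d_0$. The same argument applies to $F(<\max)=\bigoplus_{\mathtt j\neq\max}V(\mathtt j)$ when the maximum does not change. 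These are exactly the directions relevant for each kind of subspace.

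It then remains to locate where the minimum and maximum change, using Lemma~\ref{lemma:structure_stokes_directions} and Lemma~\ref{lemma:crossings_stokes_digram}. In the unfolded diagram the curves are $\nu_\theta(q_i)=\cos(\theta s/r+2\pi i/a)$, which are $a$ equally phase-shifted cosines. The minimum changes exactly at crossings at the lowest height $\cos(\pi(a-1)/a)$, which are the troughs midway between consecutive minima. The maximum changes exactly at crossings at height $\cos(\pi/a)$. A curve attains its minimum at the centre of a decreasing interval, that is at $\theta_{i,l}$ with $l$ odd, and its maximum at $\theta_{i,l}$ with $l$ even.

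Counting via Lemma~\ref{lemma:distinguished_intervals_description}, there are $b$ classes of odd-$l$ intervals, and the lowest envelope passes through each of them once. This gives $b$ arcs, each labelled by the direction $[\theta_{i,l}]$ with $l$ odd, separated by $b$ Stokes directions. The subdominant case is the same with $l$ even.

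The main obstacle is a careful bookkeeping point: checking that the lowest crossings are exactly $b$ Stokes directions modulo $2\pi$, using the equivalence relation of Lemma~\ref{lemma:crossings_stokes_digram}, and that consecutive lowest curves are distinct. The case $a=2$, where the top and bottom crossings coincide, must be treated separately as a sanity check; there the line and the hyperplane coincide and both jump at all $b$ odd Stokes directions.
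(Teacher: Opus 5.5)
Your proposal is correct and follows essentially the same route as the paper. In both, the minimal (resp.\ maximal) exponential factor sits at $\theta_{i,l}$ with $l$ odd (resp.\ even), it changes only at the crossings $c_{i+1,i,l}$ on the lower (resp.\ upper) envelope, and the equivalence relation of Lemma~\ref{lemma:distinguished_intervals_description} gives $b$ such directions. Your Stokes-condition argument that nothing jumps at the other Stokes directions makes explicit a step the paper only asserts, and the bookkeeping you flag is already settled by your observation that the lower envelope has exactly $b$ periods per turn.
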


\begin{proof}
For any $(i, l)$ with $l$ odd, $[q_i, \theta_{i,l}]$ has minimal growth rate, hence the recessive space at $d_{i,l}\defeq [\theta_{i,l}]$ is 
\begin{equation}
V^{\rec}_{i,l}\defeq V_{d_{i,l}}([q_{i},\theta_{i,l}])
\label{eq:recessive_subspace_l,i}
\end{equation}
Furthermore, between two consecutive such directions, that is, between $d_{i,l}$ and $d_{i+1, l}$, there is a unique direction where the recessive pieces jump, given by the crossing $c_{i+1,i,l}$ in the Stokes diagram where the closures of the arcs corresponding to $J_{i,l}$ and $J_{i+1, l}$ intersect. Again, this implies that there are $b$ possibly different recessive subspaces. (Note that all the fibres $V_d$ are naturally identified with a single vector space $V$ since the underlying space $\widehat \Sigma$ is contractible.)

Similarly, for any $(i, l)$ with $l$ even, $[q_i, \theta_{i,l}]$ has maximal growth rate, hence the subdominant subspace at $d_{i,l}\defeq [\theta_{i,l}]$ is 
\begin{equation}
V^{\sub}_{i,l}\defeq V_{d_{i,l}}(<[q_i, \theta_{i,l}])
\label{eq:subdominant_subspace_l,i}
\end{equation}
Again, between two consecutive such directions, that is, between $d_{i,l}$ and $d_{i+1, l}$, there is a unique direction where the subdominant pieces jump, given by the crossing $c_{i+1,i,l}$ in the Stokes diagram where the closures of the arcs corresponding to $J_{i,l}$ and $J_{i+1, l}$ intersect. By the equivalence relation from Lemma~\ref{lemma:distinguished_intervals_description}, there are thus precisely $b$ possibly different subdominant subspaces.

Finally, the recessive subspaces are lines and the subdominant subspaces are hyperplanes because all Stokes circles are of multiplicity $1$.
\end{proof}

\subsection{Stokes filtration from recessive lines}

An important observation is that, for the symmetric irregular classes $\Theta_{a,b}$, the Stokes filtration can be reconstructed from the recessive subspaces, or from the subdominant subspaces.

Let $\mathbf{SF}_{a,b}$ denote the groupoid of Stokes filtered local systems with irregular class $\Theta_{a,b}$.

\begin{definition} Let $a$ and $b$ be positive integers with $b>a$. The groupoid $\mathbf P_{a,b}$ of nondegenerate $b$-gons in projective $(a-1)$-space is the groupoid whose objects are pairs $(V, (V_1, \dots, V_b))$, where
\begin{itemize}
    \item $V$ is a complex vector space of dimension $a$,
    \item $V_i\subset V$ is a line (a subspace of dimension 1) in $V$ for any $i\in \{1, \dots, b\}$,
\end{itemize}
satisfying the following property: For each $i\in \{1, \dots, b\}$, the subspaces $V_i, \dots, V_{i+a}$ span all of $V$, where we define $V_i$ for any $i\in \mathbb Z$ via $V_{i+b}\defeq V_i$.
\end{definition}

\begin{theorem}
\label{thm:reconstruction_stokes_filtrations_recessive}
Let $a$ and $b$ be positive integers with $b>a$. Using previous notations, the (seemingly forgetful) functor
\[
\begin{array}{cccc}
 \Phi^{\rec}_{a,b}\colon & \mathbf{SF}_{a,b}& \longrightarrow &  \mathbf P_{a,b}\\
  & (V, F) & \longmapsto & (V, (V^{\rec}_{1,1}, \dots, V^{\rec}_{b,1}))
 \end{array}
\]
with $V^{\rec}_{i,l}$ defined as in \eqref{eq:recessive_subspace_l,i} is an equivalence between $\mathbf{SF}_{a,b}$ and $\mathbf P_{a,b}$.

An explicit quasi-inverse $\Psi^{\rec}_{a,b}$ is given as follows: Let $(V, (V_1, \dots, V_b))\in \mathbf P_{a,b}$. We define a line $V^{\rec}_{i,l}$ in $V$ for any $(i,l)\in \mathbb Z^2$ with $l$ odd by setting $V^{\rec}_{i,1}\defeq V_i$ for $i\in \{1, \dots, b\}$ and extending by using the equivalence relation from Lemma \ref{lemma:distinguished_intervals_description}. For $i,i', l\in \mathbb Z$ such that $0\leq i-i'\leq a-1$, we set
\[
V^{\rec}_{i,i',l}\defeq V^{\rec}_{i',l}+\ldots + V^{\rec}_{i,l} = \sum_{m=i'}^{i} V^{\rec}_{m,l}.
\]
Then we define filtrations on $V$ by full flags, setting
(cf.\ Fig.~\ref{fig:reconstruction_filtration_recessive}):
\begin{equation}
    F_{[\theta]}\defeq \left\lbrace
    \begin{array}{ll}
    0 \subset V^{\rec}_{i,l} \subset V^{\rec}_{i+1,i,l} \subset V^{\rec}_{i+1, i-1, l} \subset V^{\rec}_{i+2, i-1, l}  \subset \ldots \subset V & \text{ if } \theta\in (\frac{\theta_{i+1, l}+\theta_{i,l}}{2}, \theta_{i,l}) \\
    0 \subset V^{\rec}_{i,l} \subset V^{\rec}_{i,i-1,l} \subset V^{\rec}_{i+1, i-1,l} \subset V^{\rec}_{i+1, i-2,l} \subset \ldots \subset V & \text{ if } \theta\in (\theta_{i,l},\frac{\theta_{i, l}+\theta_{i-1,l}}{2})
    \end{array} \right.
    \label{eq:reconstruction_stokes_filtration_recessive}
\end{equation}
for any $(i,l)\in \mathbb Z$ with $l$ odd.
These define a Stokes filtration on $V$ of type $\Theta_{a,b}$.
\end{theorem}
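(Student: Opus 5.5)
The plan is to show that, for any Stokes filtered local system of type $\Theta_{a,b}$, the full flag $F_{[\theta]}$ at a non-Stokes direction is forced to be the flag \eqref{eq:reconstruction_stokes_filtration_recessive} built from sums of cyclically consecutive recessive lines. Everything else then follows: nondegeneracy of the image, full faithfulness, and the fact that $\Psi^{\rec}_{a,b}\circ\Phi^{\rec}_{a,b}\simeq\Id$. The remaining step is to check directly that $\Psi^{\rec}_{a,b}$ produces a genuine Stokes filtration, which gives essential surjectivity.

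\emph{Step 1 (the flag is determined by recessive lines).} Fix $l$ odd and work on the unfolded Stokes diagram near the decreasing intervals $J_{m,l}$, using Lemma~\ref{lemma:crossings_stokes_digram}. Since all multiplicities are $1$, $F_{[\theta]}$ is a full flag, and its $k$-th piece is $F_{[\theta]}(\mathtt i_k)$, where $\mathtt i_k$ is the $k$-th smallest element of $I_{[\theta]}$. I would prove by induction on $k$ that this piece is a sum of $k$ consecutive recessive lines, with the index window prescribed by \eqref{eq:reconstruction_stokes_filtration_recessive}.

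For $k=1$ this is the definition of $V^{\rec}_{m,l}$. For the inductive step, observe that the $k$-th piece is locally constant except at crossings at height $\nu$ between the $k$-th and $(k+1)$-st strands. In particular, it is constant across the crossings at the level just below, namely those between the $(k-1)$-st and $k$-th strands. Take such a crossing. The Stokes condition there gives a local splitting $V=\bigoplus V(\mathtt j)$ in which the two $(k-1)$-st pieces on either side of the crossing differ by exchanging one summand. These two pieces are therefore distinct hyperplanes of the common $k$-th piece, and so their sum is that $k$-th piece. By induction, the two $(k-1)$-st pieces are the sums over the windows $[i',i]$ and $[i'+1,i+1]$ (or the analogous shifted windows), so the $k$-th piece is the sum over $[i',i+1]$. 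Following the staircase of crossings in Fig.~\ref{fig:crossings_stokes_diagram} then reproduces exactly the alternating pattern of windows in \eqref{eq:reconstruction_stokes_filtration_recessive}.

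For $k=a$ the piece is $V$, so any $a$ consecutive recessive lines span $V$. Thus $\Phi^{\rec}_{a,b}$ lands in $\mathbf P_{a,b}$, using the identification of indices from Lemma~\ref{lemma:distinguished_intervals_description} to reduce to $V^{\rec}_{1,1},\dots,V^{\rec}_{b,1}$.

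\emph{Step 2 (full faithfulness).} A morphism in $\mathbf{SF}_{a,b}$ is an isomorphism $V\to V'$ preserving all filtrations. Such a map preserves the recessive lines. Conversely, a linear isomorphism mapping $V_m$ to $V'_m$ for all $m$ preserves every sum of lines, hence by Step~1 every $F_d$. So the functor is bijective on morphisms. Step~1 also gives a natural isomorphism $\Psi^{\rec}_{a,b}\circ\Phi^{\rec}_{a,b}\simeq\Id$, and $\Phi^{\rec}_{a,b}\circ\Psi^{\rec}_{a,b}=\Id$ is immediate once $\Psi^{\rec}_{a,b}$ is known to be well defined.

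\emph{Step 3 (the main obstacle: $\Psi^{\rec}_{a,b}$ yields a Stokes filtration).} Given a nondegenerate $b$-gon, any $k\le a$ cyclically consecutive lines are independent, because they lie inside $a$ consecutive lines spanning the $a$-dimensional space $V$. Hence each $F_{[\theta]}$ is a full flag with one-dimensional graded pieces. The flags are locally constant by construction, and they are compatible with the identifications $(i,l)\sim(i',l')$. It remains to verify the Stokes condition at each Stokes direction $d$, where possibly several crossings (all of one parity, cf.\ Remark~\ref{remark:parity_stokes_directions}) occur simultaneously.

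My approach is as follows. Near $d$, the flags on both sides are built from sums of lines indexed by a single window of $a$ consecutive indices $m_0,\dots,m_0+a-1$; I would check from \eqref{eq:reconstruction_stokes_filtration_recessive} and Lemma~\ref{lemma:crossings_stokes_digram} that the windows used on both sides fit in one such range. Choose nonzero $v_m\in V^{\rec}_{m,l}$ for $m$ in this window. These vectors form a basis, and each flag piece on either side is the span of a subset of this basis. Assigning $v_m$ to the strand $\mathtt j\in I_{d'}$ whose graded piece it spans on each side then gives a locally constant decomposition $V=\bigoplus V(\mathtt j)$ splitting both flags. This proves the Stokes condition.

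The delicate part is the index bookkeeping: one must confirm that at a simultaneous multi-crossing the strand labels match up consistently across $d$, so that $V(\mathtt j)$ is genuinely locally constant. I would check this against the two labellings $c_{i,i',l}=c_{i'+a,i,l+1}$ of Lemma~\ref{lemma:crossings_stokes_digram}.
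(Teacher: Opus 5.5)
Your proposal is correct and follows essentially the same route as the paper: induction on the dimension of the flag pieces using the crossing relation $V_{++}=V_{+-}+V_{-+}$ (the paper's Lemma~\ref{lemma:local_situation_crossing}), and then the Stokes condition at each Stokes direction via the splitting $V=\bigoplus_m V^{\rec}_{m,l}$ over a window of $a$ consecutive indices, with $V^{\rec}_{m,l}$ assigned to $q_m$. One small bookkeeping remark: for $a$ even, at the directions $[\theta_{i,l}]$ the two top-dimensional windows on either side need not lie in a common window of $a$ consecutive indices, but both pieces equal $V$, so your splitting argument is unaffected.
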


\begin{remark}
More precisely, the quasi-inverse functor in Theorem~\ref{thm:reconstruction_stokes_filtrations_recessive} can be described in the following way: Let $\theta\in \mathbb R$ such that $d=[\theta]$ is not a Stokes direction. There exists a pair $(i,l)$ with $l$ odd, such that $\theta\in (\frac{\theta_{i+1, l}+\theta_{i,l}}{2}, \frac{\theta_{i, l}+\theta_{i-1,l}}{2})$. Now:
\begin{itemize}
    \item If $\theta\in (\frac{\theta_{i+1, l}+\theta_{i,l}}{2}, 
    \theta_{i,l})$ (this corresponds to the blue dashed line in Fig.~\ref{fig:reconstruction_filtration_recessive}), then for $k\in \{1\dots, a\}$ the piece $F^k_{d}$ of dimension $k$ of $F_{d}$ is given by
    \[
F^k_{d}=
\left\lbrace
\begin{array}{ll}
F_{d}([q_{i-\kappa}, \theta])=V^{\rec}_{i+\kappa, i-\kappa,l} & \text{ if } k=2\kappa+1 \text{ is odd,}\\
F_{d}([q_{i+\kappa}, \theta])=V^{\rec}_{i+\kappa, i-\kappa+1,l} & \text{ if } k=2\kappa \text{ is even.}
\end{array}
\right.
    \]
    \item If $\theta\in (\theta_{i,l},\frac{\theta_{i, l}+\theta_{i-1,l}}{2})$  (this corresponds to the red dashed line in Fig.~\ref{fig:reconstruction_filtration_recessive}), then for $k\in \{1\dots, a\}$ the piece $F^k_{d}$ of dimension $k$ of $F_{d}$ is given by
    \[
F^k_{d}=
\left\lbrace
\begin{array}{ll}
F_{d}([q_{i+\kappa}, \theta])=V^{\rec}_{i+\kappa, i-\kappa,l} & \text{ if } k=2\kappa+1 \text{ is odd,}\\
F_{d}([q_{i-\kappa}, \theta])=V^{\rec}_{i+\kappa-1, i-\kappa,l} & \text{ if } k=2\kappa \text{ is even.}
\end{array}
\right.
    \]
\end{itemize}
If $a=2$, then $\theta=\theta_{i,l}$ is possible since $\theta_{i,l}$ is not a Stokes direction. In this case, the filtrations on both sides defined above coincide and also apply to $\theta=\theta_{i,l}$.)

Then $\Psi^{\rec}_{a,b}(V, (V_1, \dots, V_b))\defeq (V,F)$ is a Stokes filtered local system of type $\Theta_{a,b}$, and we have $\Psi^{\rec}_{a,b}\circ \Phi^{\rec}_{a,b}\cong\Id_{\mathbf{SF}_{a,b}}$, $\Phi^{\rec}_{a,b}\circ \Psi^{\rec}_{a,b}\cong\Id_{\mathbf{P}_{a,b}}$.
\end{remark}

\begin{figure}[h]
\centering
\begin{tikzpicture}[x=7.6cm, y=1.2cm]
\foreach \l in {0, ...,3}
\draw[domain=-1:1,scale=1,samples=500] plot ({\x},{cos(\x*360+\l*360/4)});

\draw[blue, dashed] (-0.05,-1.5)--(-0.05,1.5);
\draw[red, dashed] (+0.05,-1.5)--(+0.05,1.5);

\draw ($ ({2*1/4},-1) + (0,-0.5) $) node {$\theta_{i-2,l}$};
\draw ($ ({1*1/4},-1) + (0,-0.5) $) node {$\theta_{i-1,l}$};
\draw ($ ({0*1/4},-1) + (0,-0.5) $) node {$\theta_{i,l}$};
\draw ($ ({-1*1/4},-1) + (0,-0.5) $) node {$\theta_{i+1,l}$};
\draw ($ ({-2*1/4},-1) + (0,-0.5) $) node {$\theta_{i+2,l}$};
\draw ($ ({-3*1/4},-1) + (0,-0.5) $) node {$\theta_{i,l}$};

\draw  ({2*1/4},-0.7) node {$V^{\rec}_{i-2,l}$};
\draw  ({1*1/4},-0.7) node {$V^{\rec}_{i-1,l}$};
\draw  ({0*1/4},-0.7) node {$V^{\rec}_{i,l}$};
\draw  ({-1*1/4},-0.7) node {$V^{\rec}_{i+1,l}$};
\draw  ({-2*1/4},-0.7) node {$V^{\rec}_{i+2,l}$};

\draw  ({1.5*1/4},0) node {$V^{\rec}_{i-1,i-2,l}$};
\draw  ({0.5*1/4},0) node {$V^{\rec}_{i,i-1,l}$};
\draw  ({-0.5*1/4},0) node {$V^{\rec}_{i+1,i,l}$};
\draw  ({-1.5*1/4},0) node {$V^{\rec}_{i+2,i+1,l}$};

\draw  ({1*1/4},0.6) node {$V^{\rec}_{i,i-2,l}$};
\draw  ({0*1/4},0.6) node {$V^{\rec}_{i+1,i-1,l}$};
\draw  ({-1*1/4},0.6) node {$V^{\rec}_{i+2,i,l}$};
\end{tikzpicture}
\caption{Reconstruction of the Stokes filtration of type $\Theta_{a,b}$ from the recessive lines $V^{{\rec}}_{i,l}$, with $l$ odd. Each connected component in the complement of the Stokes diagram corresponds to a piece $V^{\rec}_{i,i',l}$ of the filtration. Each piece is obtained as the sum of the two pieces that lie just below it, and in turn is the sum of all the recessive lines lying below it.}
\label{fig:reconstruction_filtration_recessive}
\end{figure}

The reconstruction of the filtration is easily understood from Fig.~\ref{fig:reconstruction_filtration_recessive}.  Each connected component of the complement of the Stokes diagram corresponds to a subspace $V^{\rec}_{i,i',l}$. For any direction $d$ which is not a Stokes direction, the filtration $F_d$ is then obtained by taking the connected components intersected by the vertical line corresponding to $d$, going from bottom to top. 

The main idea of the proof is that it follows from the properties of the Stokes filtrations that the filtered piece corresponding to a connected component of the complement of the Stokes diagram is the sum of the two pieces just below it. This can be stated as follows.

\begin{lemma}
\label{lemma:local_situation_crossing}
Let $(V,F)$ be a Stokes filtered local system with irregular class $\Theta$. Consider the local situation in a small neighbourhood of a simple crossing in its Stokes diagram, at a Stokes direction $d$, as in Fig.~\ref{fig:local_situation_crossing}: Let us denote by $q$ and $q'$ the two exponential factors involved, corresponding to the two strands crossing each other, such that $q<_{d_-} q'$ for any direction $d_-$ just to the left of the crossing, and $q'<_{d_+} q$ for any direction $d_+$ just to the right of the crossing.

\begin{figure}[h]
\centering
\begin{tikzpicture}[scale=0.8]
\draw (-2,-2)--(2,2);
\draw (-2,2)--(2,-2);

\draw[dashed] (1,-2.5)--(1,2.5); 
\draw[dashed] (-1,-2.5)--(-1,2.5); 
\draw (1,-2.8) node {$d_+$};
\draw (0,-2.75) node {$d$};
\draw (-1,-2.8) node {$d_-$};

\draw (0,0) node {$\bullet$};

\draw (-2.2, -2.2) node {$q$};
\draw (-2.2, 2.2) node {$q'$};

\draw (0,2) node {$V_{++}$};
\draw (-2,0) node {$V_{+-}$};
\draw (2,0) node {$V_{-+}$};
\draw (0,-2) node {$V_{--}$};
\end{tikzpicture}
\caption{Local structure of the Stokes filtration at a simple crossing at some Stokes direction $d$. To each connected component of the complement of the Stokes diagram in a neighbourhood of the crossing we can associate a piece of the Stokes filtration, so that the subspace $F_{d'}(q'')$ at a direction $d'$ close to $d$ associated to an exponential factor $q''$ corresponds to the connected component lying above $q''$ at $d'$, while the subspace $F_{d'}(<q'')$ corresponds to the connected component lying below $q''$. Lemma~\ref{lemma:local_situation_crossing} states that we have the relations $V_{++}=V_{+-}+V_{-+}$, and $V_{--}=V_{+-}\cap V_{-+}$.}
\label{fig:local_situation_crossing}
\end{figure}
Let us set $V_{+-}\defeq F_{d_-}(q)$ and $V_{-+}\defeq F_{d_+}(q')$. Then we have: 
\[
F_{d_-}(q')=F_{d_+}(q)=V_{+-}+V_{-+}\eqdef V_{++}, \qquad F_{d_-}(<q)=F_{d_+}(<q')=V_{+-}\cap V_{-+}\eqdef V_{--}.
\]
\end{lemma}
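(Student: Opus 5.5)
The proof will be entirely local, using only the Stokes condition at $d$ and local constancy of the filtrations away from $d$. By condition (2) in the definition of a Stokes filtration, there is a neighbourhood $U$ of $d$ and a locally constant splitting $V=\bigoplus_{\mathtt j\in I_{d'}} V(\mathtt j)_{d'}$ such that, for every $d'\in U\smallsetminus\mathbb S$, each $F_{d'}(\mathtt i)$ is the sum of the graded pieces $V(\mathtt j)$ with $\mathtt j\leq_{d'}\mathtt i$. Shrinking $U$, we may assume that $d$ is the only Stokes direction in $U$ and that $d_\pm$ lie in $U$ on either side of $d$.

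The key observation concerns the relative order of exponential factors on the two sides. Since the crossing is simple, the only pair whose relative order changes between $d_-$ and $d_+$ is $\{q,q'\}$. Every other $q''$ is comparable to $q$ and $q'$ in the same way on both sides. Let $W_-\defeq\bigoplus_{q''<q,\,q''<q'} V(q'')$ be the sum of the pieces lying below both strands. Because the crossing is simple and the strands $q,q'$ are adjacent near it, no factor lies strictly between $q$ and $q'$ at $d_\pm$. So any $q''$ below $q'$ at $d_-$ other than $q$ is below $q$ as well, and symmetrically at $d_+$.

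Reading off the filtration from the splitting then gives the following:
\[
F_{d_-}(<q)=W_-,\quad F_{d_-}(q)=W_-\oplus V(q),\quad F_{d_-}(q')=W_-\oplus V(q)\oplus V(q'),
\]
and symmetrically
\[
F_{d_+}(<q')=W_-,\quad F_{d_+}(q')=W_-\oplus V(q'),\quad F_{d_+}(q)=W_-\oplus V(q')\oplus V(q).
\]
Hence $V_{+-}=W_-\oplus V(q)$ and $V_{-+}=W_-\oplus V(q')$. Their sum is $W_-\oplus V(q)\oplus V(q')$, which equals both $F_{d_-}(q')$ and $F_{d_+}(q)$. Their intersection is $W_-$, since the sum $W_-\oplus V(q)\oplus V(q')$ is direct, and this equals both $F_{d_-}(<q)$ and $F_{d_+}(<q')$. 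This is exactly the claim.

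The only delicate point is the adjacency and stability statement: that between $d_-$ and $d_+$ no other pair of active factors swaps, and that $q,q'$ are consecutive in both total orders. I would justify this from the definition of a simple crossing, meaning that locally the only equality of growth rates is $\nu(q)=\nu(q')$ at $d$. Continuity of the functions $\nu_\theta$ then shows that the other strands stay strictly above or strictly below both $q$ and $q'$ near $d$. The stability of the orders of all other pairs follows because $d$ is the unique Stokes direction in $U$ and the only incomparability at $d$ is between $q$ and $q'$. If a Stokes direction contains several simultaneous crossings, which happens for $\Theta_{a,b}$, the same argument applies provided the other crossings involve disjoint pairs of strands at different heights. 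Those pairs contribute their pieces to $W_-$ or to pieces above, in the same way on both sides, so the conclusion is unchanged.
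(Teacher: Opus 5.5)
Your argument is correct and is essentially the paper's proof. Take the splitting given by the Stokes condition at $d$, and observe that, since the crossing is simple, the factors below $q$ are exactly those below $q'$ and are the same on both sides of $d$. Then write the relevant pieces on each side as $W_-$, $W_-\oplus V(q)$ (resp.\ $W_-\oplus V(q')$) and $W_-\oplus V(q)\oplus V(q')$, and read off the sum and intersection from directness.

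Your continuity justification and your remark on other crossings at the same direction spell out what the paper compresses into ``no exponential factors other than $q$ and $q'$ are involved''. Your intermediate claim that $\{q,q'\}$ is the only pair swapping at $d$ is unnecessary, and false for $\Theta_{a,b}$, as your last paragraph already recognises.
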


\begin{proof}
This follows directly from the Stokes condition at the crossing. In more detail, let $I$ denote as usual the finite subcover corresponding to $\Theta$. The exponential factors $q$ and $q'$ correspond to sections of $I$ around $d$. Let us set
\begin{align*}
I_d^{<q,q'} &\defeq \{q''\in I_d \;|\; q''<_d q \} = \{q''\in I_d \;|\; q''<_d q' \} \\   
\end{align*}
(the equality of the two sets follows from the fact that the crossing is assumed to be simple, so no exponential factors other than $q$ and $q'$ are ``involved''). By the Stokes condition at $d$, there exists a locally constant grading
\[
V=\bigoplus_{q''\in I_d} V(q'')
\]
of $V$ that splits the filtrations $F$ on both sides of $d$. In particular, at $d_-$ we have
\begin{align*}
F_{d_-}(<q)&=\bigoplus_{q''\in I_d^{<q,q'}} V(q''),\\
F_{d_-}(q)&=\left(\bigoplus_{q''\in I_d^{<q,q''}} V(q'')\right) \oplus V(q),\\
F_{d_-}(q')&=\left(\bigoplus_{q''\in I_d^{<q,q'}} V(q'')\right) \oplus V(q) \oplus V(q')\\
\end{align*}
and similarly at $d_+$
\begin{align*}
F_{d_+}(<q')&=\bigoplus_{q''\in I_d^{<q,q'}} V(q''),\\
F_{d_+}(q')&=\left(\bigoplus_{q''\in I_d^{<q,q'}} V(q'')\right) \oplus V(q'),\\
F_{d_+}(q)&=\left(\bigoplus_{q''\in I_d^{<q,q'}} V(q'')\right) \oplus V(q') \oplus V(q).\\
\end{align*}
The conclusion follows immediately.
\end{proof}

\begin{proof}[Proof of Theorem \ref{thm:reconstruction_stokes_filtrations_recessive}]

First, let us consider a Stokes filtered local system $(V, F)$ with irregular class $\Theta_{a,b}$, and let us show that the filtration $F$ is reconstructed from the recessive pieces $V^{\rec}_{i,l}$ as in the statement of the theorem. 
    We proceed by induction on the dimension $k$ of the filtered pieces.  
For $k=1$ there is nothing to do, since they correspond to the recessive pieces by definition. Now, let $k\in\{1, \dots, a-1\}$, and assume that all the pieces of dimension $\leq k$ are as claimed. It then follows directly from Lemma \ref{lemma:local_situation_crossing} (using $V_{++}=V_{+-}+V_{-+}$), applied to the crossings $c_{i+k, i,l}$ for $(i,l)\in \mathbb Z^2$ and $l$ odd, that this is also true for the pieces of dimension $k+1$. 
In particular, we obtain that, for any $(i,l)$ with $l$ odd, we have
\[
V=V^{\rec}_{i-a+1,l}+\dots + V^{\rec}_{i,l},
\]
that is, any $a$ consecutive recessive lines span $V$, hence the recessive lines give indeed an element in $\mathbf P_{a,b}$, so the map $\Phi^{\rec}_{a,b}$ is well-defined, and we have $\Psi^{\rec}_{a,b}\circ \Phi^{\rec}_{a,b}\cong\Id_{\mathbf{SF}_{a,b}}$.

To conclude, it just remains to check that if $(V, (V_1, \dots, V_b))\in \mathbf P_{a,b}$, then its image $(V, F)$ by $\Psi^{\rec}_{a,b}$ is a Stokes filtered local system with irregular class $\Theta_{a,b}$.
For this, one has to check that the filtration built by \eqref{eq:reconstruction_stokes_filtration_recessive} satisfies the Stokes conditions. Let $d=[\theta]$ be a Stokes direction. We construct a grading of $V$ in a neighbourhood of $d$ that splits the filtration $F$ on both sides: 

There exists a pair $(i,l)\in \mathbb Z^2$ with $l$ odd\footnote{More precisely, $\frac{\theta_{i+a-1,l}+\theta_{i,l}}{2}$ is a Stokes direction whose parity, in the sense of Remark \ref{remark:parity_stokes_directions}, is the one of $a$. If $d$ has the same parity as $a$, we can take $(i,l)$ such that $\theta=\frac{\theta_{i+a-1,l}+\theta_{i,l}}{2}$, and if $d$ has the other parity, we take $(i,l)$ such that $\frac{\theta_{i+a-1,l}+\theta_{i,l}}{2}$ is a Stokes direction consecutive to $d$ (both choices work).} such that
\[
\left\vert \frac{\theta_{i+a-1,l}+\theta_{i,l}}{2} -\theta \right\vert\leq \frac{\pi}{b}.
\]
 By definition of $\mathbf P_{a,b}$, the consecutive lines $V^{\rec}_{i+a-1, l},\dots, V^{\rec}_{i, l}$ span $V$, so we have a splitting
\[
V=V^{\rec}_{i+a-1, l}\oplus \dots \oplus V^{\rec}_{i, l}.
\]
This allows us to define an $I$-grading of $V$ in a neighbourhood of $d$ by 
\[
V([q_m,\theta'])\defeq V^{\rec}_{m, l}
\]
for $m\in \{i-a+1, \dots, i\}$ and $\theta'$ close to $\theta$. It is straightforward to check that this grading splits the filtration $F$ on both sides of $d$. 
\end{proof}

\subsection{Stokes filtration from subdominant hyperplanes}

In a very similar way, the Stokes filtration can be reconstructed from the subdominant subspaces.

\begin{theorem}
\label{thm:reconstruction_stokes_filtrations_subdominant}
Let $a$ and $b$ be positive integers, with $b>a$. Using again previous notations, let us define the functor
\[
\begin{array}{cccc}
 \Phi^{\sub}_{a,b}\colon & \mathbf{SF}_{a,b}& \longrightarrow &  \mathbf P_{a,b}\\
  & (V, F) & \longmapsto & (V^*, ((V^{\sub}_{1,0})^0, \dots (V^{\sub}_{b,0})^0)),
 \end{array}
\]
where $V^*$ denotes the dual vector space of $V$, and $V^{\sub}_{i,0}$ denotes the subdominant hyperplane \eqref{eq:subdominant_subspace_l,i} in the direction $[\theta_{i,0}]$, and $(V^{\sub}_{i,0})^0$ is its annihilator in $V^*$ (its elements are the linear forms on $V$ vanishing on $V^{\sub}_{i,0}$), for $l\in \{1, \dots, b\}$ . Then $\Phi^{\sub}_{a,b}$ is an equivalence between $\mathbf{SF}_{a,b}$ and $\mathbf P_{a,b}$.

An explicit quasi-inverse $\Psi^{\sub}_{a,b}$ is obtained as follows: Let $(V^*, (V_1^0, \dots, V^0_b))\in \mathbf P_{a,b}$. Let $V$ be the dual of $V^*$. We define a hyperplane $V^{\sub}_{i,l}$ in $V$ for any $(i,l)\in \mathbb Z^2$ with $l$ even by defining $V^{\sub}_{i,0}$ as the kernel of any nonzero element in $V_i^0$ for $i\in \{1, \dots, b\}$ and extending to the whole of $\mathbb Z^2$ using the equivalence relation from Lemma \ref{lemma:distinguished_intervals_description}. For $i,i', l\in \mathbb Z$ such that $0\leq i-i'\leq a-1$, we set
\[
V^{\sub}_{i,i',l}\defeq V^{\sub}_{i',l}\cap\ldots \cap V^{\sub}_{i,l}.
\]
Then we define filtrations on $V$ by full flags, setting
(cf.\ Fig.~\ref{fig:reconstruction_filtration_subdominant}):
\begin{equation*}
    F_{[\theta]}\defeq \left\lbrace
    \begin{array}{ll}
    V \supset V^{\sub}_{i,l} \supset V^{\sub}_{i+1,i,l} \supset V^{\sub}_{i+1, i-1, l} \supset V^{\sub}_{i+2, i-1, l} \supset \ldots \supset 0 & \text{ if } \theta\in (\frac{\theta_{i+1, l}+\theta_{i,l}}{2}, \theta_{i,l}) \\
    V \supset V^{\sub}_{i,l} \supset V^{\sub}_{i,i-1,l} \supset V^{\sub}_{i+1, i-1,l} \supset V^{\sub}_{i+1, i-2,l} \supset \ldots \supset 0 & \text{ if } \theta\in (\theta_{i,l},\frac{\theta_{i, l}+\theta_{i-1,l}}{2})
    \end{array} \right.
\end{equation*}
for any $(i,l)\in \mathbb Z$ with $l$ even. These define a Stokes filtration on $V$ of type $\Theta_{a,b}$.
\end{theorem}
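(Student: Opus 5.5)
The plan is to mirror the proof of Theorem~\ref{thm:reconstruction_stokes_filtrations_recessive}, reading the Stokes diagram from the top down instead of from the bottom up. The most economical route is to reduce to the recessive case by duality. Given a Stokes filtered local system $(V,F)$ of type $\Theta_{a,b}$, the dual local system $V^*$ carries the dual filtration $F^\vee_d(\mathtt i)\defeq F_d(<\mathtt i^\vee)^0$. Here $\mathtt i^\vee$ corresponds to $-q$, and the irregular class is $\Theta_{a,b}$ with all exponential factors negated. Negating $q_i$ amounts to the rotation $\theta\mapsto\theta+\pi r/s$, which exchanges increasing and decreasing distinguished intervals, i.e.\ shifts $l$ by one. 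Under this identification, the recessive lines of $(V^*,F^\vee)$ are exactly the annihilators $(V^{\sub}_{i,l})^0$ of the subdominant hyperplanes of $(V,F)$, with $l$ even. It is routine that dualising preserves the Stokes condition: take the dual grading. Dualising is also an equivalence of groupoids, and rotation of the circle $\partial$ preserves everything. Composing these, $\Phi^{\sub}_{a,b}$ is identified with $\Phi^{\rec}_{a,b}$ applied to the dual, so it is an equivalence by Theorem~\ref{thm:reconstruction_stokes_filtrations_recessive}.

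To obtain the explicit quasi-inverse, I would also redo the argument directly, since the formula for $\Psi^{\sub}_{a,b}$ is what is needed later. The first step is to show that a given $(V,F)$ is recovered from its subdominant hyperplanes. I induct on codimension $k$ of the filtered pieces, going downward from the top of the Stokes diagram. For $k=1$ the pieces are the $V^{\sub}_{i,l}$ by definition. For the inductive step I use the second relation of Lemma~\ref{lemma:local_situation_crossing}, namely $V_{--}=V_{+-}\cap V_{-+}$, at the crossings $c_{i+k,i,l}$ with $l$ even (the increasing labelling of Lemma~\ref{lemma:crossings_stokes_digram}). The piece below each crossing is then the intersection of the two pieces above it, which yields $V^{\sub}_{i+k,i,l}=\bigcap_{m=i}^{i+k}V^{\sub}_{m,l}$. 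At the bottom this gives $\bigcap_{m=i-a+1}^{i}V^{\sub}_{m,l}=0$. Equivalently, the annihilators of any $a$ consecutive subdominant hyperplanes span $V^*$, so $\Phi^{\sub}_{a,b}$ lands in $\mathbf P_{a,b}$ and $\Psi^{\sub}_{a,b}\circ\Phi^{\sub}_{a,b}\cong\Id$.

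The second step is to show that $\Psi^{\sub}_{a,b}$ of an object of $\mathbf P_{a,b}$ satisfies the Stokes condition at each Stokes direction $d$. The nondegeneracy hypothesis says that $a$ consecutive annihilators $(V^{\sub}_{i,l})^0$, $m=i,\dots,i+a-1$, form a direct sum decomposition of $V^*$. Dually, the lines $L_m\defeq\bigcap_{m'\neq m}V^{\sub}_{m',l}$, where $m'$ ranges over the other indices of the window, give a splitting $V=\bigoplus_m L_m$ with $V^{\sub}_{m,l}=\bigoplus_{m'\neq m}L_{m'}$. Choose the window by the same recipe as in the footnote of the recessive proof, now with $l$ even. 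Setting $V([q_m,\theta'])\defeq L_m$ near $d$, I would check that each intersection $V^{\sub}_{i',i'',l}$ is the sum of the $L_m$ with $m$ outside $[i'',i']$. It follows that both flags adjacent to $d$ are split.

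The main obstacle is the index bookkeeping in this last verification. One must confirm that the window of $a$ consecutive hyperplanes determined by $d$ contains every hyperplane appearing in the flags on both sides of $d$. One must also check that the assignment of $L_m$ to the exponential factor $q_m$ matches the positions in the flags given in the statement. This is where the duality/rotation argument is convenient as a consistency check, since it transports the already established recessive verification.
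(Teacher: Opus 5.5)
Your proposal is correct. Your first step, the descending induction on codimension using $V_{--}=V_{+-}\cap V_{-+}$ at the crossings $c_{i+k,i,l}$ with $l$ even, is exactly the paper's. The differences are in the splitting you use to check the Stokes condition, and in your extra duality reduction.

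The paper takes as graded pieces the lines $V^{\sub}_{i+a-2,i,l}$, i.e.\ intersections of $a-1$ consecutive hyperplanes (the future recessive lines). It asserts that they form an object of $\mathbf P_{a,b}$ and reuses the recessive splitting by $a$ consecutive such lines. This tacitly requires two facts: projective duality preserves nondegeneracy, and every $V^{\sub}_{i,i',l}$ is the sum of the recessive lines it contains (the linear-algebra content of Corollary~\ref{corollary:relation_pieces_from_recessive_subdominant}). Your dual-basis grading $L_m$, attached to a window of $a$ consecutive annihilators, needs nothing beyond the defining nondegeneracy of $\mathbf P_{a,b}$. It is literally the transpose of the recessive argument. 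In general only the two extreme $L_m$ are recessive lines, which is harmless since splittings are not unique. In exchange, the paper's choice exhibits the recessive lines of $\Psi^{\sub}_{a,b}(P)$ directly, and so makes the projective duality of Corollary~\ref{corollary:projective_duality_formula} visible.

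Your reduction via the dual Stokes filtered local system, with the rotation by $\pi r/s$ identifying $\Theta_{a,b}^\vee$ with $\Theta_{a,b}$, does not appear in the paper. It gives the equivalence for free from Theorem~\ref{thm:reconstruction_stokes_filtrations_recessive}. Dualising the recessive quasi-inverse would even produce the explicit $\Psi^{\sub}_{a,b}$ without redoing the bookkeeping, since annihilators of sums of consecutive lines are intersections of consecutive hyperplanes. Its only cost is the routine matching of labels $(i,l)\mapsto(i,l\pm 1)$. This may introduce a cyclic relabelling of the $b$ lines, which is an automorphism of $\mathbf P_{a,b}$ and therefore harmless.
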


\begin{remark}
In more detail, the quasi-inverse functor from Theorem~\ref{thm:reconstruction_stokes_filtrations_subdominant} is described in the following way: Let $\theta\in \mathbb R$ such that $d=[\theta]$ is not a Stokes direction. There exists a pair $(i,l)$ with $l$ even such that $\theta\in (\frac{\theta_{i+1, l}+\theta_{i,l}}{2}, \frac{\theta_{i, l}+\theta_{i-1,l}}{2})$. Now:
\begin{itemize}
    \item If $\theta\in (\frac{\theta_{i+1, l}+\theta_{i,l}}{2}, 
    \theta_{i,l})$ (this corresponds to the blue dashed line in Fig.~\ref{fig:reconstruction_filtration_subdominant}), then for $k\in \{1\dots, a\}$ the piece $F^{a-k}_{d}$ of dimension $a-k$ of $F_{d}$ is given by
    \[
F^{a-k}_{d}=
\left\lbrace
\begin{array}{ll}
F_{d}([q_{i-\kappa}, \theta])=V^{\sub}_{i+\kappa, i-\kappa,l} & \text{ if } k=2\kappa+1 \text{ is odd,}\\
F_{d}([q_{i+\kappa}, \theta])=V^{\sub}_{i+\kappa, i-\kappa+1,l} & \text{ if } k=2\kappa \text{ is even.}
\end{array}
\right.
    \]
    \item If $\theta\in (\theta_{i,l},\frac{\theta_{i, l}+\theta_{i-1,l}}{2})$ (this corresponds to the red dashed line in Fig.~\ref{fig:reconstruction_filtration_subdominant}), then for $k\in \{1\dots, a\}$ the piece $F^{a-k}_{d}$ of dimension $a-k$ of $F_{d}$ is given by
    \[
F^{a-k}_{d}=
\left\lbrace
\begin{array}{ll}
F_{d}([q_{i+\kappa}, \theta])=V^{\sub}_{i+\kappa, i-\kappa,l} & \text{ if } k=2\kappa+1 \text{ is odd,}\\
F_{d}([q_{i-\kappa}, \theta])=V^{\sub}_{i+\kappa-1, i-\kappa,l} & \text{ if } k=2\kappa \text{ is even.}
\end{array}
\right.
    \]
\end{itemize}
If $a=2$ and $\theta=\theta_{i,l}$, either of the two filtrations can be taken since they coincide in this case.

Then $\Psi^{\sub}_{a,b}(V^*, (V^0_1, \dots, V^0_{b}))\defeq (V,F)$ is a Stokes filtered local system of type $\Theta_{a,b}$, and we have $\Psi^{\sub}_{a,b}\circ \Phi^{\sub}_{a,b}\cong\Id_{\mathbf{SF}_{a,b}}$, $\Phi^{\sub}_{a,b}\circ \Psi^{\sub}_{a,b}\cong\Id_{\mathbf{P}_{a,b}}$.
\end{remark}

\begin{figure}[h]
\begin{tikzpicture}[x=7.6cm, y=1.2cm]
\foreach \l in {0, ...,3}
\draw[domain=-1:1,scale=1,samples=500] plot ({\x},{cos(\x*360+\l*360/4)});

\draw[blue, dashed] (-0.05,-1.5)--(-0.05,1.5);
\draw[red, dashed] (+0.05,-1.5)--(+0.05,1.5);

\draw ($ ({2*1/4},1) + (0,0.5) $) node {$\theta_{i-2,l}$};
\draw ($ ({1*1/4},1) + (0,0.5) $) node {$\theta_{i-1,l}$};
\draw ($ ({0*1/4},1) + (0,0.5) $) node {$\theta_{i,l}$};
\draw ($ ({-1*1/4},1) + (0,0.5) $) node {$\theta_{i+1,l}$};
\draw ($ ({-2*1/4},1) + (0,0.5) $) node {$\theta_{i+2,l}$};

\draw  ({1*1/4},-0.7) node {$V^{\sub}_{i,i-2,l}$};
\draw  ({0*1/4},-0.7) node {$V^{\sub}_{i+1,i-1,l}$};
\draw  ({-1*1/4},-0.7) node {$V^{\sub}_{i+2,i,l}$};

\draw  ({1.5*1/4},0) node {$V^{\sub}_{i-1,i-2,l}$};
\draw  ({0.5*1/4},0) node {$V^{\sub}_{i,i-1,l}$};
\draw  ({-0.5*1/4},0) node {$V^{\sub}_{i+1,i,l}$};
\draw  ({-1.5*1/4},0) node {$V^{\sub}_{i+2,i+1,l}$};

\draw  ({2*1/4},0.6) node {$V^{\sub}_{i-2,l}$};
\draw  ({1*1/4},0.6) node {$V^{\sub}_{i-1,l}$};
\draw  ({0*1/4},0.6) node {$V^{\sub}_{i,l}$};
\draw  ({-1*1/4},0.6) node {$V^{\sub}_{i+1,l}$};
\draw  ({-2*1/4},0.6) node {$V^{\sub}_{i+2,l}$};
\end{tikzpicture}
\caption{Reconstruction of the Stokes filtration of type $\Theta_{a,b}$ from the subdominant hyperplanes $V^{\sub}_{i,l}$ with $l$ even. Each connected component in the complement of the Stokes diagram corresponds to a piece $V^{\sub}_{i,i',l}$ of the filtration. Each piece is obtained as the intersection of the two pieces that lie just above it, and in turn is the intersection of all the subdominant hyperplanes lying above it.}
\label{fig:reconstruction_filtration_subdominant}
\end{figure}

As for the recessive case, the reconstruction of the filtration is easily understood from Fig.~\ref{fig:reconstruction_filtration_subdominant}.  Each connected component of the complement of the Stokes diagram corresponds to a subspace $V^{\sub}_{i,i',l}$. For any direction $d$ which is not a Stokes direction, the filtration $F_d$ is then obtained by taking the connected components intersected by the vertical line corresponding to $d$, going from bottom to top. The difference with the recessive case is that instead of taking sums of pieces of lower dimension to get pieces of higher dimension in the filtration, this time we take intersections of pieces of higher dimension to get the pieces of lower dimension. 

\begin{proof}
We proceed in a very similar way as for the proof of Theorem~\ref{thm:reconstruction_stokes_filtrations_recessive}. First, if $(V, F)$  is a Stokes filtered local system of type $\Theta_{a,b}$, we obtain that the filtration $F$ is reconstructed from the recessive pieces $V^{\sub}_{i,l}$  in the statement of the theorem by descending induction on the dimension $a-k$ of the pieces of the filtration, using Lemma~\ref{lemma:local_situation_crossing} (in particular the relation $V_{--}=V_{+-}\cap V_{-+}$) applied to crossings of the form $c_{i+k, i, l}$ with $l$ even to pass from $k$ to $k+1$. In particular, this proves that $\Phi^{\sub}_{a,b}$ is well-defined and that $\Psi^{\sub}_{a,b}\circ\Phi^{\sub}_{a,b}\cong\Id_{\mathbf{SF}_{a,b}}$.

It then remains to show that, if $(V^*, (V_1^0, \dots, V^0_{b}))\in \mathbf P_{a,b}$, then its image $(V,F)$ by $\Psi^{\sub}_{a,b}$ is a Stokes filtered local system of type $\Theta_{a,b}$ with subdominant hyperplanes $V^{\sub}_{i,l}$. For this, one considers the lines 
\[
V^{\sub}_{i+a-2,i,l}=V^{\sub}_{i,l}\cap\dots \cap V^{\sub}_{i+a-2,l}.
\]
given by intersections of $a-1$ consecutive subdominant subspaces. They define an element in $\mathbf P_{a,b}$, and one obtains a grading that splits the filtration exactly as in the proof of Theorem~\ref{eq:reconstruction_stokes_filtration_recessive} using these lines as graded pieces (in particular the lines give the recessive lines of $(V,F)$). 
\end{proof}

\subsection{Relation between recessive and subdominant subspaces}

As a by-product, we obtain explicit relations between the subspaces $V^{\rec}_{i,i',l}$ for $l$ odd, built from sums of consecutive recessive lines, and the subspaces $V^{\sub}_{i,i',l}$ for $l$ even, built from intersections of consecutive subdominant hyperplanes.  

\begin{corollary}
\label{corollary:relation_pieces_from_recessive_subdominant}
Keeping previous notations, for any $(i,i',l)$ with $0\leq i-i'\leq a-2$, one has
\begin{equation*}
\begin{array}{cccc}
V^{\rec}_{i,i',l} &=& V^{\sub}_{i'+a-1, i+1, l+1}   & \text{ for $l$ odd,}\\
V^{\sub}_{i,i',l} &=& V^{\rec}_{i'+a-1, i+1, l+1} & \text{ for $l$ even.}
\end{array}
\end{equation*}
In particular, for the recessive and subdominant subspaces, we get
\begin{equation*}
\begin{array}{cccc}
V^{\rec}_{i,l}&= &V^{\sub}_{i+a-1, i+1, l+1}   & \text{ for $l$ odd,}\\
V^{\sub}_{i,l} &= & V^{\rec}_{i+a-1, i+1, l+1} & \text{ for $l$ even.}
\end{array}
\end{equation*}
\end{corollary}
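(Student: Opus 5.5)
The plan is to compare the two explicit descriptions of one and the same Stokes filtration. By Theorems~\ref{thm:reconstruction_stokes_filtrations_recessive} and \ref{thm:reconstruction_stokes_filtrations_subdominant}, $\Psi^{\rec}_{a,b}\circ\Phi^{\rec}_{a,b}\cong \Id$ and $\Psi^{\sub}_{a,b}\circ\Phi^{\sub}_{a,b}\cong\Id$. So for a given $(V,F)\in\mathbf{SF}_{a,b}$, every filtered piece $F_d(\mathtt i)$ can be written in two ways: as some $V^{\rec}_{i,i',l}$ with $l$ odd, and as some $V^{\sub}_{j,j',l'}$ with $l'$ even. In both descriptions, each connected component of the complement of the unfolded Stokes diagram (Figs.~\ref{fig:reconstruction_filtration_recessive} and \ref{fig:reconstruction_filtration_subdominant}) carries a single subspace, namely $F_{d'}(q'')$ for any non-Stokes direction $d'$ crossing that component, with $q''$ the strand directly below it. The statement therefore reduces to a purely combinatorial claim: the component labelled $V^{\rec}_{i,i',l}$ in the recessive picture is the component labelled $V^{\sub}_{i'+a-1,i+1,l+1}$ in the subdominant picture.

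The first step is a dimension check, which also fixes the shape of the matching. $V^{\rec}_{i,i',l}$ is a sum of $i-i'+1$ consecutive recessive lines that span a subspace of that dimension (any $a$ consecutive lines span $V$). $V^{\sub}_{i'+a-1,i+1,l+1}$ is an intersection of $(i'+a-1)-(i+1)+1=a-(i-i'+1)$ consecutive subdominant hyperplanes. Both are thus of dimension $k\defeq i-i'+1$.

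The second step, the interior case $1\le i-i'\le a-3$, uses the crossings. Each bounded component of the complement is a ``diamond'' with one crossing at its bottom vertex and one at its top vertex, lying at the same Stokes direction. In the recessive reconstruction the component is labelled by its bottom crossing: by Lemma~\ref{lemma:local_situation_crossing} ($V_{++}=V_{+-}+V_{-+}$), the region directly above $c_{i,i',l}$ (decreasing labelling, $l$ odd) is $V^{\rec}_{i,i',l}$. In the subdominant reconstruction it is labelled by its top crossing: by the same lemma ($V_{--}=V_{+-}\cap V_{-+}$), the region directly below $c_{j,j',l+1}$ (increasing labelling) is $V^{\sub}_{j,j',l+1}$. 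Reading off the heights $\nu(i,i',l)$ from Lemma~\ref{lemma:structure_stokes_directions}, the crossing directly above $c_{i,i',l}$ in the same column is $c_{i+1,i'-1,l}$. By Lemma~\ref{lemma:crossings_stokes_digram}, $c_{i+1,i'-1,l}=c_{i'-1+a,\,i+1,\,l+1}$, so $j=i'+a-1$ and $j'=i+1$, which is the claimed identity.

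The boundary cases are where the care is needed, and I expect this bookkeeping to be the main obstacle. When $i=i'$, the region $V^{\rec}_{i,l}$ has no crossing below it: it is the region just above the lowest strand near $\theta_{i,l}$. When $i-i'=a-2$, the region $V^{\sub}_{j,l+1}$ has no crossing above it: it is the region just below the top strand near $\theta_{j,l+1}$. In these cases I would instead use the remaining crossing together with the explicit formulas for $F^k_d$ in the remarks following the two theorems. For example, for $i=i'$ the region above $V^{\rec}_{i,l}$ is bounded at the top by $c_{i+1,i-1,l}=c_{i+a-1,i+1,l+1}$, which gives $V^{\sub}_{i+a-1,i+1,l+1}$ directly. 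The dual argument handles $i-i'=a-2$. For $a=2$, where only one of these two situations can occur, the statement reduces to $V^{\rec}_{i,l}=V^{\sub}_{i+1,i+1,l+1}$, which is immediate from the fact that the lines are both recessive and subdominant. Along the way I must also check, using the equivalence relation \eqref{eq:equivalence_relation_intervals}, that the labels are invariant under the equivalence relations on triples, so that the identities hold for all $(i,i',l)$ and not just one representative. The statement for $l$ even follows by the symmetric argument, exchanging the roles of sums and intersections (or by applying the result just proved together with Lemma~\ref{lemma:crossings_stokes_digram}).
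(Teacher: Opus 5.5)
Your proposal is correct and takes the same route as the paper. The paper states this corollary without a written proof, as an immediate by-product of comparing the two reconstructions of the same Stokes filtration in Theorems~\ref{thm:reconstruction_stokes_filtrations_recessive} and~\ref{thm:reconstruction_stokes_filtrations_subdominant}, i.e.\ by reading off the recessive and subdominant labels of each connected component of the complement of the Stokes diagram. Your matching of components through the top crossing $c_{i+1,i'-1,l}=c_{i'+a-1,i+1,l+1}$ of Lemma~\ref{lemma:crossings_stokes_digram}, with the cases $i=i'$ and $i-i'=a-2$ treated separately, supplies exactly the bookkeeping the paper leaves implicit, and it agrees with the explicit formulas for $F^k_d$ given in the remarks after those two theorems.
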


\begin{corollary}
\label{corollary:projective_duality_formula}
    The functor $*\defeq \Phi^{\sub}_{a,b}\circ \Psi^{\rec}_{a,b}$ is given by 
\begin{equation*}
\big(V, (V_i)_{i \in \{1, \dots, b\}}\big)\mapsto \big(V^*, \big((V_{i+1} + \dots + V_{i+a-1})^0\big)_{i \in \{1, \dots, b\}}\big).
\end{equation*}
\end{corollary}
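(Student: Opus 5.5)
The plan is to compute $\Phi^{\sub}_{a,b}\circ\Psi^{\rec}_{a,b}$ directly, using Corollary~\ref{corollary:relation_pieces_from_recessive_subdominant}. Start from an object $(V,(V_1,\dots,V_b))\in\mathbf P_{a,b}$ and let $(V,F)\defeq\Psi^{\rec}_{a,b}(V,(V_i))$. By Theorem~\ref{thm:reconstruction_stokes_filtrations_recessive}, its recessive lines are $V^{\rec}_{i,1}=V_i$ for $i\in\{1,\dots,b\}$, and they extend to all $(i,l)$ with $l$ odd via the equivalence relation \eqref{eq:equivalence_relation_intervals}.

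By definition, $\Phi^{\sub}_{a,b}(V,F)=(V^*,((V^{\sub}_{i,0})^0)_{i})$, so the task is to identify the subdominant hyperplanes $V^{\sub}_{i,0}$. Corollary~\ref{corollary:relation_pieces_from_recessive_subdominant} (the case $l$ even) gives
\[
V^{\sub}_{i,0}=V^{\rec}_{i+a-1,i+1,1}=V^{\rec}_{i+1,1}+\dots+V^{\rec}_{i+a-1,1}.
\]
Since $V^{\rec}_{m,1}=V_m$, with indices read modulo $b$ via $V_{m+b}=V_m$, this sum is $V_{i+1}+\dots+V_{i+a-1}$. One should check that this periodic extension agrees with the one coming from Lemma~\ref{lemma:distinguished_intervals_description}: $(m,1)\sim(m+b,1)$ because $b\equiv 0\bmod b$. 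Taking annihilators then gives the stated formula.

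A little care is needed at two points. First, the sum $V_{i+1}+\dots+V_{i+a-1}$ is indeed a hyperplane, since it consists of $a-1$ of $a$ consecutive lines, which span $V$ by the nondegeneracy condition; this is consistent with the corollary. Second, functoriality on morphisms is automatic, because every construction involved (sum, annihilator) is natural in isomorphisms of $V$.

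The only real obstacle is the proof of Corollary~\ref{corollary:relation_pieces_from_recessive_subdominant} itself, if it is not taken as given. That proof compares the two reconstructions: a connected component of the complement of the Stokes diagram is labelled $V^{\rec}_{i,i',l}$ by Fig.~\ref{fig:reconstruction_filtration_recessive} and $V^{\sub}_{i'+a-1,i+1,l+1}$ by Fig.~\ref{fig:reconstruction_filtration_subdominant}. Matching the labels requires the index bookkeeping of Lemma~\ref{lemma:crossings_stokes_digram} ($c_{i,i',l}=c_{i'+a,i,l+1}$), since both descriptions are valid filtrations of the same $(V,F)$. I would invoke that corollary as stated.
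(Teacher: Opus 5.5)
Your proposal is correct and follows the same route as the paper, which gives no separate proof. There the statement is read off from the $l$ even case of Corollary~\ref{corollary:relation_pieces_from_recessive_subdominant} at $l=0$, namely $V^{\sub}_{i,0}=V^{\rec}_{i+1,1}+\dots+V^{\rec}_{i+a-1,1}=V_{i+1}+\dots+V_{i+a-1}$ for $\Psi^{\rec}_{a,b}(V,(V_i))$, and then annihilators are taken as in the definition of $\Phi^{\sub}_{a,b}$, with that corollary itself obtained by comparing the two reconstructions in Theorems~\ref{thm:reconstruction_stokes_filtrations_recessive} and~\ref{thm:reconstruction_stokes_filtrations_subdominant}, just as you describe.
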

This is nothing but projective duality for generic $b$-gons in $(a-1)$-dimensional projective space (see e.g.\ \cite[\S4.4]{morier2014linear}), hence the notation $*$.

The situation is summarised by the following commutative diagram:
\begin{equation}
\begin{tikzcd}
&  & \mathbf P_{a,b}  \arrow[leftrightarrow,dd, "*"] \\
\mathbf{SF}_{a,b} \arrow[rru, "\Phi^{\rec}_{a,b}"] \arrow[rrd, "\Phi^{\sub}_{a,b}", swap] &  & \\
&  & \mathbf P_{a,b}        
\end{tikzcd}
\label{eq:summary_isomorphisms}
\end{equation}

\section{The Gale transform}
\label{sec:gale_transform}

In this section, we briefly recall a few facts on the Gale transform of configurations of points in projective spaces. We refer the reader to \cite{dolgachev1988point,eisenbud2000projective, morier2014linear} for more details. We also introduce a more intrinsic categorical Gale transform that will prove useful later.

\subsection{Classical Gale duality}

 Let $a$ and $b$ be positive integers with $b>a$. We consider configurations of $b$ points $v=(v_1, \dots, v_b)\in (\mathbb P^{a-1})^b$ in the projective space $\mathbb P^{a-1}$. We consider indices modulo $b$, in other words, we define $v_i$ for any $i\in \mathbb Z$ via $v_{i+b}\defeq v_i$.  

\begin{definition}\label{def:GaleDual}
Let $v=(v_1, \dots,v_b)\in (\mathbb P^{a-1})^b$ be a configuration of $b$ points in $\mathbb P^{a-1}$, and let $w=(w_1, \dots,w_b)\in (\mathbb P^{b-a-1})^b$  be a configuration of $b$ points in $\mathbb P^{b-a-1}$. We say that $v$ and $w$ are \emph{Gale dual} if the following condition holds: If $X$ is an $a\times b$ matrix representing\footnote{This means that $X$ is a matrix whose columns are representatives in $\mathbb C^a\smallsetminus\{0\}$ of $v_1,\ldots,v_n$. Such a matrix is hence unique up to multiplication with an invertible diagonal matrix from the right.} $v$ and $Y$ a $(b-a)\times b$ matrix representing $w$, there exists a $b\times b$ invertible diagonal matrix $D$ such that
\[
YD X^\top=0.
\]
\end{definition}

The notion of Gale duality is well-defined, since the validity of the equality $YDX^\top = 0$ only depends on the projective equivalence class of $X$ and $Y$. Moreover, being a nondegenerate $b$-gon is preserved by Gale duality. At the level of moduli spaces, Gale duality induces an algebraic isomorphism
\[
\mathscr G\colon \mathcal P_{a,b} \iso \mathcal P_{b-a,b}
\]
between moduli spaces of nondegenerate $b$-gons in $\mathbb P^{a-1}$ and $\mathbb P^{b-a-1}$, the Gale transform, see e.g.\ \cite{morier2014linear}. 

For our purposes, it will be convenient to slightly reformulate the Gale duality in a more intrinsic way, which is how we will encounter it in the context of the Fourier transform of Stokes data. 

\begin{lemma}
\label{lemma:intrinsic_gale}
Let $V$ be a vector space of dimension $a$, and let $\widehat V$ be a vector space of dimension $b-a$. Moreover, for any $i\in \{1, \dots, b\}$, let $W_i$ be a vector space of dimension $1$. Let us set $W\defeq \bigoplus_{l=1}^b W_i$.  Let $\alpha_i\colon V\to W_i$, and $\beta_i\colon W_i\to \widehat V$ be rank one linear maps (see Fig.~\ref{fig:intrinsic_Gale}) such that 
\[
\sum_{i=1}^b \beta_i\circ \alpha_i=0.
\]
For $i\in \{1, \dots, b\}$, let us denote by $v^0_i$ the annihilator of the hyperplane $V_i\defeq \Ker(\alpha_i)$, viewed as an element of $\mathbb P(V^*)$, and set $\widehat v_i\defeq \Im(\beta_i)$, viewed as an element of $\mathbb P(\widehat V)$. Then the configuration of points $v^0\defeq (v_1^0, \dots, v_a^0)$ and $\widehat v\defeq (\widehat v_1, \dots, \widehat v_{b-a})$, in $\mathbb P(V^*)$ and $\mathbb P(\widehat V)$ respectively, are Gale dual, in the sense that for any choices of bases of $V^*$ and $\widehat V$, the corresponding point configurations in $\mathbb P^{a-1}$ and $\mathbb P^{b-a-1}$ are Gale dual. 
\end{lemma}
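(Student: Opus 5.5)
Choose bases everywhere and translate the intrinsic identity into the matrix identity of Definition~\ref{def:GaleDual}. Fix a basis $e_1,\dots,e_a$ of $V$ with dual basis of $V^*$, a basis of $\widehat V$, and for each $i$ a nonzero vector $w_i\in W_i$, which identifies $W_i\cong\mathbb C$. Since $\alpha_i\colon V\to W_i$ has rank one, it is of the form $\alpha_i(v)=\xi_i(v)\,w_i$ for a unique nonzero linear form $\xi_i\in V^*$. This form vanishes exactly on $V_i=\Ker(\alpha_i)$, so $\xi_i$ spans the annihilator of $V_i$ and is a representative of $v^0_i\in\mathbb P(V^*)$. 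Similarly, $\beta_i$ has rank one, so $y_i\defeq\beta_i(w_i)$ is a nonzero vector spanning $\Im(\beta_i)$, i.e.\ a representative of $\widehat v_i$.

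Next I would let $X$ be the $a\times b$ matrix whose $i$-th column is the coordinate vector of $\xi_i$ in the dual basis, and $Y$ the $(b-a)\times b$ matrix whose $i$-th column is the coordinate vector of $y_i$. For $v\in V$ we have $\beta_i(\alpha_i(v))=\xi_i(v)\,y_i$, so in coordinates the map $\beta_i\circ\alpha_i$ is the rank-one matrix $y_i\,\xi_i^\top$. Summing over $i$ gives
\[
\sum_{i=1}^b \beta_i\circ\alpha_i \;\longleftrightarrow\; \sum_{i=1}^b y_i\,\xi_i^\top = Y X^\top .
\]
The hypothesis therefore says exactly $YX^\top=0$, i.e.\ $YDX^\top=0$ with $D=\mathrm{Id}$ invariably invertible diagonal.

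Finally I would check independence of choices. Changing the generators $w_i$ rescales $\xi_i$ and $y_i$ by inverse scalars. Changing the representatives of the projective points multiplies $X$ and $Y$ on the right by invertible diagonal matrices, which is absorbed into $D$, as remarked after Definition~\ref{def:GaleDual}. Changing the bases of $V^*$ and $\widehat V$ multiplies $X$ and $Y$ on the left by invertible matrices, which preserves the vanishing of $YDX^\top$. So Gale duality holds for every choice of bases.

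There is no real obstacle here; the only point needing care is matching representatives. One must take the annihilator forms $\xi_i$ (not vectors in the $V_i$) for $X$, and recognize that $\beta_i\circ\alpha_i$ is the outer product $y_i\xi_i^\top$. One also notes that the index ranges in the statement should read $v^0=(v^0_1,\dots,v^0_b)$ and $\widehat v=(\widehat v_1,\dots,\widehat v_b)$, both being configurations of $b$ points.
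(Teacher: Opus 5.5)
Your proof is correct and follows essentially the same route as the paper: both choose generators $w_i$ and bases, write $\alpha_i(e_j)=x_{ji}w_i$ and $\beta_i(w_i)=\sum_k y_{ki}\widehat e_k$, and observe that $\sum_i\beta_i\circ\alpha_i$ has matrix $YX^\top$, so the hypothesis gives Gale duality with $D=\mathrm{Id}$. Your explicit check of independence of all choices, and your remark that both configurations should consist of $b$ points (indices running to $b$), are correct refinements of details the paper leaves implicit.
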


The situation is summarised in Fig.~\ref{fig:intrinsic_Gale}.
\begin{figure}[h]
\centering
    \begin{tikzpicture}[scale=0.8]
        \node (A) at (0,0) {$V$};
        \node (B1) at (3,3.5) {$W_1$};
        \node (B2) at (3,1.5) {$W_2$};
        \node (Bb-1) at (3,-1.5) {$W_{b-1}$};
        \node (Bb) at (3,-3.5) {$W_b$};
        \node (Bmid) at (3,0) {$\vdots$};
        \node (C) at (6,0) {$\widehat V$};
        \draw[->] (A) -- node[midway, left=0.1cm, above] {$\alpha_1$} (B1);
        \draw[->] (A) -- node[midway, above] {$\alpha_2$} (B2);
        \draw[->] (A) -- node[midway, above] {$\alpha_{b-1}$} (Bb-1);
        \draw[->] (A) -- node[midway,right=-0.1cm, below] {$\alpha_b$} (Bb);
        \draw[->] (B1) -- node[midway, right=0.1cm, above] {$\beta_1$} (C);
        \draw[->] (B2) -- node[midway, above] {$\beta_2$} (C);
        \draw[->] (Bb-1) -- node[midway, left=0.05cm, above] {$\beta_{b-1}$} (C); 
        \draw[->] (Bb) -- node[midway, right=0.05cm, below] {$\beta_b$} (C);
    \end{tikzpicture}   
    \caption{Intrinsic Gale duality. Consider the situation described in the figure, where the vector space $V$ has dimension $a$, $\widehat V$ has dimension $b-a$, the spaces $W_1, \dots, W_b$ have dimension $1$, the linear maps $\alpha_1, \dots, \alpha_b$, and $\beta_1, \dots, \beta_b$ are of rank $1$ and satisfy $\sum_{i=1}^{b}\beta_i\circ\alpha_i=0$. Then the configuration of lines $\Im(\beta_l)$ in $\widehat V$ is Gale dual to the configuration of lines in $V^*$ given by the annihilators of the hyperplanes $\Ker(\alpha_i)$ in $V$. }
    \label{fig:intrinsic_Gale}
\end{figure}

\begin{proof}
For $i\in\{1, \dots, b\}$, let $w_i\in W_i$ be a vector generating $W_i$. Let us also choose a basis $\mathcal B\defeq (e_1, \dots, e_a)$ of $V$, and a basis $\widehat{\mathcal B}\defeq (\widehat e_1, \dots, \widehat e_{b-a})$ of $\widehat V$. 
For each $i$, let us decompose $\beta_i(w_i)$ in the basis $\widehat{\mathcal B}$:
\[
\beta_i(w_i)=\sum_{k=1}^{b-a} y_{ki} \widehat e_k.
\]
The collection $\widehat v$ is thus represented in $\widehat{\mathcal B}$ by the $(b-a)\times b$ matrix $Y\defeq (y_{ki})_{ki}$. Let us also write for each $i\in \{1, \dots, b\}$ and $j\in \{1, \dots, a\}$.
\[
\alpha_i(e_j)=x_{ji} w_i
\]
so that the collection $v^0$ is represented by the $a\times b$ matrix $X\defeq (x_{ji})_{ji}$ in the basis $(e_1^*, \dots, e_a^*)$ of $V^*$ dual to $(e_1, \dots, e_a)$.
The condition $\sum_{i=1}^b \beta_i\circ \alpha_i=0$ translates into $Y X^\top=0$ and the conclusion follows. 
\end{proof}

\subsection{Categorical Gale transform}

 One can ask further whether Gale duality can be made even more intrinsic, in the following sense:  Given a vector space $V$ of dimension $a$, and a configuration of lines $V_1, \dots, V_b$ in $V$, can one construct from this data in an intrinsic way (i.e.\ without choosing any bases) a new vector space $\widehat V$ of dimension $b-a$ and a collection of $b$ lines in $\widehat V$ that is Gale dual to $(V_1, \dots, V_b)$? It turns out that, if the pair $(V, (V_1, \dots, V_b))$ belongs to $\mathbf P_{a,b}$, there is a simple way to do this, which yields a categorical (or functorial) version of the Gale transform $\mathscr G\colon \mathbf P_{a,b}\to \mathbf P_{b-a, b}$. It can be defined as follows.

\begin{lemma} 
\label{lemma:categorical_gale}
 Let $P\in \mathbf P_{a,b}$, and let us write $P=(V^*, (V^0_1, \dots, V^0_b))$, with $V$ of dimension $a$ and $V_1,\ldots,V_b$ hyperplanes in $V$. (Here, $V_i^0$ denotes again the annihilator of $V_i$ in $V$.)

For $i\in \{1, \dots, b\}$, we set $W_i\defeq V/V_i$, and let $\alpha_i\colon V\to W_i$ be the projection. We also set $W\defeq \bigoplus_{i=1}^b W_i$, and $\alpha\defeq \bigoplus_{i=1}^b \alpha_i\colon V\to W$. We define 
\[
\widehat V\defeq W/\Im(\alpha), 
\]
and, denoting by $\beta_i\colon W_i\to \widehat V$ the composition of the inclusion $W_i\hookrightarrow W$ and the projection $W\to \widehat V$, we set
\[
\widehat V_i\defeq \Im(\beta_i)
\]
for $i\in \{1, \dots, b\}$ (so that we are in the situation of Fig.~\ref{fig:intrinsic_Gale}).
Then the pair
\[
(\widehat V, (\widehat V_1, \dots, \widehat V_b))
\]
defines an object in $\mathbf P_{b-a, b}$.
\end{lemma}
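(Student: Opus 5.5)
We need to show three things: $\dim\widehat V=b-a$, each $\widehat V_i$ is a line, and any $b-a$ cyclically consecutive $\widehat V_i$ span $\widehat V$. The plan is to reduce everything to linear algebra of the map $\alpha\colon V\to W$, using the nondegeneracy of $P$: the annihilators $V^0_i,\dots,V^0_{i+a-1}$ of any $a$ cyclically consecutive hyperplanes span $V^*$. Equivalently, $V_i\cap\dots\cap V_{i+a-1}=0$, so the corresponding $a$ linear forms are a basis of $V^*$.

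\textbf{Step 1 (dimension).} We first show that $\alpha$ is injective. If $v\in\Ker\alpha$, then $v\in\bigcap_{i=1}^b V_i\subset V_1\cap\dots\cap V_a=0$. Hence $\Im(\alpha)\cong V$ has dimension $a$, and since $\dim W=b$ we get $\dim\widehat V=b-a$.

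\textbf{Step 2 (spanning).} Fix $i$ and let $S=\{i,\dots,i+b-a-1\}$ (indices mod $b$) be a block of $b-a$ cyclically consecutive indices. Its complement $S^c$ is a block of $a$ consecutive indices. The images $\widehat V_j$ for $j\in S$ span $\widehat V$ if and only if $W_S+\Im(\alpha)=W$, where $W_S\defeq\bigoplus_{j\in S}W_j$. By dimension count this holds if and only if $W_S\cap\Im(\alpha)=0$. Now $\alpha(v)\in W_S$ means $\alpha_j(v)=0$ for all $j\in S^c$, i.e.\ $v\in\bigcap_{j\in S^c}V_j$, which is $0$ by nondegeneracy. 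So the span condition holds.

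\textbf{Step 3 (lines).} It remains to check that each $\beta_i$ is nonzero, i.e.\ $W_i\not\subset\Im(\alpha)$. Since $b>a$, the index $i$ lies in some block $S$ of $b-a\geq 1$ consecutive indices, and $W_S\cap\Im(\alpha)=0$ by Step 2. Hence $W_i\cap\Im(\alpha)=0$ and $\widehat V_i=\Im(\beta_i)$ is a line.

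The argument is elementary; the only point needing care is the complementarity in Step 2, namely that the complement of $b-a$ cyclically consecutive indices is exactly $a$ cyclically consecutive indices. This is what makes the nondegeneracy condition transfer from $\mathbf P_{a,b}$ to $\mathbf P_{b-a,b}$. Finally, $\sum_i\beta_i\circ\alpha_i$ is the composition $V\to W\to\widehat V$, which vanishes by construction. So by Lemma~\ref{lemma:intrinsic_gale} the resulting configuration is Gale dual to $(V^0_i)$, as intended, although this is not needed for membership in $\mathbf P_{b-a,b}$.
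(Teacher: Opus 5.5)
Your proof is correct and follows essentially the same route as the paper: injectivity of $\alpha$ from $\bigcap_i V_i=0$, together with the key observation that $W_S\cap\Im(\alpha)=0$ whenever the complement of $S$ contains $a$ cyclically consecutive indices. The only differences are cosmetic. You conclude spanning by a dimension count, whereas the paper proves linear independence directly. You also deduce injectivity of $\beta_i$ from the block case, whereas the paper gets it from $\bigcap_{j\neq i}V_j=0$.
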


\begin{definition}
Keeping the notations of Lemma \ref{lemma:categorical_gale}, if $P\in \mathbf P_{a,b}$, its categorical Gale transform is 
\[
\mathscr G(P)\defeq (\widehat V, (\widehat V_1, \dots, \widehat V_b))\in \mathbf P_{b-a,b}.
\]
\end{definition}

It follows immediately from Lemma \ref{lemma:intrinsic_gale} that the point configurations in $\mathbb P(V)$ and $\mathbb P(\widehat V)$ defined by $(V_1^0, \dots, V_b^0)$ and $(\widehat V_1, \dots, \widehat V_b)$ respectively are Gale dual, which justifies the terminology. 

\begin{proof}[Proof of Lemma \ref{lemma:categorical_gale}]
  There are several things to check: that $\widehat V$ is of dimension $b-a$, that the subspaces $\widehat V_i$ are of dimension $1$, and that any $(b-a)$ cyclically consecutive $\widehat V_i$ span the whole of $\widehat V$.

To see that $\widehat V$ is of dimension $b-a$, we show that $\alpha$ is injective, so that $\Im(\alpha)$ is of dimension $a$. If $v\in V$, then $\alpha(v)$ is explicitly given by 
   \[
\alpha(v)=(v \mod V_1, \dots, v\mod V_b),
   \]
hence $\Ker(\alpha)=\bigcap_{i=1}^bV_i$, and this intersection is $\{0\}$. Indeed, the fact that any $a$ consecutive lines $V_i^0$ are linearly independent implies that any $a$ consecutive hyperplanes $V_i$ have trivial intersection. 

To see that the subspaces $\widehat V_i$ are of dimension $1$, let us show that $\beta_i$ is injective. If $w_i\in \Ker(\beta_i)$, there exists $v\in V$ such that $v \mod V_i = w_i$, and $v \mod V_j = 0$ for $j\neq i$. This means that $v$ is an element of the intersection $\bigcap_{j\neq i} V_j=\{0\}$, so $v=0$, hence $w_i=0$. 

Finally, let $i\in\mathbb Z$, and let us show that the lines $\widehat V_{i}, \dots, \widehat V_{i+(b-a)-1}$ are linearly independent in $\widehat V$. (Here again, the indices are considered modulo $b$.)
A vanishing linear combination of elements of $\widehat V_{i}, \dots, \widehat V_{i+(b-a)-1}$ lifts to an element of the direct sum
\[
w=(w_i,\ldots, w_{i+(b-a)-1})\in \bigoplus_{j=i}^{i+(b-a)-1} W_j\subset W,
\]
with $w_j\in W_j$ for $j\in \{ i, \dots, i+(b-a)-1\}$, such that $w\in \Im(\alpha)$. This means that there exists $v\in V$ such that $v \mod V_j= w_j$ for $j\in \{i, \dots, i+(b-a)-1\}$, and $v \mod V_j= 0$ for $j\notin \{i, \dots, i+(b-a)-1\}$. Hence $v$ belongs to the intersection $\bigcap_{j=i+(b-a)}^{i+b-1} V_j=\{0\}$, hence $v=0$, and $w_j=0$ for all $j\in \{ i, \dots, i+(b-a)-1\}$. This concludes the proof.
\end{proof}

\section{Fourier $=$ Gale}
\label{sec:fourier=gale}

In this section, we exploit results of T.\ Mochizuki \cite{mochizuki2018stokes} to show that the Fourier transform of connections on the affine line with irregular class $\Theta_{a,b}$ at infinity corresponds to the Gale transform of their recessive/subdominant solutions. 

\subsection{The Fourier transform}
Let us briefly recall a few facts about the Fourier transform of irregular connections on $\mathbb P^1$.
At the level of the Weyl algebra $\mathbb C[z]\langle \partial_z\rangle$, that is, the algebra of differential operators on the affine line $\mathbb A^1$, the Fourier transform is the automorphism of $\mathbb C[z]\langle \partial_z\rangle$ defined by $z\mapsto -\partial_z$, $\partial_z\mapsto z$. Due to the close relation between connections and D-modules, this induces with some restrictions a well-defined transformation at the level of irregular connections on $\mathbb P^1$ (cf.\ e.g.\ \cite[\S2.2]{arinkin2010rigid}): If $(E,\nabla)$ is an irreducible connection on a Zariski open subset $U\subset \mathbb A^1$, excluding the situation where it is a rank one connection with a second-order pole at infinity, there is a well-defined irreducible connection $(\widehat E, \widehat \nabla)$ on a Zariski open subset $\widehat U\subset \mathbb A^1$ that one calls the Fourier transform of $(E,\nabla)$.

At the formal level, the Fourier transform is well understood: The irregular classes and the formal monodromies of $(\widehat E, \widehat \nabla)$ at its singularities can be explicitly obtained from those of $(E,\nabla)$ by the stationary phase formula \cite{malgrange1991equations, garcia2004microlocalization, fang2009calculation, sabbah2008explicit, graham2013calculation}. 

If $(E,\nabla)$ is an algebraic connection on $\mathbb A^1$ (that is, it only has a singularity at infinity), and all its active Stokes circles at infinity are of slope $>1$, then $(\widehat E,\widehat \nabla)$ also has these properties. In particular, the situation that we are considering here fits into this case. Explicitly, the active Stokes circles of $(\widehat E,\widehat \nabla)$ are obtained from those of $(E,\nabla)$ by a Legendre transform (see, for instance, \cite[\S5]{doucot2025topological} for more details): If $q(z)$ is an exponential factor of $(E,\nabla)$ as in \eqref{eq:def_exp_factor}, it determines an exponential factor $\widehat q(w)$ of $(\widehat E,\widehat \nabla)$, where $w$ denotes the dual complex coordinate, via the relations
\begin{align*}
    w&=\frac{dq}{dz},\\
    \widehat q(w)&=q(z(w))-z(w) w.
\end{align*}
(One uses the first equation to express $z$ as a function of $w$ and then the second equation to obtain $\widehat q(w)$.)
This induces a homeomorphism $\mathcal L\colon \cir{q}\simeq\cir{\widehat{q}}$ between the corresponding Stokes circles.

In our situation, we have:

\begin{lemma}
\label{lemma:legendre_transform}
Let $(E,\nabla)$ be an irreducible algebraic connection on $\mathbb A^1$ with irregular class $\Theta_{a, b}=\sum_{i=0}^{p-1} \cir{e^{\frac{2\pi\sqrt{-1}}{a}i} z^{s/r}}$ at infinity. Then its Fourier transform $(\widehat E, \widehat \nabla)$ is an irreducible algebraic connection on $\mathbb A^1$ with irregular class
\[
\sum_{i=0}^{p-1} \cir{-R\, e^{\frac{2\pi\sqrt{-1}}{b-a}i} w^{\frac{s}{s-r}}}.
\]
where $R\defeq \left( (\frac{r}{s})^{\frac{r}{s-r}}-(\frac{r}{s})^{\frac{s}{s-r}}\right)\in \mathbb R_{>0}$. 

More precisely, writing as before $q_i=e^{\frac{2\pi\sqrt{-1}}{a}i} z^{\frac{s}{r}}$, and setting $\widehat q_i\defeq e^{\frac{2\pi\sqrt{-1}}{b-a}i} w^{\frac{s}{s-r}}$ for $i\in \mathbb Z$, the Legendre transform of the active circle $\cir{q_i}$ of $\Theta_{a, b}$ is $\cir{-R\ \widehat q_{-i}}$, and the corresponding homeomorphism of Stokes circles is given by
\[
\mathcal L([q_i, \theta])=\left[-R \ \widehat{q}_{-i}, \frac{2\pi i}{a}+\frac{b-a}{a}\theta\right],
\]
for any $(i, \theta)\in \mathbb Z\times \mathbb R$.
\end{lemma}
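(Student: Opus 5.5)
The plan is a direct computation with the Legendre transform formulas $w=dq/dz$ and $\widehat q(w)=q(z(w))-z(w)w$, applied to $q=q_i=\omega^i z^{s/r}$ with $\omega\defeq e^{2\pi\sqrt{-1}/a}$. Irreducibility, algebraicity on $\mathbb A^1$ and the fact that all slopes stay $>1$ are taken from the general facts recalled above. For those facts we use the stationary phase formula: slope $s/r>1$ at $\infty$ maps to slope $s/(s-r)>1$ at $\infty$, and no finite singularities are created. So the content of the lemma is the explicit identification of the Stokes circles and of the homeomorphism $\mathcal L$.

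\textbf{Step 1 (formal computation).} First compute
\[
w=\tfrac{s}{r}\,\omega^i z^{\frac{s-r}{r}},
\]
so that $z=\big(\tfrac{r}{s}\omega^{-i}w\big)^{\frac{r}{s-r}}$ for a suitable branch. Substituting gives
\[
\widehat q=\omega^i z^{s/r}-zw=z\,w\Big(\tfrac{r}{s}-1\Big).
\]
This has the form $-c\,\omega^{-ir/(s-r)}w^{\frac{s}{s-r}}$ with
\[
c=\Big(1-\tfrac rs\Big)\Big(\tfrac rs\Big)^{\frac{r}{s-r}}=\Big(\tfrac rs\Big)^{\frac{r}{s-r}}-\Big(\tfrac rs\Big)^{\frac{s}{s-r}}=R.
\]
Next use $\omega^{-ir/(s-r)}=e^{-2\pi\sqrt{-1}\,i r/(a(s-r))}=e^{-2\pi\sqrt{-1}\,i/(b-a)}$, which holds because $a(s-r)/r=p(s-r)=b-a$. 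This gives $\widehat q=-R\,\widehat q_{-i}$, up to the choice of branch. Since $\widehat q_{-i}$ and $\widehat q_{-i+(b-a)}$ agree and circles are indexed mod $p$, the circles $\cir{-R\widehat q_{-i}}$ for $i\in\{0,\dots,p-1\}$ run over $\sum_{i}\cir{-R\,\widehat q_{i}}$. Here I would also check that ramification orders match: $\widehat q$ has ramification $s-r$, so the rank is $p(s-r)=b-a$, as expected.

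\textbf{Step 2 (the map on directions).} To get the explicit homeomorphism, track arguments. For $z=Re^{\sqrt{-1}\theta}$ on the branch of $q_i$ fixed by our determination of $\log$, we get
\[
\arg w=\frac{2\pi i}{a}+\frac{s-r}{r}\theta=\frac{2\pi i}{a}+\frac{b-a}{a}\theta.
\]
Then verify that with this choice of $\arg w$ the determination of $w^{s/(s-r)}$ used in $[\,-R\widehat q_{-i},\cdot\,]$ reproduces exactly the value $\widehat q$ computed in Step 1. Consistency with the equivalence relation $\sim$ is checked via monodromy. The shift $\theta\mapsto\theta+2\pi r$, which is the full period on $\cir{q_i}$, maps to a shift of $2\pi(s-r)$, which is the full period on the dual circle. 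Likewise $i\mapsto i+b$, with $q_{i+b}=\rho(q_i)$, corresponds to $\theta\mapsto\theta+2\pi$, and both sides shift consistently. This shows that $\mathcal L$ is a well-defined homeomorphism of circles.

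\textbf{Main obstacle.} The hard part is the branch bookkeeping in Step 2: ensuring that the phase of $(\omega^{-i}w)^{r/(s-r)}$ defined via $\arg w$ is exactly the determination giving $\widehat q_{-i}$ with the conventions of the Notation, not $\widehat q_{-i+k}$ for some other $k$. I would handle this by writing everything in terms of $\theta$ and checking the identity
\[
\arg\big(-R\,\widehat q_{-i}(w)\big)=\pi-\frac{2\pi i}{b-a}+\frac{s}{s-r}\arg w
\]
directly against $\arg(\widehat q)=\pi+\arg z+\arg w$. The $i$-dependent terms agree because $\frac{2\pi i}{a}\cdot\frac{s}{s-r}-\frac{2\pi i}{b-a}=\frac{2\pi i}{a}$.
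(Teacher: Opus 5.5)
Your proposal is correct and follows the same route as the paper, whose proof is just ``a direct computation of the Legendre transform'' resting on the stationary phase facts recalled before the lemma. Your Steps 1--2 carry out that computation explicitly: the value of $R$, the identity $a(s-r)/r=b-a$, and the argument check $\frac{2\pi i}{a}\cdot\frac{s}{s-r}-\frac{2\pi i}{b-a}=\frac{2\pi i}{a}$ that fixes the branch and yields the formula for $\mathcal L$. Once the germs are matched at every $(i,\theta)$, your separate well-definedness checks are automatic.
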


\begin{proof}
    This follows from a direct computation of the Legendre transform. 
\end{proof}

In other words, up to multiplication of every exponential factor by the positive number $R$ (which does not change the Stokes diagram), the irregular class of the Fourier transform is
\[
\widehat\Theta_{b-a, b}\defeq \sum_{i=0}^{p-1} \cir{- e^{\frac{2\pi\sqrt{-1}}{b-a}i} w^{\frac{s}{s-r}}}.
\]
To simplify the notation, we rescale the dual complex coordinate $w$ by a positive real number to have $\mathcal L(\cir{q_i})= \cir{-\widehat q_{-i}}$.  

The structure of the Stokes diagrams and Stokes filtrations for the irregular class $\widehat\Theta_{b-a,b}$ is understood completely analogously to the one of $\Theta_{a,b}$ that we discussed in detail in \S\ref{sec:symmetric_irreg_classes} (indeed, $\widehat\Theta_{b-a, b}$ is obtained from $\Theta_{b-a, b}$ by adding a minus sign in front of all its exponential factors, which amounts to rotating the Stokes diagram by $\pi$). In particular, we have the following description of the distinguished intervals.

\begin{lemma}
\label{lemma:distinguished_intervals_fourier_transform}
The distinguished intervals of $\widehat\Theta_{b-a,b}$ are all the intervals of the form 
\begin{equation*}
    \widehat J_{i, l}\defeq \Sect_{-\widehat q_i}(\widehat\theta_{i,l},\frac{s-r}{s}\pi), \qquad (i, l)\in \mathbb Z^2,
\end{equation*}
where
\begin{equation*}
    \widehat\theta_{i, l}\defeq \frac{2\pi}{b}\left(-i+l\,\frac{b-a}{2}\right).
\end{equation*}
Here, the interval $\widehat J_{i,l}$ is increasing (for $-\widehat q_i$) if $l$ is odd, and decreasing if $l$ is even. For $(i,l), (i',l')\in \mathbb Z^2$, we have $J_{i,l}=J_{i',l'}$ if and only if  $(i,l)\sim (i',l')$ where $\sim$ is the equivalence relation defined by
\begin{equation}
(i,l)\sim (i', l') \Longleftrightarrow (l'-l) \text{ is even and } (i'-i)\equiv (b-a)\, \frac{l'-l}{2} \mod b.
\label{eq:equivalence_relation_intervals_fourier}
\end{equation}

\end{lemma}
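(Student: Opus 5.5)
The plan is to reduce to the single-circle computation of Lemma~\ref{lemma:distinguished_intervals_one_circle} and then copy the bookkeeping in the proof of Lemma~\ref{lemma:distinguished_intervals_description}. First, I would write the Stokes circles of $\widehat\Theta_{b-a,b}$ as $\cir{-\widehat q_i}$ with $-\widehat q_i=\lambda_i w^{s/(s-r)}$, where $\lambda_i=-e^{\frac{2\pi\sqrt{-1}}{b-a}i}$. Its argument is $\varphi_i=\pi+\frac{2\pi i}{b-a}$. Since $\gcd(s,s-r)=1$ and $s>s-r$, Lemma~\ref{lemma:distinguished_intervals_one_circle} applies with the pair $(r,s)$ replaced by $(s-r,s)$. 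It shows that the distinguished intervals of $\cir{-\widehat q_i}$ are $\Sect_{-\widehat q_i}\big(-\frac{s-r}{s}\varphi_i+l'\frac{s-r}{s}\pi,\frac{s-r}{s}\pi\big)$, and that they are increasing for $l'$ even and decreasing for $l'$ odd.

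Next I would compute the centres. Using $\frac{s-r}{s}=\frac{b-a}{b}$, I get
\[
-\frac{s-r}{s}\varphi_i+l'\frac{s-r}{s}\pi=\frac{2\pi}{b}\Big(-i+(l'-1)\frac{b-a}{2}\Big)=\widehat\theta_{i,l'-1}.
\]
Setting $l=l'-1$ therefore identifies the distinguished intervals with the $\widehat J_{i,l}$. The shift by one flips the parity, so $\widehat J_{i,l}$ is increasing exactly when $l$ is odd, as claimed. This parity shift is the only point where the statement visibly differs from Lemma~\ref{lemma:distinguished_intervals_description}; it comes from the minus sign, which rotates the diagram by $\pi$.

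Finally I would establish the identification criterion. Using the convention $[q,\theta+2\pi]=[\rho(q),\theta]$, I would first note that $\rho(-\widehat q_i)=-\widehat q_{i+b}$ and that $-\widehat q_i=-\widehat q_{i'}$ if and only if $i\equiv i'\bmod (b-a)$. Two intervals coincide exactly when their centres $[-\widehat q_i,\widehat\theta_{i,l}]$ and $[-\widehat q_{i'},\widehat\theta_{i',l'}]$ define the same point of $\mathcal I$ and they have the same monotonicity type. The latter forces $l'-l$ to be even.

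Writing $l'-l=2m$, I would then check the two directions. The difference of centres is $\frac{2\pi}{b}\big(-(i'-i)+m(b-a)\big)$. If $i'-i=m(b-a)+kb$, then the centres differ by $-2\pi k$, and $\rho^{-k}$ maps $-\widehat q_{i'}$ to $-\widehat q_{i'-kb}=-\widehat q_{i+m(b-a)}=-\widehat q_i$. Conversely, equality of the points forces the angle difference to be $2\pi k$ for some integer $k$, which gives $i'-i\equiv m(b-a)\bmod b$.

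This last step is the main obstacle: the deck-transformation index bookkeeping is where sign or index errors are most likely. Otherwise the argument is routine.
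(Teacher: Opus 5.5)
Your proposal is correct and follows the paper's route. The paper's proof says only that the lemma is obtained exactly as Lemma~\ref{lemma:distinguished_intervals_description}: apply Lemma~\ref{lemma:distinguished_intervals_one_circle} to each circle $\cir{-\widehat q_i}$, with $(r,s)$ replaced by $(s-r,s)$, and then determine when two labels $(i,l)$ give the same interval. Your write-up supplies the details the paper leaves implicit: the centre computation, including the shift $l=l'-1$ coming from $\arg(-1)=\pi$ that produces the parity flip, and the check of the equivalence relation via $\rho(-\widehat q_i)=-\widehat q_{i+b}$ and $-\widehat q_i=-\widehat q_{i'}\iff i\equiv i'\bmod (b-a)$.
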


\begin{proof}
    This is obtained exactly as in Lemma \ref{lemma:distinguished_intervals_description}.
\end{proof}

Moreover, the Legendre transform induces a bijection between the distinguished intervals on both sides of the Fourier transform \cite[\S5.2]{doucot2025topological}, which exchanges decreasing intervals (of $\cir{q_i}$) and increasing ones (of $\cir{-\widehat{q}_{-i}}$) and vice versa. In our situation, we have the following result. An explicit example is given in Fig.~\ref{fig:legendre_distinguished_intervals}.

\begin{lemma}
The Legendre transform induces $\mathcal L(J_{i,l})=\widehat{J}_{-i, l}$, and $\mathcal L ([q_i, \theta_{i,l}])=[-\widehat q_{-i}, \widehat \theta_{-i,l}]$. 
\end{lemma}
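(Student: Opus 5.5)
The plan is a direct computation: apply the explicit homeomorphism $\mathcal L$ of Lemma~\ref{lemma:legendre_transform}, after the rescaling $\mathcal L(\cir{q_i})=\cir{-\widehat q_{-i}}$, to the centre $[q_i,\theta_{i,l}]$ of $J_{i,l}$. Then use that $\mathcal L$ is a homeomorphism of Stokes circles which maps distinguished intervals to distinguished intervals (as recalled from \cite[\S5.2]{doucot2025topological}). From this it follows that the interval containing the image centre is the image interval.

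First, the point computation. By Lemma~\ref{lemma:legendre_transform},
\[
\mathcal L([q_i,\theta_{i,l}])=\Big[-\widehat q_{-i},\ \tfrac{2\pi i}{a}+\tfrac{b-a}{a}\theta_{i,l}\Big].
\]
Substituting $\theta_{i,l}=\frac{2\pi}{b}(-i+l\frac a2)$ gives
\[
\tfrac{2\pi i}{a}-\tfrac{2\pi i(b-a)}{ab}+\tfrac{2\pi}{b}\, l\,\tfrac{b-a}{2}
=\tfrac{2\pi i}{b}+\tfrac{2\pi}{b}\, l\,\tfrac{b-a}{2}=\widehat\theta_{-i,l},
\]
since $\frac{1}{a}-\frac{b-a}{ab}=\frac1b$. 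This proves the second claim exactly. I will check that the rescaling of $w$ by a positive real does not affect the angular coordinate, which holds since only the modulus is rescaled.

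Second, the intervals. The map $\theta\mapsto \frac{2\pi i}{a}+\frac{b-a}{a}\theta$ is affine with positive slope, so it sends $\Sect_{q_i}(\theta_{i,l},\frac rs\pi)$ to the sector of $\cir{-\widehat q_{-i}}$ centred at $\widehat\theta_{-i,l}$ with length $\frac{b-a}{a}\cdot\frac rs\pi=\frac{s-r}{s}\pi$. By Lemma~\ref{lemma:distinguished_intervals_fourier_transform} this is exactly $\widehat J_{-i,l}$. As a consistency check, the monotonicity types should match the expected exchange: $J_{i,l}$ is increasing iff $l$ is even, while $\widehat J_{-i,l}$ is increasing iff $l$ is odd. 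This agrees with the general fact that $\mathcal L$ swaps increasing and decreasing intervals. One can also verify it directly from the sign of $\Re(-\widehat q_{-i})$ at the centre.

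The only delicate point is well-definedness modulo the equivalence relations of Lemmas~\ref{lemma:distinguished_intervals_description} and~\ref{lemma:distinguished_intervals_fourier_transform}, together with the identification $[q,\theta+2\pi]=[\rho(q),\theta]$. The formula for $\mathcal L$ is stated for all $(i,\theta)\in\mathbb Z\times\mathbb R$, so it is compatible with these identifications. I would nevertheless verify one case. If $(i,l)\sim(i+a\frac{m}{2},l+m)$ with $m$ even, the target pair is $(-i-a\frac m2,l+m)$. This should be equivalent to $(-i,l)$ under \eqref{eq:equivalence_relation_intervals_fourier}, which requires $-a\frac m2\equiv(b-a)\frac m2 \mod b$. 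That congruence holds because $b\frac m2\equiv 0$. I expect this bookkeeping, rather than any conceptual step, to be the main (mild) obstacle.
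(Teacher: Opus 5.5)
Your proposal is correct and takes the same route as the paper, whose proof simply says that the lemma follows directly from the explicit formula for $\mathcal L$ in Lemma~\ref{lemma:legendre_transform}. Your substitution of $\theta_{i,l}$, the observation that the affine angular map rescales lengths by $\frac{b-a}{a}$ (so that $\frac{b-a}{a}\cdot\frac{r}{s}\pi=\frac{s-r}{s}\pi$), and your compatibility check with the equivalence relations are exactly the details behind that one-line argument.
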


\begin{proof}
This follows directly from Lemma \ref{lemma:legendre_transform}.
\end{proof}

\begin{figure}[h]
\centering
		\begin{tikzpicture}[scale=1.7]
        \begin{scope}
			\draw[dotted] (0,0) circle (1);
			\draw[domain=0:(2*360),scale=1,samples=1000] plot (\x:{exp(cos(-5/2*\x)/exp(1))});
            \foreach \i  in {0, 1, ..., 9}
            \draw[teal] ({-\i*72}:{exp(0.7*cos(-5/2*\i*72)/exp(1))}) node {\tiny{$\i$}};
            \draw[<-, bend right] (40:2) to (50:2);
            \draw (45:2.2) node {$\theta$};
            
            \draw ({-72*0}:{exp((-2.1)/exp(1))}) node {$J_{1,1}$};
            \draw ({-72*1}:{exp((-2.1)/exp(1))}) node {$J_{0,1}$};
            \draw ({-72*2}:{exp((-2.1)/exp(1))}) node {$J_{4,1}$};
            \draw ({-72*3}:{exp((-2.1)/exp(1))}) node {$J_{3,1}$};
            \draw ({-72*4}:{exp((-2.1)/exp(1))}) node {$J_{2,1}$};

            \draw ({-72*0}:{exp((1.4)/exp(1))}) node {$J_{0,0}$};
            \draw ({-72*1}:{exp((1.4)/exp(1))}) node {$J_{4,0}$};
            \draw ({-72*2}:{exp((1.4)/exp(1))}) node {$J_{3,0}$};
            \draw ({-72*3}:{exp((1.4)/exp(1))}) node {$J_{2,0}$};
            \draw ({-72*4}:{exp((1.4)/exp(1))}) node {$J_{1,0}$};
        \end{scope}
        
        \begin{scope}[xshift=5cm]
			\draw[dotted] (0,0) circle (1);
			\draw[domain=0:(3*360),scale=1,samples=1000] plot (\x:{exp(-cos(-5/3*\x)/exp(1))});
            \foreach \i  in {0, 1, ..., 9}
            \draw[teal] ({-\i*108}:{exp(-0.7*cos(-5/3*\i*108)/exp(1))}) node {\tiny{$\i$}};
            \draw[<-, bend right] (40:2) to (50:2);
            \draw (45:2.2) node {$\widehat\theta$};
            
            \draw ({-72*0}:{exp((-2.2)/exp(1))}) node {$\widehat J_{0,0}$};
            \draw ({-72*1}:{exp((-2.2)/exp(1))}) node {$\widehat J_{-1,0}$};
            \draw ({-72*2}:{exp((-2.2)/exp(1))}) node {$\widehat J_{-2,0}$};
            \draw ({-72*3}:{exp((-2.2)/exp(1))}) node {$\widehat J_{-3,0}$};
            \draw ({-72*4}:{exp((-2.2)/exp(1))}) node {$\widehat J_{-4,0}$};
            
            \draw ({108-72*1}:{exp((1.5)/exp(1))}) node {$\widehat J_{-3,1}$};
            \draw ({108-72*2}:{exp((1.5)/exp(1))}) node {$\widehat J_{-4,1}$};
            \draw ({108-72*3}:{exp((1.5)/exp(1))}) node {$\widehat J_{0,1}$};
            \draw ({108-72*4}:{exp((1.5)/exp(1))}) node {$\widehat J_{-1,1}$};
            \draw ({108-72*5}:{exp((1.5)/exp(1))}) node {$\widehat J_{-2,1}$};
		\end{scope}
		\end{tikzpicture}
\caption{Correspondence between distinguished intervals under the Legendre transform, in the case $(a,b)=(2,5)$. The picture on the left is the Stokes diagram for $\Theta_{2,5}$; a distinguished interval is denoted by $J_{i,l}$, with the pair $(i,l)$ unique up to the equivalence relation \eqref{eq:equivalence_relation_intervals}, in particular here $J_{i,l}=J_{i+5, l}=J_{i+2, l+2}$. The picture on the right is the Stokes diagram for $\widehat\Theta_{3,5}$; a distinguished interval is denoted by $\widehat J_{i,l}$, with the pair $(i,l)$ unique up to the equivalence relation \eqref{eq:equivalence_relation_intervals_fourier}, in particular here $\widehat J_{i,l}=\widehat J_{i+5, l}=\widehat J_{i+3, l+2}$. The Legendre transform induces a homeomorphism between $J_{i,l}$ and $\widehat J_{-i,l}$, for any $(i,l)\in \mathbb Z^2$. To help the reader compare the two sides, we also label the distinguished intervals on both sides by integers from 0 to 9, in such a way that intervals related by the Legendre transform have the same label. }
\label{fig:legendre_distinguished_intervals}
\end{figure}

In a completely similar way as for $\Theta_{a,b}$, denoting by $\widehat{\mathbf{SF}}_{b-a,b}$ the groupoid of Stokes filtered local systems with irregular class $\widehat \Theta_{b-a, b}$, by taking respectively the recessive and subdominant pieces, we obtain two equivalences
\begin{align*}
    \widehat \Phi^{\rec}_{b-a, b} &\colon \widehat{\mathbf{SF}}_{b-a,b} \to \mathbf P_{b-a, b}\\
    \widehat \Phi^{\sub}_{b-a, b} &\colon \widehat{\mathbf{SF}}_{b-a,b} \to \mathbf P_{b-a, b}   
\end{align*}
They are defined as follows: If $(\widehat V, F)$ is a Stokes filtered local system with irregular class $\widehat \Theta_{b-a,b}$, it has $b$ lines of recessive subspaces, given by the pieces $\widehat V^{\rec}_{i,l}$ of dimension $1$ of the filtration in the directions $\widehat \theta_{i,l}$, for $(i, l)\in \mathbb Z^2$ with $l$ even. In turn, we define $\widehat \Phi^{\rec}_{b-a, b}$ by 
\[
\widehat \Phi^{\rec}_{b-a, b}(\widehat V, F)\defeq (\widehat V, (\widehat V^{\rec}_{-1, 0}, \dots, \widehat V_{-b, 0}^{\rec})).
\]
Similarly, $(\widehat V, F)$ has $b$ subdominant subspaces, given by the pieces $V^{\sub}_{i, l}$ of dimension $b-a-1$ of the filtration in the directions $\widehat \theta_{i,l}$, for $(i, l)\in \mathbb Z^2$ with $l$ odd, and we define $\widehat \Phi^{\sub}_{b-a, b}$ by 
\[
\widehat \Phi^{\sub}_{b-a, b}(\widehat V, F)\defeq (\widehat V^*, ((\widehat V_{-1, 1}^{\sub})^0 \dots, (\widehat V_{-b, 1}^{\sub})^0)).
\]

As for $\Theta_{a,b}$, we have the following commutative diagram, where $*$ denotes the projective duality:
\begin{equation}
\begin{tikzcd}[column sep=small, row sep=small]
&  & \mathbf P_{b-a,b}  \arrow[leftrightarrow,dd, "*"] \\
\widehat{\mathbf{SF}}_{b-a,b} \arrow[rru, "\widehat{\Phi}^{\rec}_{b-a,b}"] \arrow[rrd, "\widehat{\Phi}^{\sub}_{b-a,b}", swap] &  & \\
&  & \mathbf P_{b-a,b}        
\end{tikzcd}
\label{eq:summary_isomorphisms_final_side}
\end{equation}

\subsection{Stokes data of the Fourier transform}
We can now state and prove our first main result.

\begin{theorem}
\label{thm:fourier=gale}
Let $(E,\nabla)$ be an algebraic connection on $\mathbb A^1$ with irregular class $\Theta_{a,b}$, $\mathbb V$ its Stokes filtered local system, and $P^{\sub}\defeq \Phi^{\sub}_{a,b}(\mathbb V)\in \mathbf P_{a,b}$ the collection of (the annihilators of) its subdominant hyperplanes. Let $(\widehat E,\widehat \nabla)$ be the Fourier transform of $(E,\nabla)$, with (rescaled) irregular class $\widehat\Theta_{b-a, b}$, $\widehat{\mathbb V}$ its Stokes filtered local system, and $\widehat{P}^{\rec}\defeq \widehat{\Phi}_{b-a,b}^\rec(\widehat{\mathbb V})\in \mathbf P_{b-a, b}$ the collection of its recessive lines. There exists a natural isomorphism
\[
\widehat{P}^{\rec}\simeq \mathscr G(P^{\sub}).
\]
\end{theorem}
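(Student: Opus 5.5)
The plan is to realise the Fourier transform topologically via Mochizuki's description of the Stokes structure of the Fourier transform \cite{mochizuki2018stokes}, in the form used in \cite[\S5]{doucot2025topological}. Concretely, I will express the solution space $\widehat V$ of $(\widehat E,\widehat\nabla)$ as a space of rapid-decay relative homology classes (or cycles) for $(E,\nabla)\otimes e^{-zw}$, in a direction $w$ near infinity. For $|w|$ large and a fixed direction of $w$, the function $\Re(q_i(z)-zw)$ has its critical points, via the Legendre relation $w=q_i'(z)$, in distinguished sectors of $\Theta_{a,b}$. Mochizuki's result then gives, uniformly in the direction, an exact sequence
\[
0\to V \xrightarrow{\ \alpha\ } \bigoplus_{i=1}^b V/V^{\sub}_{i,0} \longrightarrow \widehat V \to 0 .
\]
The first map is the natural restriction of global solutions to the ``dominant quotients'' on the $b$ increasing distinguished intervals $J_{i,0}$. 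Heuristically, a global section of $V$ yields a trivial cycle, and a local solution near $\theta_{i,0}$ modulo subdominant ones yields a Lefschetz thimble through the corresponding saddle point. The dimension count $b-a=\dim\widehat V$ serves as a consistency check.

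In order, the steps are as follows.

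\begin{enumerate}
\item Recall from Mochizuki/\cite{doucot2025topological} the description of $\widehat V$ as the cokernel above. I will check that in our setting the relevant local pieces at the increasing intervals $J_{i,0}$ are exactly $V/V_{i,0}^{\sub}$, since the maximal-growth graded piece is one-dimensional.
\item Identify this presentation with Lemma \ref{lemma:categorical_gale}. Set $W_i=V/V^{\sub}_{i,0}$; then $\widehat V\cong W/\Im(\alpha)$, and the annihilators $(V^{\sub}_{i,0})^0$ are precisely the lines of $P^{\sub}$. So $\mathscr G(P^{\sub})=(W/\Im\alpha,(\Im\beta_i)_i)$ matches $\widehat V$ with the lines $\beta_i(W_i)$.
\item Show that $\beta_i(W_i)\subset\widehat V$ is the recessive line $\widehat V^{\rec}_{-i,0}$. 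By the lemma on $\mathcal L(J_{i,l})=\widehat J_{-i,l}$, the increasing interval $J_{i,0}$ corresponds to the decreasing interval $\widehat J_{-i,0}$ of $\widehat\Theta_{b-a,b}$. The thimble attached to the saddle in $J_{i,0}$ has $e^{\widehat q}$-asymptotics with $\widehat q$ the Legendre transform, hence has minimal growth in direction $\widehat\theta_{-i,0}$.
\item Check naturality. All maps are functorial in isomorphisms of $\mathbb V$, and the indexing convention in $\widehat\Phi^{\rec}_{b-a,b}$ (using $\widehat V^{\rec}_{-i,0}$) was chosen to match.
\end{enumerate}

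The main obstacle is step 3, together with making step 1 precise. Mochizuki's theorem is phrased with filtrations on local systems over the real blow-up, and I must check that the image of the local piece over $J_{i,0}$ lands in the \emph{smallest} step of $\widehat F$ at $\widehat\theta_{-i,0}$, not merely in some step. I would first attempt this by a direct growth estimate: the integral $\int e^{-zw}y(z)\,dz$ over the steepest-descent path through the unique saddle in $J_{i,0}$ behaves like $e^{\widehat q_{\mathcal L}(w)}$ times a power. Since each active circle has multiplicity one, this pins down the graded piece. The fact that $\mathcal L$ exchanges increasing and decreasing intervals then gives minimality.

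Because recessive lines are one-dimensional and $\beta_i$ is injective (Lemma \ref{lemma:categorical_gale}), the inclusion $\beta_i(W_i)\subset\widehat V^{\rec}_{-i,0}$ upgrades to equality. This yields the isomorphism $\widehat P^{\rec}\simeq\mathscr G(P^{\sub})$.
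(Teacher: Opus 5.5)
Your proposal follows essentially the same route as the paper. Both use Mochizuki's description of the Stokes structure of the Fourier transform to present $\widehat V$ as $W/\Im(\alpha)$ with $W=\bigoplus_i V/V^{\sub}_{i,0}$, identify the recessive line of $\widehat V$ at the centre of $\widehat J_{-i,0}$ with the image of $V/V^{\sub}_{i,0}$ via the Legendre correspondence $\mathcal L(J_{i,0})=\widehat J_{-i,0}$, and then match this with the categorical Gale transform.

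The one difference is in your step 3. The paper does not use a steepest-descent estimate there; it reads the statement off directly from Mochizuki's Proposition 8.8.5. This uses two observations: for symmetric classes, $H^0(J,L_{J,<0})$ is the recessive line at the centre of a decreasing interval, and Mochizuki's maps $\nu_0^\pm$ realise $\mathcal L$.
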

In brief, in our situation the recessive solutions of the Fourier transform are Gale dual to the subdominant solutions of the initial connection.
\begin{proof}
    We will use all the language prepared above and apply to our situation the description of the Stokes data of the Fourier transform given by T.\ Mochizuki \cite{mochizuki2018stokes}.
    
    Let $V$ be the vector space of global sections of the (constant) local system underlying $\mathbb
	V$. Write $V_1,\ldots,V_b$ for the subdominant subspaces, so that $P^\sub = (V^*,(V_1^0,\ldots,V_b^0))$.
	
	By \cite[\S10.5]{mochizuki2018stokes}, the Stokes filtered local system $\widehat{\mathbb V}$ is naturally isomorphic to the ``extension'' (a terminology introduced in loc.~cit.) of two Stokes filtered local systems, denoted by $\mathfrak{Q}_!^\infty$ and $\mathfrak{Q}_*^\infty$, with respect to a trivial morphism of the $0$-th graded pieces. In our case, one can see that this extension amounts to ``removing'' in either of these local systems the $0$-th graded piece.
	
	Let us consider $\mathfrak{Q}_!^\infty$: In the situation we are looking at (where the irregular class is of pure level $b/a$) and with our notation, the underlying local system is the constant local system on  $\widehat \Sigma$ with global sections given by
	\[V/V_1 \oplus V/V_2 \oplus \ldots\oplus V/V_b\eqdef W,\]
	(see \cite[\S8.8.1]{mochizuki2018stokes}).
	To see this, the key point is to note the following: The objects $H^0(J,L_{J,>0})$ appearing in the direct sum defining $\mathfrak{Q}_!^\infty$ in \cite{mochizuki2018stokes} are defined in \S2.3.3 of loc.~cit.\ by considering those exponential factors which are either increasing or decreasing on the full open sector $J$ and taking the quotient of the sum of the corresponding graded pieces of the Stokes filtration by the subspace generated by the decreasing ones. One can, however, show that for symmetric irregular classes this amounts to taking the quotient of the full vector space by all the non-maximal graded pieces, so these vector spaces are exactly the quotients by the subdominant subspaces.
	
	This local system comes equipped with a filtration at each point on the boundary, and the ``extension'' of $\mathfrak{Q}_!^\infty$ mentioned above is obtained by quotienting by the $0$-th graded piece of this filtration. It follows from \cite[Propositions~8.8.4 and 8.8.5]{mochizuki2018stokes} that the $0$-th graded piece is actually globally a sub-local system of $\mathfrak{Q}_!^\infty$, consisting of the elements of the form $([v],\ldots,[v])$.
    In other words, the local subsystem we have to quotient by is the constant local systems whose global sections are the image of the natural map $\alpha\colon V\to W$ induced by the canonical projections, so the local system underlying $\widehat{\mathbb{V}}$ is the constant local system with global sections $\widehat{V} = W/\Im \alpha$. To see this, one checks that the maps $A_\infty^{J_\pm}$ in \cite{mochizuki2018stokes} coincide with $\alpha$ in the case of one level.
	
 Finally, in our situation, \cite[Proposition 8.8.5]{mochizuki2018stokes} also describes the recessive pieces of the Stokes filtration on $\widehat V$. Notice that, for symmetric irregular classes,  the vector space $H^0(J,L_{J,<0})$ in the notations of \cite{mochizuki2018stokes} corresponds exactly, when $J$ is (the projection to $\partial$ of) a decreasing distinguished interval, to the recessive subspace of the Stokes filtered local system $L$ at the centre of $J$. Using this, in our case, Proposition 8.8.5 in loc.~cit.\ implies that, if $\widehat J=\widehat J_{-i,0}$, with $i\in \{1, \dots, b\}$, is a decreasing distinguished interval for $\widehat \Theta_{b-a, b}$  the recessive subspace $\widehat V_i=\widehat V_{-i,0}^{\rec}$ of $\widehat V$ at the centre of $\widehat J$ is given by the image in $\widehat V$ of the piece $V/V_{i}$ in $W$, where $V_{i}=V^{\sub}_{i,0}$ is the subdominant subspace of $V$ at the centre of the increasing distinguished interval $J=\mathcal L^{-1}(\widehat J)=J_{i,0}$ for $\Theta_{a,b}$ given by the inverse Legendre transform of $\widehat J$. (Note that the maps $\nu_0^+$ and $\nu_0^-$ in loc.~cit.\ describe precisely the isomorphism of circles given by the Legendre transform, see also \cite[\S 5]{doucot2025topological}.)
	
	Comparing this with our definition of the categorical Gale transformation, the collection of all spaces of recessive solutions is exactly the Gale transform of the (annihilators of) the subdominant solutions, as asserted.
\end{proof}

\begin{corollary}
\label{cor:fourier=gale_projective_duality}
    Keeping the notations of Theorem \ref{thm:fourier=gale}, and setting also $P^{\rec}\defeq \Phi_{a,b}^{\rec}(\mathbb V)$ and $\widehat P^{\sub}\defeq \Phi_{a,b}^{\sub}(\widehat{\mathbb V})$, there is a natural isomorphism \[
\widehat{P}^{\sub}\simeq \mathscr G(P^{\rec}).
\]
\end{corollary}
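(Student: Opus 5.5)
The plan is to derive the corollary formally from Theorem~\ref{thm:fourier=gale} and the two commutative triangles \eqref{eq:summary_isomorphisms} and \eqref{eq:summary_isomorphisms_final_side}, together with one compatibility between the categorical Gale transform and projective duality. Concretely, \eqref{eq:summary_isomorphisms_final_side} gives $\widehat P^{\sub}\simeq *(\widehat P^{\rec})$. Theorem~\ref{thm:fourier=gale} then gives $\widehat P^{\sub}\simeq *\,\mathscr G(P^{\sub})$, and \eqref{eq:summary_isomorphisms} gives $P^{\sub}\simeq *(P^{\rec})$. So it suffices to establish a natural isomorphism
\[
*\circ\mathscr G\circ *\;\simeq\;\mathscr G \quad\text{as functors } \mathbf P_{a,b}\to\mathbf P_{b-a,b},
\]
with the index conventions (shifts) made compatible with the labelling of $\widehat\Phi^{\sub}_{b-a,b}$ by $\widehat V^{\sub}_{-i,1}$.

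To prove this compatibility, I would work with the explicit formula of Corollary~\ref{corollary:projective_duality_formula} and the construction of Lemma~\ref{lemma:categorical_gale}. Start from $(V,(V_1,\dots,V_b))$ with lines $V_i$. Its dual $*$ is $V^*$ with lines $H_i^0$, where $H_i=V_{i+1}+\dots+V_{i+a-1}$. The Gale transform of that object then uses $W'=\bigoplus V/H_i$ and $\widehat V'=W'/V$. On the other hand, $\mathscr G(V,(V_i))$ has underlying space $U^*/V^*$ with $U=\bigoplus V_i$, since the Gale transform of lines in $V$ viewed as hyperplanes in $V^*$ is built from $\bigoplus V^*/V_i^0=\bigoplus V_i^*$. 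Dualizing this gives $\ker(\bigoplus V_i\to V)$, the space of linear relations among the recessive vectors. I would construct a natural pairing between $\widehat V'$ and $\ker(\bigoplus V_i\to V)$ and check it is perfect by a dimension count ($b-a$ on both sides) and nondegeneracy, using the spanning condition in $\mathbf P_{a,b}$.

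The remaining step is to identify lines. Under the pairing, the annihilator of a sum of $b-a-1$ consecutive lines $V/H_j$ in $\widehat V'$ should be the line of relations supported on $a+1$ consecutive indices. That line is unique up to scalar, because any $a$ consecutive $V_i$ form a basis and the $(a+1)$-st is a unique combination of them. This line should coincide with the image of the corresponding summand in $\mathscr G(P^{\rec})$.

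I expect the main obstacle to be index bookkeeping rather than mathematics. One must check that the shifts in Corollary~\ref{corollary:projective_duality_formula} (the range $i+1,\dots,i+a-1$), the labelling $\widehat V^{\rec}_{-i,0}$ versus $\widehat V^{\sub}_{-i,1}$ related through Corollary~\ref{corollary:relation_pieces_from_recessive_subdominant} applied on the hat side with $a$ replaced by $b-a$, and the support window of the relations all match exactly. Only then is the isomorphism indexwise $\widehat P^{\sub}_i\simeq\mathscr G(P^{\rec})_i$, rather than off by a cyclic shift. I would first verify the case $(a,b)=(2,5)$ by hand to fix the offsets.

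As a fallback, one could avoid the duality identity altogether by rerunning the proof of Theorem~\ref{thm:fourier=gale} with $\mathfrak Q_*^\infty$ in place of $\mathfrak Q_!^\infty$. Its underlying space is $\ker(\bigoplus V^{\rec}_i\to V)$, which directly exhibits $\widehat V^*$ and the subdominant annihilators as $\mathscr G(P^{\rec})$, by the dual of the same Mochizuki propositions.
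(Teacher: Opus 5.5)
Your reduction is exactly the paper's route: $\widehat P^{\sub}\simeq *(\widehat P^{\rec})\simeq *\mathscr G(P^{\sub})\simeq *\mathscr G *(P^{\rec})$. Everything therefore rests on $*\circ\mathscr G\circ *\simeq\mathscr G$, which the paper simply quotes from \cite[Corollary~4.4.5]{morier2014linear}. The paper's alternative, rerunning Mochizuki's description for the inverse transform, is in the spirit of your fallback.

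Where you go beyond the paper, your verification of that compatibility is off by one dualization. Set $U=\bigoplus_i V_i$ and $K=\ker(U\to V)$. The space of $\mathscr G(P)$ is $U^*/V^*\cong K^*$, and its $i$-th line is spanned by the coordinate functional $c\mapsto c_i$. The space of $*\mathscr G*(P)$ is $\widehat V'^*$. So what you need is a natural isomorphism $\widehat V'\cong K$, not a perfect pairing between $\widehat V'$ and $K$. A pairing identifies $\widehat V'^*$ with $K$, which is the space of $*\mathscr G(P)$. Indeed, the relation lines supported on $a+1$ consecutive indices are precisely the lines of $*\mathscr G(P)$. Your final ``coincidence'' therefore compares a line of $K$ with a line of $K^*$. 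Worse, a natural pairing with the annihilator property you describe would give $*\mathscr G(P)\simeq\mathscr G(P)$ up to relabelling for all $P$. That would make projective duality trivial on $\mathbf P_{b-a,b}$, which is false in general: already for hexagons in $\mathbb P^2$, i.e.\ $(a,b)=(3,6)$, a hexagon inscribed in a conic need not be circumscribed about one.

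The fix uses your ingredient in the right place. Let $\pi_{[j,j+a-1]}\colon V\iso V_j\oplus\dots\oplus V_{j+a-1}\subset U$ be the decomposition given by nondegeneracy. For $[v]\in V/H_j$, put $\delta_j([v])\defeq \pi_{[j,j+a-1]}(v)-\pi_{[j+1,j+a]}(v)$. This map has the following properties:
\begin{itemize}
\item It is well defined, since both decompositions agree on $H_j=V_{j+1}\oplus\dots\oplus V_{j+a-1}$.
\item It lands in $K$ and maps $V/H_j$ isomorphically onto the relation line $R_j$ supported on $\{j,\dots,j+a\}$.
\item $\sum_j\delta_j(v\bmod H_j)=0$ by telescoping over $\mathbb Z/b\mathbb Z$, so $\sum_j\delta_j$ descends to a map $\widehat V'=(\bigoplus_j V/H_j)/V\to K$.
\item This map is onto, since $R_j,\dots,R_{j+b-a-1}$ are independent (their supports are triangular), hence it is an isomorphism.
\end{itemize}
Under this isomorphism the lines of $\mathscr G(*P)$ go to the $R_j$, which is $\mathscr G\circ *\cong *\circ\mathscr G$. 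Moreover, $V/H_{i+1}+\dots+V/H_{i+b-a-1}$ goes to $\{c\in K: c_i=0\}$. Its annihilator in $K^*$ is the image of $V_i^*$, which is the $i$-th line of $\mathscr G(P)$. This gives $*\mathscr G*(P)\cong\mathscr G(P)$ naturally and with no index shift.

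With the paper's labelling by $\widehat V^{\rec}_{-k,0}$ and $\widehat V^{\sub}_{-k,1}$, the hat-side triangle is given by the same formula for $*$: each $\widehat V^{\sub}_{-k,1}$ is the sum of the $\widehat V^{\rec}_{-m,0}$ for $m=k+1,\dots,k+b-a-1$. So the offsets you were worried about do not arise.
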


\begin{proof}
    This follows the fact that the Gale transform commutes with projective duality, see \cite[Corollary 4.4.5]{morier2014linear}. Alternatively, one could obtain the result by repeating all the above constructions for the inverse Fourier transform (and applying the results of \cite{mochizuki2018stokes} in that case).
\end{proof}

We thus arrive at the situation described by Fig.~\ref{fig:fourier_gale_big_diagram}.

\begin{remark}
\label{rem:noncommutative_generalisation_filtrations}
In our discussion we made the simplifying assumption that the multiplicity of all the active Stokes circles is $1$, however the results could be generalised to the ``noncommutative'' case where we allow for higher multiplicities. In this more general situation, the recessive (respectively subdominant) subspaces will not be of dimension (respectively codimension) 1, but the categorical Gale transform could be generalised to this setup, and the results of \cite{mochizuki2018stokes} that we use to prove Theorem \ref{thm:fourier=gale} are valid for any multiplicity. 
\end{remark}

\section{Stokes local systems}
\label{sec:stokes_local_systems}

We now briefly review a few facts about the alternative description of Stokes data in terms of Stokes local systems and Stokes representations, and fix some notations. We refer the reader to \cite{boalch2021topology, doucot2025topological} for more details. 

\subsection{Graded local systems and (minimal) framings}

We first recall the notion of local systems graded by (finite subcovers of) the exponential local system $\mathcal I\to\partial$.

\begin{definition}
Let $\Theta$ be an irregular class, and $I\to \partial$ the corresponding finite subcover of the exponential local system $\mathcal I\to \partial$. A graded local system of type $\Theta$ is a local system of finite-dimensional vector spaces $V^0\to \partial$, together with the data of a direct sum decomposition 
\[
V^0_d=\bigoplus_{\mathtt i\in I_d} V^0_d(\mathtt i)
\]
for each direction $d\in \partial$, such that $\dim V^0_d(\mathtt i)=\Theta(\mathtt i)$ for any $\mathtt i\in I_d$ and the decomposition is locally constant on $\partial$.
\end{definition}

If one fixes a direction $\mathtt d\in \partial$, the monodromy $\rho\in \GL(V^0_{\mathtt d})$ of a graded local system $V^0\to \partial$ around $\partial$ is compatible with the grading, that is, it belongs to the subset
\[
\bigoplus_{\mathtt i\in I_\mathtt d} \Hom(V^0_\mathtt d(\mathtt i), V^0_\mathtt d(\rho(\mathtt i)))\subset \GL(V^0_\mathtt d)
\]
(where, as before, $\rho \colon I_\mathtt d\to I_\mathtt d$ also denotes the monodromy of $I\to \partial$). 

In particular, if $I$ consists of a single Stokes circle $\mathtt I$ with ramification order $r$, writing $\mathtt I_\mathtt d=\{\mathtt i_0, \dots, \mathtt i_{r-1}\}$ with $\rho(\mathtt i_k)=\mathtt i_{k+1}$ for all $k$ (viewing indices modulo $r$), the monodromy is of the form

\begin{equation}
	\rho_\mathtt I=\begin{pmatrix}
		0 & \hdots & \hdots &  0 & \rho_{r-1,0} \\
		\rho_{0,1} & \ddots & & \vdots & 0\\
		0 & \rho_{1,2} & \ddots & \vdots & \vdots \\
		\vdots & \ddots & \ddots & 0 & \vdots\\
		0 & \hdots & 0 & \rho_{r-2,r-1} & 0
	\end{pmatrix}\in \GL(V^0_\mathtt d)
	\label{eq:form_formal_monodromy_one_circle}
\end{equation}
with $\rho_{k,k+1}\in \Hom(V^0_\mathtt d(\mathtt i_{k}), V^0_\mathtt d(\mathtt i_{k+1}))$ for all $k\in \mathbb Z/r\mathbb Z$.

\begin{definition}
Let  $V^0\to\partial$ be a graded local system of type $\Theta$, and $\mathtt d\in \partial$. A framing of $V^0$ at $\mathtt d$ is an isomorphism 
\[
\varphi_\mathtt d\colon V^0_\mathtt d \overset{\sim}{\longrightarrow} \bigoplus_{\mathtt i \in I_\mathtt d} \mathbb C^{\Theta(\mathtt i)}
\]
compatible with the grading, that is, mapping $V^0_\mathtt d(\mathtt i)$  to  $\mathbb C^{\Theta(\mathtt i)}$ for each $\mathtt i\in I_\mathtt d$.
\end{definition}

The set of framings of $V^0$ is a torsor for the group
\begin{equation}
\label{eq:group_H_change_framings}
H\defeq \prod_{\mathtt i\in I_\mathtt d} \GL(\mathbb C^{\Theta(\mathtt i)}).
\end{equation}

\begin{definition}
\label{def:minimal_framing_graded_local_system}
Let $\Theta$ be an irregular class, $I\to\partial$ the corresponding finite subcover of exponential factors and $\mathtt d\in \partial$. Let $V^0\to \partial$ be a graded local system of type $\Theta$.
\begin{itemize}

\item A \emph{minimal structure} on $I_\mathtt d$ is a subset $I_\mathtt d^{\mathrm{min}}\subset I_\mathtt d$ containing exactly one point $\mathtt i^0(\mathtt I)\in \mathtt I_\mathtt d$ for each active Stokes circle $\mathtt I\subset I$.

\item

Given a minimal structure $I^{\mathrm{min}}_\mathtt d$, a framing $\varphi_\mathtt d$ of $V^0$ at $\mathtt d$ is $I_{\mathtt d}^{\mathrm{min}}$-\emph{minimal} if, for any Stokes circle $\mathtt I\subset I$, the monodromy of $V^0$ on $\mathtt I$, as in \eqref{eq:form_formal_monodromy_one_circle} with the choice $\mathtt i_0\defeq \mathtt i^0(\mathtt I)$, is represented via $\varphi_\mathtt d$ by a matrix of the form
\begin{equation}
	(\varphi_\mathtt d\circ\rho\circ\varphi_\mathtt d^{-1})\big|_{\bigoplus_{\mathtt i\in \mathtt I_\mathtt d}\mathbb C^{\Theta(\mathtt i)}}=\begin{pmatrix}
		0 & \hdots & \hdots &  0 & \breve h_{\mathtt I} \\
		1 & \ddots & & \vdots & 0\\
		0 & 1 & \ddots & \vdots & \vdots \\
		\vdots & \ddots & \ddots & 0 & \vdots\\
		0 & \hdots & 0 & 1 & 0
	\end{pmatrix}\in \GL\bigg(\bigoplus_{\mathtt i\in \mathtt I_\mathtt d}\mathbb C^{\Theta(\mathtt i)}\bigg)\simeq \GL_{r\Theta(\mathtt I)}(\mathbb C).
	\label{eq:form_formal_monodromy_minimal}
\end{equation}

\end{itemize}
\end{definition}

Starting from any framing, the $H$-action changing the framings can be used to pass to a minimal framing, hence minimal framings always exist, for any choice of minimal structure. Moreover, the set of minimal framings of $V^0$ is a torsor for the group
\begin{equation}
\label{eq:group_breve_H_change_minimal_framings}
\breve H\defeq \prod_{\mathtt I\in \pi_0(I)} \GL(\mathbb C^{\Theta(\mathtt I)}),
\end{equation}
where the product is over the set of connected components of $I$, i.e.\ over the Stokes circles of $\Theta$. In other words, choosing a minimal framing amounts to choosing for each Stokes circle $\mathtt I$ an isomorphism $V^0_\mathtt d(\mathtt i^0(\mathtt I))\simeq \mathbb C ^{\Theta(\mathtt I)}$.

\subsection{Stokes local systems}

Let us first recall the notion of singular direction. 

\begin{definition}
Let $\Theta$ be an irregular class, and $I$ the corresponding finite subcover. We say that a direction $d=[\theta]\in \partial$ is a \emph{singular direction} (or \emph{anti-Stokes direction}) for $\Theta$ if there exist distinct points $\mathtt i=[q,\theta], \mathtt i'=[q',\theta]\in I_d$, such that the exponential $e^{q-q'}$ has its fastest decay in the direction $d$, or equivalently, if (the leading coefficient in $R$ of) $(q-q')(R\, e^{\sqrt{-1}\theta})$ is eventually in $\mathbb R_{<0}$ as $R\to+ \infty$.

In that case, we also say that there is a \emph{Stokes arrow} from $\mathtt i'$ to $\mathtt i$, and write $\mathtt i'\to_d \mathtt i$.
\end{definition}

We denote by $\mathbb A\subset \partial$ the (always finite) set of singular directions. 

Let $\Theta$ be an irregular class. We introduce a modified Riemann surface $\widetilde{\Sigma}(\Theta)$, obtained as follows (see Fig.~\ref{fig:SLS}):
\begin{itemize}
    \item Consider a tubular neighbourhood $\mathbb H\simeq \partial \times [0,1]$ of the circle $\partial$ in the real oriented blow-up $\widehat \Sigma\to \mathbb P^1$. We call $\mathbb H$ the \emph{halo} and denote by  $\partial'$ its outer boundary circle.
    \item For each singular direction $d\in \mathbb A$, remove from $\widehat \Sigma$ a \emph{tangential puncture} $e(d)$, located on $\partial'$ in the direction $d$. 
\end{itemize}

\begin{definition}
\label{def:stokes_local_system}
Let $\Theta$ be an irregular class, and $I\to\partial$ the corresponding finite subcover of exponential factors. A \emph{Stokes local system} of type $\Theta$ is a local system $\mathcal V$ on the modified surface $\widetilde{\Sigma}(\Theta)$ with the following properties:
\begin{itemize}
    \item The restriction $V^0\defeq \mathcal V|_{\partial}$ of $\mathcal V$ to the boundary circle $\partial$ is a graded local system of type $\Theta$.

\item For each singular direction $d\in \mathbb A$, denoting by $\gamma_d$ a small loop based at $d\in \partial$ 
and going around the tangential puncture $e(d)$ (and no other puncture or nontrivial loop), the monodromy $\rho(\gamma_d)$ of $\mathcal V$ belongs to the subgroup of $\GL(V_d^0)$ given as
\[
\Sto_d\defeq \exp\left(\bigoplus_{\mathtt i'\to_d \mathtt i} \Hom(V^0_d(\mathtt i'), V^0_d(\mathtt i)) \right)\subset \GL(V^0_d)
\]
\end{itemize}

\end{definition}

\begin{theorem}(see \cite{boalch2021topology})
Let $\Theta$ be an irregular class. The categories of Stokes local systems of type $\Theta$ and of Stokes filtered local systems of type $\Theta$ are equivalent.
\end{theorem}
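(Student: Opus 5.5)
The plan is to construct the two functors between the categories and check that their composites are naturally isomorphic to the identities. This is the standard Boalch route, specialised to a single pole at $\infty$ on $\mathbb P^1$.

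\emph{From Stokes local systems to Stokes filtered local systems.} Let $\mathcal V$ be a Stokes local system on $\widetilde\Sigma(\Theta)$. Take its restriction to the inner part of $\widehat\Sigma$; since this region is contractible, this restriction is a constant local system $V$. Now fix a non-Stokes direction $d\in\partial$, and transport the grading $V^0_d=\bigoplus_{\mathtt i} V^0_d(\mathtt i)$ radially inward to $V$. Define
\[
F_d(\mathtt i)\defeq \bigoplus_{\mathtt j\leq_d \mathtt i} V^0_d(\mathtt j).
\]
Between two consecutive singular directions the grading is locally constant, so $F_d$ is locally constant on that arc.

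The key point is what happens when $d$ crosses a singular direction $d'$. The grading then jumps by the Stokes monodromy $\rho(\gamma_{d'})\in\Sto_{d'}$. This element is unipotent, of the form $1+\sum_{\mathtt i'\to_{d'}\mathtt i} u$, where each summand $u$ maps $V^0(\mathtt i')$ into $V^0(\mathtt i)$ and $\mathtt i<\mathtt i'$ on the whole sector in which $d'$ is the direction of fastest decay. That sector contains the non-Stokes directions adjacent to $d'$. Hence $\rho(\gamma_{d'})$ preserves the filtration, so the filtrations on the two sides of $d'$ agree and $F$ is locally constant away from $\mathbb S$. The Stokes condition at each Stokes direction is checked by using a single grading as the splitting on both sides: Stokes directions and singular directions interleave, so near each Stokes direction there is a locally constant grading with no singular direction in between.

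\emph{From Stokes filtered local systems to Stokes local systems.} Given $(V,F)$, I use the standard fact that on each arc of $\partial$ of length slightly more than $\pi/k$ (where $k$ is the slope) containing exactly one singular direction, there exists a grading splitting all filtrations on the arc. This splitting is unique if we require it to be adapted to the arc. I take these canonical splittings on the arcs between consecutive singular directions. This defines the graded local system $V^0$ on $\partial$ minus the singular directions, and hence on all of $\partial$ after gluing by the parallel transport of $I$. The transition between adjacent canonical gradings gives the monodromy around $e(d)$. Because both gradings split the same filtrations on the overlap, this transition is unipotent and lies in $\Sto_d$.

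\emph{Main obstacle.} I expect the hard step to be the existence and uniqueness of these canonical adapted splittings. This is where multisummation, or the Deligne--Malgrange sheaf-theoretic argument, really enters. It is proved by showing that the automorphism sheaf of the filtration has vanishing $H^1$ on such arcs, which requires an arc containing at most one singular direction for each pair of exponential factors. In this proof I would cite it from \cite{boalch2021topology} and not reprove it. The rest is functoriality and checking that the two composites are the identity up to natural isomorphism, which is immediate from the uniqueness of the splittings.
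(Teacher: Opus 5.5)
Your strategy is the standard one behind \cite{boalch2021topology}. The paper itself only cites that reference, sketching the passage from Stokes local systems to filtrations by radial transport of the grading. However, two steps of your proposal are false as written.

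\emph{First step: the Stokes condition.} Stokes directions and singular directions do \emph{not} interleave in general, not even in this paper's setting. For $\Theta_{a,b}$ with $a$ even and $a\geq 4$, every singular direction $\theta^j_{i,l}$ lies in $\frac{\pi}{b}\mathbb Z$, i.e.\ is also a Stokes direction. For instance, for $a=4$ one has $\theta^2_{i,l}=\theta_{i+1,i-1,l+1}$: the arrow from $q_i$ to $q_{i-2}$ sits exactly at the crossing of $q_{i+1}$ and $q_{i-1}$ (see Fig.~\ref{fig:structure_stokes_arrows}). At such a $d$ the grading coming from $\mathcal V$ jumps precisely at the Stokes direction, so your check of the Stokes condition breaks down. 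The repair uses an observation you already made. For each arrow $\mathtt i'\to_d\mathtt i$ one has $\mathtt i<\mathtt i'$ on the whole arc of length $\pi/k$ centred at $d$, where $k$ is the slope of $q-q'$. Hence $\rho(\gamma_d)$ maps each $\bigoplus_{\mathtt j\leq\mathtt i}V(\mathtt j)$ into, and therefore onto, itself for the orders on \emph{both} sides of $d$. So the grading from either adjacent sector splits the filtrations on both sides, which is the Stokes condition at $d$.

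\emph{Second step: the splitting lemma.} In the converse direction your lemma swaps the roles of Stokes and singular directions; the same swap occurs in your $H^1$ criterion. The correct statement is that a splitting of $F$ exists on an arc containing at most one \emph{Stokes} direction of each pair, and is (elementarily) unique if the arc contains at least one. The arcs to use are the supersectors $\widehat U$: take $U=(d_1,d_2)$ between consecutive singular directions and enlarge it by $\frac{\pi}{2k}$ on each side (pairwise if there are several levels). Then $\widehat U$ contains exactly one Stokes direction of each pair, but it contains the two singular directions $d_1,d_2$.

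As you state it, the lemma is false. An arc of length slightly more than $\pi/k$ centred at a singular direction $d$ contains exactly one singular direction. But it also contains both Stokes directions $d\pm\frac{\pi}{2k}$ of the pair carrying the arrow at $d$. For $\Theta_{2,b}$, a splitting on such an arc would force the recessive lines $V^{\rec}_{i+1,l}$ and $V^{\rec}_{i-1,l}$ to coincide.

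The size of $\widehat U$ is also what makes your final step true. Adjacent supersectors overlap along $(d-\frac{\pi}{2k},d+\frac{\pi}{2k})$. A unipotent automorphism preserving all filtrations there can only have components $V(\mathtt i')\to V(\mathtt i)$ with $\mathtt i<\mathtt i'$ on that entire arc, i.e.\ along Stokes arrows at $d$. If you only knew that the two gradings split $F$ on a small neighbourhood of $d$, you would get a group strictly larger than $\Sto_d$ in general. For $\Theta_{3,b}$, where $F_d$ is a full flag at each singular direction, the group of unipotent automorphisms of that flag is $3$-dimensional while $\Sto_d$ is $1$-dimensional.

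With these two corrections your outline is the cited argument.
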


 The idea of this equivalence is essentially as follows: Let $\mathcal V$ be Stokes local system, and $\mathbb V=(V, F)$ the corresponding Stokes filtered local system. Then outside the halo, $\mathcal V$ is identified with the local system $V$. Furthermore, by using the grading on $V^0$ and taking the parallel transport along radial arcs connecting $\partial$ and $\partial'$ in non-singular directions, $\mathcal V$ determines a grading of $V$ in each sector between consecutive singular directions. This \emph{Stokes grading} has the property that its splits the Stokes filtration where it is defined, so it determines the Stokes filtration. 

\subsection{Stokes representations}
\label{subsec:stokes_representations}

Just as a local system can be described by a representation of the fundamental group of the underlying space (and hence, after choosing bases, by explicit monodromy matrices for any loop), a Stokes local system can be described by a \emph{Stokes representation}, which can be explicitly represented by a set of Stokes matrices and a formal monodromy matrix. The latter requires making a few choices, which we now describe in some detail, since we will need to work with explicit Stokes matrix entries later.

\begin{figure}[h]
\centering
	\begin{tikzpicture}[scale=1.1]
        
		\begin{scope}[decoration={markings, mark=at position 0.5 with {\arrow{<}}}]  
        \fill[even odd rule, black!10] (0,0) circle (0.7) (0,0) circle (2);
        \draw [postaction={decorate}] (0,0) circle (0.7);
        \draw[loosely dotted] (0,0) circle (2);
        \draw[dashed] (10:0.7)-- (10:2.9);

		\foreach \x in {72,144,...,360} 
		{\draw[fill=white] ({\x}:2) circle (0.1);
			\draw[blue, ->] ({\x}:2)++(0.25,0) arc (360:0:0.25);}

		\draw (180:0.525) node {$h$};
        \draw (5:0.5) node {\color{blue} $\mathtt d$};
		\draw[blue] (0*72:2.5) node {$S_1$};
		\draw[blue] (-1*72:2.5) node {$S_2$};
        \draw[blue] (-4*72:2.5) node {$S_5$};
        \draw[blue] (-10:1.65) node {$\widetilde{\gamma}_1$};
        \draw[blue] (-53:2.1) node {$\widetilde{\gamma}_2$};
        \draw[blue] (-270:2.1) node {$\widetilde{\gamma}_5$};
        
        \end{scope}

		\begin{scope}[blue,decoration={markings,mark=at position 0.5 with {\arrow{>}}}] 
            \draw (5:0.7) node {$\bullet$};
			\draw[postaction={decorate}] (5:0.7) to [out=0, in=180]($(180:0.25)+(0:2)$);
			\draw[postaction={decorate}] (5:0.7) to [out angle=-18, in angle=40,curve through={(-45:1.7)}]($(40:0.25)+(-1*72:2)$);
			\draw[postaction={decorate}] (5:0.7) to [out angle=-36, in angle=-40,curve through={(-72:1.63)}]($(-40:0.25)+(-2*72:2)$);
			\draw[postaction={decorate}]  (5:0.7) to [out angle=-54, in angle=-100,curve through={(-72:1.47) (-2*72:1.6)}]($(-100:0.25)+(-3*72:2)$);
			\draw[postaction={decorate}] (5:0.7) to [out angle=-72, in angle=180,curve through={(-72:1.3) (-2*72:1.4) (-3*72:1.5)}]($(180:0.25)+(-4*72:2)$);
		\end{scope} 
	\end{tikzpicture}
\caption{Stokes local system and choice of paths for Stokes representations (drawn here for $\Theta_{2,5}$)}
\label{fig:SLS}
\end{figure}

Let $\Theta$ be an irregular class. For simplicity, from now on we make the following assumption.

\begin{assumption}
\label{assumption:multiplicity_one}
    All Stokes circles in $\Theta$ have multiplicity $1$.
\end{assumption}
(In general, Stokes matrices can naturally be subdivided into rectangular blocks whose sizes are determined by the irregular class $\Theta$. Assumption~\ref{assumption:multiplicity_one} guarantees that all these ``blocks'' are of size $1\times 1$, i.e.\ they are simply complex numbers. Without this assumption, in what follows, one would need to think in terms of ``blocks in the Stokes matrix'' instead of ``Stokes matrix entries''.)

We fix the following choices: 

\begin{itemize}
    \item We choose a reference direction  $\mathtt d\in \partial$, and $\vartheta\in \mathbb R$ such that $\mathtt d=[\vartheta]$ and $\mathtt d$ is not a singular direction. This determines a preferred choice of loops generating the fundamental group $\pi_1(\widetilde \Sigma(\Theta), \mathtt d)$, as shown in Fig.~\ref{fig:SLS}: A loop $\widetilde \gamma_d$ going around the tangential puncture $e(d)$, for each singular direction $d\in \mathbb A$, and the circle of directions $\partial$, viewed as a loop based at $\mathtt d$. 

\item We fix a numbering of the elements of the fibre $I_{\mathtt d}$, that is, we choose a bijection $I_{\mathtt d}\simeq\{1, \dots, a\}$. Then, we transport this numbering to the other fibres of $I$ by parallel transport in $I$ (in the positive direction of the loop $\partial$, i.e.\ towards increasing arguments of the variable $z$): For any $\theta\in \mathbb R$, we obtain a well-defined element $\mathtt i_k(\theta)\in I_{[\theta]}$ for $k\in \{1, \dots, a\}$. Conversely, for $\theta\in \mathbb R$ and $\mathtt i\in I_{[\theta]}$, we will write $k=\mathrm{idx}_\theta(\mathtt i)\in \{1, \dots, a \}$ for the index such that $\mathtt i =\mathtt i_k(\theta)$.
\item We choose a minimal structure $I_\mathtt d^{\mathrm{min}}\subset I_\mathtt d$.
\end{itemize}

Once these choices are made, we can speak of (minimally) framed Stokes local systems.

\begin{definition}
\label{def:framed_stokes_local_syst}
A framed Stokes local system with irregular class $\Theta$ is a pair $(\mathcal V, \varphi_\mathtt d)$ where $\mathcal V$ is a Stokes local system of type $\Theta$ and $\varphi_\mathtt d$ is a framing at $\mathtt d$ of its graded local system $V^0$ on $\partial$ that is compatible with our chosen numbering, meaning that $\varphi_\mathtt d(V^0_{\mathtt i_k(\vartheta)})=\mathbb C e_k$, for $k\in \{1, \dots, a\}$ where $(e_1, \dots, e_a)$ denotes the canonical basis of $\mathbb C^a$. Equivalently, via setting $v_k(\vartheta)\defeq\varphi_\mathtt d^{-1}(e_k)$, choosing a framing amounts to choosing a vector $v_k(\vartheta)\in V^0_\mathtt d$ generating $V^0_{\mathtt i_k(\vartheta)}$ for each $k\in \{1, \dots, a\}$).

We say that $(\mathcal V, \varphi_\mathtt d)$ is minimally framed if the framing $\varphi_\mathtt d$ is minimal in the sense of Definition~\ref{def:minimal_framing_graded_local_system}.

\end{definition}

\begin{definition}
\label{def:stokes_representation}
Let $(\mathcal V, \varphi_\mathtt d)$ be a framed Stokes local system with irregular class $\Theta$. The \emph{Stokes representation} associated to $(\mathcal V, \varphi_\mathtt d)$ is the element of $\Hom(\pi_1(\widetilde{\Sigma}(\Theta), \mathtt d), \GL_a(\mathbb C))$ given by
\[
\gamma\mapsto \varphi_\mathtt d\circ \rho(\gamma)\circ \varphi_\mathtt d^{-1},
\]
where $\rho$ denotes the parallel transport in $\mathcal V$.
\end{definition}

For convenience, we will once again abuse notation and use the letter $\rho$ to denote the Stokes representation. (This amounts to not writing explicitly the maps $\varphi_{\mathtt d}$ and $\varphi_{\mathtt d}^{-1}$ in the above.)

The Stokes representation $\rho$ of a framed local system $(\mathcal V, \varphi_\mathtt d)$ can be concretely described by \emph{Stokes matrices} and a \emph{formal monodromy matrix}, which are obtained using the paths $\widetilde \gamma_d$ for $d\in \mathbb A$ and $\partial$, generating $\pi_1(\widetilde{\Sigma}(\Theta), \mathtt d)$, see Fig.~\ref{fig:SLS}:

\begin{itemize}
    \item If $d\in\mathbb A$ is a singular direction, the Stokes matrix $S_d$ is defined as
    \[
    S_d\defeq \rho(\widetilde \gamma_d)\in \GL_a(\mathbb C).
    \]
    \item The formal monodromy $h$ is the matrix 
    \[
    h\defeq \rho(\partial)\in \GL_a(\mathbb C).
    \]
\end{itemize}

It follows from Definition~\ref{def:stokes_local_system} that the Stokes and formal monodromy matrices are of a particular shape.
For the Stokes matrices, let $d\in \mathbb A$ be a singular direction and $\theta\in (\vartheta, \vartheta+2\pi)$ such that $d=[\theta]$. For $k, k'\in \{1, \dots, a\}$ we have
\begin{equation}
\label{eq:form_stokes_matrices}
   (S_d)_{kk'}=\left\lbrace
   \begin{array}{ll}
      1  & \text{ if } k=k' \\
      0  &  \text{ if }  k\neq k' \text{ and there is no Stokes arrow } \mathtt i_{k'}(\theta)\to_d \mathtt i_{k}(\theta).
   \end{array}
   \right.
\end{equation}
The shape of the formal monodromy matrix $h$ reflects \eqref{eq:form_formal_monodromy_one_circle}, namely if for $k\in \{1, \dots, a\}$ we set $\rho(k)\defeq \mathrm{idx}(\rho(\mathtt i_k(\vartheta)))$, we have for any $k,k'\in \{1, \dots, a\}$
\begin{equation}
   h_{kk'} \left\lbrace
   \begin{array}{ll}
      \neq 0  & \text{ if } k=\rho(k'), \\
      =0  &  \text{ if } k\neq \rho(k').
   \end{array}
   \right.
   \label{eq:form_formal_monodromy_matrix}
\end{equation}
Moreover, if the framing is minimal, in view of \eqref{eq:form_formal_monodromy_minimal}, the nonzero coefficients $h_{\rho(k) k}$ corresponding to a given Stokes circle $\mathtt I$ are all equal to 1, except possibly for the coefficient corresponding to the point $\mathtt i^0(\mathtt I)$ of $\mathtt I_\mathtt d$ given by the minimal structure, that is, for $k\in \{1, \dots, a\}$ we have
\begin{equation}
\label{eq:form_formal_monodromy_matrix_minimal}
h_{\rho(k) k}=1 \quad
\text{ if } 
\mathtt i_{\rho(k)}(\vartheta)\notin I_\mathtt d^{\mathrm{min}}.
\end{equation}

Properties \eqref{eq:form_stokes_matrices} and \eqref{eq:form_formal_monodromy_matrix} characterise representations that come from a framed Stokes local system of type $\Theta$, so we can use them as a definition of Stokes representations in general.

\begin{definition}
Let $\rho\in \Hom(\pi_1(\widetilde{\Sigma}(\Theta), \mathtt d), \GL_a(\mathbb C))$. We say that $\rho$ is a Stokes representation of type $\Theta$ if the corresponding Stokes and formal monodromy matrices satisfy \eqref{eq:form_stokes_matrices} and \eqref{eq:form_formal_monodromy_matrix}. We say that $\rho$ is \emph{reduced} if it also satisfies \eqref{eq:form_formal_monodromy_matrix_minimal}.
\end{definition}

Observe also that, if we denote by $d_1, \dots, d_N$ the consecutive singular directions when going around $\partial$ in the positive orientation starting from $\mathtt d$, we have the relation
\[
hS_{d_N}\dots S_{d_1}=1
\]
(simply because the loop $\widetilde \gamma_{d_1}  \dots \widetilde \gamma_{d_N} \partial$ is contractible).

\subsection{(Reduced) wild representation varieties}

We now briefly discuss the geometry of the space of all (reduced) Stokes representations of type $\Theta$.

For $d\in \mathbb A$, the set of matrices of the form \eqref{eq:form_stokes_matrices} is a unipotent subgroup of $\GL_a(\mathbb C)$ that (again by a slight abuse of notation) we denote by $\Sto_d$.

Let $H\subset \GL_a(\mathbb C)$ denote the group of diagonal invertible matrices, which agrees with \eqref{eq:group_H_change_framings} under Assumption \ref{assumption:multiplicity_one}. We have $H\simeq (\mathbb C^*)^a$. The set of matrices of the form \eqref{eq:form_formal_monodromy_matrix} is a \emph{twist} of $H$ in the terminology of \cite{boalch2015twisted}, that we denote by $H(\partial)$. 

\begin{definition}
Keeping previous notations, the \emph{wild representation variety} associated to an irregular class $\Theta$ is the complex algebraic variety
\[
\mathcal R(\Theta)\defeq\left\lbrace(h, S_{d_1}, \dots, S_{d_N})\in H(\partial)\times \Sto_{d_1}\times \dots \times\Sto_{d_N}\middle \vert \; hS_{d_N}\dots S_{d_1}=1\right\rbrace
\]
 parametrising Stokes representations with irregular class $\Theta$. 
\end{definition}

To discuss spaces of reduced Stokes representations, let us set $\breve H\defeq(\mathbb C^*)^{\pi_0(I)}\simeq (\mathbb C^*)^p$, where $p$ is the number of Stokes circles of $\Theta$, as in \eqref{eq:group_breve_H_change_minimal_framings}. We denote by $\breve H(\partial)$ the set of matrices satisfying \eqref{eq:form_formal_monodromy_matrix} and \eqref{eq:form_formal_monodromy_matrix_minimal}.

It is also useful to introduce a notion of reduced formal monodromy.

\begin{definition}
Let $\rho$ be a Stokes representation of type $\Theta$ and $I^\mathrm{min}_{\mathtt d}\subset I_{\mathtt d}$ a minimal structure. If $\mathtt I$ is a Stokes circle, $r$ its ramification order, let us denote the indices of the elements of $\mathtt I_\mathtt d$ by $k_0, \dots, k_{r-1}$ such that $k_{j+1}=\rho(k_j)$ for all $j$ (viewing indices modulo $r$), and $\mathtt i_{k_0}(\vartheta)\in I_\mathtt d^{\mathrm{min}}$. 

\begin{itemize}
    \item The reduced formal monodromy of $\rho$ along $\mathtt I$ is the product of the nonzero entries of the restriction $\rho_I$ to $\mathtt I$ of the formal monodromy, that is:
\begin{equation}
    \breve h_\mathtt I\defeq h_{k_1 k_0}\dots h_{k_{r-1}k_{r-2}}h_{k_0 k_{r-1}}
    \label{eq:reduced_formal_monodromy_definition}
\end{equation}
\item The reduced formal monodromy of $\rho$ is the collection of the reduced monodromies along all Stokes circles:
\[
\breve h\defeq(\breve h_{\mathtt I})_{\mathtt I\in \pi_0(I)}\in \breve H
\]
\end{itemize}
\end{definition}

In particular, if $\rho$ is reduced, all elements  except $h_{k_0 k_{r-1}}$ are equal to $1$ in the product \eqref{eq:reduced_formal_monodromy_definition}, so we simply have $\breve h_\mathtt I=h_{k_0 k_{r-1}}$.

Notice also that (both in the reduced and non-reduced case) we have
\begin{equation*}
\breve h_\mathtt I=(-1)^{\Ram(\mathtt I)-1}\det(\rho_{\mathtt I}),
\end{equation*}
where $\rho_{\mathtt I}$ is as in \eqref{eq:form_formal_monodromy_one_circle}.

\begin{definition}

 The \emph{reduced wild representation variety} associated to an irregular class $\Theta$ is the algebraic variety
 \[
\breve{\mathcal R}(\Theta)\defeq\left\lbrace(h, S_{d_1}, \dots, S_{d_N})\in \breve H(\partial)\times \Sto_{d_1}\times \dots \times\Sto_{d_N}\middle \vert \; hS_{d_N}\dots S_{d_1}=1\right\rbrace
\]
parametrising reduced Stokes representations with irregular class $\Theta$.
\end{definition}

The group $H$ acts algebraically on $\mathcal R(\Theta)$ by changing framings and similarly $\breve H$ acts algebraically on $\breve{\mathcal R}(\Theta)$ by changing minimal framings. Moreover, the (reduced) wild representation varieties have a richer structure, which we recall in the next statement.

\begin{theorem}[{\cite{boalch2015twisted}, \cite[Appendix A]{boalch2020diagrams}}]
   The wild representation variety $\mathcal R(\Theta)$ is a twisted quasi-Hamiltonian $H$-space, with momentum map $\mu\colon \mathcal R(\Theta)\to H$ given by $\mu(\rho)=h^{-1}$. 
   The reduced wild representation variety $\breve{\mathcal R}(\Theta)$ is a quasi-Hamiltonian $\breve H$-space, with momentum map $\mu\colon \breve{\mathcal R}(\Theta)\to \breve H$ given by $\mu(\rho)=\breve h^{-1}$. 

   The twisted quasi-Hamiltonian reduction $\mathcal R(\Theta)\sslash H$ and the quasi-Hamiltonian reduction $\breve{\mathcal R}(\Theta)\sslash \breve H$ coincide, and this quotient, called the (Poisson) \emph{wild character variety} $\mathcal M_{\mathrm{B}}(\Theta)\defeq\mathcal R(\Theta)\sslash H$, has an algebraic Poisson structure. 

\end{theorem}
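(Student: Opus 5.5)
The plan is to reduce everything to Boalch's general constructions of wild character varieties from fission spaces, and then handle the twist and the reduction by a slice argument.

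\textbf{Step 1: the untwisted building block.} Let $G\defeq\GL_a(\mathbb C)$. I would first recall that, for the unramified case, the space of Stokes data at one pole is the fission space
\[
\mathcal A=G\times \Sto_{d_1}\times\dots\times\Sto_{d_N}\times H .
\]
Boalch shows this is a quasi-Hamiltonian $G\times H$-space, with the moment map recording the total monodromy and the formal monodromy. Gluing to the sphere with a single pole amounts to reducing by $G$ at the identity. This imposes the relation $hS_{d_N}\cdots S_{d_1}=1$ and leaves a quasi-Hamiltonian $H$-space.

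\textbf{Step 2: the twist.} In the ramified setting of \S\ref{subsec:stokes_representations}, the formal monodromy lives in the twist $H(\partial)$ rather than in $H$. Let $\sigma$ be the automorphism of $H$ induced by the permutation $k\mapsto\rho(k)$ of the indices. Then $H(\partial)$ is an $H$-bitorsor, and $H$ acts on it by the twisted conjugation $t\cdot h=t\,h\,\sigma(t)^{-1}$ (up to the convention of which side is twisted). I would redo the fission construction of Step 1 with the $H$-factor replaced by $H(\partial)$, using Boalch's framework of twisted quasi-Hamiltonian spaces. The key check is that the moment map $h^{-1}$ is $\sigma$-equivariant and that the axioms still hold. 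Since the twist only permutes the diagonal $\mathbb C^*$-factors, this should be a routine modification of the untwisted computation. This gives the first assertion.

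\textbf{Step 3: reduction to $\breve H$ by a cross-section.} This is the step I expect to be the main obstacle. The locus $\breve H(\partial)\subset H(\partial)$ of minimal forms \eqref{eq:form_formal_monodromy_minimal} is a cross-section for the twisted conjugation action. Indeed, on each Stokes circle $\mathtt I$ of ramification $r$, twisted conjugation can set all but one of the subdiagonal entries of \eqref{eq:form_formal_monodromy_one_circle} equal to $1$. The stabiliser of this normal form consists of scalars on each circle, i.e.\ a copy of $\breve H$ embedded diagonally circle by circle, and the invariant left over is the product $\breve h_{\mathtt I}$.

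I would then apply the cross-section theorem for (twisted) quasi-Hamiltonian spaces, or equivalently reduction in stages: the preimage $\breve{\mathcal R}(\Theta)=\mu^{-1}(\breve H(\partial))$ is a quasi-Hamiltonian $\breve H$-space. Its moment map is the restriction of $h^{-1}$ composed with the projection to the invariant, which is exactly $\breve h^{-1}$. Moreover $\mathcal R(\Theta)\cong H\times_{\breve H}\breve{\mathcal R}(\Theta)$, from which $\mathcal R(\Theta)\sslash H=\breve{\mathcal R}(\Theta)\sslash\breve H$ follows.

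The delicate points are two. First, one must verify that the twisted moment-map equation restricts correctly to the slice, which needs the identity $\breve h_{\mathtt I}=(-1)^{r-1}\det\rho_{\mathtt I}$. Second, one must check that the $2$-form restricts non-degenerately; I would verify this by a direct tangent-space computation on one circle.

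\textbf{Step 4: the Poisson structure.} Quasi-Hamiltonian $\breve H$-spaces are quasi-Poisson. Since $\breve H$ is abelian, the quasi-Poisson bivector becomes an honest Poisson bivector on $\breve H$-invariant functions, and it descends to the affine GIT quotient. Hence $\mathcal M_{\mathrm B}(\Theta)$ carries an algebraic Poisson structure, whose symplectic leaves are the reductions at fixed values of $\breve h$.
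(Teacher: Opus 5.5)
Your outline is correct and follows the route behind the results the paper cites; the paper gives no proof of its own. Boalch--Yamakawa's twisted fission space, reduced by $\GL_a(\mathbb C)$ at the identity, gives the twisted quasi-Hamiltonian $H$-space $\mathcal R(\Theta)$, and the $\breve H$-version comes from passing to minimal framings, which is exactly your restriction to the cross-section $\breve H(\partial)\subset H(\partial)$ giving $\mathcal R(\Theta)\cong H\times_{\breve H}\breve{\mathcal R}(\Theta)$.

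The nondegeneracy you flag is in fact immediate in the form $\ker\omega\cap\ker d\mu=0$. A vector tangent to the slice and killed by $d\mu$ is $\omega$-orthogonal to all $H$-orbit directions by the moment map condition, and the slice together with those orbit directions spans the tangent space of $\mathcal R(\Theta)$.
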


In particular, the symplectic leaves of the wild character variety $\mathcal M_{\mathrm{B}}(\Theta)$ are obtained by fixing the value of the reduced formal monodromy $\breve h$ (because of Assumption \ref{assumption:multiplicity_one}, in our situation each conjugacy class in $\breve H$ consists of a single point).

\subsection{Stokes vectors and Stokes gradings from Stokes representations}

We already mentioned that a Stokes local system defines a \emph{Stokes grading} which splits the corresponding Stokes filtration. Let us discuss how to obtain explicitly this grading, together with particular vectors generating the graded pieces, from the Stokes and formal monodromy matrices.  

\begin{definition}
\label{def:collection_of_stokes_vectors_of_stokes_rep}
Let $\rho$ be a Stokes representation of rank $a$. Keeping previous notations, the \emph{collection of Stokes vectors} associated to $\rho$ consists of the data of vectors $v_1(d),\dots, v_a(d)\in \mathbb C^a$ for any direction $d\notin\mathbb A$, defined as follows:
\begin{itemize}
    \item For $d=\mathtt d$, as initial condition, $(v_1(\mathtt d), \dots, v_a(\mathtt d))$ is the canonical basis $(e_1, \dots, e_a)$ of $\mathbb C^a$.
    \item For other directions $d\in \partial\smallsetminus\mathbb A$, the vectors $v_k(d)$ for $k\in \{1, \dots, a\}$ are then defined inductively, moving around $\partial$ in the positive direction starting from $\mathtt d$, by asking that they are locally constant between consecutive singular directions, and jump at each singular direction $d$ according to 
\begin{equation}
\big(v_1(d^-)|\dots| v_a(d^-)\big){}= \big(v_1(d^+)|\dots| v_a(d^+)\big) S_d
\label{eq:jump_stokes_vectors_sing_dir_with_numbering}
\end{equation}
where the notation $v_k(d^-)$ (respectively $v_k(d^+)$) refers to angles slightly before (respectively after) $d$, see Fig.~\ref{fig:jumps_stokes_vectors}.
\end{itemize}
Equivalently, if $\rho$ corresponds to the framed Stokes local system $(\mathcal V, \varphi_\mathtt d)$  of rank $a$, one can view the Stokes vectors $v_k(\mathtt d)$ as elements in the vector space $V$ via the framing. Concretely, one chooses the basis $(v_1(\mathtt d), \dots, v_a(\mathtt d))\defeq\varphi_{\mathtt d}^{-1}(e_1, \dots, e_a)$ of $V$ and extends it to any other non-singular direction via the relation \eqref{eq:jump_stokes_vectors_sing_dir_with_numbering}.

In turn, if $\mathcal V$ is a Stokes local system, \emph{a} collection of Stokes vectors for $\mathcal V$ is the collection of Stokes vectors for some framing of $\mathcal V$.
\end{definition}

\begin{notation}
Going back to a more intrinsic labelling of fibres without using indices $k$, if $d=[\theta] \notin{\mathbb A}$ with $\theta\in [\vartheta, \vartheta+2\pi)$, $\mathtt i\in I_d$, and $k\in \{1, \dots, a\}$ the index such that $\mathtt i=\mathtt i_k(\theta)$, we set $v_\mathtt i\defeq v_k(d)$. For a Stokes arrow $\mathtt i'\to_d\mathtt i$, with $\mathtt i=\mathtt i_k(\theta)$ and $\mathtt i'=\mathtt i_{k'}(\theta)$ we also write $s_d(\mathtt i\leftarrow \mathtt i')$ to denote the corresponding Stokes matrix entry $(S_d)_{kk'}$, see Fig.~\ref{fig:situation_at_singular_direction}. Similarly, for $\mathtt i, \mathtt i'\in I_\mathtt d$ such that $\mathtt i=\rho(\mathtt i')$ we denote by $h(\mathtt i\leftarrow \mathtt i')$ the nonzero formal monodromy coefficient $h_{kk'}$, where $\mathtt i=\mathtt i_k(\vartheta)$ and $\mathtt i'=\mathtt i_{k'}(\vartheta)$. Notice also that the numbers $s_d(\mathtt i\leftarrow \mathtt i')$ and $h(\mathtt i\leftarrow \mathtt i')$ do not depend on the choice of numbering of $I_\mathtt d$ (but introducing numberings was still convenient in our discussion, notably to write down \eqref{eq:jump_stokes_vectors_sing_dir_with_numbering}).
\end{notation}

\begin{figure}
\centering
\begin{tikzpicture}[scale=1.2]

\draw[thick, dotted] (0,0) -- (90:2);
\draw (90:2.3) node {$\mathtt d$};
\draw (90:2.78) node {$\cdot h $};
\draw (82:1.8) node {$\mathtt d^+$};
\draw (98:1.8) node {$\mathtt d^-$};
\draw[->] (82:2.5) to[bend right] (98:2.5);

\draw[dotted] (0,0) -- (50:2);
\draw (50:2.3) node {$d_1$};
\draw (50:2.83) node {$\cdot S_{d_1} $};
\draw (42:1.8) node {$d_1^+$};
\draw (58:1.8) node {$d_1^-$};
\draw[->] (42:2.5) to[bend right] (58:2.5);

\draw[dotted] (0,0) -- (130:2);
\draw (130:2.3) node {$d_N$};
\draw (130:2.88) node {$\cdot S_{d_N} $};
\draw (122:1.8) node {$d_N^+$};
\draw (138:1.8) node {$d_N^-$};
\draw[->] (122:2.5) to[bend right] (138:2.5);

\draw[dotted] (0,0) -- (-70:2);
\draw (-70:2.3) node {$d$};
\draw (-70:2.78) node {$\cdot S_{d} $};
\draw (-62:1.8) node {$d^-$};
\draw (-78:1.8) node {$d^+$};
\draw[->] (-78:2.5) to[bend right] (-62:2.5);

\draw[thick, loosely dotted] (-25:1.5) to[bend right]  (5:1.5);

\draw[thick, loosely dotted] (-165:1.5) to[bend right]  (-135:1.5);     
\end{tikzpicture}
\caption{The jumps between bases of Stokes vectors of a Stokes representation $\rho$ at the singular directions and the reference direction $\mathtt d$,  cf.\ \eqref{eq:jump_stokes_vectors_sing_dir_with_numbering}.}
\label{fig:jumps_stokes_vectors}
\end{figure}

For each direction $d\notin \mathbb A$, the vectors $v_1(d),\dots, v_a(d)$ form a basis of $\mathbb C^a$. Because of the form \eqref{eq:form_stokes_matrices} of the Stokes matrices, the jump condition at a singular direction $d$ concretely means the following: If, for $\mathtt i\in I_d$, we denote by $v_{\mathtt i^-}$ (respectively $v_{\mathtt i^+}$) the Stokes vector associated to elements slightly before (respectively after) $\mathtt i$ on the local system $I\to\partial$, as in Fig.~\ref{fig:situation_at_singular_direction}, we have:
\begin{itemize}
    \item If there is no Stokes arrow $\mathtt i'\rightarrow_d \mathtt i$ with tail $\mathtt i'$, then 
    \[v_{\mathtt i'^-}=v_{\mathtt i'^+}.\]
    \item Otherwise, if there exist Stokes arrows $\mathtt i'\rightarrow_d \mathtt i_1$, $\dots$, $\mathtt i'\rightarrow_d \mathtt i_t$  with tail $\mathtt i'$, then we have
    \begin{equation*}
        v_{\mathtt i'^-}= v_{\mathtt i'^+}+\sum_{l=1}^t s_d(\mathtt i_l\leftarrow \mathtt i')\;v_{\mathtt i_l^+}.
    \end{equation*}
\end{itemize}

In brief, $ v_{\mathtt i'^-}$ picks contributions from the $v_{\mathtt i_l^+}$, with coefficients given by the corresponding nontrivial Stokes matrix entries in $S_d$. In particular, if there is a unique Stokes arrow starting from $\mathtt i'$, then the situation is as represented in Fig.~\ref{fig:situation_at_singular_direction}.

The Stokes vectors also jump at the reference direction $\mathtt d$, and the jump there is given by the formal monodromy. Indeed, by construction, when completing going around $\partial$  starting from $\mathtt d^+$ and coming back at $\mathtt d^-$ we have (cf.\ Fig.~\ref{fig:jumps_stokes_vectors})
\[
\big(v_1(\mathtt d^+)|\dots| v_a(\mathtt d^+)\big){}= \big(v_1(\mathtt d^-)|\dots| v_a(\mathtt d^-)\big) S_{d_N}\dots S_{d_1}=\big(v_1(\mathtt d^-)|\dots| v_a(\mathtt d^-)\big)h^{-1}.
\]
This implies that for $\mathtt i\in I_\mathtt d$, we have
\begin{equation}
    v_{\mathtt i^-}=h(\mathtt i\leftarrow \rho^{-1}(\mathtt i))\; v_{\mathtt i^+} 
    \label{eq:jump_stokes_vector_reference_dir}
\end{equation}
Notice that for a given Stokes local system $\mathcal V$, the set of collections of Stokes vectors for $\mathcal V$ is an $H$-torsor (the $H$-action amounts to changing the framing).

\begin{remark}
\label{rem:stokes_vectors_extended_definition}
    If $d\in \mathbb A$ is a singular direction, and $\mathtt i\in I_d$ is not the tail of any Stokes arrow, since $v_{\mathtt i^+}=v_{\mathtt i^-}$ we have a well-defined vector $v_\mathtt i\defeq v_{\mathtt i^+}=v_{\mathtt i^-}$.
\end{remark}

The main fact relating Stokes representations and Stokes filtrations is the following.

\begin{theorem}
    Let $(\mathcal V, \varphi_\mathtt d)$ be a framed Stokes local system, $\rho$ its Stokes representation, and $(V, F)$ its associated Stokes filtered local system. Via the framing  $\varphi_\mathtt d$ and the isomorphism $V\simeq V^0_\mathtt d$ obtained by parallel transport across the outer boundary of the halo in the direction $\mathtt d$, we identify the vector spaces $V$, $V^0_\mathtt d$ and $\mathbb C^a$. Then, for any direction $d\notin \mathbb A$, the direct sum decomposition
\[
V=\bigoplus_{\mathtt i\in I_d} \mathbb Cv_{\mathtt i}
\]
splits the Stokes filtration, that is,
\[
F_d V(\mathtt i)=\bigoplus_{\mathtt i'\leq_d \mathtt i} \mathbb C v_{\mathtt i'}.
\]
\end{theorem}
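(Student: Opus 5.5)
The plan is to unwind how the Stokes local system $\mathcal V$ produces the Stokes filtration, and then show that the Stokes vectors of Definition~\ref{def:collection_of_stokes_vectors_of_stokes_rep} are exactly the parallel transports of the graded basis along radial arcs.

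\textbf{Step 1: Stokes vectors as transported graded vectors.} Fix a direction $d\notin\mathbb A$ with $d=[\theta]$, $\theta\in(\vartheta,\vartheta+2\pi)$, and let $\delta_d$ be the path from $\mathtt d$ that runs along $\partial$ in the positive direction to $d$ and then radially out through the halo to $\partial'$. For $\mathtt i\in I_d$, let $v_{\mathtt i}$ be the image of the framed generator of $V^0_d(\mathtt i)$ under transport along $\delta_d$ into $V$, where $V$ is identified with $V^0_{\mathtt d}\simeq\mathbb C^a$ through the radial arc at $\mathtt d$. I will check that these vectors obey the jump rule \eqref{eq:jump_stokes_vectors_sing_dir_with_numbering}. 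When $d$ crosses a singular direction $d'$, the two paths $\delta_{d'^-}$ and $\delta_{d'^+}$ differ by a loop homotopic to $\widetilde\gamma_{d'}$, with the appropriate orientation convention from Fig.~\ref{fig:SLS}. Transport along this loop is $S_{d'}$, and this gives exactly the stated relation. The initial condition at $\mathtt d$ holds by definition of the framing. Since $V^0$ is locally constant, the vectors are constant between singular directions. Uniqueness of the inductively defined collection then identifies the two families.

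\textbf{Step 2: the transported grading splits the filtration.} This is essentially the content of the equivalence between Stokes local systems and Stokes filtered local systems in \cite{boalch2021topology}. There the Stokes filtration attached to $\mathcal V$ is defined, or characterised, by declaring that on each sector between consecutive singular directions the radially transported grading $V=\bigoplus_{\mathtt i}V^0_d(\mathtt i)$ splits $F_d$, so that $F_d(\mathtt i)=\bigoplus_{\mathtt i'\leq_d\mathtt i}V^0_d(\mathtt i')$. I will quote this and check two things. First, the grading is locally constant on sectors of $\partial\smallsetminus\mathbb A$. Second, the dominance order $\leq_d$ is constant on each such sector away from Stokes directions. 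Combined with Step 1, this yields the formula.

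\textbf{Main obstacle.} The delicate point is consistency across a singular direction $d'$ that lies between Stokes directions. Here the grading jumps by $S_{d'}$, yet the filtration must not jump. This holds because $S_{d'}-1$ only maps $V(\mathtt i')$ into $V(\mathtt i)$ along Stokes arrows $\mathtt i'\to_{d'}\mathtt i$, and at $d'$ the exponential $e^{q_{\mathtt i}-q_{\mathtt i'}}$ has maximal decay, so $\mathtt i<\mathtt i'$ throughout the sector. The unipotent jump is therefore filtration-preserving, and both gradings split the same flag. I expect the main care to go into the orientation and sign conventions matching Fig.~\ref{fig:jumps_stokes_vectors}. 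If these are unclear, I would instead verify the splitting directly via Lemma~\ref{lemma:local_situation_crossing} at each Stokes direction.
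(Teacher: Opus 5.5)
Your proposal is correct and follows essentially the same route as the paper. The paper's proof is just an appeal to \cite{boalch2021topology} for the fact that the Stokes grading (the grading of $V^0$ transported radially across the halo) splits the Stokes filtration, together with the remark that, via the framing, the Stokes matrices represent the intrinsic Stokes automorphisms.

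Your Step~1 makes that identification of the Stokes vectors with the transported framed basis explicit, and the orientation conventions do match. For $d$ just before returning to $\mathtt d$, transport along $\delta_d$ is the formal monodromy, so $\big(v_1(\mathtt d^-)|\dots|v_a(\mathtt d^-)\big)=h$, exactly as forced by $hS_{d_N}\cdots S_{d_1}=1$. Your consistency check across singular directions is already built into the cited equivalence.
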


\begin{proof}
    This is just a more concrete way, with explicit vectors generating the graded pieces, of formulating the fact that the Stokes gradings associated to a Stokes local system split the corresponding Stokes filtration, see \cite{boalch2021topology} (in particular the Stokes matrices represent via the framing the more intrinsic \emph{Stokes automorphisms} considered there).
\end{proof}

In particular, if $\mathtt i$ corresponds to the recessive exponential factor at $d$, then the vector $v_{\mathtt i}$ generates the recessive line $V^{\rec}_d$.

\begin{figure}
\centering
\begin{tikzpicture}[x=10cm, y=1cm]
    \draw[domain=-0.1:0.3,scale=1,samples=500] plot ({\x},{cos(\x*360)});
    \draw[domain=-0.1:0.3,scale=1,samples=500] plot ({\x},{cos(\x*360+120)});

    \draw[red, ->] ({0.1},{cos(0.1*360)}) -- ({0.1},{cos(0.1*360+120)});
    \draw (0.1, 1.5) node {$d$};
    \draw (0, 1.5) node {$d^-$};
    \draw (0.2, 1.5) node {$d^+$};

    \draw ({0.02},{cos(0.02*360)}) ++(0,-0.4) node {$v_{\mathtt i'^-}$};
    \draw ({0.02},{cos(0.02*360+120)}) ++(0,-0.4) node {$v_{\mathtt i^-}$};
    
    \draw ({0.18},{cos(0.15*360)}) ++(0,-0.4) node {$v_{\mathtt i'^+}$};
    \draw ({0.18},{cos(0.15*360+120)}) ++(0,-0.4) node {$v_{\mathtt i^+}$};
    
    \draw ({0.1},{cos(0.1*360)}) ++(0.0,0.25) node {$\mathtt i'$};
    \draw ({0.1},{cos(0.1*360+120)}) ++(0.0,-0.2) node {$\mathtt i$};
\end{tikzpicture}
\caption{Situation at a singular direction $d$, when there is a unique Stokes arrow $\mathtt i'\to_d\mathtt i$ starting from $\mathtt i'$. We have $v_{\mathtt i^-}=v_{\mathtt i^+}$, and $v_{\mathtt i'^-}=v_{\mathtt i'^+}+s_d(\mathtt i\leftarrow \mathtt  i') v_{ \mathtt i^+}$, where $s_d(\mathtt i\leftarrow \mathtt i')$ is the Stokes matrix entry of $S_d$ corresponding to $\mathtt i'\to_d\mathtt i$. If $\mathtt i''\in I_d$ is not the tail of any Stokes arrow, then $v_{\mathtt i''^-}=v_{\mathtt i''^+}$.}
\label{fig:situation_at_singular_direction}
\end{figure}

\section{From recessive solutions to Stokes representations}
\label{sec:from_recessive_soluions_to_stokes_representations}

In this section, we study the passage between Stokes filtered local systems and Stokes local systems for symmetric irregular classes. We give an explicit way to obtain the Stokes matrix entries of a (framed) Stokes local systems in terms of the recessive lines of the corresponding Stokes filtration, by looking at vanishing linear combinations of suitably scaled vectors generating consecutive recessive lines. 

\subsection{Stokes local systems for symmetric irregular classes}

We first discuss the structure of Stokes local systems for symmetric irregular classes $\Theta_{a,b}$. 

\begin{lemma} 
\label{lemma:structure_stokes_arrows_symmetric}
In the Stokes diagram of $\Theta_{a,b}$, for any increasing distinguished interval $J_{i,l+1}$ with $l$ odd, there are exactly $a-1$ equally spaced Stokes arrows with starting point in $J_{i,l+1}$. More precisely, for any Stokes arrow in the Stokes diagram of $\Theta_{a,b}$, there exists a triple $(i,l,j)\in \mathbb Z^2\times \{1, \dots, a-1\}$, where $l$ is odd,  the pair $(i,l)\in \mathbb Z^2$ is uniquely defined up to the equivalence relation $\sim$, and $j\in \{1, \dots, a-1\}$ is uniquely defined such that the Stokes arrow is based at the direction $[\theta_{i,l}^j]$, where
\[
\theta_{i,l}^j\defeq \frac{2\pi}{b}\left(-i+\frac{a}{4}+\frac{j}{2}+l\frac{a}{2}\right),
\]
starts on the increasing distinguished interval $J_{i, l+1}$, and ends on the  decreasing distinguished interval $J_{i-j, l}$.
\end{lemma}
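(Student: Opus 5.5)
We will argue by direct computation with the exponential factors $q_i=e^{\frac{2\pi\sqrt{-1}}{a}i}z^{s/r}$, in the same spirit as Lemmas \ref{lemma:distinguished_intervals_description} and \ref{lemma:structure_stokes_directions}.

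\textbf{Step 1: locate the singular directions.} Fix $i\neq i'$ modulo $a$ and write
\[
q_{i'}-q_i=\big(e^{\frac{2\pi\sqrt{-1}}{a}i'}-e^{\frac{2\pi\sqrt{-1}}{a}i}\big)z^{b/a}.
\]
Put $j\defeq i'-i$. The prefactor equals
\[
2\sqrt{-1}\,\sin\!\big(\tfrac{\pi j}{a}\big)\,e^{\frac{\pi\sqrt{-1}}{a}(i+i')},
\]
so its argument is $\frac{\pi}{2}+\frac{\pi(i+i')}{a}$, up to adding $\pi$ if $\sin(\pi j/a)<0$. A Stokes arrow $[q_{i'},\theta]\to[q_i,\theta]$, meaning that $e^{q_i-q_{i'}}$ has maximal decay, requires $q_i-q_{i'}$ to be real negative along the ray. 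Equivalently, $\arg\big(q_{i'}-q_i\big)(e^{\sqrt{-1}\theta})\equiv 0 \pmod{2\pi}$. Solving this linear condition in $\theta$, with $\theta$ multiplied by $b/a$, gives $\theta$ of the form
\[
\frac{2\pi}{b}\Big(-\frac{i+i'}{2}-\frac{a}{4}+(\text{integer})\cdot\frac{a}{2}\Big)
\]
on the appropriate branch, with the representative $j\in\{1,\dots,a-1\}$ absorbing the sign of $\sin$.

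\textbf{Step 2: relabel in terms of $(i,l,j)$.} Here the tail is $\mathtt i'$ and the head is $\mathtt i$. At the singular direction the tail has maximal growth relative to the head, so its growth exceeds the average and it lies in an increasing interval. For the same reason the head lies in a decreasing interval. Using Lemma \ref{lemma:distinguished_intervals_description}, I will identify the tail with a point of $J_{i,l+1}$ and the head with a point of $J_{i-j,l}$, for $l$ odd. The key bookkeeping is the identity $\theta_{i,i',l}=\theta_{i+a,i',l+1}$ from Lemma \ref{lemma:crossings_stokes_digram}, together with the fact that the singular direction lies exactly midway between the two Stokes directions where the strands cross. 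Concretely, $\theta^j_{i,l}$ sits at distance $\frac{\pi}{2}\cdot\frac{2}{b}\cdot\frac{a}{2}$ from the crossing $\theta_{i,i-j,l}$. I will then check that $\theta_{i,l}^j=\frac{2\pi}{b}\big(-i+\frac a4+\frac j2+l\frac a2\big)$ lies inside both $\Sect_{q_i}(\theta_{i,l+1},\frac rs\pi)$ and $\Sect_{q_{i-j}}(\theta_{i-j,l},\frac rs\pi)$. This reduces to elementary inequalities: the distances to the centres are $\frac{2\pi}{b}\big|\frac j2-\frac a4\big|<\frac{\pi a}{2b}$, and similarly for the head, with the condition $1\le j\le a-1$ giving exactly the strict inequality.

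\textbf{Step 3: counting and uniqueness.} For fixed $J_{i,l+1}$, the values $j=1,\dots,a-1$ give directions spaced by $\frac{\pi}{b}$, which are equally spaced and pairwise distinct. Conversely, any arrow with tail on $J_{i,l+1}$ has head some $q_{i-j}$ with $j\not\equiv0\pmod a$, and Step 1 shows that there is exactly one admissible direction inside the interval for each such $j$ mod $a$. Hence there are exactly $a-1$ such arrows. Uniqueness of $(i,l)$ up to $\sim$ follows from the corresponding statement in Lemma \ref{lemma:distinguished_intervals_description}, since the tail interval determines $(i,l+1)$ up to $\sim$; then $j$ is determined by the direction.

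\textbf{Main obstacle.} I expect the hardest part to be the sign and branch bookkeeping in Step 2: matching the representative $j\in\{1,\dots,a-1\}$ with the correct lift of $q_{i-j}$ on the covering $\mathcal I$ under the monodromy $[q_{i+b},\theta]=[q_i,\theta+2\pi]$, and confirming that the head lands on a decreasing interval $J_{i-j,l}$ rather than on an equivalent label. I would first verify the picture for $a=2,3$ against Fig.~\ref{fig:crossings_stokes_diagram}, and then write the general inequality check.
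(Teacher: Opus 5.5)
Your proposal is correct and is essentially the paper's argument. The paper places each arrow at the midpoint $\frac{\theta_{i,i-j,l}+\theta_{i-j,i-a,l}}{2}=\theta^j_{i,l}$ between consecutive crossings of the strands $q_i$ and $q_{i-j}$. That is exactly what your direct solution of $\arg(q_{i'}-q_i)(e^{\sqrt{-1}\theta})\equiv 0$ yields, once you note that for $j\in\{1,\dots,a-1\}$ one has $\sin(\pi j/a)>0$. Then the integer multiplying $\frac a2$ in Step 1 must be even, which is what forces $l$ odd; state this explicitly rather than writing ``on the appropriate branch''.

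Your inequality $|\frac j2-\frac a4|<\frac a4\iff 1\le j\le a-1$ gives membership of the tail in $J_{i,l+1}$ and of the head in $J_{i-j,l}$. Your count uses that arrows between a fixed pair of strands are spaced by $\frac{2\pi a}{b}$, which exceeds the interval length $\frac{\pi a}{b}$. Together these fill in details the paper leaves implicit.
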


\begin{figure}[h]
\centering
\begin{tikzpicture}[x=6.5cm, y=1cm]
\draw[red, thick, domain=-1:1,scale=1,samples=500] plot ({\x},{cos(\x*360+0*360/4)});
\foreach \i in {1, ...,3}
\draw[domain=-1:1,scale=1,samples=500] plot ({\x},{cos(\x*360+\i*360/4)});

\draw[->, red]  ({-1/8},{cos(-1/8*360+0*360/4)}) -- ({-1/8},{cos(-1/8*360-1*360/4)}) ;
\draw[red] ({-1/8},{cos(-1/8*360+0*360/4)})++(0,0.4) node {$d_{i,l}^1$};
\draw[->, red]  ({0},{cos(0*360+0*360/4)}) -- ({0},{cos(0*360-2*360/4)});
\draw[->, red]  ({+1/8},{cos(+1/8*360+0*360/4)}) -- ({+1/8},{cos(+1/8*360+1*360/4)});
\draw[red] ({+1/8},{cos(+1/8*360+0*360/4)})++(0,0.4) node {$d_{i,l}^{a-1}$};

\draw[dotted] (-1,0)--(1,0);
\draw ($ (0*1/4,1) + (0,0.5) $) node {$\theta_{i,l+1}$};
\draw ($ (2*1/4,-1) + (0,-0.5) $) node {$\theta_{i-a,l}$};
\draw ($ (1*1/4,-1) + (0,-0.5) $) node {$\theta_{i-(a-1),l}$};
\draw ($ (0*1/4,-1) + (0,-0.5) $) node {$\dots$};
\draw ($ (-1*1/4,-1) + (0,-0.5) $) node {$\theta_{i-1,l}$};
\draw ($ (-2*1/4,-1) + (0,-0.5) $) node {$\theta_{i,l}$};
\end{tikzpicture}
\caption{Structure of Stokes arrows for $\Theta_{a,b}$ (drawn here for $a=4$).}
\label{fig:structure_stokes_arrows}
\end{figure}

\begin{proof} This can be seen using the fact that Stokes arrows between two distinct exponential factors $q, q'$ are located at the midpoints between consecutive crossings of the corresponding strands in the Stokes diagram. Applying this in our situation to $q_i$ and $q_{i-j}$, for any $i$, and for $j\in \{1, \dots, a-1\}$, we obtain a Stokes arrow at the midpoint between the consecutive crossings $c_{i, i-j,l}$ and $c_{i-j, i-a,l}$ for any $l$ (using the notations of Lemma \ref{lemma:crossings_stokes_digram}), i.e.\ at the angle
$\frac{\theta_{i, i-j, l}+\theta_{i-j, i-a, l}}{2}=\theta_{i,l}^j$. When $l$ is odd, the Stokes arrow goes from $[q_i, \theta^j_{i,l}]$ to $[q_{i-j}, \theta^j_{i,l}]$, and all Stokes arrows are obtained in this way. 
\end{proof}

The situation is represented in Fig.~\ref{fig:structure_stokes_arrows}. We denote by $ {\downarrow}^j_{i,l}$ the Stokes arrow associated to $(i,l, j)$.

\subsection{From point configurations to Stokes representations}
\label{subsec:From point configurations to Stokes representations}

In this paragraph, we show that there is a one-to-one correspondence between Stokes representations of type $\Theta_{a,b}$ and (equivalence classes of) suitably scaled collections of vectors in an $a$-dimensional vector space, defined as follows.

\begin{definition}
\label{def:extended_collection_stokes_adapted_vectors}
An \emph{extended Stokes-adapted collection of vectors of type $(a,b)$} is a collection of vectors $\bm v=(v_1, \dots, v_{a+b})\in V^{a+b}$ in an $a$-dimensional vector space $V$ with the following properties:
\begin{itemize}
\item For all $i\in \{a+1, \dots, a+b\}$, the vectors $v_{i-1}, \dots, v_{i-a}$ are linearly independent (hence a basis of $V$).
\item For all $i\in \{a+1, \dots, a+b\}$, when decomposing $v_i$ in the basis $(v_{i-1}, \dots, v_{i-a})$, its coefficient in $v_{i-a}$ is equal to 1. In other words, there exist (uniquely defined) complex numbers $\sigma_i^j$ for $i\in\{a+1,\ldots,a+b\}$ and $j\in \{1, \dots, a-1\}$, such that 
\begin{equation}
\label{eq:extended_collection_stokes_vectors_form_linear_comb}
v_{i}=\sigma_i^1 v_{i-1}+\dots+\sigma_i^{a-1} v_{i-(a-1)} + v_{i-a}.
\end{equation}
\item For all $i\in \{1, \dots, a\}$, the vectors $v_i$ and $v_{i+b}$ are collinear, that is, there exists a (uniquely defined) complex number $h_i\in \mathbb C^*$ such that $v_{i+b}=h_i^{-1} v_i$.
\end{itemize}
Two extended Stokes-adapted collections are \emph{equivalent} if they have the same $\sigma_i^j$ and $h_i$. 
\end{definition}

\begin{definition}
\label{def:formal_monodromy_for_extend_coll_stokes_vect}
Keeping the notations of Definition~\ref{def:extended_collection_stokes_adapted_vectors}, we say that $h\defeq (h_1, \dots, h_a)$ is the \emph{formal monodromy} of $\bm v$. Furthermore, we say that $\bm v $ is \emph{reduced} if the formal monodromy is of the form $(\breve h_1, \dots, \breve h_p, 1, \dots, 1)$. In that case we call $\breve h\defeq (\breve h_1, \dots, \breve h_p)\in (\mathbb C^*)^p$ the reduced formal monodromy of $\bm v$. (As usual, we set $p\defeq\gcd(a,b)$.) We call the $\sigma_{i}^j$ the \emph{Stokes coefficients} of $\bm v$.
\end{definition}

To state precisely the correspondence between equivalence classes of such collections and Stokes representations of type $\Theta_{a,b}$, in view of the general discussion in \S\ref{subsec:stokes_representations}, we have to fix some choices for framed Stokes local systems of type $\Theta_{a,b}$. Let us choose a pair $(i_0, l)\in \mathbb Z^2$, with $l$ odd, an angle $\vartheta\in \mathbb R$ such that $0< \vartheta-\theta_{i_0, l}\ll 1$, and take $\mathtt d=[\vartheta]$ as the reference direction. We have $I_\mathtt d=\{[q_1, \vartheta], \dots,  [q_a, \vartheta]\}$
and we choose the minimal structure $I_\mathtt d^{\mathrm{min}}=\{[q_{i_0}, \vartheta], \dots, [q_{i_0+p-1}, \vartheta]\}$ (where $p=\mathrm{gcd}(a,b)$ denotes as before the number of Stokes circles of $\Theta_{a,b}$).

\begin{theorem} 
\label{thm:correspondence_stokes_rep_stokes_adapted_vectors}
There is a one-to-one correspondence between Stokes representations of type $\Theta_{a,b}$ and equivalence classes of Stokes-adapted collections of type $(a,b)$, obtained as follows.

Let $(\mathcal V, \varphi_\mathtt d)$ be a framed Stokes local system of type $\Theta_{a,b}$, corresponding to a Stokes representation $\rho$. Let $(v_{[q_i, \vartheta]})_{i=1, \dots a}$, denote the basis of $V$ defined by the framing $\varphi_\mathtt d$ (i.e.\ corresponding to the standard basis of $\mathbb C^a$ through $\varphi_\mathtt d$).

For $i\in \{i_0, \dots, i_0+a+b-1\}$, we define a vector $v_{i,l}\in V$ in the following way:

\begin{itemize}
    \item  For $i\in \{i_0, \dots, i_0+b-1\}$, we set 
    \[
    v_{i,l}\defeq v_{[q_i,\theta_{i,l}]},
    \] 
    where $v_{[q_i, \theta]}$, for $i\in\mathbb Z$, and $\theta\in \mathbb R$ such that $[q_i,\theta]$ is not the tail of any Stokes arrow, are the Stokes vectors of $(\mathcal V, \varphi_\mathtt d)$ exactly as in Definition~\ref{def:collection_of_stokes_vectors_of_stokes_rep} (and Remark \ref{rem:stokes_vectors_extended_definition}).

\item  For $i\in \{i_0+b, \dots, i_0+a+b-1\}$, we set 
\[
v_{i,l}\defeq  (h_{i-b,l})^{-1}\,v_{i-b,l}, 
\]
where $h_{i-b,l}\defeq h([q_{i-b}, \vartheta]\leftarrow [q_{i-2b}, \vartheta])$.
\end{itemize}

Then $\bm v=(v_{i_0, l}, \dots,v_{i_0+a+b-1, l})$ is an extended Stokes-adapted collection of vectors, with the following properties:
\begin{itemize}
\item
The Stokes coefficients of $\bm v$ are equal to the nontrivial Stokes matrix entries of $\rho$, possibly up to multiplication by formal monodromies, namely for $i\in \{i_0+a, \dots, i_0+a+b-1\}$, we have 
\begin{equation}
v_{i,l}=\sigma_{i,l}^1\, v_{i-1,l}+\ldots+\sigma_{i,l}^{a-1}\, v_{i-(a-1),l} + v_{i-a,l},
\label{eq:relation_recessive_stokes_vectors}
\end{equation}
with 
\begin{equation}
\label{eq:relation_stokes_coeff_sigma_stokes_matrix_entries}
\sigma^{j}_{i,l}=\eta_{i,l}^j\;s_{i,l}^j
\qquad \text{ where }
\eta_{i,l}^j\defeq
\left\lbrace
\begin{array}{ll}
1 & \text{ if } i-j\leq i_0+b-1,\\
(h_{i-b,l})^{-1}\, h_{i-b-j,l} & \text{ if }i-j \geq i_0+b,
\end{array}
\right.
\end{equation}
and $s_{i,l}^j\defeq s_{d^{j}_{i,l}}([q_{i-j}, \theta^j_{i,l}]\leftarrow [q_i, \theta^j_{i,l}])$ is the Stokes matrix entry associated to the Stokes arrow $\downarrow^j_{i,l}$ at the singular direction $d^{j}_{i,l}$ as in Lemma \ref{lemma:structure_stokes_arrows_symmetric}.
\item 
Its formal monodromy in the sense of Definition~\ref{def:formal_monodromy_for_extend_coll_stokes_vect} coincides with the formal monodromy of $\rho$ in the sense of Stokes representations: For $i\in \{i_0, \dots, i_0+a-1\}$, we have
\begin{equation}
v_{i+b,l}=h([q_{i}, \vartheta]\leftarrow [q_{i-b}, \vartheta])^{-1}\,v_{i,l}.
\label{eq:formal_monodromies_coincide_in_proof}
\end{equation}
\end{itemize}

Conversely, if $(v_1, \dots, v_{a+b})$ is an extended Stokes-adapted collection of vectors, up to equivalence it comes in this way from a unique Stokes representation, which only depends on its equivalence class.
\end{theorem}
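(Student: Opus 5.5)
The plan is to follow the Stokes vectors continuously around $\partial$, starting at the reference direction $\mathtt d$, and to track what happens to the recessive vectors along the way. The basic mechanism is the following: whenever a vector $v_{[q_i,\theta]}$ is carried across a singular direction at which $[q_i,\theta]$ is the tail of Stokes arrows, it picks up the contributions $s\,v_{\mathrm{head}}$; otherwise it stays unchanged.

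\textbf{Step 1: recessive vectors at the centres of decreasing intervals.} Fix $i$ and follow the strand $q_i$, on the unfolded diagram, from the centre $\theta_{i-a,l}$ of the decreasing interval $J_{i-a,l}$ (equivalently $J_{i,l+2}$ shifted) to the centre $\theta_{i,l}$ of $J_{i,l}$. By Lemma~\ref{lemma:structure_stokes_arrows_symmetric}, the only singular directions between these two points where the strand is a tail of an arrow are the $a-1$ arrows $\downarrow^j_{i,l}$, located on the increasing interval $J_{i,l+1}$; each of them ends on the decreasing interval $J_{i-j,l}$. Near the end of each arrow the target strand $q_{i-j}$ is recessive, so it is not the tail of any arrow. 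Hence its Stokes vector at the head equals the recessive vector $v_{[q_{i-j},\theta_{i-j,l}]}$ (Remark~\ref{rem:stokes_vectors_extended_definition}). Applying the jump rule $v_{\mathtt i'^-}=v_{\mathtt i'^+}+\sum s\, v_{\mathtt i^+}$ successively at the $a-1$ arrows, in the order in which they are met along the strand, telescopes into
\[
v_{i}=s^1_{i,l}v_{i-1}+\dots+s^{a-1}_{i,l}v_{i-a+1}+v_{i-a}.
\]
Here $v_{i-a}$ denotes the value of the vector on the strand $q_i$ at the centre of $J_{i-a,l}$ viewed as $J_{i,l-2}$, by the equivalence $\sim$. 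The point to check is the orientation convention: the strand $q_i$ carries the vector of $J_{i,l}$ on one side and, after crossing the increasing interval, the vector of the previous decreasing interval on the other side, and it is the ``$-$'' side that receives the contributions. I expect to verify this by comparing the ordering of $\theta^j_{i,l}$ with the positive direction of $\partial$.

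\textbf{Step 2: the reference direction.} When the chain from $i_0$ to $i_0+a+b-1$ crosses $\mathtt d$, which lies just after $\theta_{i_0,l}$, formula \eqref{eq:jump_stokes_vector_reference_dir} rescales the vectors by formal monodromy coefficients $h$. This is exactly why $v_{i,l}$ is defined as $h^{-1}v_{i-b,l}$ for $i\geq i_0+b$. Rewriting the relation of Step 1 in terms of these rescaled vectors, the coefficient of a vector $v_{i-j}$ with $i-j\geq i_0+b$ acquires the factor $h_{i-b,l}^{-1}h_{i-b-j,l}$, which is the definition of $\eta^j_{i,l}$. The coefficient of $v_{i-a}$ stays equal to $1$: the strands of $v_i$ and $v_{i-a}$ lie on the same side of $\mathtt d$, up to exactly one common factor that cancels. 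Relation \eqref{eq:formal_monodromies_coincide_in_proof} is then immediate from the definition. Linear independence of $a$ consecutive vectors follows from Theorem~\ref{thm:reconstruction_stokes_filtrations_recessive}, since they generate $a$ consecutive recessive lines. This bookkeeping of indices modulo $b$ and of the side of $\mathtt d$ is the main obstacle, and I would carry it out by working on the universal cover with $\theta\in[\vartheta,\vartheta+2\pi)$.

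\textbf{Step 3: the converse.} Starting from an extended collection, define the Stokes matrices by declaring the entries at the arrows $\downarrow^j_{i,l}$ to be $(\eta^j_{i,l})^{-1}\sigma^j_{i,l}$, and define $h$ from the $h_i$ (with the off-diagonal cyclic pattern \eqref{eq:form_formal_monodromy_matrix}). Then check the relation $hS_{d_N}\cdots S_{d_1}=1$. This holds because propagating the basis $(v_{i_0},\dots,v_{i_0+a-1})$ around the full circle by the jump rules reproduces $(v_{i_0+b},\dots)$ by \eqref{eq:extended_collection_stokes_vectors_form_linear_comb}, and therefore returns the basis rescaled by $h^{-1}$. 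Uniqueness holds because the jump rules determine all Stokes vectors from the $\sigma$'s and the $h$'s, and these two maps are mutually inverse.
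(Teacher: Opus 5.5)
Your proposal follows essentially the same route as the paper's proof. The paper telescopes the $a-1$ jumps along the strand $q_i$ across the increasing interval $J_{i,l+1}$, treats separately the indices $i\geq i_0+b$ (where the path crosses $\mathtt d$ and the formal-monodromy jump $h_{i-b,l}$ is absorbed by the rescaling in the definition of $v_{i,l}$), and obtains the converse from $hS_{d_N}\cdots S_{d_1}=1$ by propagating bases around $\partial$; your three steps match this.

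One justification needs correcting. The head of $\downarrow^j_{i,l}$ lies in the decreasing interval $J_{i-j,l}$, but in general not where $q_{i-j}$ is recessive: its distance to $\theta_{i-j,l}$ is $\frac{2\pi}{b}\,|\tfrac{a}{4}-\tfrac{j}{2}|$, whereas the recessive sector has half-width $\frac{\pi}{b}$, and for $a=4$, $j=1$ the head even sits on a crossing. The correct reason the Stokes vector there equals $v_{i-j,l}$ is that no Stokes arrow starts on a decreasing interval (Lemma~\ref{lemma:structure_stokes_arrows_symmetric}). Also, $J_{i-a,l}=J_{i,l+2}$, not $J_{i,l-2}$.
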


\begin{proof}
The idea of the proof is as follows: Basically, the form of the jumps of Stokes vectors at singular directions described in Fig.~\ref{fig:situation_at_singular_direction}, together with the structure of consecutive Stokes arrows starting on a given increasing distinguished interval described in Fig.~\ref{fig:structure_stokes_arrows}, implies that any $a+1$ consecutive recessive Stokes vectors $v_{i-a,l},\dots, v_{i,l}$ satisfy the relation \eqref{eq:relation_recessive_stokes_vectors}, with the coefficients $\sigma_{i,l}^j$ equal to the Stokes matrix entries $s_{i,l}^j$, provided we do not cross the reference direction $\mathtt d$ between $\theta_{i-a,l}$ and $\theta_{i,l}$. There is, however, a subtlety if one has to cross the direction $\mathtt d$: In that case, because of the jump of Stokes vectors \eqref{eq:jump_stokes_vector_reference_dir} at $\mathtt d$, one does not quite get \eqref{eq:relation_recessive_stokes_vectors}: because of the formal monodromy the coefficient of $v_{i-a, l}$ is not 1. To remedy this and obtain \eqref{eq:relation_recessive_stokes_vectors} on the nose, it is necessary to ``cancel'' the jump at $\mathtt d$, and this is precisely what defining $v_{i,l}$ as $h([q_{i-b}, \vartheta]\leftarrow [q_{i-2b}, \vartheta])^{-1} v_{i-b,l}$ for $i\in \{i_0+b, \dots, i_0+a+b-1\}$ does. Let us give a few more details about how this works precisely.

 We first discuss the case where $i\in \{i_0+a, \dots, i_0+b-1\}$: Here we do not cross $\mathtt d$ when going from $\theta_{i-a,l}$ to $\theta_{i,l}$, and we have $v_{i', l}=v_{[q_{i'},\theta_{i',l}]}$ for $i'\in \{i-a, \dots, i\}$. To see that we have \eqref{eq:relation_recessive_stokes_vectors} for $i$, let us consider the Stokes vectors $v_{[q_i, \theta]}$ for $\theta\in [\theta_{i,l}, \theta_{i-a, l}]$. By definition, we have $v_{[q_i, \theta_{i,l}]}=v_{i,l}$ and $v_{[q_i, \theta_{i-a,l}]}=v_{i-a,l}$ (remembering that $q_i=q_{i-a}$). From the structure of Stokes arrows given by Fig.~\ref{fig:structure_stokes_arrows}, on the interval $[\theta_{i,l}, \theta_{i-a, l}]$, $v_{[q_i, \theta]}$ experiences jumps at the angles $\theta_{i,l}^1 <\ldots <\theta_{i,l}^{a-1}$. From the form of the jumps given by Fig.~\ref{fig:situation_at_singular_direction}, for all $j\in \{1, \dots, a-1\}$, using that $v_{[q_{i-j}, \theta^{j}_{i,l}]^\pm}=v_{[q_{i-j}, \theta^{j}_{i,l}]}=v_{i-j, l}$, we have
\begin{equation}
    v_{{[q_i, \theta^j_{i,l}]}^-}=v_{{[q_i, \theta^j_{i,l}]}^+}+s^{j}_{i,l}\, v_{i-j, l}.
\label{eq:jump_at_stokes_direction_in_proof}
\end{equation}
Combining all these jumps immediately gives a relation of the form \eqref{eq:relation_recessive_stokes_vectors} (with $\sigma_{i,l}^j = s_{i,l}^j$).

Let us now discuss the case $i\in \{i_0+b, \dots, i_0+a+b-1\}$: Here we cross $\mathtt d$ when going from $\theta_{i-a,l}$ to $\theta_{i,l}$ (in the negative sense with respect to the orientation), namely, if we set $j_0\defeq i-i_0-b\in \{0,\dots, a-1\}$, we cross $\vartheta-2\pi$ when passing from $\theta_{i,l}^{j_0+1}$ to $\theta_{i,l}^{j_0}$, i.e.\ we have
\[
\theta_{i,l}< \theta^{1}_{i,l}< \ldots < \theta^{j_0}_{i,l} < \vartheta - 2\pi < \theta^{j_0+1}_{i,l}< \ldots < \theta^{a-1}_{i,l} < \theta_{i-a, l} <\ldots <\theta_{i-b,l}<\ldots <\theta_{i_0,l} <\vartheta.
\]

The jumps experienced by $v_{[q_{i-b, l}, \theta]}$, going from $\theta_{i-a, l}+2\pi$ to $\theta_{i,l}+2\pi=\theta_{i-b, l}$, are as follows:

\begin{itemize}
    \item Before crossing $\mathtt d$ (at $\vartheta$), for $j\in \{j_0+1, \dots, a-1\}$ (i.e.\ $i-j < i_0+ b$) we have at $[\theta_{i-b,l}^j]=[\theta_{i,l}^j]$ the jump
    \[
    v_{{[q_{i-b},\theta_{i-b, l}^j]}^-}=v_{{[q_{i-b},\theta_{i-b, l}^j]}^+}+s_{i-b, l}^j\, v_{i-j,l}, 
    \]
    since  $v_{{[q_{i-b-j},\theta_{i-b, l}^j]}}=v_{{[q_{i-j},\theta_{i, l}^j]}}=v_{i-j,l}$.
Combining these jumps we get
\[
v_{{[q_{i-b},\vartheta]}^+}= s_{i-b, l}^{j_0+1}\, v_{i-j_0-1,l}+\ldots +s_{i-b, l}^{a-1}\, v_{i-a+1,l}+v_{i-a, l}.
\]
\item At $\mathtt d=[\vartheta]$, by  \eqref{eq:jump_stokes_vector_reference_dir}, we have
\[
v_{{[q_{i-b},\vartheta]}^-}=h([q_{i-b}, \vartheta]\leftarrow [q_{i-2b}, \vartheta])\,v_{{[q_{i-b},\vartheta]}^+}= h_{i-b,l}\,v_{{[q_{i-b},\vartheta]}^+}.
\]
\item For $j\in \{1, \dots, j_0\}$ (i.e.\ $i-j \geq  i_0+ b$) we have at $[\theta_{i-b,l}^j]=[\theta_{i,l}^j]$ the jump
\[
    v_{{[q_{i-b},\theta_{i-b, l}^j]}^-}=v_{{[q_{i-b},\theta_{i-b, l}^j]}^+}+s_{i-b, l}^j\, v_{i-b-j,l}, 
\]
since  $v_{{[q_{i-b-j},\theta_{i-b, l}^j]}}=v_{i-b-j,l}$. Combining these jumps we get
\[
v_{{[q_{i-b},\vartheta]}^-}=v_{i-b, l}- s_{i-b, l}^1\, v_{i-b-1,l}-\ldots - s_{i-b, l}^{j_0}\, v_{i_0,l}.
\]
\end{itemize}
Finally, combining the three above relations satisfied by $v_{{[q_{i-b},\vartheta]}^\pm}$, we obtain
\[
v_{i-b, l}- s_{i-b, l}^1\, v_{i-b-1,l}-\ldots - s_{i-b, l}^{j_0}\, v_{i_0,l}=h_{i-b,l}\left( s_{i-b, l}^{j_0+1}\, v_{i_0+b-1,l}+\ldots +s_{i-b, l}^{a-1}\, v_{i-a+1,l}+v_{i-a, l} \right)
\]
Dividing by $h_{i-b}$, and using that by definition $v_{i',l}=
(h_{i'-b,l})^{-1}\,v_{i'-b,l}$ for $i'\in \{i_0+b, \dots, i\}$ we arrive at \eqref{eq:relation_recessive_stokes_vectors}, with the Stokes coefficients $\sigma_{i,l}^j$ given by \eqref{eq:relation_stokes_coeff_sigma_stokes_matrix_entries} as claimed.

In the other direction, we have to check that an extended Stokes-adapted collection of vectors $\bm v=(v_1, \dots, v_{a+b})\in V^{a+b}$ comes, up to equivalence, from a unique Stokes representation. The unicity is clear. Let us indicate the proof of existence: We can use the relations \eqref{eq:relation_recessive_stokes_vectors} and \eqref{eq:formal_monodromies_coincide_in_proof} to define from $\bm v$ some tentative Stokes and formal monodromy matrices of the correct shape (in particular they only depend on the equivalence class of $\bm v$). To see that they indeed define a Stokes representation, one just has to check the compatibility condition $hS_{d_N}...S_{d_1}=1$.
This is a consequence (by a change of basis) of the fact that the composition of the linear maps inducing the changes of basis $(v_{i-a}, \dots, v_{i-1}) \mapsto (v_{i-a+1}, \dots, v_{i})$ for $i\in \{a+1, \dots, a+b\}$, followed by the linear map $(v_{1}, \dots, v_{a}) \mapsto(h_1 \,v_{1}, \dots,  h_{a}\,v_a) $ where $(h_1, \dots, h_a)$ is the formal monodromy of $\bm v$, preserves the basis $(v_1, \dots, v_a)$ of $V$, hence is the identity.
\end{proof}

As a consequence of this correspondence, given an element $P\in \mathbf P_{a,b}$, viewed as the recessive solutions of a Stokes filtered local system, the problem of finding a corresponding Stokes representation reduces to finding a lift of $P$ that is Stokes-adapted. We thus make the following definition:

\begin{definition}
Let $P=(V,(V_1, \dots, V_b))\in \mathbf P_{a,b}$. A \emph{Stokes-adapted lift} of $P$ is an extended Stokes-adapted collection of vectors $\bm v=(v_1, \dots, v_{a+b})\in V^{a+b}$ such that $V_i=\mathbb C v_i$ for all $i\in \{1, \dots, b\}$.
\end{definition}

We now show that it is always possible to find a Stokes-adapted lift.

\begin{proposition}
\label{prop:construction_stokes_adapted_lift}
    Let $P=(V,(V_1, \dots, V_b))\in \mathbf P_{a,b}$. 
    \begin{itemize}
        \item For any vectors $v_1, \dots, v_a$ generating $V_1, \dots, V_a$ respectively, there exists a unique Stokes-adapted lift $(v_1, \dots, v_{a+b})$ of $P$ extending $(v_1, \dots, v_a)$.
        \item For any vectors $v_1, \dots, v_p$ generating $V_1, \dots, V_p$ respectively, there exists a unique reduced Stokes-adapted lift $(v_1, \dots, v_{a+b})$ of $P$ extending $(v_1, \dots, v_p)$. 
    \end{itemize}
\end{proposition}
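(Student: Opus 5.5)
For the first statement I would build the lift recursively, using the nondegeneracy condition in the definition of $\mathbf P_{a,b}$. Start from the given generators $v_1,\dots,v_a$. Suppose $v_1,\dots,v_{i-1}$ have been constructed for some $i\in\{a+1,\dots,b\}$, with $v_k$ generating $V_k$. The lines $V_{i-a},\dots,V_{i-1}$ are $a$ cyclically consecutive lines, so they span $V$, and $(v_{i-1},\dots,v_{i-a})$ is a basis of $V$. Any generator $u$ of $V_i$ expands as $u=c_1v_{i-1}+\dots+c_a v_{i-a}$.

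The key point is that $c_a\neq0$. Otherwise $u$ would lie in $V_{i-a+1}+\dots+V_{i-1}$, so the $a$ consecutive lines $V_{i-a+1},\dots,V_i$ would span a subspace of dimension at most $a-1$. This contradicts nondegeneracy. Hence there is a unique rescaling $v_i\defeq c_a^{-1}u$ of the generator whose coefficient on $v_{i-a}$ equals $1$. This gives existence and uniqueness of $v_{a+1},\dots,v_b$, together with the coefficients $\sigma_i^j$ for $i\le b$.

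For $i\in\{b+1,\dots,b+a\}$ the same argument applies verbatim, with $V_i\defeq V_{i-b}$, since the indices are cyclic and the spanning condition holds for all cyclic windows. It produces a vector $v_i$ generating $V_{i-b}$, which is therefore collinear with $v_{i-b}$. We then set $h_{i-b}\defeq$ the nonzero scalar with $v_i=h_{i-b}^{-1}v_{i-b}$. All three conditions of Definition~\ref{def:extended_collection_stokes_adapted_vectors} now hold. Uniqueness follows because each step was forced. Note that linear independence of the windows $v_{i-a},\dots,v_{i-1}$ for $i\le a+b$ is exactly the spanning property of $P$ applied to cyclic windows.

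For the reduced statement I would use the first part together with the scaling freedom in $v_{p+1},\dots,v_a$. Rescaling the initial vectors $v_k\mapsto t_kv_k$, $k\in\{1,\dots,a\}$, propagates through the recursion. The plan is to compute how the resulting $h_k$ change: a diagonal rescaling of the initial window should rescale all subsequent vectors in a way that is periodic modulo $a$. Indeed, if $v_k\mapsto t_{k \bmod a}v_k$ for all $k$, the normalisation ``coefficient of $v_{i-a}$ equals $1$'' is preserved, with $\sigma_i^j\mapsto t_{i}t_{i-j}^{-1}\sigma_i^j$. Then $v_{k+b}=h_k^{-1}v_k$ forces the new formal monodromy to be $h_k' = h_k\,t_k\,t_{k+b}^{-1}$, with indices of $t$ taken mod $a$.

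Since $b\equiv$ a multiple of $p$ modulo $a$, the map $k\mapsto k+b$ permutes $\mathbb Z/a$ with orbits equal to the residue classes mod $p$, each of length $r=a/p$. So within each orbit we can choose the $t$'s successively, fixing $t_k=1$ for $k\in\{1,\dots,p\}$, to make all $h'_k=1$ except one per orbit. This is the twisted-conjugation normal form of \eqref{eq:form_formal_monodromy_minimal}. The product of the $h_k$ along an orbit is invariant and becomes $\breve h$. I expect the main obstacle to be bookkeeping: checking that the surviving nontrivial entries sit exactly at positions $1,\dots,p$ as required by Definition~\ref{def:formal_monodromy_for_extend_coll_stokes_vect}, i.e.\ choosing which element of each orbit keeps the nontrivial entry. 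Uniqueness follows since, once $t_1,\dots,t_p$ are fixed, the orbit equations determine the remaining $t_k$.
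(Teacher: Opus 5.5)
Your proof is correct, and for the reduced case it takes a genuinely different route from the paper.

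The paper fixes a $b$-periodic lift $\bm w$ and solves in one go for rescaling factors $\lambda_i$. The constraints are $\lambda_i=h_{i\leftarrow i-a}\lambda_{i-a}$ (with $h_{i\leftarrow i-a}$ a signed ratio of consecutive $a\times a$ determinants, via Cramer's rule), together with $\lambda_1=\dots=\lambda_p=1$ and $\lambda_{i+b}=\lambda_i$ for $i\in\{p+1,\dots,a\}$. It shows this square system has a unique solution by following, in each residue class mod $p$, a single cycle through $\{1,\dots,a+b\}$ made of $s$ forward $a$-jumps and $r$ backward $b$-jumps, i.e.\ the decomposition $i=\iota_i+k_ia-l_ib$.

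You instead first build the unreduced lift. Your first part is the same forced recursion, phrased through basis expansions rather than determinants. You then normalise it with the torus $(\mathbb C^*)^a$ rescaling the initial window. This acts on the formal monodromy by the twisted conjugation $h_k\mapsto h_kt_kt_{k+b}^{-1}$ along the orbits of $k\mapsto k+b$ on $\mathbb Z/a\mathbb Z$, which have length $r$. The fact that makes the orbits exactly the residue classes mod $p$ is $\gcd(a,b)=p$, not merely $p\mid b$.

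The bookkeeping you flag is harmless. Each orbit contains exactly one $\iota\in\{1,\dots,p\}$. The $r-1$ equations $t_{k+b}=h_kt_k$ for the other $k$ in the orbit form a chain ending at $t_\iota=1$, which is solved backwards uniquely and leaves $h'_\iota=\prod_{k}h_k$. Uniqueness then follows from the uniqueness in your first part, since any other reduced lift extending $(v_1,\dots,v_p)$ is such a rescaling.

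Your version is more conceptual. It mirrors the passage from framings to minimal framings for Stokes representations, and it makes both the invariance of $\breve h$ as an orbit product and the $\breve H$-action of Remark~\ref{rem:breve_H_action_on_reduced_stokes_adapted_vectors} immediate. The paper's more computational version pays off afterwards. It directly gives the Pl\"ucker-coordinate formulas for $h$ and $\breve h$ in the next corollary. The jump counts $k_i,l_i$ it introduces are also reused to obtain the signs $\varepsilon_i=(-1)^{(a-1)(k_i+l_i)}$ in Proposition~\ref{prop:correspondence_frieze_adapted_stokes_adapted_lifts}.
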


\begin{proof} Before dealing with the two cases in the statement of the proposition, let us discuss more generally how to obtain Stokes-adapted lifts: Let $\bm w=(w_1, \dots, w_{a+b})\in V^{a+b}$ be any $b$-periodic (extended) lift of $P$, i.e.\ a collection of vectors such that $\mathbb C w_i=V_i$ for $i\in \{1, \dots, a+b\}$ (where we read $i$ modulo $b$ for $V_i$), and $w_{i+b}=w_i$  for $i\in\{1, \dots, a\}$. We wish to rescale $\bm  w$ to obtain a Stokes-adapted lift. More precisely, we want to find rescaling factors $\lambda_1, \dots, \lambda_{a+b}\in \mathbb C^*$, such that, setting $v_i\defeq \lambda_i^{-1}w_i$ for $i\in \{1, \dots, a+b\}$, the collection $\bm v\defeq (v_1, \dots, v_{a+b})$ is Stokes-adapted in the sense of Definition~\ref{def:extended_collection_stokes_adapted_vectors}. 

For any $i\in \{a+1, \dots, a+b\}$, we have a relation of the form
\begin{equation*}
w_{i}=\tau_i^1 w_{i-1}+\dots+\tau_i^{a-1} w_{i-(a-1)} + \tau_i^{a} w_{i-a}
\end{equation*}
with $\tau_i^{j}\in\mathbb C$ for $j\in \{1, \dots, a\}$ and $\tau_i^a\neq 0$. The difference with \eqref{eq:extended_collection_stokes_vectors_form_linear_comb} is that we do not have $\tau_i^a=1$ in general, and our goal is to choose the $\lambda_i$ to reduce to this situation. 
Using that, by Cramer's rule,
\[
\tau_i^a=\frac{|w_{i-1}, \dots, w_{i-(a-1)}, w_i|}{|w_{i-1},\dots,  w_{i-(a-1)}, w_{i-a}|}=(-1)^{a-1}\frac{|w_i,w_{i-1}, \dots, w_{i-(a-1)}|}{|w_{i-1},\dots,  w_{i-(a-1)}, w_{i-a}|},
\]
where we denote by $|\cdot|$ the determinant of $a$ vectors in $V$, and setting 
\[
\Delta_{i,\dots,i-a+1}\defeq|w_i,w_{i-1}, \dots, w_{i-(a-1)}|
\]
to shorten notations, we obtain that $\bm v$ is Stokes-adapted if and only if the $\lambda_i$ satisfy the conditions
\begin{equation}
    \lambda_i= h_{i\leftarrow i-a}\;\lambda_{i-a}
    \label{eq:condition_rescaling_stokes_adapted}
\end{equation}
with 
\begin{equation*}
    h_{i\leftarrow i-a}\defeq(-1)^{a-1}\frac{\Delta_{i,\dots,i-a+1}} {\Delta_{i-1,\dots,i-a}}
\end{equation*}
for all $i\in \{a+1, \dots, a+b\}$.

Let us discuss the first case in the statement of the proposition. Consider any lift $\bm w$ as above extending $(v_1, \dots, v_a)$, i.e.\ such that $w_i=v_i$ for $i\in \{1, \dots, a\}$. This translates into the requirement $\lambda_1=\ldots=\lambda_a=1$. The coefficients $\lambda_{a+1}, \dots, \lambda_{a+b}$ are then immediately determined by \eqref{eq:condition_rescaling_stokes_adapted}: For $i\in \{a+1, \dots, a+b\}$, writing the Euclidean division $i=\kappa a+\iota$, with $\kappa\in \mathbb N$ and $\iota=\{1, \dots, a\}$, we have
\[
\lambda_i=\prod_{k=1}^\kappa h_{k a+\iota\leftarrow (k-1)a+\iota}=(-1)^{\kappa(a-1)}\prod_{k=1}^\kappa \frac{\Delta_{k a+\iota,\dots,(k-1)a+\iota+1}} {\Delta_{ka+\iota-1,\dots,(k-1)a+\iota}}. 
\]
Hence the Stokes-adapted lift $\bm v$ extending $(v_1, \dots, v_a)$ exists and is unique as claimed. 

Let us discuss the second case of the proposition. Consider a lift $\bm w=(w_1, \dots, w_{a+b})$ of $P$ as above extending $(v_1, \ldots, v_p)$, i.e.\ such that $w_i=v_i$ for $i\in \{1, \dots, p\}$. This translates into the requirement $\lambda_1=\dots=\lambda_p=1$. Notice that, this time, this condition is not sufficient to uniquely determine the remaining $\lambda_i$ via \eqref{eq:condition_rescaling_stokes_adapted}. This is where the requirement that $\bm v$ should be reduced enters: The latter translates into the condition $v_{i+b}=v_i$ for $i\in \{p+1, \dots, a\}$, hence (since we chose $\bm w$ to be $b$-periodic)
\begin{equation}
\lambda_{i+b}=\lambda_i, \quad \text{for } i\in \{p+1, \dots, a\}.
\label{eq:periodicity_condition_rescaling_stokes_adapted}
\end{equation}
We now note that \eqref{eq:condition_rescaling_stokes_adapted} and \eqref{eq:periodicity_condition_rescaling_stokes_adapted} together with $\lambda_1=\ldots=\lambda_p=1$ uniquely determine all the $\lambda_{p+1},\ldots,\lambda_{a+b}$: For $i\in \{p+1, \dots, a+b\}$, we can write 
\begin{equation}
i=\iota_i+k_i a -l_i  b
\label{eq:number_jumps}
\end{equation}
in a unique way with $\iota_i\in \{1, \dots, p\}$, $k_i\in \{1, \dots, s\}$ and $l_i\geq 0$ is an integer (see Remark \ref{rem:number_jumps} below). This expression gives us a way to determine $\lambda_{i}$ for $i\in \{p+1, \dots, a+b\}$ by ``jumping'' through the diagram in Fig.~\ref{fig:rescaling_to_reduced_stokes_adapted} in order to reach $\lambda_{i}$ starting from $\lambda_{\iota_i}=1$. Each of the $k_i$ times we jump forward by $a$ steps (i.e.\ along one of the arrows on the bottom), we use \eqref{eq:condition_rescaling_stokes_adapted}, and each of the $l_i$ times we jump backward by $b$ steps (i.e.\ along one of the arrows on the top), we use \eqref{eq:periodicity_condition_rescaling_stokes_adapted}, noting that the indices for the backward jumps are indeed in the appropriate range (i.e.\ in Fig.~\ref{fig:rescaling_to_reduced_stokes_adapted} they correspond to a top dashed arrow with label $1$).
\end{proof}

\begin{remark}
\label{rem:number_jumps}
For $i\in \{p+1, \dots, a+b\}$, the integers  $\iota_i\in \{1, \dots, p\}$, $k_i\in \{1, \dots, s\}$ and $l_i\geq 0$ in \eqref{eq:number_jumps} are explicitly determined as follows:
\begin{itemize}
\item $\iota_i$ is the integer in $\{1, \dots, p\}$ such that $i\equiv \iota_i\mod p$
\item  $k_i$ is then the integer in $\{1, \dots, s\}$ such that $\iota_i+k_i\, a\equiv i\mod b$, i.e.\ such that
\[
\overline{k_i}=\overline r^{-1}\overline{\left(\frac{i-\iota_i}{p}\right)}\in \mathbb Z/s\mathbb Z
\]
\item $l_i$ is then given by $l_i=\frac{\iota_i+k_i\, a-i}{b}$.
\end{itemize}
It will be convenient later to also write \eqref{eq:number_jumps} for $i\in \{1, \dots, p\}$, with $\iota_i=i$, $k_i=l_i=0$.
\end{remark}

\begin{example} Consider the exact situation of
Fig.~\ref{fig:rescaling_to_reduced_stokes_adapted}, with $a=4$, $b=10$, $r=2$, $s=5$, $p=2$. For $i=7$ for instance, we have $\iota_i=1$, $k_i=4$ and $l_i=1$, and \eqref{eq:number_jumps} reads $7=1+4\times 4 - 1\times 10$. On the figure, this corresponds to the fact that $i$ is on the black Stokes circle, and that to go from $\lambda_1$ to $\lambda_7$ following the arrows, one has to follow the trajectory, $\lambda_1\to\lambda_5\to \lambda_9\to \lambda_{13}\to \lambda_3\to \lambda_7$, featuring $k_i=4$ bottom arrows (the forward steps), as well as $l_i=1$ top arrow (the backward step).
\end{example}

\begin{remark}
\label{rem:breve_H_action_on_reduced_stokes_adapted_vectors}
It follows from Proposition \ref{prop:construction_stokes_adapted_lift} that there is a natural $\breve H$-action on reduced Stokes-adapted collections of vectors (hence on equivalence classes thereof), induced by the $\breve H$-action rescaling the initial data $(v_1, \dots, v_p)$. It is straightforward to see that this action corresponds via Theorem \ref{thm:correspondence_stokes_rep_stokes_adapted_vectors} to the $\breve H$-action on reduced Stokes representations changing minimal framings. 
\end{remark}

\subsection{Wild character varieties and moduli spaces of nondegenerate polygons}

A direct consequence of Theorem \ref{thm:correspondence_stokes_rep_stokes_adapted_vectors} is that the equivalence between Stokes local systems and (recessive solutions of) Stokes filtered local systems with irregular class $\Theta_{a,b}$ includes an algebraic isomorphism between the wild character variety $\mathcal B_{a,b}$ and the moduli space $\mathcal P_{a,b}$ of nondegenerate $b$-gons in $\mathbb P^{a-1}$.

Here by nondegenerate polygons in projective space we mean the following:

\begin{definition} 
Let $v=(v_1, \dots, v_b)\in (\mathbb P^{a-1})^b$ be a configuration of $b$ points in $\mathbb P^{a-1}$. We say that $v$ is a nondegenerate $b$-gon if any $a$ cyclically consecutive points are linearly independent, that is, if for any $i\in \mathbb Z$, the points $v_l, \dots, v_{l+a-1}$ are linearly independent (viewing indices modulo $b$). 
\end{definition}

The algebraic group $\mathrm{PGL}_a(\mathbb C)$ naturally acts on $(\mathbb P^{a-1})^b$, and equivalence classes of nondegenerate $b$-gons in $\mathbb P^{a-1}$ under projective transformations are naturally in bijection with isomorphism classes of objects in $\mathbf P_{a,b}$.

It is possible to construct an algebraic moduli space parametrising classes of point configurations under projective equivalence \cite{dolgachev1988point}. In particular, restricting to nondegenerate polygons, there exists a smooth complex algebraic variety $\mathcal P_{a,b}$ parametrising equivalence classes of nondegenerate $b$-gons in $\mathbb P^{a-1}$.
Explicitly, $\mathcal P_{a,b}$ can be constructed as a torus quotient of the Grassmannian $\mathrm{Gr}_{a, b}$ \cite[\S2.2]{gelfand1982geometry}. By the correspondence between Stokes filtered local systems and their recessive solutions of Theorem \ref{thm:reconstruction_stokes_filtrations_recessive},  $\mathcal P_{a,b}$ is also a (coarse) moduli space for Stokes filtered local systems on $(\mathbb P^1, \{\infty\})$ with irregular class $\Theta_{a,b}$ at infinity.

Since in the correspondence of Theorem \ref{thm:correspondence_stokes_rep_stokes_adapted_vectors}, the Stokes matrix entries are algebraic expressions in the vectors $v_i$ and vice versa, we obtain:

\begin{corollary}
    The twisted wild character variety $\mathcal B_{a,b}$  is algebraically isomorphic to the moduli space $\mathcal P_{a,b}$ of nondegenerate $b$-gons in $\mathbb P^{a-1}$. 
\end{corollary}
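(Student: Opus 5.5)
The plan is to realise the isomorphism as the quotient of an explicit $\breve H$-equivariant identification at the level of reduced Stokes representations, using Theorem~\ref{thm:correspondence_stokes_rep_stokes_adapted_vectors} and Proposition~\ref{prop:construction_stokes_adapted_lift}. The wild character variety is $\mathcal B_{a,b}=\breve{\mathcal R}(\Theta_{a,b})\sslash\breve H$. On the other side, $\mathcal P_{a,b}$ is the quotient of nondegenerate $b$-gons by $\mathrm{PGL}_a(\mathbb C)$, i.e.\ a torus quotient of an open subset of $\mathrm{Gr}_{a,b}$.

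\textbf{Step 1 (the forward map).} Given a reduced Stokes representation $\rho$, I consider the reduced extended Stokes-adapted collection $\bm v=(v_1,\dots,v_{a+b})$ in $\mathbb C^a$ it determines via Theorem~\ref{thm:correspondence_stokes_rep_stokes_adapted_vectors}. I then send $\rho$ to the configuration of lines $([v_1],\dots,[v_b])\in(\mathbb P^{a-1})^b$.
\begin{itemize}
\item Nondegeneracy is built into Definition~\ref{def:extended_collection_stokes_adapted_vectors}: any $a$ consecutive vectors form a basis, and the identification $v_{i+b}\propto v_i$ handles the cyclic wraparound.
\item Algebraicity holds because $\bm v$ is obtained from $\rho$ by the recursion \eqref{eq:relation_recessive_stokes_vectors} with initial data the standard basis. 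Each $v_i$ is therefore a polynomial in the Stokes matrix entries and the entries of $h^{\pm1}$, so we get a morphism $\breve{\mathcal R}(\Theta_{a,b})\to(\mathbb P^{a-1})^b_{\mathrm{nd}}$.
\item The $\breve H$-action changes the framing only by rescaling $v_1,\dots,v_p$ (Remark~\ref{rem:breve_H_action_on_reduced_stokes_adapted_vectors}). This changes the collection by a linear transformation of $V$ together with rescalings, so the composite to $\mathcal P_{a,b}$ is $\breve H$-invariant. It thus descends to a morphism $\mathcal B_{a,b}\to\mathcal P_{a,b}$.
\item Under the equivalences $\mathbf{SL}\simeq\mathbf{SF}_{a,b}\simeq\mathbf P_{a,b}$, this morphism is exactly the recessive-line map $\Phi^{\rec}_{a,b}$ of Theorem~\ref{thm:reconstruction_stokes_filtrations_recessive}, because the Stokes vectors at $\theta_{i,l}$ generate the recessive lines.
\end{itemize}

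\textbf{Step 2 (the inverse).} Starting from a nondegenerate $b$-gon, I pick representatives $w_i$ and use the explicit formulas in the proof of Proposition~\ref{prop:construction_stokes_adapted_lift}. Normalising $v_1,\dots,v_p$ produces the rescalings $\lambda_i$, as products of ratios of consecutive determinants $\Delta$. These determinants are nonvanishing by nondegeneracy, so the $\lambda_i$ are regular functions.

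The Stokes coefficients $\sigma_i^j$ and the reduced formal monodromy $\breve h$ are then computed by Cramer's rule, again as ratios of consecutive minors. Through \eqref{eq:relation_stokes_coeff_sigma_stokes_matrix_entries} they give the Stokes matrix entries. This yields a regular map from the open subset of nondegenerate configurations to $\breve{\mathcal R}(\Theta_{a,b})$.

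Changing the representatives $w_i$, or applying an element of $\mathrm{GL}_a$, changes the output only by the $\breve H$-action on $v_1,\dots,v_p$. The map therefore descends to $\mathcal P_{a,b}\to\mathcal B_{a,b}$. The uniqueness statements in Theorem~\ref{thm:correspondence_stokes_rep_stokes_adapted_vectors} and Proposition~\ref{prop:construction_stokes_adapted_lift} show that the two descended maps are mutually inverse on points.

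\textbf{Main obstacle.} The delicate point is that a bijective morphism need not be an isomorphism. What is needed is that both maps are genuine morphisms of the quotient varieties, which requires the quotients to be compatible with the categorical (GIT) quotient structures.

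I would handle this by checking that $\breve H$ acts freely on $\breve{\mathcal R}(\Theta_{a,b})$, modulo the overall scalars acting trivially. Here the reduced framing data reduces the stabiliser to scalars on each circle, and these are killed in the projective picture. Both quotients are then geometric quotients by free actions of reductive groups on smooth affine (respectively quasi-projective) varieties. Consequently, $\breve H$-invariant and $\mathrm{PGL}_a$-invariant morphisms descend, and the two descended morphisms are inverse isomorphisms.
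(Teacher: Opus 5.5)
Your Steps 1 and 2 are essentially the paper's own argument, made explicit. The paper justifies the corollary only by noting that, in the correspondence of Theorem~\ref{thm:correspondence_stokes_rep_stokes_adapted_vectors}, the Stokes matrix entries are algebraic in the vectors $v_i$ and vice versa. That is exactly your forward map via Stokes vectors and your inverse via the determinantal rescalings $\lambda_i$ and Cramer's rule. (Minor point: the first $a$ vectors of $\bm v$ are Stokes vectors at the directions $\theta_{i,l}$, not the standard basis. They are obtained from it by products of unipotent Stokes matrices and $h^{\pm1}$, so polynomiality still holds.)

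The problem is your resolution of the ``main obstacle''. The claim that $\breve H$ acts freely on $\breve{\mathcal R}(\Theta_{a,b})$ modulo overall scalars is false whenever $p=\gcd(a,b)\geq 2$. The ``scalars on each circle'' are precisely the elements of $\breve H$, and only the diagonal $\mathbb C^*\subset\breve H$ acts trivially on $\mathbb P(V)$. Conjugation by $t\in\breve H$ multiplies a Stokes entry joining circles $\mathtt I,\mathtt I'$ by $t_{\mathtt I}/t_{\mathtt I'}$. So any representation whose inter-circle Stokes entries all vanish (a direct sum of Stokes data on the individual circles) is fixed by all of $\breve H$. For example, take $(a,b)=(2,4)$ and the representation with $h=1$ and all $S_d=1$, i.e.\ the connection $e^{z^2}\oplus e^{-z^2}$. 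Its stabiliser is $\breve H=(\mathbb C^*)^2$. Its recessive configuration $(0,\infty,0,\infty)\in(\mathbb P^1)^4$ has a $\mathbb C^*$ stabiliser in $\mathrm{PGL}_2$ and lies in the closure of the orbit of $(0,\infty,1,\infty)$. So neither quotient is geometric in general. In the coprime case your claim holds, but only trivially, since $\breve H=\mathbb C^*$ acts trivially.

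Freeness is, however, not needed. The ``bijective morphism'' worry does not arise either, since you already have morphisms in both directions. Write $\rho\mapsto\bm v(\rho)$ for your Step 1 map and $W\mapsto\Psi(W)$ for your Step 2 map on matrices of representatives. The uniqueness statements in Theorem~\ref{thm:correspondence_stokes_rep_stokes_adapted_vectors} and Proposition~\ref{prop:construction_stokes_adapted_lift} give two facts: $\Psi(\bm v(\rho))=\rho$, and the first $b$ vectors of $\bm v(\Psi(W))$ lie in the $(\GL_a\times(\mathbb C^*)^b)$-orbit of $W$. Now $\mathcal B_{a,b}=\breve{\mathcal R}(\Theta_{a,b})\sslash\breve H$ is an affine GIT quotient. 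Likewise $\mathcal P_{a,b}$ is a torus quotient of the affine open subset of $\mathrm{Gr}_{a,b}$ where the cyclically consecutive Plücker coordinates are nonzero. So both are categorical quotients. The invariant composites therefore descend to morphisms $\bar\Phi\colon\mathcal B_{a,b}\to\mathcal P_{a,b}$ and $\bar\Psi\colon\mathcal P_{a,b}\to\mathcal B_{a,b}$. The two identities above then descend to $\bar\Psi\circ\bar\Phi=\mathrm{id}$ and $\bar\Phi\circ\bar\Psi=\mathrm{id}$, by uniqueness of factorisation through a categorical quotient. Replacing your last paragraph by this argument gives a complete proof, including the non-coprime case.
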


In a similar way, the correspondence between Stokes local systems and subdominant solutions of Stokes filtered local systems of type $\Theta_{a,b}$ leads to a different isomorphism between the wild character variety $\mathcal B_{a,b}\defeq \mathcal M_B(\Theta_{a,b})$ and $\mathcal P_{a,b}$. The two isomorphisms are related by projective duality, i.e.\ the commutative diagram \eqref{eq:summary_isomorphisms_final_side} induces a commutative diagram of algebraic isomorphisms. 

\begin{remark}
In the case $a=2$ and $b$ even, this correspondence has already been discussed in detail by Boalch \cite{boalch2018wild_points}. In that case the situation is significantly simpler since recessive and subdominant solutions coincide.
\end{remark}

\subsection{Formal monodromy from point configurations}

As another direct consequence of (the proof of) Proposition \ref{prop:construction_stokes_adapted_lift}, we can also obtain explicit expressions for the formal monodromies in terms of Plücker coordinates of the point configurations:
\begin{corollary}
    In the situation of Proposition~\ref{prop:construction_stokes_adapted_lift} and using the notation introduced in its proof, we have:
    \begin{itemize}
        \item The formal monodromy of the Stokes-adapted lift extending $(v_1,\ldots,v_a)$ is given by $h=(h_1, \dots, h_a)$ with
        \[
h_i = \lambda_{i+b} = \prod_{k=1}^\kappa h_{k a+\iota\leftarrow (k-1)a+\iota}=(-1)^{\kappa(a-1)}\prod_{k=1}^\kappa \frac{\Delta_{k a+\iota,\dots,(k-1)a+\iota+1}} {\Delta_{ka+\iota-1,\dots,(k-1)a+\iota}}
\]
with $\kappa\in\mathbb N$ and $\iota\in\{1,\ldots,a\}$ such that $i+b=\kappa a+\iota$.

\item The reduced formal monodromy of the Stokes-adapted lift extending $(v_1,\ldots,v_p)$ is $\breve h=(\breve h_1,\ldots,\breve h_p)$ with
\begin{equation*}
   \breve h_i=\prod_{
    \substack{j \in \{a+1,\ldots,a+b\}\\j \equiv i \mod p}} 
   h_{j\leftarrow j-a}=(-1)^{s(a-1)} \prod_{
    \substack{j \in \{a+1,\ldots,a+b\}\\j \equiv i \mod p}} 
    \frac{\Delta_{j,\dots,j-a+1}} {\Delta_{j-1,\dots,j-a}}
\end{equation*}
In particular, it does not depend on $(v_1,\ldots,v_p)$. 
\end{itemize}
\end{corollary}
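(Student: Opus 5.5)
The plan is to read both formulas directly off the rescaling factors $\lambda_i$ constructed in the proof of Proposition~\ref{prop:construction_stokes_adapted_lift}, using the relation $v_{i+b}=h_i^{-1}v_i$ from Definition~\ref{def:extended_collection_stokes_adapted_vectors}.

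\textbf{First case (lift extending $(v_1,\ldots,v_a)$).} Take the $b$-periodic lift $\bm w$ with $w_i=v_i$ for $i\le a$. Then $\lambda_1=\dots=\lambda_a=1$ and $v_j=\lambda_j^{-1}w_j$ for all $j$. For $i\in\{1,\dots,a\}$, periodicity gives $w_{i+b}=w_i=v_i$, hence
\[
v_{i+b}=\lambda_{i+b}^{-1}w_{i+b}=\lambda_{i+b}^{-1}v_i .
\]
Comparing with $v_{i+b}=h_i^{-1}v_i$ yields $h_i=\lambda_{i+b}$. Substituting the explicit product formula for $\lambda_{i+b}$ from the proof, with the Euclidean division $i+b=\kappa a+\iota$, gives the stated expression.

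\textbf{Second case (reduced lift extending $(v_1,\ldots,v_p)$).} Again take $\bm w$ $b$-periodic with $w_i=v_i$ for $i\le p$, so $\lambda_1=\dots=\lambda_p=1$ and, by the same computation, $\breve h_i=h_i=\lambda_{i+b}$ for $i\in\{1,\ldots,p\}$. To evaluate $\lambda_{i+b}$, propagate from $\lambda_i=1$ through the residue class of $i$ modulo $p$ in Fig.~\ref{fig:rescaling_to_reduced_stokes_adapted}.
\begin{itemize}
\item Each forward step $j-a\to j$ with $j\in\{a+1,\dots,a+b\}$ multiplies by $h_{j\leftarrow j-a}$, by \eqref{eq:condition_rescaling_stokes_adapted}.
\item Each backward step $j+b\to j$ with $j\in\{p+1,\dots,a\}$ multiplies by $1$, by \eqref{eq:periodicity_condition_rescaling_stokes_adapted}.
\end{itemize}
The key combinatorial claim is that the path from $\lambda_i$ to $\lambda_{i+b}$ uses every forward arrow $j-a\to j$ with $j\in\{a+1,\dots,a+b\}$ and $j\equiv i \mod p$ exactly once. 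There are exactly $s$ such $j$ in a window of length $b=ps$.

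To justify the claim, follow the path in $\mathbb Z/b\mathbb Z$. Since $a=rp$ and $\gcd(r,s)=1$, adding $a$ cycles through all $s$ residues congruent to $i$ mod $p$ before returning. So the path visits each residue class mod $b$ in the class of $i$ mod $p$ exactly once before reaching $i+b$. Each residue class corresponds to exactly one forward arrow with target in $\{a+1,\ldots,a+b\}$. Backward jumps only occur when the index exceeds $a+b$ in the relevant range and contribute trivially, which is consistent with the uniqueness in Remark~\ref{rem:number_jumps}.

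Multiplying the $s$ factors gives
\[
\breve h_i=\prod_{\substack{j\in\{a+1,\ldots,a+b\}\\ j\equiv i \mod p}} h_{j\leftarrow j-a},
\]
and the sign $(-1)^{s(a-1)}$ collects the $s$ copies of $(-1)^{a-1}$.

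\textbf{Independence of $(v_1,\ldots,v_p)$.} Rescaling one $w_m$ by $c$ multiplies $\Delta_{j,\dots,j-a+1}$ by $c$ precisely when $m\in\{j-a+1,\dots,j\}$ (mod $b$), and $\Delta_{j-1,\dots,j-a}$ by $c$ precisely when $m\in\{j-a,\dots,j-1\}$. So the ratio for $j$ changes by $c$ if $m\equiv j$, by $c^{-1}$ if $m\equiv j-a$, and otherwise not at all. Over the product on the class of $i$ mod $p$, these contributions cancel, since the map $j\mapsto j-a$ permutes that class modulo $b$. The same argument shows the product is independent of the choice of $\bm w$. Alternatively, this independence is immediate from Remark~\ref{rem:breve_H_action_on_reduced_stokes_adapted_vectors}, since the reduced formal monodromy is invariant under change of minimal framing.

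The main obstacle is the bookkeeping in the path-counting step, in particular verifying that no forward arrow is used twice or falls outside the window $\{a+1,\ldots,a+b\}$. I would settle this by working with residues modulo $b$ as above rather than tracking actual indices.
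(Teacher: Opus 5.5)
Your proposal is correct and follows the paper's proof: you get $h_i=\lambda_{i+b}$ from the $b$-periodicity of $\bm w$, then count the $s$ forward $a$-jumps (and $r-1$ trivial backward $b$-jumps) on the path from $\lambda_i=1$ to $\lambda_{i+b}$, with each $j\equiv i \mod p$ in $\{a+1,\dots,a+b\}$ occurring exactly once. Your residue-mod-$b$ argument for the path count and your explicit degree-zero check for independence of $(v_1,\dots,v_p)$ only spell out what the paper leaves implicit; one wording fix is that backward jumps happen when the current index lies in $\{b+p+1,\dots,a+b\}$ (i.e.\ when the next forward step would exceed $a+b$), not when the index itself exceeds $a+b$.
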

\begin{proof}
    For the first part, the equality $h_i=\lambda_{i+b}$ follows simply from the definition of formal monodromy ($v_{i+b}=h_i^{-1} v_i$) and from the fact that in the proof of Proposition~\ref{prop:construction_stokes_adapted_lift} the lift $\bm w$ is assumed to be $b$-periodic, i.e.\ $w_{i+b}=w_i$ for $i\in\{1,\ldots,a\}$, hence $v_{i+b}=\lambda_{i+b}^{-1} v_i$ since $\lambda_1=1$. Then the statement is clear from the above computations.

    For the second part, we have again $\breve h_i=\lambda_{i+b}$ for $i\in\{1, \dots, p\}$.  The result is obtained by observing that to go from $\lambda_i=1$ to $\lambda_{i+b}$, the number of forward $a$-jumps is exactly $s=\frac{b}{p}$ (the number of backward $b$-jumps is $r-1$). In each forward jump, one will get a contribution by some $h_{j\leftarrow j-a}$, where all the $j$ will have the same residue class modulo $p$, since we jump by multiples of $p$. Moreover, each $j$ appears only once, hence we accumulate all the $s$ different possible indices $j$ with residue class $i$ modulo $p$. We therefore obtain the reduced monodromies as in the statement of the corollary.
\end{proof}

\begin{remark}\label{rem:formal-monodromy-Stokes-adapted-lift}
    Let us make a few comments about the formal monodromy. 

\begin{itemize}
\item When $a$ and $b$ are coprime, we have $a=r$ and $p=1$, the reduced formal monodromy of the Stokes-adapted lift extending any $(v_1)$ is
\[
\breve h\defeq\breve h_1=(-1)^{s(r-1)}
\]
(all the terms cancel in the product). If $r$ is odd, we thus have $\breve h=1$, and if $r$ is even, then $s$ has to be odd and we have $\breve h=-1$, hence in any case we have 
\[
\breve h=(-1)^{r-1}.
\]
This can also be obtained in a more direct way: Since any Stokes matrix has determinant 1, the relation  $hS_{d_N}\dots S_{d_1}=1$ implies $\det(h)=1$, and hence $\breve h=(-1)^{r-1}$ since we have $\det(h)=(-1)^{r-1}\breve h$ by \eqref{eq:form_formal_monodromy_minimal}.
\item In the case $a=2$, and $b=2n$ even (hence $r=1$, $s=n$ and $p=2$) considered in \cite{boalch2018wild_points}, we have that for any $\bm v=(v_1, v_2)$, the corresponding reduced formal monodromy satisfies
\[
\breve h_1=\breve h_2^{-1}=(-1)^n\frac{\Delta_{2n, 2n-1}\dots \Delta_{4,3}\Delta_{2,1}}{\Delta_{2n-1, 2n-3}\dots \Delta_{3,1}\Delta_{1,2n-1}}
\]
is up to a sign the multiratio of the $2n$ points $(V_1, \dots, V_n)$ in $\mathbb P(V)\simeq \mathbb P^1$.
\item For general $(a,b)$ with $a<b$, the formal monodromy map $\breve h\colon \mathcal P_{a,b}\to \breve H=(\mathbb C^*)^p$ can thus be seen as a (torus-valued) generalisation of the multiratio, for nondegenerate $b$-gons in $\mathbb  P^{a-1}$.
\end{itemize}
\end{remark}

\begin{figure}
\centering
  \begin{tikzpicture}[scale=1.2]
\node (1) at (1,0) {$\lambda_1$};
\node[blue] (2) at (2,0) {$\lambda_2$};
\node  (3) at (3,0) {$\lambda_3$};
\node[blue] (4) at (4,0) {$\lambda_4$};
\node (5) at (5,0) {$\lambda_5$};
\node[blue] (6) at (6,0) {$\lambda_6$};
\node (7) at (7,0) {$\lambda_7$};
\node[blue] (8) at (8,0) {$\lambda_8$};
\node (9) at (9,0) {$\lambda_9$};
\node[blue] (10) at (10,0) {$\lambda_{10}$};
\node (11) at (11,0) {$\lambda_{11}$};
\node[blue] (12) at (12,0) {$\lambda_{12}$};
\node (13) at (13,0) {$\lambda_{13}$};
\node[blue] (14) at (14,0) {$\lambda_{14}$};

\draw[->] (1) to[bend right] node[midway, below]{$ h_{5\leftarrow 1}$} (5);
\draw[->] (5) to[bend right] node[midway, below]{$h_{9\leftarrow 5}$}  (9);
\draw[->] (9) to[bend right] node[midway, below]{$h_{13\leftarrow 9}$} (13);
\draw[dashed, ->] (13) to[bend right] node[midway, above]{$1$} (3);
\draw[->] (3) to[bend right] node[midway, below]{$h_{7\leftarrow 3}$} (7);
\draw[->] (7) to[bend right] node[midway, below]{$h_{11\leftarrow 7}$} (11);
\draw[dashed, ->] (11) to[bend right] node[midway, above]{$\breve h_1^{-1}$} (1);

\draw[blue, ->] (2) to[bend right] node[midway, below]{$h_{6\leftarrow 2}$} (6);
\draw[blue, ->] (6) to[bend right]  node[midway, below]{$h_{10\leftarrow 6}$} (10);
\draw[blue,->] (10) to[bend right] node[midway, below]{$h_{14\leftarrow 10}$} (14);
\draw[blue, dashed, ->] (14) to[bend right] node[midway, above]{$1$} (4);
\draw[blue, ->] (4) to[bend right] node[midway, below]{$h_{8\leftarrow 4}$} (8);
\draw[blue, ->] (8) to[bend right] node[midway, below]{$h_{12\leftarrow 8}$} (12);
\draw[blue, dashed, ->] (12) to[bend right] node[midway, above]{$\breve h_2^{-1}$} (2);

\draw[thick, dotted] (10.5,-1.5)--(10.5,2);
  \end{tikzpicture}  
\caption{Rescaling of a $b$-periodic lift to make it Stokes-adapted and reduced, in the case $a=4$, $b=10$, hence $s=5$, $r=2$. Here there are $p=2$ Stokes circles, corresponding to the two colors on the figure.}
\label{fig:rescaling_to_reduced_stokes_adapted}
\end{figure}

\section{Correspondence with linear difference equations and frieze patterns}
\label{sec:relation_to_friezes_difference_eq}
In this section, we relate the passage from recessive solutions to Stokes representations for symmetric irregular classes discussed in \S\ref{sec:from_recessive_soluions_to_stokes_representations} to the theory of superperiodic linear difference equations and frieze patterns developed in \cite{morier2014linear}. Namely, we show that there is  a one-to-one correspondence between reduced Stokes representations of type $\Theta_{a,b}$ (whose reduced formal monodromy must have a particular value) and linear difference equations/frieze patterns, such that the coefficients of the frieze patterns are simply obtained from the nontrivial Stokes matrix entries by some changes of sign. In particular, when $(a,b)$ are coprime, this implies that the moduli spaces $\mathcal E_{a,b}$ and $\mathcal F_{a,b}$ of linear difference equations and frieze patterns are isomorphic to the twisted wild character variety $\mathcal B_{a,b}$.

This allows us to use the \emph{combinatorial Gale transform} introduced in loc.~cit., which lifts the classical Gale transform at the level of difference equations/friezes, to obtain explicit closed formulas for the Fourier transform of Stokes matrix entries.

\subsection{Linear difference equations and frieze patterns}
\label{subsec:difference_eq_frieze_review}
We begin by briefly reviewing a few facts about superperiodic linear difference equations, frieze patterns, and the corresponding moduli spaces, following \cite{morier2014linear}, to which we refer the reader for more details and proofs of the statements. 

\begin{definition}
Let $a,b$ be positive integers with $b>a$. A \emph{superperiodic linear difference equation} of type $(a,b)$ is a system of linear difference equations
\begin{equation}
\label{eq:antiperiodic_linear_diff_equation_definition}
v_{i}=\alpha_i^1 v_{i-1}-\alpha^2_i v_{i-2}+\dots+(-1)^{a-2}\alpha_i^{a-1} v_{i-(a-1)} +(-1)^{a-1} v_{i-a}.
\end{equation}
for $i\in \mathbb Z$ (here, the unknowns $v_i$ are viewed
as complex numbers),
satisfying the following conditions:
\begin{itemize}
    \item The coefficients $\alpha_i^j\in \mathbb C$ are $b$-periodic in $i$, that is, 
\[
\alpha_{i+b}^j=\alpha_i^j,
\]
for all $i\in \mathbb Z$ and $j\in \{1, \dots, a-1\}$.
\item All the solutions of the system \eqref{eq:antiperiodic_linear_diff_equation_definition} are $b$-(anti)periodic in the following sense: For any solution $(v_i)_{i\in \mathbb Z}$, we have
\[
v_{i+b}=(-1)^{a-1} v_i.
\]
for all $i\in\mathbb Z$.
\end{itemize}
\end{definition}

We will denote a superperiodic linear difference equation of type $(a,b)$ by $E=(\alpha_i^j)_{i\in \mathbb Z, 1\leq j \leq a-1}$, or simply $E=(\alpha_i^j)$. 

\begin{definition}
    Let $a,b$ be positive integers with $b>a$. A frieze pattern of type $(a,b)$ is a collection of complex numbers $(\widehat \alpha_i^j)_{i,j\in \mathbb Z}$, viewed as an infinite array (see Fig.~\ref{fig:frieze_pattern_definition}), such that:
    \begin{itemize}
        \item There are nontrivial entries $\widehat \alpha_i^j$ for $j\in \{1, \dots, b-a\}$, bordered by a row of $1$'s and $a-1$ rows of $0$'s, that is:
        \begin{itemize}
        \item $\widehat \alpha_i^j=1$ if $j=1$ or $j=b-a$.
        \item $\widehat \alpha_i^j=0$ if $j\in \{-a-1, \dots, -1\}$ or $j\in \{b-a+1, \dots b-1\}$,
        
        \end{itemize}

        \item  Its $a\times a$ determinants satisfy the ``$\mathrm{SL}_{a}$-condition'', that is, one has
        \[
        \begin{vmatrix}
            \widehat \alpha_i^j & \widehat \alpha_{i}^{j-1} & \dots & \widehat \alpha_{i}^{j-a+1}\\
            \widehat \alpha_{i+1}^{j+1} & \widehat \alpha_{i+1}^j & \dots & \widehat \alpha_{i+1}^{j-a+2} \\
            \vdots & \vdots & \ddots & \vdots\\
            \widehat \alpha_{i+a-1}^{j+a-1} & \widehat \alpha_{i+a-1}^{j+a-2} & \dots & \widehat \alpha_{i+a-1}^{j}
        \end{vmatrix}=1
        \]
for any $(i,j)\in \mathbb Z^2$. 
    \end{itemize}

A frieze pattern is called \emph{tame} if all the corresponding $(a+1)\times (a+1)$ determinants vanish.
\end{definition}

 A (tame) frieze pattern of type $(a,b)$ in our terminology corresponds to a (tame) $\mathrm{SL}_{a}$-frieze pattern of width $w=b-a-1$ in the terminology of \cite{morier2014linear}. Frieze patterns were first introduced by Coxeter in the $\mathrm{SL}_2$ case \cite{coxeter1971frieze}, hence $\mathrm{SL}_2$-friezes are also often called Coxeter friezes. The notion of $\mathrm{SL}_a$-friezes was introduced by Bergeron--Reutenauer \cite{bergeron2010sl_k}.

In what follows, we will only consider tame frieze patterns. We will denote a (tame) frieze pattern of type $(a,b)$ by $\widehat F=(\widehat \alpha_i^j)_{i\in \mathbb Z, 1\leq j \leq b-a-1}$, or simply $\widehat F=(\widehat\alpha_i^j)$.

\begin{figure}
\centering
\begin{tikzpicture}[scale=0.6]
\draw (3,3) node[rotate=45] {$\dots$};;

\draw (-4,2) node {$\dots$};
\draw (-2,2) node {$0$};
\draw (0,2) node {$0$};
\draw (2,2) node {$0$};
\draw (4,2) node {$0$};
\draw (6,2) node {$0$};
\draw (8,2) node {$\dots$};

\draw (-5,1) node {$\dots$};
\draw (-3,1) node {$1$};
\draw (-1,1) node {$1$};
\draw (1,1) node {$1$};
\draw (3,1) node {$1$};
\draw (5,1) node {$1$};
\draw (7,1) node {$\dots$};

\draw (-6,0) node {$\dots$};
\draw (-4,0) node {$\widehat \alpha^1_{i-2}$};
\draw (-2,0) node {$\widehat \alpha^1_{i-1}$};
\draw (0,0) node {$\widehat \alpha^1_{i}$};
\draw (2,0) node {$\widehat \alpha^1_{i+1}$};
\draw (4,0) node {$\widehat \alpha^1_{i+2}$};
\draw (6,0) node {$\dots$};

\draw (-7,-1) node {$\dots$};
\draw (-5,-1) node {$\widehat \alpha^2_{i-2}$};
\draw (-3,-1) node {$\widehat \alpha^2_{i-1}$};
\draw (-1,-1) node {$\widehat \alpha^2_{i}$};
\draw (1,-1) node {$\widehat \alpha^2_{i+1}$};
\draw (3,-1) node {$\widehat \alpha^2_{i+2}$};
\draw (5,-1) node {$\dots$};

\draw (-8,-2) node {$\dots$};
\draw (-6,-2) node[rotate=45] {$\dots$};
\draw (-4,-2) node[rotate=45] {$\dots$};
\draw (-2,-2) node[rotate=45] {$\dots$};
\draw (0,-2) node[rotate=45] {$\dots$};
\draw (2,-2) node[rotate=45] {$\dots$};
\draw (4,-2) node {$\dots$};

\draw (-9,-3) node {$\dots$};
\draw (-7,-3) node {$\widehat \alpha^{b-a-1}_{i-2}$};
\draw (-5,-3) node {$\widehat \alpha^{b-a-1}_{i-1}$};
\draw (-3,-3) node {$\widehat \alpha^{b-a-1}_{i}$};
\draw (-1,-3) node {$\widehat \alpha^{b-a-1}_{i+1}$};
\draw (1,-3) node {$\widehat \alpha^{b-a-1}_{i+2}$};
\draw (3,-3) node {$\dots$};

\draw (-10,-4) node {$\dots$};
\draw (-8,-4) node {$1$};
\draw (-6,-4) node {$1$};
\draw (-4,-4) node {$1$};
\draw (-2,-4) node {$1$};
\draw (0,-4) node {$1$};
\draw (2,-4) node {$\dots$};

\draw (-11,-5) node {$\dots$};
\draw (-9,-5) node {$0$};
\draw (-7,-5) node {$0$};
\draw (-5,-5) node {$0$};
\draw (-3,-5) node {$0$};
\draw (-1,-5) node {$0$};
\draw (1,-5) node {$\dots$};

\draw (-6,-6) node[rotate=45] {$\dots$};        
\end{tikzpicture}   
\caption{Frieze pattern of type $(a,b)$.}
\label{fig:frieze_pattern_definition}
\end{figure}

\begin{remark}
    Let us make a few comments about our choices of notation, which differ in some respects from those of \cite{morier2014linear}. Anticipating on the paragraphs below, our choices are motivated by consistency with the notations for Stokes data and their Fourier transform: In particular, we consistently use quantities without a hat such as $E$, $F$, $\alpha_i^j$ when their coefficients correspond (up to a sign) to the Stokes matrix entries of the initial connection of type $\Theta_{a,b}$, and quantities with a hat such as $\widehat E$, $\widehat F$, $\widehat \alpha_i^j$ when their coefficients correspond to the Stokes matrix entries of its Fourier transform, which is of type $\widehat \Theta_{b-a, b}$. The relation between our notation and that of loc.~cit.\ for the periodicity, rank, and width of friezes is described in the following table:

\begin{center}  
\vspace{0.3cm}
\begin{tabular}{|c|c|c|}
\hline
 & here   &  \cite{morier2014linear} \\
\hline
periodicity & $b$ & $n$\\
\hline
rank & $a$ & $k+1$\\
\hline
width & $b-a-1$ & $w=n-k-2$\\
\hline
\end{tabular}
\vspace{0.3cm}
\end{center}
Note also that we consider frieze patterns and difference equations with complex coefficients here, while loc.~cit.\ considers real coefficients. This does not affect their main results, since the theory can be formulated with coefficients in any field (for instance, the work \cite{morier-genoud2021counting} counts Coxeter friezes over finite fields). 
\end{remark}

\begin{theorem}[{\cite[Corollary 7.1.1]{morier2014linear}}]
    If $\widehat F=(\widehat \alpha_i^j)$ is a tame frieze pattern of type $(a,b)$, then the nontrivial entries $\widehat \alpha_i^j$ for $i\in\mathbb Z$ and $j\in \{1,\dots, b-a-1\}$ are $b$-periodic in the horizontal variable $i$, that is,
    \[
    \widehat \alpha^j_{i+b}=\widehat \alpha_{i}^j
    \]
    for all such $i,j$.
\end{theorem}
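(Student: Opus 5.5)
Since $\widehat F$ is tame, the $(a+1)\times(a+1)$ minors vanish, and I would read every diagonal of the frieze as the solution of one fixed linear recurrence of order $a$. This turns periodicity of the entries into antiperiodicity of solutions of a superperiodic difference equation.

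The first step is to extend the array to all $j\in\mathbb Z$. Below the bottom row of $1$'s sit $a-1$ rows of $0$'s, and the idea is to continue the pattern past them, e.g.\ with the next row of $(-1)^{a-1}$. The claim to establish is this: because the $a\times a$ determinants equal $1$, the vanishing of the $(a+1)\times(a+1)$ minors forces the $a+1$ consecutive entries along any diagonal to satisfy one linear relation. The coefficients of that relation depend only on the row index $i$, and the coefficient of the last term is $\pm1$, normalised by the $\mathrm{SL}_a$-condition.

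Concretely, this gives coefficients $\alpha_i^j$ such that every sequence $v_i=\widehat\alpha_i^{\,j-i}$, read along a diagonal, satisfies
\[
v_{i}=\alpha_i^1 v_{i-1}-\dots+(-1)^{a-1}v_{i-a},
\]
and the $\alpha_i^j$ can be expressed as entries of the frieze itself. The boundary rows ($a-1$ zeros, then a $1$, then the nontrivial strip, then a $1$, then $a-1$ zeros) then show that the $a$ diagonal sequences starting at consecutive positions form a fundamental system of solutions. After $b$ steps each such sequence returns to the configuration ``zeros, $1$'' shifted appropriately, up to the sign $(-1)^{a-1}$. So the monodromy of the recurrence over $b$ steps is $(-1)^{a-1}\mathrm{Id}$, i.e.\ all solutions are $b$-antiperiodic.

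The last step derives periodicity of the coefficients from this. If all solutions satisfy $v_{i+b}=(-1)^{a-1}v_i$, then the shifted sequence $(v_{i+b})$ satisfies the recurrence with coefficients $\alpha_{i+b}^j$ and is also $\pm$ a solution of the original one. Comparing the two recurrences on a basis of solutions gives $\alpha_{i+b}^j=\alpha_i^j$, because the coefficients are determined by any $a+1$ consecutive values of a fundamental system, via Cramer's rule with nonzero Wronskian-type determinants. The frieze entries are then recovered from the solutions, and the recurrence is $b$-invariant, so $\widehat\alpha_{i+b}^j=\widehat\alpha_i^j$.

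The main obstacle is the middle step: the index bookkeeping showing that the two boundary strips glue so that the fundamental solutions really come back after exactly $b$ steps with sign $(-1)^{a-1}$. For this I would first treat the $\mathrm{SL}_2$ (Coxeter) case to fix conventions, then handle general $a$ with Dodgson-type condensation identities on the determinants. Failing that, I would fall back on citing \cite{morier2014linear}, as the statement itself does.
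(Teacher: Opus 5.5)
Your middle step has a genuine gap. It is not a matter of index bookkeeping: the implication you assert there carries the full content of the theorem.

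Write the recurrence as $v_i=\sum_{m=1}^{a}c_i^m v_{i-m}$ with $c_i^a=(-1)^{a-1}$. Let $A_i$ be its companion matrix, taking the values on $\{i-a,\dots,i-1\}$ to those on $\{i-a+1,\dots,i\}$. Let $M_m$ be the transfer matrix from the window $\{m,\dots,m+a-1\}$ to the window $\{m+b,\dots,m+b+a-1\}$, written in the unit vectors $e_1,\dots,e_a$, where $e_p$ sits at the $p$-th position of the window.

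What the boundary strips give is this: the diagonal reading $(0,\dots,0,1)$ on a window reads $(0,\dots,0,(-1)^{a-1})$ on the window $b$ steps later. That is, $M_me_a=(-1)^{a-1}e_a$ for every $m$; equivalently, each diagonal continues as $(-1)^{a-1}$ times the diagonal $b$ steps further. But your $a$ fundamental diagonals are in this normal form on $a$ \emph{different} windows, so no single $M_m$ is shown to be scalar. By that same relation, the values of the other fundamental diagonals on $\{m,\dots,m+a-1\}$ and on $\{m+b,\dots,m+b+a-1\}$ are frieze entries in the same rows lying $b$ positions apart. Their equality is exactly the periodicity you want. Equivalently, shifting a solution by $b$ gives a solution only if the $c_i^m$ are already $b$-periodic, which you only derive afterwards.

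Concretely, the relation $M_{m+1}A_{m+a}=A_{m+a+b}M_m$ still yields $M_me_1=(-1)^{a-1}e_1$. That is why $a=2$ goes through, so treating the Coxeter case first will not reveal the problem. But it also gives
$M_me_2=(-1)^{a-1}e_2+(c^{a-1}_{m+a}-c^{a-1}_{m+a+b})\,e_1$,
so for $a\ge 3$ scalar monodromy is precisely the unproved periodicity of the coefficients. Indeed, any recurrence with $c_i^a=(-1)^{a-1}$ and $M_me_a=(-1)^{a-1}e_a$ for all $m$ produces a tame frieze of type $(a,b)$ from its normalised solutions. So ``each diagonal comes back $\Rightarrow$ $M_m=(-1)^{a-1}\mathrm{Id}$'' is the statement itself. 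The paper gives no argument to compare with here; it only quotes \cite[Corollary 7.1.1]{morier2014linear}.

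The missing idea is to use the second family of diagonals as well. Put $d_{x,y}=\widehat\alpha^{\,x-y}_x$, so that the two families of diagonals are $x\mapsto d_{x,y}$ and $y\mapsto d_{x,y}$ (your $\widehat\alpha_i^{\,j-i}$ is neither). The $\mathrm{SL}_a$-condition and tameness are conditions on adjacent $a\times a$ and $(a+1)\times(a+1)$ minors of the matrix $(d_{x,y})$. They are therefore symmetric in $x$ and $y$, and both families read $0,\dots,0,1,\ast,\dots,\ast,1,0,\dots,0$.

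So your rank argument gives \emph{two} recurrences of order $a$, each with last coefficient $(-1)^{a-1}$: one in $x$, solved by every column $x\mapsto d_{x,y}$, and one in $y$, solved by every row. Extend the array to $\tilde d$ on $\mathbb Z^2$ using the first. Each relation of the second recurrence is a linear relation among columns, i.e.\ among solutions of the first. It holds on $a$ consecutive values inside the strip, hence everywhere, so $\tilde d$ satisfies both recurrences.

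Your boundary computation, done in each direction, now gives $\tilde d_{x,y}=(-1)^{a-1}\tilde d_{x,y+b}$ and $\tilde d_{x,y}=(-1)^{a-1}\tilde d_{x+b,y}$. The second identity says that every column is $b$-antiperiodic. Since $a$ consecutive columns form a basis, every solution of the first recurrence is $b$-antiperiodic. This is your step 2, after which your coefficient argument gives superperiodicity. Combining the two identities gives $\tilde d_{x+b,y+b}=\tilde d_{x,y}$, i.e.\ $\widehat\alpha^j_{i+b}=\widehat\alpha^j_i$, directly. For $b=a+1$ there are no nontrivial entries; for $b\ge a+2$ the strip is wide enough for the rank argument in both directions.
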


Let us briefly summarise some of the main properties of the moduli spaces of linear difference equations and frieze patterns. 

The set $\mathcal E_{a,b}$ of all superperiodic linear difference equations of type $(a,b)$ has the structure of a complex algebraic variety. Namely, it is an affine variety in $\mathbb C^{(a-1)b}$ (see \cite[\S3.1]{morier2014linear}) cut by the conditions that the solutions have to be (anti)periodic. There is a natural morphism of complex algebraic varieties from $\mathcal E_{a,b}$ to $\mathcal P_{a,b}$, given by taking the equivalence class of any basis of solutions of the equation (see \cite[\S3.4]{morier2014linear} for a more precise description).

The set $\mathcal F_{a,b}$ of all tame frieze patterns of type $(a,b)$ also has the structure of a complex algebraic variety. Namely, there is a closed embedding of $\mathcal F_{a,b}$ into the complex Grassmannian $\mathrm{Gr}_{a,b}$ (see \cite[\S3.2]{morier2014linear}). In turn, there is a natural morphism of algebraic varieties from $\mathcal F_{a,b}$ to $\mathcal P_{a,b}$. It is obtained by composing the embedding $\mathcal F_{a,b}\subset \mathrm{Gr}_{a,b}$ with the natural map $\mathrm{Gr}_{a,b}\to \mathcal P_{a,b}$ (remember that $\mathcal P_{a,b}$ is a torus quotient of $\mathrm{Gr}_{a,b}$). 

\begin{theorem}[{\cite[Theorem 3.4.1]{morier2014linear}}]
There exists a natural algebraic isomorphism $\Phi^{\mathrm{sol}}_{a,b}\colon \mathcal E_{a,b}\overset{\sim}{\longrightarrow} \mathcal F_{a,b}$ which commutes with the maps to $\mathcal P_{a,b}$, i.e.\ such that the following diagram commutes:
\begin{equation}
\label{eq:triality_frieze_diagram}
\begin{tikzcd}
   \mathcal E_{a,b} \arrow[rr, "\Phi^{\mathrm{sol}}_{a,b}"]  \arrow[rd]& &  \mathcal F_{a,b} \arrow[ld]\\
   & \mathcal P_{a,b} &
\end{tikzcd}
\end{equation}
Moreover, if $a$ and $b$ are coprime, the maps to $\mathcal P_{a,b}$ are isomorphisms, and we have $\mathcal E_{a,b}\simeq\mathcal F_{a,b}\simeq \mathcal P_{a,b}$.
\end{theorem}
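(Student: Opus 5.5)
The plan is to construct $\Phi^{\mathrm{sol}}_{a,b}$ explicitly from a fundamental system of solutions, and then verify in turn that the resulting array is a tame frieze, that the construction can be inverted, and that it commutes with the maps to $\mathcal P_{a,b}$.

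\emph{Construction.} Let $E=(\alpha_i^j)\in\mathcal E_{a,b}$. Fix the fundamental system of solutions $(V_i)_{i\in\mathbb Z}$ of \eqref{eq:antiperiodic_linear_diff_equation_definition} with values in $\mathbb C^a$, normalised by $V_{1-a},\dots,V_0$ being the standard basis up to signs, chosen so that $|V_{i-a+1},\dots,V_i|=1$. The recurrence has last coefficient $(-1)^{a-1}$, so an expansion along the new vector shows that this determinant is preserved from one window of $a$ consecutive vectors to the next. Hence $|V_{i-a+1},\dots,V_i|=1$ holds for all $i$. I will then define the frieze entries as $a\times a$ minors
\[
\widehat\alpha_i^j\defeq |V_{i-1},\dots,V_{i+a-3},V_{i+a-2+j}|
\]
up to a fixed index shift, i.e. the Plücker coordinates of a window of $a-1$ consecutive vectors completed by one further vector.

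\emph{Frieze properties.} The boundary rows come out directly. Repeated vectors in the minor give the rows of zeros. The row of ones on one side comes from the normalisation. The other row of ones, together with the zeros after it, comes from superperiodicity $V_{i+b}=(-1)^{a-1}V_i$, which turns these minors back into normalised windows with the correct sign. The $\mathrm{SL}_a$-condition and tameness should both follow from Plücker and Desnanot--Jacobi identities. The $(a+1)\times(a+1)$ determinants vanish because all the vectors lie in an $a$-dimensional space. The $a\times a$ determinants reduce to products of the consecutive windows, which all equal $1$.

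\emph{Inverse and compatibility.} For the inverse, I will read the coefficients off the first $a-1$ rows of the frieze. Cramer's rule gives $\alpha_i^j$ as the minor obtained by replacing $V_{i-j}$ with $V_i$ in a window. This is a frieze entry, up to a sign and a reindexing of rows, using the frieze-compatible normalisation. Tameness is what guarantees that the frieze rows determine a sequence of vectors satisfying a recurrence of order $a$. The antiperiodicity is then forced by the rows of zeros and ones at the far boundary. Both maps are polynomial, so $\Phi^{\mathrm{sol}}_{a,b}$ is algebraic. Compatibility with the maps to $\mathcal P_{a,b}$ holds by construction: in both cases the point of $\mathcal P_{a,b}$ is the class of $(V_1,\dots,V_b)$ in $\mathbb P^{a-1}$. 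For the frieze, this is because its embedding in $\mathrm{Gr}_{a,b}$ is the row space of the $a\times b$ matrix of the $V_i$.

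\emph{Coprime case.} Here I will use Proposition \ref{prop:construction_stokes_adapted_lift}. Given a nondegenerate $b$-gon, a lift can be rescaled uniquely, up to an overall scalar, so that all consecutive windows have determinant $1$ and $v_{i+b}=(-1)^{a-1}v_i$. This is the analogue of the reduced lift, with formal monodromy $(-1)^{r-1}$ by Remark \ref{rem:formal-monodromy-Stokes-adapted-lift}. That rescaling produces the inverse map $\mathcal P_{a,b}\to\mathcal E_{a,b}$.

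\emph{Main obstacle.} I expect the hardest step to be proving that this rescaling exists and is unique exactly when $\gcd(a,b)=1$. The rescaling conditions form a system of multiplicative equations in the $\lambda_i$ with step $a$ modulo $b$. These equations link all indices into a single orbit only if $p=1$. When $p>1$, there are $p$ separate orbits, each carrying an independent monodromy constraint, and this is precisely why surjectivity fails in the non-coprime case.
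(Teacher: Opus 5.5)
The paper does not prove this statement; it quotes \cite[Theorem~3.4.1]{morier2014linear}. Your architecture is the standard one for that result: a fundamental system with constant Casoratian, frieze entries built from $a-1$ consecutive solution vectors plus one more, tameness for the converse, and a normalised lift in the coprime case. However, two explicit steps fail as written. Both can be repaired.

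\emph{The frieze entries are mis-indexed.} Your formula $\widehat\alpha_i^j=|V_{i-1},\dots,V_{i+a-3},V_{i+a-2+j}|$ does not satisfy the diamond form of the $\mathrm{SL}_a$-condition, and no fixed index shift repairs it. Already for $a=2$, with $c_m=\alpha^1_m$ and $|V_{m-1},V_m|=1$, your entries give $\widehat\alpha^1_i\widehat\alpha^1_{i+1}-\widehat\alpha^0_i\widehat\alpha^2_{i+1}=c_{i+1}c_{i+2}-c_{i+2}c_{i+3}+1$, which is not identically $1$. The window has to move with the diagonal $i-j$, so take $\widehat\alpha_i^j=|V_{i-j-1},\dots,V_{i-j+a-3},V_{i+a-2}|$. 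Set $\eta_m=|V_{m-1},\dots,V_{m+a-3},\,\cdot\,|\in(\mathbb C^a)^*$. Then every block entry is $\widehat\alpha^{j+r-s}_{i+r}=\eta_{i-j+s}(V_{i+a-2+r})$. By Cauchy--Binet, each $a\times a$ block equals $|V_{i+a-2},\dots,V_{i+2a-3}|\cdot\det(\eta_{i-j},\dots,\eta_{i-j+a-1})=1\cdot1$. The second factor is $1$ by direct computation: evaluated on a consecutive window, its matrix is triangular up to one corner entry. The same factorisation shows every $(a+1)\times(a+1)$ block has rank $\le a$, which gives tameness. This argument is what should replace your appeal to Plücker and Desnanot--Jacobi identities. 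Your boundary-row checks go through unchanged with the corrected indexing.

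\emph{The coefficients are not frieze entries.} In the inverse direction, the Cramer numerator of $\alpha_i^j$ is the Plücker coordinate on $\{i-a,\dots,i\}\smallsetminus\{i-j\}$. For $a\ge4$ and $2\le j\le a-2$ this set contains no $a-1$ consecutive indices. So $\alpha_i^j$ is not in general a frieze entry, only a polynomial in the entries; for example, $\alpha^2$ is a product of two first-row entries minus a second-row entry.

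Instead, define the inverse as the recurrence shared by all solution-diagonals. On $a+1$ consecutive diagonals, the vanishing $(a+1)$-minors and unit $a$-minors express each entry through the $a$ preceding entries on the same diagonal. The Cramer coefficients have denominator $1$, so they are polynomial, and the last one is $(-1)^{a-1}$. Sliding the window of diagonals shows the coefficients do not depend on the diagonal.

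Superperiodicity also needs an actual argument. For a fundamental solution, the far rows of zeros say that $V_{J+b}$ lies in each of the $a-1$ hyperplanes spanned by windows of $a-1$ consecutive $V$'s that contain $V_J$. These hyperplanes are independent, because any $a$ consecutive $\eta$'s are. Hence $V_{J+b}=c_JV_J$. The far row of ones then forces $c_J=(-1)^{a-1}$, and $b$-periodicity of the coefficients follows.

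Finally, in the coprime case your unit-window lift is unique only up to an $a$-th root of unity, which is harmless. To see that the inverse $\mathcal P_{a,b}\to\mathcal E_{a,b}$ is a morphism, normalise consecutive windows to be equal rather than equal to $1$. This is done in Proposition~\ref{prop:correspondence_frieze_adapted_stokes_adapted_lifts}, whose compatibility condition is automatic for $p=1$ (Remark~\ref{rem:formal-monodromy-frieze-adapted-lift}).
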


If $E\in \mathcal E_{a,b}$, we say that $\widehat F=\Phi^{\mathrm{sol}}_{a,b}(E)$ is the \emph{frieze of solutions} of $E$. It is constructed by taking the coefficients of certain solutions of $E$ defined by suitable initial conditions.

\subsection{From Stokes local systems to linear difference equations}

\begin{definition}
\label{def:extended_collection_frieze_adapted_vectors}
An \emph{extended frieze-adapted collection of vectors} of type $(a,b)$ is a collection of vectors $\bm v=(v_1, \dots, v_{a+b})\in V^{a+b}$ in an $a$-dimensional vector space $V$ with the following properties:
\begin{itemize}
\item For all $i\in \{a+1, \dots, a+b\}$, the vectors $v_{i-1}, \dots, v_{i-a}$ are linearly independent (hence a basis of $V$).
\item For all $i\in \{a+1, \dots, a+b\}$, when decomposing $v_i$ in the basis $(v_{i-1}, \dots, v_{i-a})$, its coefficient in $v_{i-a}$ is equal to $(-1)^{a-1}$. In other words, there exist (uniquely defined) complex numbers $\alpha_i^j$ for $j\in \{1, \dots, a-1\}$, such that
\begin{equation}
\label{eq:extended_collection_frieze_vectors_form_linear_comb}
v_{i}=\alpha_i^1 v_{i-1}-\alpha^2_i v_{i-2}+\dots+(-1)^{a-2}\alpha_i^{a-1} v_{i-(a-1)} +(-1)^{a-1} v_{i-a}.
\end{equation}
\item For all $i\in \{1, \dots, a\}$,  one has $v_{i+b}=(-1)^{a-1} v_i$.
\end{itemize}
Two extended frieze-adapted collections are \emph{equivalent} if they have the same $\alpha_i^j$.
\end{definition}

\begin{lemma}
\label{lemma:correspondence_frieze_adapted_vectors_diff_equations}
There is a one-to-one correspondence between equivalence classes of extended  frieze-adapted collections of vectors and superperiodic linear difference equations.
\end{lemma}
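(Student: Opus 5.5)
The plan is to construct maps in both directions directly from the definitions and check that they are mutually inverse. From a collection to an equation: given an extended frieze-adapted collection $\bm v=(v_1,\dots,v_{a+b})$ in $V$, define $\alpha_i^j$ for $i\in\{a+1,\dots,a+b\}$ by \eqref{eq:extended_collection_frieze_vectors_form_linear_comb}, and extend to all $i\in\mathbb Z$ by $b$-periodicity. We must check that the resulting system \eqref{eq:antiperiodic_linear_diff_equation_definition} is superperiodic. First extend $\bm v$ to a bi-infinite sequence $(v_i)_{i\in\mathbb Z}$ by setting $v_{i+b}\defeq(-1)^{a-1}v_i$ for all $i$. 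This is consistent with the given $v_{a+1},\dots,v_{a+b}$ because of the third axiom on $v_{b+1},\dots,v_{b+a}$. Since the recurrence has periodic coefficients and the sign $(-1)^{a-1}$ is the same on both sides, the extended sequence satisfies \eqref{eq:antiperiodic_linear_diff_equation_definition} for every $i\in\mathbb Z$. For $i\in\{a+1,\dots,a+b\}$ this is the definition, and other $i$ reduce to these by shifting by multiples of $b$.

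Now choose a linear form $\xi\in V^*$. The scalar sequence $(\xi(v_i))_i$ is then a solution of the scalar system, and it is $b$-antiperiodic. As $\xi$ ranges over $V^*$ and $(v_1,\dots,v_a)$ is a basis, the initial values $(\xi(v_1),\dots,\xi(v_a))$ range over all of $\mathbb C^a$. A solution of \eqref{eq:antiperiodic_linear_diff_equation_definition} is determined by $a$ consecutive values, because the coefficient of $v_{i-a}$ is $\pm1\neq0$, so the system can also be run backwards. Hence every solution has the form $\xi(v_i)$ and is antiperiodic, and the equation lies in $\mathcal E_{a,b}$. Equivalent collections have the same $\alpha_i^j$ by definition, so this map is well defined on equivalence classes. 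It is injective for the same reason, since the class is exactly the data of the $\alpha_i^j$.

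From an equation to a collection: given $E=(\alpha_i^j)\in\mathcal E_{a,b}$, take $V=\mathbb C^a$. Let $(v_i)$ be the vector-valued solution whose initial values $v_1,\dots,v_a$ are the standard basis; equivalently, $v_i$ is the vector of values at $i$ of the fundamental solutions. Each coordinate is a scalar solution, hence antiperiodic, so $v_{i+b}=(-1)^{a-1}v_i$. The recurrence then gives the second axiom. For the linear-independence axiom, use that the transfer map $(v_{i-a},\dots,v_{i-1})\mapsto(v_{i-a+1},\dots,v_i)$ is invertible, again because the last coefficient is $\pm1$. Starting from a basis, every window of $a$ consecutive vectors is therefore a basis, and the decomposition coefficients are uniquely the $\alpha_i^j$. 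The two constructions are then inverse to each other on equivalence classes.

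The only delicate point is the first direction. We need the globally $b$-antiperiodic sequence to satisfy the recurrence across the "seam" at indices beyond $a+b$. This is exactly where the periodicity of the coefficients and the matching of the sign of the last term with the antiperiodicity sign are used. I would check this carefully by writing out the relation for $i+b$ and substituting $v_{k+b}=(-1)^{a-1}v_k$.
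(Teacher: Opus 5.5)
Your proof is correct and is essentially the paper's argument, which simply observes that an extended frieze-adapted collection is exactly a basis of solutions of the difference equation $E=(\alpha_i^j)$; you have written out both directions and the inverse checks in full. One minor correction to your closing paragraph: extending the recurrence across the seam uses only the $b$-periodicity of the coefficients and the linearity of the relation (shifting by $b$ multiplies every term by $(-1)^{a-1}$), so the fact that the last coefficient $(-1)^{a-1}$ coincides with the antiperiodicity sign plays no role there.
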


\begin{proof}
    An extended frieze-adapted collection $\bm v$ corresponds precisely to a basis of solutions of the difference equation $E\defeq(\alpha^j_i)$.
\end{proof}

\begin{definition}
Let $P=(V,(V_1, \dots, V_b))\in \mathbf P_{a,b}$. A \emph{frieze-adapted lift} of $P$ is an extended frieze-adapted collection of vectors $\bm v=(v_1, \dots, v_{a+b})\in V^{a+b}$ such that $V_i=\mathbb C v_i$ for all $i\in \{1, \dots, b\}$.

If there exists a frieze-adapted lift of $P$, then we call $P$ \emph{frieze-compatible}.
\end{definition}

In other words, frieze-compatible point configurations correspond to the image of the map $\mathcal E_{a,b}\to P_{a,b}$.

\begin{proposition}
\label{prop:correspondence_frieze_adapted_stokes_adapted_lifts}
Let $P=(V,(V_1, \dots, V_b))\in \mathbf P_{a,b}$. Then $P$ is frieze-compatible if and only if its reduced formal monodromy $\breve h=(\breve h_1, \dots, \breve h_p)$ satisfies
\begin{equation}
\breve h_i=(-1)^{(s-r)(a-1)}  
\label{eq:frieze_compatible_formal_monodromy}
\end{equation}
for $i\in \{1, \dots,p\}$.

In that case, for any vectors $v_1, \dots, v_p$ generating $V_1, \dots, V_p$ respectively, there exists a unique frieze-adapted lift $(v_1, \dots, v_{a+b})$ of $P$ extending $(v_1, \dots, v_p)$. 

Moreover, if $\bm v'=(v'_1,\dots, v'_{a+b})$ is the reduced Stokes-adapted lift of $P$ extending $(v_1, \dots, v_p)$, for any $i\in \{1, \dots, a+b\}$, $v'_i$ and $v_i$ only differ up to a possible sign, namely
\begin{equation}
 v_i=\varepsilon_i v'_i, \quad \text{with }   \varepsilon_i=(-1)^{(a-1)(k_i+l_i)}
\label{eq:sign_change_stokes_frieze_vectors}
\end{equation}
where the integers $k_i$ and $l_i$ are defined as in \eqref{eq:number_jumps} and Remark \ref{rem:number_jumps}.

\end{proposition}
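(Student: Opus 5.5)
The plan is to compare the two rescaling problems directly, using the proof of Proposition~\ref{prop:construction_stokes_adapted_lift}. Fix a $b$-periodic lift $\bm w$ of $P$ with $w_i=v_i$ for $i\in\{1,\dots,p\}$, and look for scalars $\mu_i$ with $v_i=\mu_i^{-1}w_i$ such that $\bm v$ is frieze-adapted.

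\textbf{Step 1: the constraints on the $\mu_i$.} By the same Cramer's rule computation as before, the condition that the coefficient of $v_{i-a}$ equals $(-1)^{a-1}$ becomes
\[
\mu_i=(-1)^{a-1}h_{i\leftarrow i-a}\,\mu_{i-a},\qquad i\in\{a+1,\dots,a+b\}.
\]
The condition $v_{i+b}=(-1)^{a-1}v_i$ for $i\in\{1,\dots,a\}$ becomes $\mu_{i+b}=(-1)^{a-1}\mu_i$. Together with $\mu_1=\dots=\mu_p=1$, these are the relations encoded in Fig.~\ref{fig:rescaling_to_reduced_stokes_adapted}. The only differences from the reduced Stokes-adapted case are:
\begin{itemize}
\item each forward $a$-jump carries an extra factor $(-1)^{a-1}$;
\item each backward $b$-jump carries a factor $(-1)^{a-1}$ instead of $1$;
\item the closing backward jump to $\mu_{\iota}$, for $\iota\le p$, is now forced to be $(-1)^{a-1}$ rather than the free value $\breve h_\iota^{-1}$.
\end{itemize}

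\textbf{Step 2: existence and uniqueness.} Following the path of Remark~\ref{rem:number_jumps} from $\mu_{\iota_i}=1$ determines each $\mu_i$ with $i\in\{p+1,\dots,a+b\}$ uniquely. Comparing with the Stokes-adapted scalars $\lambda_i$ gives
\[
\mu_i=(-1)^{(a-1)(k_i+l_i)}\lambda_i.
\]
Since $v_i=\mu_i^{-1}w_i$ and $v'_i=\lambda_i^{-1}w_i$, this yields \eqref{eq:sign_change_stokes_frieze_vectors}, because the sign is its own inverse.

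The system is overdetermined only through the cycle closing up at each $\iota\in\{1,\dots,p\}$. Going from $\mu_\iota$ to $\mu_{\iota+b}$ uses $s$ forward jumps and $r-1$ backward jumps, as in the proof of the corollary on formal monodromy. The final relation $\mu_{\iota+b}=(-1)^{a-1}\mu_\iota$ then reads
\[
(-1)^{(a-1)(s+r-1)}\,\breve h_\iota=(-1)^{a-1},
\]
which is equivalent to $\breve h_\iota=(-1)^{(a-1)(s+r)}=(-1)^{(a-1)(s-r)}$. This gives the frieze-compatibility criterion \eqref{eq:frieze_compatible_formal_monodromy}. Conversely, if a frieze-adapted lift exists, its rescaling factors satisfy these relations, so the criterion is necessary. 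Any frieze-adapted lift can be normalised so that $v_1,\dots,v_p$ are the prescribed vectors, so the statement holds for every choice of generators.

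\textbf{Main obstacle.} The delicate point is the sign bookkeeping along the path.
\begin{itemize}
\item I must check that the path defining $k_i,l_i$ uses exactly $k_i$ forward and $l_i$ backward jumps.
\item The backward jumps must land in the range $\{p+1,\dots,a\}$ where periodicity is imposed, not on the closing step. This was already verified in Proposition~\ref{prop:construction_stokes_adapted_lift}.
\item For the cycle count, $l_{\iota+b}$ computed via Remark~\ref{rem:number_jumps} must equal $r-1$ while $k=s$. Indeed $\iota+sa-(r-1)b=\iota+b$, which checks this directly.
\end{itemize}
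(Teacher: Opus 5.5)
Your proposal is correct and follows the paper's proof essentially step for step. You rescale a $b$-periodic lift, observe that the frieze-adapted constraints differ from the reduced Stokes-adapted ones by a factor $(-1)^{a-1}$ on every forward jump and every non-closing backward jump, read off $\varepsilon_i=(-1)^{(a-1)(k_i+l_i)}$ from the jump counts, and obtain \eqref{eq:frieze_compatible_formal_monodromy} by closing each cycle with $s$ forward and $r-1$ backward jumps. This is exactly the source of the paper's factor $(-1)^{(r-1)(a-1)}$.
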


\begin{proof}
The proof follows closely the one of Proposition~\ref{prop:construction_stokes_adapted_lift}, up to some changes of signs. Let $\bm w=(v_1, \dots, w_{a+b})$ be any $b$-periodic lift of $P$ extending $(v_1, \dots, v_p)$, and let us try to construct a frieze-adapted lift $\bm v$ by a rescaling $v_i\defeq\lambda_i^{-1} w_i$ for $i\in \{1, \dots, a+b\}$. If such a rescaling exists, it is obtained exactly as in the construction of the reduced Stokes-adapted rescaling in the proof of Proposition~\ref{prop:construction_stokes_adapted_lift}, simply up to replacing the partial formal monodromies
$h_{i\leftarrow i-a}$ by
\[
(-1)^{a-1}h_{i\leftarrow i-a}=\frac{\Delta_{i,\dots,i-a+1}} {\Delta_{i-1,\dots,i-a}}
\]
and by replacing the periodicity condition \eqref{eq:periodicity_condition_rescaling_stokes_adapted} by the (anti)periodicity condition
\begin{equation}
\lambda_{i+b}=(-1)^{a-1}\lambda_i, \quad \text{for } i\in \{p+1, \dots, a\}.
\label{eq:antiperiodicity_condition_rescaling_frieze_adapted}
\end{equation}
This determines $\bm v$ uniquely as in the (reduced) Stokes-adapted case. In terms of Fig.~\ref{fig:rescaling_to_reduced_stokes_adapted}, passing from the (reduced) Stokes-adapted to the frieze-adapted amounts to multiplying by $(-1)^{a-1}$ the numbers on all bottom arrows, and all top arrows with label 1. 

Now, $\bm v$ also has to satisfy the (anti)periodicity condition $\lambda_{i+b}=(-1)^{a-1}\lambda_i$  for $i\in \{1, \dots, p\}$, which leads to the relation
\[
  (-1)^{(r-1)(a-1)}\prod_{
    \substack{ j \in \mathbb Z/b\mathbb Z \\
        j \equiv i \mod p }} 
    \frac{\Delta_{j,\dots,j-a+1}} {\Delta_{j-1,\dots,j-a}}=(-1)^{(s-r-1)(a-1)}  \breve h_i=(-1)^{a-1},
\]
(the term $(-1)^{(r-1)(a-1)}$ comes from the contributions of the (anti)periodicity condition \eqref{eq:antiperiodicity_condition_rescaling_frieze_adapted}). We thus obtain 
\[
\breve h_i=(-1)^{(s-r)(a-1)}.
\]
Hence a frieze-adapted lift can only exist if this condition holds, and in that case such a lift exists and is unique.

Finally let us discuss how to obtain the sign change \eqref{eq:sign_change_stokes_frieze_vectors} between $v'_i$ and $v_i$ for $j\in \{1, \dots, a+b\}$. Let us denote by $\lambda'_i$ the rescaling factors for $v'_i$ for any $i\in \{1, \dots a+b\}$. We observe that the difference between passing from $\lambda'_{\iota_i}$ to $\lambda'_i$ in the Stokes-adapted case, and from $\lambda_{\iota_i}$ to $\lambda_i$ in the frieze-adapted case, is that one picks a sign $(-1)^{a-1}$ for each arrow that one has to take in Fig.~\ref{fig:rescaling_to_reduced_stokes_adapted} to go from $\iota_i$ to $i$. The relation \eqref{eq:sign_change_stokes_frieze_vectors} then directly follows from \eqref{eq:number_jumps}: one encounters $k_i$ bottom arrows, as well as $l_i$ top arrows. 
\end{proof}

\begin{remark}
\label{rem:formal-monodromy-frieze-adapted-lift}
Here are again a few comments about the formal monodromy (compare with Remark \ref{rem:formal-monodromy-Stokes-adapted-lift}):
\begin{itemize}
\item The frieze-compatibility condition $\breve h_i=(-1)^{(s-r)(a-1)}$ explicitly reads
\[
\prod_{
    \substack{ j \in \mathbb Z/b\mathbb Z \\
        j \equiv i \mod p }} 
    \frac{\Delta_{j,\dots,j-a+1}} {\Delta_{j-1,\dots,j-a}}=(-1)^{r(a-1)}.
\]
    \item When $a$ and $b$ are coprime, we have $a=r$, and $\breve h_i=(-1)^{s(a-1)}$ (as seen in Remark \ref{rem:formal-monodromy-Stokes-adapted-lift}, so the condition from Proposition~\ref{prop:construction_stokes_adapted_lift} is automatically satisfied).

\item In the case $a=2$ and $b=2n$ is even (hence $r=1$), we find that $P\in \mathbf P_{a,b}$ is frieze-compatible if and only if
\[
\frac{\Delta_{2n, 2n-1}\dots \Delta_{4,3}\Delta_{2,1}}{\Delta_{2n-1, 2n-3}\dots \Delta_{3,1}\Delta_{1,2n-1}}=-1,
\]
in agreement with \cite[\S2.5]{morier-genoud2021counting}.
\end{itemize}
\end{remark}

\begin{remark}
\label{rem:breve_H_action_on_frieze_adapted_vectors_and_diff_eq}
As in Remark \ref{rem:breve_H_action_on_reduced_stokes_adapted_vectors} for Stokes-adapted collections of vectors, it follows from Proposition \ref{prop:correspondence_frieze_adapted_stokes_adapted_lifts} that there is a natural $\breve H$-action on frieze-adapted collection of vectors (and hence on equivalence classes thereof), induced by the action rescaling $(v_1, \dots, v_p)$. Via the correspondence with linear difference equations of Lemma \ref{lemma:correspondence_frieze_adapted_vectors_diff_equations}, this induces an algebraic $\breve H$-action on the moduli space $\mathcal E_{a,b}$ of superperiodic linear difference equations. In the case $a=2$, this action was already discussed in \cite[\S2.3]{morier-genoud2021counting} (when speaking of ``friezes up to rescaling the first row'').
\end{remark}

We are now finally ready to state and prove the correspondence between (frieze-compatible, reduced) Stokes representations and superperiodic linear difference equations. 

Let $a,b$ be two positive integers with $b>a$. Keeping the notations of \S\ref{sec:from_recessive_soluions_to_stokes_representations}, fixing a choice of pair $(i_0, l_0)\in \mathbb Z$ with $l_0$ odd (i.e.\ of a reference decreasing distinguished interval for $\Theta_{a,b}$), we can speak of the reduced wild representation variety $\breve{\mathcal R}_{a,b}$. An element $\rho\in \breve{\mathcal R}_{a,b}$ can be written as
\[
\rho=(\bm s, \breve h),
\]
where $\bm s$ is the collection of its
nontrivial Stokes matrix entries $s_{i}^j\defeq s^j_{i_0+i-1,l_0}$, for $i\in \{a+1, \dots, a+b\}$ and $j\in \{1, \dots, a-1\}$, and  $\breve h=(\breve h_1, \dots, \breve h_p)\in \breve H=(\mathbb C^*)^p$ is the reduced formal monodromy.

\begin{definition}
The variety of frieze-compatible reduced Stokes representations of type $(a,b)$ is the closed subvariety $\mathcal R_{a,b}^F\defeq\mu^{-1}(\breve h^F)$ of $\breve{\mathcal R}_{a,b}$, where $\mu\colon \breve{\mathcal R}_{a,b}\to \breve H$ is the momentum map of $\breve{\mathcal R}_{a,b}$, given by $\mu(\rho)=\breve h^{-1}$, and $\breve h^F\defeq\varepsilon^F_{a,b}(1, \dots, 1)\in \breve H$, where $\varepsilon^F_{a,b}\defeq (-1)^{(s-r)(a-1)}\in \{\pm 1\}$.
\end{definition}

When $\rho$ is frieze-compatible, the (non-reduced) formal monodromy $h$ is thus
\[
h=(h_1, \dots, h_a)=(\underbrace{\varepsilon^F_{a,b}, \dots, \varepsilon^F_{a,b}}_{p \text { times}}, \underbrace{1, \dots, 1}_{a-p \text { times}})\in H=(\mathbb C^*)^a.
\]
Let $\sigma_{i}^j$, with $i\in \{a+1,\dots, a+b\}$ and $j\in \{1, \dots, a-1\}$, denote the Stokes coefficients of the Stokes-adapted collection $\bm v'$ associated to $\rho$ by Theorem \ref{thm:correspondence_stokes_rep_stokes_adapted_vectors}. By \eqref{eq:relation_stokes_coeff_sigma_stokes_matrix_entries}, we have 
\[
\sigma_{i}^j=\eta_i^j\; s_i^j, 
\]
where the signs $\eta_{i}^j\in \{\pm 1\}$ are defined by
\begin{equation}
\label{eq:sign_change_eta_definition}
\eta_{i}^j\defeq
\left\lbrace
\begin{array}{ll}
1 & \text{ if } i-j\leq b,\\
h_{i-b}^{-1}\, h_{i-b-j} & \text{ if }i-j> b.
\end{array}
\right.
\end{equation}

In particular, if $a$ is odd, then $\varepsilon_{a,b}^F=1$ and $\eta_{i}^j$ for all $i,j$.

\begin{theorem}
\label{thm:correspondence_stokes_representations_difference_eq}
With the above notations, the map associating to any $\rho=(\bm s,\breve h^F)\in \mathcal R_{a,b}^F$ the collection 
$\bm\alpha=(\alpha_i^j)_{i \in \mathbb Z,\, 1\leq j\leq a-1}$ defined by
\begin{equation}
    \alpha^j_i\defeq \varepsilon_i^j\,s_i^j, \quad \text{with }\varepsilon_i^j\defeq \varepsilon_i\varepsilon_{i-j}(-1)^{j-1} \eta_i^j\in \{\pm 1\}
\label{eq:correspondence_stokes_matrix_entries_diff_eq}
\end{equation}
for $i\in \{a+1, \dots, a+b\}$ and $j\in \{1,\dots, a-1\}$, where $\varepsilon_{i'}$ is given by \eqref{eq:sign_change_stokes_frieze_vectors} for $i'\in \{1, \dots, a+b\}$, and then extending to any $i\in \mathbb Z$ by $b$-periodicity, yields an $\breve H$-equivariant algebraic isomorphism between $\mathcal R^F_{a,b}$ and the moduli space $\mathcal E_{a,b}$ of superperiodic linear difference equations.

This isomorphism lifts the isomorphism $\mathcal B_{a,b}\simeq \mathcal P_{a,b}$ induced by the equivalence between Stokes local systems  and (recessive solutions of) Stokes filtered local systems with irregular class $\Theta_{a,b}$, namely we have the following commutative diagram of morphisms of complex algebraic varieties:
\begin{equation*}
\begin{tikzcd}[row sep=1.2cm, column sep=1.2cm]
    \mathcal R^F_{a,b} \arrow[r, "\sim"] \arrow[d, "\sslash \breve H"] & \mathcal E_{a,b} \arrow[d] \\
    \mathcal B_{a,b} \arrow[r, "\sim"] & \mathcal P_{a,b} 
\end{tikzcd}
\end{equation*}
Moreover, in the particular case where $a$ and $b$ are coprime, we have ${\mathcal R}^F_{a,b}=\breve{\mathcal R}_{a,b}=\mathcal B_{a,b}$,  hence $\mathcal F_{a,b}\simeq \mathcal E_{a,b}\simeq \mathcal B_{a,b}$. In this case, the vertical maps in the diagram are isomorphisms. 
\end{theorem}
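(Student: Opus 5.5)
The plan is to factor the map $\rho\mapsto\bm\alpha$ through the two kinds of lifts already constructed. Theorem~\ref{thm:correspondence_stokes_rep_stokes_adapted_vectors} sends $\rho$ to an equivalence class of extended Stokes-adapted collections $\bm v'$; reducedness of $\rho$ corresponds to $\bm v'$ being reduced. By Proposition~\ref{prop:construction_stokes_adapted_lift} and Remark~\ref{rem:breve_H_action_on_reduced_stokes_adapted_vectors}, $\bm v'$ is the unique reduced Stokes-adapted lift of its recessive configuration $P$ extending $(v'_1,\dots,v'_p)$. Since $\breve h=\breve h^F=(-1)^{(s-r)(a-1)}(1,\dots,1)$, Proposition~\ref{prop:correspondence_frieze_adapted_stokes_adapted_lifts} shows that $P$ is frieze-compatible. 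The same proposition yields a unique frieze-adapted lift $\bm v$ with $v_i=\varepsilon_i v'_i$. Lemma~\ref{lemma:correspondence_frieze_adapted_vectors_diff_equations} then turns the class of $\bm v$ into $E\in\mathcal E_{a,b}$. Everything is determined by equivalence classes, so the map is well defined.

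The key computation is the sign formula \eqref{eq:correspondence_stokes_matrix_entries_diff_eq}. Start from \eqref{eq:relation_recessive_stokes_vectors}, which gives $v'_i=\sum_j\sigma_i^j v'_{i-j}+v'_{i-a}$. Substitute $v'_k=\varepsilon_k v_k$ and multiply by $\varepsilon_i$ to get
\[
v_i=\sum_{j=1}^{a-1}\varepsilon_i\varepsilon_{i-j}\sigma_i^j\,v_{i-j}+\varepsilon_i\varepsilon_{i-a}v_{i-a}.
\]
Comparing with \eqref{eq:extended_collection_frieze_vectors_form_linear_comb} gives $(-1)^{j-1}\alpha_i^j=\varepsilon_i\varepsilon_{i-j}\eta_i^j s_i^j$, which is the claimed $\varepsilon_i^j$. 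The last coefficient $\varepsilon_i\varepsilon_{i-a}=(-1)^{a-1}$ is automatic, since both lifts are adapted. One can also check it directly from \eqref{eq:sign_change_stokes_frieze_vectors}, because $k_i+l_i$ and $k_{i-a}+l_{i-a}$ differ by an odd number. The extension to all $i\in\mathbb Z$ by $b$-periodicity is consistent with the definition of $\mathcal E_{a,b}$.

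Bijectivity and algebraicity follow by running the same chain backwards. Take $E\in\mathcal E_{a,b}$ and a basis of solutions, giving a frieze-adapted collection $\bm v$. Set $v'_i=\varepsilon_i v_i$; by the same computation read in reverse, $\bm v'$ is a reduced Stokes-adapted collection with formal monodromy $\breve h^F$. The converse part of Theorem~\ref{thm:correspondence_stokes_rep_stokes_adapted_vectors} then produces a unique $\rho\in\mathcal R^F_{a,b}$. The step I expect to be the main obstacle is this inverse direction: one must check that the vectors $v_{a+1},\dots,v_{a+b}$ obtained by solving the recurrence really give the frieze-adapted lift normalised through the same $k_i,l_i$ bookkeeping. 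I would handle it by the uniqueness statement in Proposition~\ref{prop:correspondence_frieze_adapted_stokes_adapted_lifts}.

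Both maps are given by sign changes of coordinates, so they are algebraic and mutually inverse. $\breve H$-equivariance comes from Remarks~\ref{rem:breve_H_action_on_reduced_stokes_adapted_vectors} and \ref{rem:breve_H_action_on_frieze_adapted_vectors_and_diff_eq}: both actions rescale $(v_1,\dots,v_p)$, and the signs $\varepsilon_i$ do not depend on that initial data. The diagram commutes because both composites send $\rho$ to the projective class of the lines $\mathbb Cv_i=\mathbb Cv'_i$, i.e.\ the recessive lines. In the coprime case $p=1$, and Remark~\ref{rem:formal-monodromy-frieze-adapted-lift} shows that every reduced representation already has $\breve h=\breve h^F$, so $\mathcal R^F_{a,b}=\breve{\mathcal R}_{a,b}$. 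The residual group is $\breve H=\mathbb C^*$, which acts trivially on Stokes data up to equivalence, giving $\breve{\mathcal R}_{a,b}=\mathcal B_{a,b}$. Combined with the cited triality $\mathcal E_{a,b}\simeq\mathcal F_{a,b}\simeq\mathcal P_{a,b}$, this makes all maps in the diagram isomorphisms.
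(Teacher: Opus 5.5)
Your proposal is correct and follows the paper's proof. Like the paper, you compose Theorem~\ref{thm:correspondence_stokes_rep_stokes_adapted_vectors}, Proposition~\ref{prop:correspondence_frieze_adapted_stokes_adapted_lifts} and Lemma~\ref{lemma:correspondence_frieze_adapted_vectors_diff_equations}, and you take $\breve H$-equivariance from the two remarks on the $\breve H$-actions. The only difference is cosmetic: you obtain $\varepsilon_i^j$ by substituting $v'_k=\varepsilon_k v_k$ into the recurrence, whereas the paper compares Cramer's-rule expressions for $\sigma_i^j$ and $\alpha_i^j$, which is the same computation. Your extra checks (that $\varepsilon_i\varepsilon_{i-a}=(-1)^{a-1}$, the inverse direction via uniqueness of the frieze-adapted lift, and the coprime case via the trivial $\breve H$-action) are sound and fill in details the paper leaves implicit.
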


\begin{remark}
The isomorphism $\mathcal F_{a,b}\simeq \mathcal E_{a,b}\simeq \mathcal B_{a,b}\simeq \mathcal P_{a,b}$ in the coprime case implies in particular that the moduli spaces of superperiodic linear difference equations and frieze patterns inherit the geometric structures of the wild character variety $\mathcal B_{a,b}$: they have an algebraic symplectic structure, and via the (extension to the twisted case of the) wild nonabelian Hodge correspondence \cite{biquard2004wild}, the underlying differentiable manifold admits a hyperkähler structure. Note also that its cohomology has recently been computed by Galashin--Lam \cite{galashin2024positroids}: $\mathcal P_{a,b}$ is referred there as the ``Catalan variety'' defined as a torus quotient of the top open positroid stratum of the Grassmannian $\mathrm{Gr}_{a,b}$.
\end{remark}

\begin{proof}
    The result follows directly from combining the three intermediate correspondences obtained previously: between reduced Stokes representations and (equivalence classes of) Stokes-adapted collections of vectors (Theorem \ref{thm:correspondence_stokes_rep_stokes_adapted_vectors}), between Stokes-adapted and frieze-adapted collections of vectors (Proposition \ref{prop:correspondence_frieze_adapted_stokes_adapted_lifts}) and between (equivalence classes of) frieze-adapted collections of vectors and superperiodic linear difference equations (Lemma \ref{lemma:correspondence_frieze_adapted_vectors_diff_equations}). The $\breve H$-equivariance follows from Remark \ref{rem:breve_H_action_on_reduced_stokes_adapted_vectors} and Remark \ref{rem:breve_H_action_on_frieze_adapted_vectors_and_diff_eq}.

The formula \eqref{eq:correspondence_stokes_matrix_entries_diff_eq} is obtained as follows: As in the setup of Proposition \ref{prop:correspondence_frieze_adapted_stokes_adapted_lifts} and its proof, let us consider the reduced Stokes-adapted lift $\bm v'$ and the frieze-adapted lift $\bm v$ extending the same $(v_1, \dots, v_p)$. 
The Stokes coefficients $\sigma_i^j$, for $i\in \{a+1, \dots, a+b\}$ and $j\in \{1, \dots, a-1\}$ are determined by $\bm v'$ via \eqref{eq:extended_collection_stokes_vectors_form_linear_comb}, hence by Cramer's rule
\[
\sigma^j_i=\frac{|v'_{i-1}\dots v'_{i-j+1} v'_i v'_{i-j-1}\dots v'_{i-a}|}{|v'_{i-1}\dots  v'_{i-a}|}.
\]
Similarly, the linear difference equation coefficients $\alpha_i^j$ are determined by $\bm v$ via \eqref{eq:extended_collection_frieze_vectors_form_linear_comb}, hence
\[
\alpha^j_i=(-1)^{j-1}\frac{|v_{i-1},\ldots, v_{i-j+1}, v_i, v_{i-j-1},\ldots, v_{i-a}|}{|v_{i-1},\ldots,  v_{i-a}|}.
\]
Thus \eqref{eq:correspondence_stokes_matrix_entries_diff_eq} follows immediately from the sign changes \eqref{eq:sign_change_stokes_frieze_vectors} between $v_i$ and $v'_i$, combined with \eqref{eq:relation_stokes_coeff_sigma_stokes_matrix_entries}.
\end{proof}

\begin{remark}
 \label{rem:noncommutative_generalisation_stokes_frieze}
In our discussion of the correspondence between recessive subspaces of Stokes filtered local systems, Stokes representations, and superperiodic linear difference equations, we made the assumption that the multiplicity of all active Stokes circles is $1$, so that the Stokes matrix entries are simply complex numbers. However, it should be possible to lift this restriction and generalise the correspondence to the ``noncommutative'' case where we allow for higher multiplicities, so that the Stokes matrix entries and frieze entries are now rectangular matrices (see also Remark \ref{rem:noncommutative_generalisation_filtrations}). Note that noncommutative frieze patterns have been recently considered in \cite{cuntz2024noncommutative}.
\end{remark}

\section{Fourier transform of Stokes matrices from combinatorial Gale transform}
\label{sec:fourier_from_combinatorial_gale}

\subsection{Combinatorial Gale transform}
We discussed in \S\ref{subsec:difference_eq_frieze_review} that, to a superperiodic linear difference equation $E\in \mathcal E_{a,b}$, one can associate a frieze $\widehat F=\Phi^{\mathrm{sol}}_{a,b}(E)\in \mathcal F_{a,b}$, its frieze of solutions. The main result of \cite{morier2014linear} is that there is a dual way to associate a frieze to $E$: One obtains a frieze $F\in \mathcal F_{b-a,b}$ simply by taking for its nontrivial entries the coefficients of $E$. This leads to a duality at the level of friezes and of linear difference equations, which remarkably happens to lift the classical Gale transform of point configurations. 

\begin{theorem}[{\cite[\S4]{morier2014linear}}]
\label{thm:combinatorial_gale_transform}
Let $E=(\alpha_i^j)_{i\in \mathbb Z, 1\leq j\leq a-1}\in \mathcal E_{a,b}$. 

\begin{itemize}
\item 
The collection of coefficients $\bm\alpha=(\alpha_i^j)$ defines a frieze pattern $F$ of type $(b-a,b)$, which we call the \emph{frieze of coefficients} of $E$ and denote by $\Phi^{\mathrm{coeff}}_{a,b}(E)$. This yields an isomorphism $\Phi^{\mathrm{coeff}}_{a,b}\colon\mathcal E_{a,b}\overset{\sim}{\longrightarrow} \mathcal F_{b-a, b}$. 

Equivalently, if $\widehat F=(\widehat \alpha_i^j)\defeq \Phi^{\mathrm{sol}}(E)\in \mathcal F_{a,b}$ is the frieze of solutions of $E$, then the collection of coefficients $\widehat{\bm \alpha}=(\alpha^j_i)$ defines a superperiodic linear difference equation $\widehat E\in \mathcal E_{b-a, b}$, i.e.\ we have $\Phi^{\mathrm{coeff}}_{b-a,b}(\widehat E)=\widehat F$. 

The situation is represented in Fig.~\ref{fig:combinatorial_gale_transform}.

\begin{figure}[h]
\centering
    \begin{tikzpicture}[scale=0.8]
        \draw (0,0) node {$E=(\alpha_i^j)\in \mathcal E_{a,b}$};
        \draw (0,-8) node {$\widehat E=(\widehat \alpha_i^j)\in \mathcal E_{b-a,b}$};

        \draw[|->] (-1.5,-0.5) -- (-3.5, -2.5);
        \draw (-3,-1.2) node {$\Phi^{\mathrm{sol}}_{a,b}$};

        \draw[|->] (1.5,-0.5) -- (3.5, -2.5);
        \draw (+3,-1.2) node {$\Phi^{\mathrm{coeff}}_{a,b}$};

        \draw[|->] (-1.5,-7.5) -- (-3.5, -5.5);
        \draw (-3.2,-6.8) node {$\Phi^{\mathrm{coeff}}_{b-a,b}$};

        \draw[|->] (1.5,-7.5) -- (3.5, -5.5);
        \draw (3.2,-6.8) node {$\Phi^{\mathrm{sol}}_{b-a,b}$};
    
        \begin{scope}[xshift=-5cm, yshift=-4cm, scale=0.5]
        \draw (-7,0) node {$\widehat F=$};
        \draw (7,0) node{$\in\mathcal F_{a,b}$};
        
        \draw (2,2) node[font=\small] {$1$};
        \draw (0,2) node[font=\small] {$1$};
        \draw (4,2) node[font=\small] {$1$};
        \draw (1,1) node[font=\small] {$\widehat \alpha^1_i$};
        \draw (0,0) node[font=\small,rotate=45] {$\dots$};
        \draw (-3,0) node[font=\small] {$\cdots$};
        \draw (3,0) node[font=\small] {$\cdots$};
        \draw (-1,-1) node[font=\small] {$\widehat \alpha^{b-a-1}_i$};
        \draw (-2,-2) node[font=\small] {$1$};
        \draw (-4,-2) node[font=\small] {$1$};
        \draw (0,-2) node[font=\small] {$1$};

        \draw (-1,2) node[font=\small] {$\dots$};
        \draw (5,2) node[font=\small] {$\dots$};

        \draw (-5,-2) node[font=\small] {$\dots$};
        \draw (1,-2) node[font=\small] {$\dots$};   
        \end{scope}

        \begin{scope}[xshift=5cm, yshift=-4cm, scale=0.5]
       \draw (-7,0) node {$F=$};
        \draw (7.5,0) node{$\in\mathcal F_{b-a,b}$};
        
        \draw (2,2) node[font=\small] {$1$};
        \draw (4,2) node[font=\small] {$1$};
        \draw (0,2) node[font=\small] {$1$};
        \draw (1,1) node[font=\small] {${\alpha}^1_i$};
        \draw (0,0) node[font=\small,rotate=45] {$\dots$};
        \draw (-3,0) node[font=\small] {$\cdots$};
        \draw (3,0) node[font=\small] {$\cdots$};
        \draw (-1,-1) node[font=\small] {$\alpha^{a-1}_i$};
        \draw (-2,-2) node[font=\small] {$1$};
        \draw (-4,-2) node[font=\small] {$1$};
        \draw (0,-2) node[font=\small] {$1$};

        \draw (5,2) node[font=\small] {$\dots$};
        \draw (-1,2) node[font=\small] {$\dots$};

        \draw (-5,-2) node[font=\small] {$\dots$};
        \draw (1,-2) node[font=\small] {$\dots$};   
        \end{scope}
    \end{tikzpicture}
    \caption{Combinatorial Gale duality for linear difference equations and frieze patterns (cf.\ \cite[Fig.~1]{morier2014linear}). To a difference equation $E\in \mathcal E_{a,b}$ one can associate two different dual friezes: its frieze of solutions $\widehat F\in \mathcal F_{a,b}$, and its frieze of coefficients $F\in \mathcal F_{b-a,b}$. Exchanging the roles of the friezes of coefficients and of the friezes of solutions yields a dual difference equation $\widehat E\in \mathcal E_{b-a, b}$.}
    \label{fig:combinatorial_gale_transform}

\end{figure}

\item The composition $(\Phi^{\mathrm{coeff}}_{b-a,b})^{-1}\circ\Phi^{\mathrm{sol}}_{a,b}\colon\mathcal E_{a,b}\overset{\sim}{\longrightarrow} \mathcal E_{b-a,b}$ lifts the classical Gale transform $\mathscr G\colon\mathcal P_{a,b}\overset{\sim}{\longrightarrow} \mathcal P_{a,b}$, that is, the following diagram commutes:
\begin{center}
    \begin{tikzpicture}[xscale=1, yscale=0.7]
    \node (P) at (0,0) {$\mathcal P_{a,b}$};
    \node (E) at (0,2) {$\mathcal E_{a,b}$};
    \node (F) at (0,4) {$\mathcal F_{a,b}$};

    \node (P') at (4,0) {$\mathcal P_{b-a,b}$};
    \node (E') at (4,2) {$\mathcal E_{b-a,b}$};
    \node (F') at (4,4) {$\mathcal F_{b-a,b}$};

    \draw (E)[->] -- (P);
    \draw (E)[->] --node[midway, left]{$\Phi^{\mathrm{sol}}_{a,b}$} (F);

    \draw (E')[->] -- (P');
    \draw (E')[->] --node[midway, right]{$\Phi^{\mathrm{sol}}_{b-a,b}$} (F');

    \draw (P)[->] --  node[midway, above, yshift=-2pt]{$\sim$} node[midway, below]{$\mathscr G$} (P');

    \draw (E)[->] --  node[near start, below]{$\Phi^{\mathrm{coeff}}_{a,b}$} (F');

    \draw (E')[->] -- node[near start, below]{$\Phi^{\mathrm{coeff}}_{b-a,b}$} (F);

    \end{tikzpicture}
    \end{center}
\end{itemize}
\end{theorem}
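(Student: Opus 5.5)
The plan is to work with an explicit basis of solutions of $E$ and realise both friezes as minors of one $a\times(a+b)$ matrix. Fix $E=(\alpha_i^j)\in\mathcal E_{a,b}$. Let $\bm v=(v_i)_{i\in\mathbb Z}$ be the frieze-adapted collection in $V=\mathbb C^a$ given by a basis of solutions (Lemma \ref{lemma:correspondence_frieze_adapted_vectors_diff_equations}). Normalise it so that $|v_{i-a+1},\dots,v_i|=1$ for all $i$. This normalisation is consistent: the recurrence \eqref{eq:antiperiodic_linear_diff_equation_definition} has last coefficient $(-1)^{a-1}$, so moving the window by one step multiplies the determinant by $(-1)^{a-1}\cdot(-1)^{a-1}=1$. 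The frieze of solutions $\widehat F=\Phi^{\mathrm{sol}}_{a,b}(E)$ is then given by the $a\times a$ minors $\widehat\alpha_i^j=|v_{i-1},\dots,v_{i-a+1},v_{i+j}|$ up to fixed signs and index shift. Equivalently, it is the array of coordinates of $v_{i+j}$ in the basis attached to position $i$. The $\mathrm{SL}_a$-condition then comes from the unimodularity of consecutive windows, and tameness from the fact that $a+1$ vectors in $V$ are dependent.

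For the frieze of coefficients, assemble the coefficients into a $b\times b$ banded matrix $M$ (or a $b$-periodic infinite band matrix). Its row $i$ contains $1,\alpha_i^1,\dots,\alpha_i^{a-1},1$ with alternating signs, so that $M$ applied to the sequence $(v_i)$ encodes the recurrence. Superperiodicity of all solutions says exactly that $\ker$ of the periodic difference operator, on $(-1)^{a-1}$-antiperiodic sequences, has full dimension $a$. Hence $\operatorname{rank}M=b-a$. Write this as $MX^\top=0$, where $X$ is the $a\times b$ matrix with columns $v_1,\dots,v_b$. Take $Y$ to be $b-a$ suitably chosen rows of $M$, or better a basis of the row space read along columns. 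Then $Y X^\top=0$ is precisely the Gale duality relation of Definition \ref{def:GaleDual}, which already gives the commutativity of the lower square once $\Phi^{\mathrm{coeff}}$ is established. The columns of $M$ give vectors $w_i\in\mathbb C^{b-a}$, namely the images $\beta_i$ in Lemma \ref{lemma:intrinsic_gale}, with the $\Im\alpha=\ker$ identification as in Lemma \ref{lemma:categorical_gale}. One then checks that these $w_i$ satisfy a recurrence of order $b-a$ with last coefficient $(-1)^{b-a-1}$ and $b$-(anti)periodic solutions. The coefficients of this recurrence are the entries of $\widehat F$. This is the statement that $\widehat F$, read as coefficients, is a point of $\mathcal E_{b-a,b}$, i.e.\ $\Phi^{\mathrm{coeff}}_{b-a,b}(\widehat E)=\widehat F$.

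The key step, and the main obstacle, is to show that the array $(\alpha_i^j)$ bordered by $1$'s and $0$'s satisfies the $\mathrm{SL}_{b-a}$ condition and tameness, so that $\Phi^{\mathrm{coeff}}_{a,b}$ lands in $\mathcal F_{b-a,b}$. My first attempt would be a Jacobi complementary-minor argument. Extend the band matrix to the infinite unimodular lower-triangular Toeplitz-like operator $L$ with $\det=1$ blocks. Its inverse $L^{-1}$ has entries given by the solution minors $\widehat\alpha$, by the André/Cramer formula, as in Theorem \ref{thm:fourier_transform_stokes_matrices_explicit_formula_intro}. Jacobi's identity then says a $(b-a)\times(b-a)$ minor of one equals the complementary $a\times a$ minor of the other. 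Superperiodicity, i.e.\ $L^{-1}$ vanishing on the appropriate strip after $b$ steps, turns the boundary rows of $0$'s and $1$'s into the required unit determinants and the vanishing $(b-a+1)$-minors. The delicate part is bookkeeping the signs and the exact index shifts in Jacobi's identity. Bijectivity of $\Phi^{\mathrm{coeff}}_{a,b}$ follows by symmetry: applying the same construction to $\widehat E$ recovers $E$, because Gale duality is involutive and the Jacobi identity is symmetric. Algebraicity is clear since all maps are polynomial in the entries.
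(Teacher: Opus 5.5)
The paper does not prove this statement; it imports it from \cite{morier2014linear}. Your Jacobi step for the first bullet works, though not quite as you phrase it. For finite unitriangular truncations of $L$, Jacobi's identity followed by peeling off unitriangular blocks gives
\[
\det L_{[p,p+b-a-1],[p-j,p-j+b-a-1]}=(-1)^{(b-a)j}\det (L^{-1})_{[p-j+b-a,\,p+b-a-1],[p-j,\,p-1]} .
\]
The right-hand side is a $j\times j$ block, not an $a\times a$ one, lying on the row of $1$'s of the frieze of solutions. For $0\le j\le a$ this block is upper unitriangular, because $L^{-1}$ vanishes on the offsets $b-a+1,\dots,b-1$. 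The analogous $(b-a+1)$-minors with $1\le j\le a-1$ give strictly upper triangular blocks, hence tameness. For the signs, write $L=DAD$ with $D=\operatorname{diag}((-1)^i)$ and $A_{i,i-j}=\alpha_i^j$. The factor $(-1)^{(b-a)j}$ then appears a second time and cancels, so the unsigned coefficient array is exactly a tame $\mathrm{SL}_{b-a}$-frieze. Thus $\Phi^{\mathrm{coeff}}_{a,b}(\mathcal E_{a,b})\subseteq\mathcal F_{b-a,b}$ is established.

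The gap is everything after that. Your sentence ``one then checks that these $w_i$ satisfy a recurrence of order $b-a$ \dots\ whose coefficients are the entries of $\widehat F$'' is the actual content of the second bullet, and nothing in the proposal proves it. The Jacobi argument controls minors of the coefficient array, not linear relations among the columns of $M$. Knowing that $\Phi^{\mathrm{coeff}}$ is an isomorphism would only give the existence of some $\widehat E$, not that the columns of $M$ solve it. Your bijectivity argument rests on the same unproved claim, since it needs $\widehat E$ to exist and to return $E$. The involutivity of classical Gale duality cannot replace it, because $\mathcal E_{a,b}\to\mathcal P_{a,b}$ is not injective when $\gcd(a,b)>1$.

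The missing idea is an operator identity. Let $K$ be $L^{-1}$ truncated after its row of $1$'s, keeping offsets $0,\dots,b-a$, and let $S$ be the shift with $S_{i+1,i}=1$. Since $L$ is banded of width $a$ and $L^{-1}$ vanishes on offsets $b-a+1,\dots,b-1$, a direct computation gives
\[
LK=KL=1-(-1)^{a-1}S^b .
\]
Both products therefore vanish on $(-1)^{a-1}$-twisted $b$-periodic sequences, where $L$ acts by your $M$. In particular $\sum_{c=m}^{m+b-a}K_{c,m}\,w_c=0$ for all $m$, with $w_c=Me_c$. After the rescaling $w_c\mapsto(-1)^cw_c$, this is an equation of the form \eqref{eq:antiperiodic_linear_diff_equation_definition} of order $b-a$. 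Its coefficients are the solution-frieze entries, read along the solution diagonals in reverse order. It is superperiodic, because the coordinate functions of the spanning, twisted-periodic $w_c$ already give all $b-a$ independent solutions. Symmetrically, $KL=1-(-1)^{a-1}S^b$ says that $L$ is the truncation of $K^{-1}$ at offset $a$. This is the equation-level involutivity your bijectivity argument needs. With it, the Gale square follows from your $YX^\top=0$. Alternatively, for surjectivity alone, your Jacobi computation is an equivalence and can be run backwards, using the $b$-periodicity of tame friezes.
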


\begin{definition}
The isomorphism $(\Phi^{\mathrm{coeff}}_{b-a,b})^{-1}\circ\Phi^{\mathrm{sol}}_{a,b}\colon \mathcal E_{a,b}\overset{\sim}{\longrightarrow} \mathcal E_{b-a,b}$ is called the \emph{combinatorial Gale transform} for superperiodic linear difference equations. Similarly, the isomorphism 
$(\Phi^{\mathrm{sol}}_{b-a,b})\circ(\Phi^{\mathrm{coeff}}_{b-a,b})^{-1}\colon \mathcal F_{a,b}\overset{\sim}{\longrightarrow} \mathcal F_{b-a,b}$
is the combinatorial Gale transform for frieze patterns. 
\end{definition}

We will denote all versions of the (combinatorial) Gale transform by $\mathscr G$, slightly abusing notation again. The combinatorial Gale transform was further studied by Krichever \cite{krichever2015commuting}, who showed that Gale dual linear difference operators commute with each other.

The projective duality of point configurations can also be lifted to linear difference equations and frieze patterns:

\begin{proposition}[{\cite[\S4.4]{morier2014linear}}]
\label{proposition:combinatorial_projective_duality}
The projective duality $*\colon \mathcal P_{a,b}\to \mathcal P_{a,b}$ admits lifts to $\mathcal E_{a,b}$ and $\mathcal F_{a,b}$, that we denote again by $*$. This combinatorial projective duality commutes with the combinatorial Gale transform. 
\end{proposition}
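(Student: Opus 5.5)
The plan is to construct the lift on $\mathcal E_{a,b}$ directly, using the formula of Corollary~\ref{corollary:projective_duality_formula}, and then transport it to $\mathcal F_{a,b}$ through the isomorphism $\Phi^{\mathrm{sol}}_{a,b}$.

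\textbf{Step 1: the lift on $\mathcal E_{a,b}$.} Let $E=(\alpha_i^j)\in\mathcal E_{a,b}$ and let $\bm v=(v_i)_{i\in\mathbb Z}$ be a basis of solutions, i.e.\ a frieze-adapted collection as in Lemma~\ref{lemma:correspondence_frieze_adapted_vectors_diff_equations}. Fix a volume form on $V$. Because the extreme coefficient of \eqref{eq:extended_collection_frieze_vectors_form_linear_comb} is $(-1)^{a-1}$, the determinant $|v_i,\dots,v_{i-a+1}|$ is independent of $i$; rescale $\bm v$ so that it equals $1$. Define
\[
v^*_i\defeq v_{i+1}\wedge\dots\wedge v_{i+a-1}\in\Lambda^{a-1}V\simeq V^*.
\]
Then $v^*_i$ spans the annihilator $(V_{i+1}+\dots+V_{i+a-1})^0$, as required by Corollary~\ref{corollary:projective_duality_formula}. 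Next, wedge the recurrence for $v_{i+a}$ with $v_{i+1},\dots,v_{i+a-1}$ minus one factor, and use the normalisation $|v_{i},\dots,v_{i+a-1}|=1$. This should show that the $v^*_i$ satisfy the formal adjoint recurrence. Its coefficients are, up to signs, the reversed coefficients $\alpha^*{}_i^j=\alpha^{a-j}_{i+c(j)}$ for a suitable index shift $c(j)$ (I expect $c(j)=j$ or $a-j$ after bookkeeping), and its extreme coefficient is again $(-1)^{a-1}$.

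\textbf{Step 2: superperiodicity and well-definedness of $E^*$.} The coefficients $\alpha^*{}_i^j$ are $b$-periodic because the $\alpha_i^j$ are. Antiperiodicity also survives, since
\[
v^*_{i+b}=\big((-1)^{a-1}\big)^{a-1}v^*_i=(-1)^{a-1}v^*_i .
\]
Hence $E^*\defeq(\alpha^*{}_i^j)\in\mathcal E_{a,b}$. By construction $E^*$ does not depend on the choice of solution basis or of volume form, since both change only by elements of $\mathrm{SL}$ or global scalars compatible with the normalisation. The map $E\mapsto E^*$ is algebraic, it is an involution up to shift, and it covers $*\colon\mathcal P_{a,b}\to\mathcal P_{a,b}$.

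\textbf{Step 3: the lift on friezes.} On $\mathcal F_{a,b}$ I would define $*\defeq\Phi^{\mathrm{sol}}_{a,b}\circ *\circ(\Phi^{\mathrm{sol}}_{a,b})^{-1}$, so that compatibility with the map to $\mathcal P_{a,b}$ is automatic. I would then identify this concretely as the vertical flip $\widehat\alpha_i^j\mapsto\widehat\alpha^{\,b-a-j}_{i'}$ of the frieze, for an appropriate index $i'$. Similarly, on coefficient friezes, Step~1 shows that $\Phi^{\mathrm{coeff}}_{a,b}(E^*)$ is the flip of $\Phi^{\mathrm{coeff}}_{a,b}(E)$ in $\mathcal F_{b-a,b}$.

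\textbf{Step 4: commutation with $\mathscr G$.} Write $\mathscr G=(\Phi^{\mathrm{coeff}}_{b-a,b})^{-1}\circ\Phi^{\mathrm{sol}}_{a,b}$. If both $\Phi^{\mathrm{sol}}$ and $\Phi^{\mathrm{coeff}}$ intertwine $*$ on equations with the flip on friezes, then $\mathscr G\circ *={*}\circ\mathscr G$ follows formally. The $\Phi^{\mathrm{coeff}}$ case is Step~3, so the real content is showing that the frieze of solutions of $E^*$ is the flip of the frieze of solutions of $E$. I would derive this from the Plücker description of frieze entries as $a\times a$ minors of consecutive solution vectors. Under $v\mapsto v^*$, these minors correspond, by Jacobi's complementary-minor identity, to the complementary minors, which is exactly a reflection of the frieze.

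I expect the main obstacle to be this last identification together with the sign and index bookkeeping, namely getting the shifts $c(j)$ and $i'$ and the signs $(-1)^{j}$ exactly right. As a check, I would first verify everything in the Coxeter case $a=2$, where $*$ should be the identity up to shift.
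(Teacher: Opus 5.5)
You are right that this statement has no proof in the paper: it is imported from \cite[\S4.4]{morier2014linear}. The resulting dual coefficients $\alpha^{*j}_i=\alpha^{a-j}_{i+a-j}$ are then used in the proof of Theorem~\ref{thm:fourier_transform_stokes_matrices_explicit_formula}.

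\textbf{Steps 1--3.} These reproduce that construction correctly. The wedge computation gives $c(j)=a-j$ with no extra signs; for instance, for $a=3$ one gets $v^*_i=\alpha^2_{i+2}v^*_{i-1}-\alpha^1_{i+1}v^*_{i-2}+v^*_{i-3}$. Your antiperiodicity argument and the identification with Corollary~\ref{corollary:projective_duality_formula} are also fine.

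For the reflection of the frieze of solutions, Jacobi's identity is not the natural tool, because the windows $v_{i+1},\dots,v_{i+a-1}$ do not sit inside one square matrix. Use instead that $v^*_p,\dots,v^*_{p+a-2}$ all vanish on $v_{p+a-1}$. Hence $|v^*_p,\dots,v^*_{p+a-2},v^*_q|$ equals a factor independent of $q$ times $|v_{q+1},\dots,v_{q+a-1},v_{p+a-1}|$. After replacing $v_{p+a-1}$ by $(-1)^{a-1}v_{p+a-1+b}$, this is an entry of the solution frieze of $E$ in the reflected row.

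\textbf{Step 4 has a genuine gap.} Your formal argument needs two maps on $\mathcal F_{a,b}$ to be literally the same, horizontal offset included:
the reflection induced through $\Phi^{\mathrm{sol}}_{a,b}$ by $*$ on $\mathcal E_{a,b}$, and
the reflection induced through $\Phi^{\mathrm{coeff}}_{b-a,b}$ by $*$ on $\mathcal E_{b-a,b}$.
With the normalisation you fixed in Step~1, they are not the same.

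Take $(a,b)=(2,5)$ and write $\tau$ for the index shift $\alpha^j_i\mapsto\alpha^j_{i+1}$.
\begin{itemize}
\item On $\mathcal E_{2,5}$, your $*$ is $\tau$.
\item On $\mathcal E_{3,5}$, every coefficient frieze is a width-two Coxeter frieze, so $\alpha^2_i=\alpha^1_{i+2}$. The formula $\alpha^{*j}_i=\alpha^{a-j}_{i+a-j}$ then gives $\alpha^{*j}_i=\alpha^j_{i-1}$, i.e.\ $*=\tau^{-1}$.
\end{itemize}
Geometrically this is the self-polarity of pentagons: $(V_{i+1}+V_{i+2})^0$ corresponds to $V_{i-1}$. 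You can check it directly on the superperiodic equation with $\alpha^1=(1,2,2,1,3)$ and $\alpha^2=(2,1,3,1,2)$.

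Since $\mathscr G$ commutes with index shifts, you get $\mathscr G\circ *=\tau^2\circ *\circ\mathscr G$, and $\tau^2$ acts nontrivially on $\mathcal E_{3,5}$.

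So what your argument can actually establish is commutation up to a cyclic relabelling of indices. To get an on-the-nose statement you must shift the convention for $*$ on one side, and that lift then no longer covers the $*$ of Corollary~\ref{corollary:projective_duality_formula} exactly. In general, this offset comparison is exactly the index $i'$ you left open, and it has to be computed rather than assumed to match.
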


\subsection{Fourier transform of Stokes representations}
Combining the combinatorial Gale transform (or rather its composition with projective duality) with the correspondence with reduced Stokes representations leads to an explicit way of determining the Fourier transform for (frieze-compatible) reduced Stokes representations.

Once again, let us fix a reference pair $(i_0,l_0)$ with $l_0$ odd for $\Theta_{a,b}$, so that we can speak of Stokes matrix entries $s_{i,l}^j$ for $\Theta_{a,b}$ and of the reduced representation variety $\breve{\mathcal R}_{a,b}$ and of the frieze-adapted subvariety $\mathcal R^F_{a,b}$. Similarly, repeating the same discussion for the irregular class $\widehat \Theta_{b-a, b}$, if we fix a reference pair $(\widehat i_0, \widehat l_0)$, we can speak of Stokes matrix entries $\widehat s_{i,l}^j$ for $\widehat \Theta_{a,b}$, of the associated reduced representation variety $\breve{\mathcal R}_{b-a, b}$ and of its frieze-adapted subvariety $\mathcal R^F_{b-a, b}$. 

Theorem \ref{thm:correspondence_stokes_representations_difference_eq} provides two maps $\Phi^{\mathcal R\to \mathcal E}_{a,b}\colon\mathcal R^F_{a,b}\to\mathcal E_{a,b}$ and $\Phi^{\mathcal R\to \mathcal E}_{b-a,b}\colon \mathcal R^F_{b-a,b}\to\mathcal E_{b-a,b}$, and we obtain the following result.

\begin{theorem} 
\label{thm:fourier_transform_stokes_matrices_using_friezes}
The composition of isomorphisms 
\[
\mathscr F\colon (\Phi^{\mathcal R\to \mathcal E}_{b-a,b})^{-1}\circ (\mathscr G\circ *)\circ\Phi^{\mathcal R\to \mathcal E}_{a,b}\colon \mathcal R^F_{a,b}\iso\mathcal R^F_{b-a,b}
\]
yields the Fourier transform of Stokes data of connections of type $\Theta_{a,b}$, that is, it lifts the isomorphism $\mathcal B_{a,b}\iso \mathcal B_{b-a, b}$ corresponding to the Fourier transform via the equivalence of categories between Stokes local systems and connections.  
\end{theorem}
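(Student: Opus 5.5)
The proof is a diagram chase in Fig.~\ref{fig:diagram_fourier_transform_stokes_data_using_gale}. The aim is to show that the composite $\mathscr F$ covers, via the left vertical quasi-Hamiltonian reductions $\mathcal R^F_{a,b}\to\mathcal B_{a,b}$ and $\mathcal R^F_{b-a,b}\to\mathcal B_{b-a,b}$, the map $\mathcal B_{a,b}\to\mathcal B_{b-a,b}$ induced by the Fourier transform. The check splits into three squares plus one well-definedness issue.

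First, I invoke Theorem \ref{thm:correspondence_stokes_representations_difference_eq} on both sides. It says that $\Phi^{\mathcal R\to\mathcal E}_{a,b}$ and $\Phi^{\mathcal R\to\mathcal E}_{b-a,b}$ lift the isomorphisms $\mathcal B_{a,b}\simeq\mathcal P_{a,b}$ and $\mathcal B_{b-a,b}\simeq\mathcal P_{b-a,b}$ defined by recessive lines, i.e.\ induced by $\Phi^{\rec}_{a,b}$ and $\widehat\Phi^{\rec}_{b-a,b}$. On the $\widehat\Theta_{b-a,b}$ side I must recheck the conventions: it is a $\pi$-rotated copy of $\Theta_{b-a,b}$, and its recessive lines are indexed by $\widehat V^{\rec}_{-i,0}$. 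The numbering used in $\widehat\Phi^{\rec}_{b-a,b}$ must match the numbering of Stokes vectors used in the reference pair $(\widehat i_0,\widehat l_0)$. I would fix $(\widehat i_0,\widehat l_0)$ compatibly, via the Legendre bijection $\mathcal L(J_{i,l})=\widehat J_{-i,l}$, so that the cyclic labels agree with those in Theorem \ref{thm:fourier=gale}.

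Second, I invoke Theorem \ref{thm:combinatorial_gale_transform} and Proposition \ref{proposition:combinatorial_projective_duality}. These show that $\mathscr G\circ *\colon\mathcal E_{a,b}\to\mathcal E_{b-a,b}$ lifts $\mathscr G\circ *\colon\mathcal P_{a,b}\to\mathcal P_{b-a,b}$ along the solution maps $\mathcal E\to\mathcal P$. Third, I translate the Fourier transform to $\mathcal P$. By Corollary \ref{corollary:projective_duality_formula}, $P^{\sub}=*(P^{\rec})$, and Theorem \ref{thm:fourier=gale} gives $\widehat P^{\rec}\simeq\mathscr G(P^{\sub})=\mathscr G(*(P^{\rec}))$. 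Naturality of these isomorphisms ensures they descend to isomorphism classes, so under $\mathcal B\simeq\mathcal P$ the Fourier map is exactly $\mathscr G\circ *$. Stacking the three squares gives the commutativity asserted in the theorem.

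It remains to see that $\mathscr F$ lands in $\mathcal R^F_{b-a,b}$ and is an isomorphism. This is automatic because $\mathscr F$ is defined as a composite of isomorphisms whose target is $\mathcal E_{b-a,b}$. Theorem \ref{thm:correspondence_stokes_representations_difference_eq} identifies $\mathcal E_{b-a,b}$ with $\mathcal R^F_{b-a,b}$, so the image automatically has the frieze-compatible formal monodromy.

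The step I expect to be the main obstacle is the index bookkeeping in the first step. I must ensure that the labelling of $\widehat P^{\rec}$ in Theorem \ref{thm:fourier=gale} (lines $\widehat V^{\rec}_{-i,0}$, $i=1,\dots,b$) agrees, up to a cyclic shift, with the sequence of recessive Stokes vectors $v_{i,\widehat l_0}$ used to build $\Phi^{\mathcal R\to\mathcal E}_{b-a,b}$. The Gale transform and projective duality on $\mathcal P$ are only equivariant for cyclic relabelling, not for reversal. So I would verify orientation: as $i$ increases, $\theta_{i,l}$ decreases, and on the hat side the index $-i$ reverses this again. I would check this explicitly on the example $(a,b)=(2,5)$ of Fig.~\ref{fig:legendre_distinguished_intervals}. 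If a shift remains, I would absorb it into the choice of $(\widehat i_0,\widehat l_0)$, since cyclic shifts commute with $\mathscr G\circ *$.
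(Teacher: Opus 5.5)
Your diagram chase is the paper's own argument: Theorem~\ref{thm:fourier=gale} with projective duality on $\mathcal P$, the combinatorial Gale transform and its compatibility with $*$ on $\mathcal E$, and Theorem~\ref{thm:correspondence_stokes_representations_difference_eq} on both sides. You are also right that the labelling on the $\widehat\Theta_{b-a,b}$ side is the delicate point. But your resolution of it fails, because the residual discrepancy there is an orientation reversal, not a cyclic shift.

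Since $\widehat\Theta_{b-a,b}$ is a rotated copy of $\Theta_{b-a,b}$, running Theorem~\ref{thm:correspondence_stokes_rep_stokes_adapted_vectors} for it gives recessive Stokes vectors with $\widehat v_{i,l}=\widehat v_{i-(b-a),l}+\sum_j \widehat s^j_{i,l}\,\widehat v_{i-j,l}$. This recurrence runs in order of increasing $i$, i.e.\ decreasing angle $\widehat\theta_{i,l}$, exactly as on the $\Theta_{a,b}$ side. So $\Phi^{\mathcal R\to\mathcal E}_{b-a,b}$ lifts the identification $\mathcal B_{b-a,b}\simeq\mathcal P_{b-a,b}$ that lists the recessive lines clockwise. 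By contrast, $\widehat\Phi^{\rec}_{b-a,b}$ lists $\widehat V^{\rec}_{-k,0}$ at $\widehat\theta_{-k,0}=2\pi k/b$, which is counterclockwise. Its Gale partner $V^{\sub}_{k,0}$ sits at $\theta_{k,0}=-2\pi k/b$. In other words, the Legendre bijection $J_{i,l}\mapsto\widehat J_{-i,l}$ reverses the cyclic order of the distinguished sectors. You can see this in Fig.~\ref{fig:legendre_distinguished_intervals}, and in \S\ref{sec:painleve_I_example}, where the transformed frieze lists the $\alpha_i$ in decreasing order of $i$. A reversal cannot be absorbed into the choice of $(\widehat i_0,\widehat l_0)$, which only shifts indices cyclically. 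Combined with your claim that $\mathscr G\circ *$ is not reversal-equivariant, your verification would stall exactly here.

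The missing idea is that the reversal is harmless, and your claim about equivariance is incorrect.
\begin{itemize}
\item The Gale transform commutes with any simultaneous relabelling of the $b$ points. The condition $YDX^\top=0$ of Definition~\ref{def:GaleDual} is unchanged under $X\mapsto X\Pi$, $Y\mapsto Y\Pi$, $D\mapsto \Pi^\top D\Pi$ for a permutation matrix $\Pi$. In particular it commutes with reversal, which also preserves nondegeneracy.
\item Projective duality commutes with reversal up to a cyclic shift.
\item Reversal lifts to $\mathcal E_{b-a,b}$: a superperiodic equation can be solved for its other extreme term after multiplying by $(-1)^{b-a-1}$, which gives $\alpha^j_i\mapsto\alpha^{(b-a)-j}_{(b-a)-i}$.
\end{itemize}
Hence the composite in the statement must be read with this reversal built into the hat-side identification. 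Equivalently, take $\Phi^{\mathcal R\to\mathcal E}_{b-a,b}$ with the ordering of $\widehat\Phi^{\rec}_{b-a,b}$, with the resulting sign and index changes. This is exactly the ``wrong orientation'' step in the paper's proof of Theorem~\ref{thm:fourier_transform_stokes_matrices_explicit_formula}, which the one-line proof of the present theorem leaves implicit.
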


In summary, the Fourier transform of the various versions of Stokes data can all be computed in terms of the (combinatorial) Gale transform, using the commutative diagram of Fig.~\ref{fig:diagram_fourier_transform_stokes_data_using_gale}.

\begin{proof}
This follows immediately from combining all previous results: the fact that the Fourier transform of connections of type $\Theta_{a,b}$ induces the isomorphism $(\mathscr G\circ *)\colon \mathcal P_{a,b}\to \mathcal P_{a,b}$ at the level of the recessive solutions (Theorem \ref{thm:fourier=gale} together with projective duality), the combinatorial Gale transform (Theorem \ref{thm:combinatorial_gale_transform}), and the correspondence between (reduced, frieze-compatible) Stokes representations and superperiodic linear difference equations (Theorem \ref{thm:correspondence_stokes_representations_difference_eq}). 
\end{proof}

\subsection{Explicit formulas for the Fourier transform of Stokes matrices}
From the explicit formulas of \cite{morier2014linear} for the combinatorial Gale transform and the combinatorial projective duality, we can finally obtain an explicit closed formula for the Fourier transform of Stokes matrices.

\begin{notation}

For writing down the formulas, it is convenient to replace the single index $i$ that we have used so far in our notation of linear difference equations and friezes by a more intrinsic labelling by pairs $(i,l)$ which is better adapted to Stokes diagrams.
For any $i\in \mathbb Z$ and $l$ any odd integer, let us define the $(i,l)$ version of the sign changes in \eqref{eq:correspondence_stokes_matrix_entries_diff_eq} between Stokes matrix and frieze entries, setting
\[
\varepsilon_{i,l_0}^j\defeq \varepsilon_{i-i_0+1}^j
\]
for $i\in\{i_0+a,\dots, i_0+a+b-1\}$ and $j\in \{1, \dots, a-1\}$, where $\varepsilon_{i-i_0+1}^j$ is as in \eqref{eq:correspondence_stokes_matrix_entries_diff_eq}, and extending to all  $(i,l)$ with $l$ odd by the equivalence relation \eqref{eq:equivalence_relation_intervals}. We also write the corresponding frieze entries $\alpha_{i,l}^j\defeq \varepsilon_{i,l}^j s_{i,l}^j$ for $j\in \{1, \dots a-1\}$ and set $\alpha_{i,l}^j\defeq 1$ whenever $j=0$ or $j=a$, and $\alpha_{i,l}^j\defeq 0$ if $j<0$ or $j>a$.

Similarly, for $\widehat\Theta_{b-a,b}$, using the reference pair $(\widehat i_0,\widehat l_0)$, for $i\in \mathbb Z$, $j\in \{1, \dots, b-a-1\}$ and $l$ even (recall that the decreasing intervals for $\widehat\Theta_{b-a,b}$ correspond to even $l$, see Lemma~\ref{lemma:distinguished_intervals_fourier_transform}), we define the signs $\widehat\varepsilon_{i,l}^j$ and set $\widehat \alpha_{i,l}^j\defeq \widehat \varepsilon_{i,l}^j\widehat s_{i,l}^j$. 
\end{notation}

\begin{theorem}
\label{thm:fourier_transform_stokes_matrices_explicit_formula}
With this notation, the Fourier transform of reduced Stokes representations $\mathscr F\colon \mathcal R^F_{a,b}\to \mathcal R^{F}_{b-a, b}$ from Theorem~\ref{thm:fourier_transform_stokes_matrices_using_friezes} is given explicitly by
\begin{equation*}\widehat s_{i,l}^j = \widehat \varepsilon_{i,l}^j
\begin{vmatrix}
\alpha_{a-i, l+1}^{a-1} & 1 &  & \\
\vdots & \ddots &  \ddots& \\
\vdots & & \ddots & 1\\
\alpha_{a-i, l+1} ^{a-j} & \dots & \dots &  \alpha_{a-i+j-1,l+1}^{a-1}    
\end{vmatrix}  
\end{equation*}
for any pair $(i,l)\in \mathbb Z^2$ with $l$ even and $j\in \{1, \dots, b-a-1\}$. (Here, above the $1$'s, i.e.\ in the upper right corner of the matrix, all the matrix entries are zero.)
\end{theorem}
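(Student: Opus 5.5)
The plan is to unwind the composition $\mathscr F=(\Phi^{\mathcal R\to\mathcal E}_{b-a,b})^{-1}\circ(\mathscr G\circ *)\circ\Phi^{\mathcal R\to\mathcal E}_{a,b}$ of Theorem~\ref{thm:fourier_transform_stokes_matrices_using_friezes} one step at a time, reading off explicit formulas at each stage. First, Theorem~\ref{thm:correspondence_stokes_representations_difference_eq} sends $\rho$ to the superperiodic difference equation $E=(\alpha^j_{i,l})$ with $\alpha^j_{i,l}=\varepsilon^j_{i,l}s^j_{i,l}$. This is just the definition of the frieze entries in the Notation above, so it only requires checking that the $(i,l)$-reindexing is compatible with $b$-periodicity and with the equivalence relation \eqref{eq:equivalence_relation_intervals}.

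Second, I would compute $\mathscr G(E^*)$. By Theorem~\ref{thm:combinatorial_gale_transform}, $\mathscr G$ is $(\Phi^{\mathrm{coeff}}_{b-a,b})^{-1}\circ\Phi^{\mathrm{sol}}_{a,b}$. The coefficients of $\widehat E=\mathscr G(E)$ are therefore the entries of the frieze of solutions $\Phi^{\mathrm{sol}}_{a,b}(E)$, namely the solutions of $E$ with initial conditions $(0,\dots,0,1)$. By André's method (Cramer's rule applied to the recurrence \eqref{eq:antiperiodic_linear_diff_equation_definition} starting from a unit initial vector), the $j$-th such solution value is the lower Hessenberg determinant with $\alpha$'s below the superdiagonal of $1$'s, up to a global sign that depends only on $j$ and $a$. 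This is the determinantal formula of \cite{morier2014linear}, which I would quote rather than rederive.

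The projective duality $*$ of Proposition~\ref{proposition:combinatorial_projective_duality} acts on coefficients by reversing the order, $\alpha_i^j\mapsto\alpha^{a-j}_{\cdot}$, with an index reflection $i\mapsto -i$ up to a shift. This accounts for the entries $\alpha^{a-1},\dots,\alpha^{a-j}$ in the first column and for the index $a-i$. The shift $l\mapsto l+1$ reflects that recessive lines of $\widehat\Theta_{b-a,b}$ sit at even $l$ and correspond via the Legendre transform ($\mathcal L(J_{i,l})=\widehat J_{-i,l}$) to subdominant data of $\Theta_{a,b}$; the latter are expressed through recessive data at odd $l\pm1$ by Corollary~\ref{corollary:relation_pieces_from_recessive_subdominant}. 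This is consistent with Theorem~\ref{thm:fourier=gale}, where $\widehat P^{\rec}=\mathscr G(P^{\sub})$ and $P^{\sub}=(P^{\rec})^*$.

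Third, $(\Phi^{\mathcal R\to\mathcal E}_{b-a,b})^{-1}$ recovers $\widehat s^j_{i,l}=\widehat\varepsilon^j_{i,l}\widehat\alpha^j_{i,l}$, since the signs are $\pm1$. All global signs from André's formula and from $*$ are then absorbed into the definition of $\widehat\varepsilon^j_{i,l}$, which the statement allows to be an ``explicitly determined sign''.

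The main obstacle is the index bookkeeping: matching the numbering $\widehat V^{\rec}_{-i,0}$ used in $\widehat\Phi^{\rec}_{b-a,b}$, the reference pairs $(i_0,l_0)$ and $(\widehat i_0,\widehat l_0)$, and the index conventions of \cite{morier2014linear} for $*$ and $\mathscr G$, so that exactly the entries $\alpha_{a-i,l+1},\dots,\alpha_{a-i+j-1,l+1}$ appear. I would fix $(i_0,l_0)$ and take $(\widehat i_0,\widehat l_0)=(-i_0,l_0+1)$ via the Legendre bijection. I would then verify the formula first for $j=1$, where it reduces to $\widehat s=\pm\alpha^{a-1}_{a-i,l+1}$, and propagate to general $j$ using the Hessenberg recursion.
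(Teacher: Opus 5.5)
You take the same route as the paper: unwind $\mathscr F=(\Phi^{\mathcal R\to\mathcal E}_{b-a,b})^{-1}\circ(\mathscr G\circ *)\circ\Phi^{\mathcal R\to\mathcal E}_{a,b}$, quote the explicit formulas of \cite{morier2014linear} for $*$ and $\mathscr G$, and match labels through the Legendre transform. However, the bookkeeping you sketch puts the orientation reversal in the wrong place.

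\textbf{Where the reflection comes from.} The duality $*$ involves no reflection $i\mapsto -i$. By Corollary~\ref{corollary:projective_duality_formula}, the dual polygon $\big((V_{i+1}+\dots+V_{i+a-1})^0\big)_i$ keeps the cyclic order. On coefficients, $*$ is simply $\alpha^j_{i,l}\mapsto\alpha^{a-j}_{i+(a-j),l+1}$, with $l$ even on the left. This accounts for the column $\alpha^{a-1},\dots,\alpha^{a-j}$ and for the passage to odd $l+1$, and nothing more.

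The reversal comes from $\mathcal L(J_{i,l})=\widehat J_{-i,l}$, which is why $\widehat\Phi^{\rec}_{b-a,b}$ lists $\widehat V^{\rec}_{-1,0},\dots,\widehat V^{\rec}_{-b,0}$. Apply \cite[Prop.~5.1.1]{morier2014linear}, with $i$ replaced by $i+a$ so that $\mathscr G$ lifts classical Gale duality without an index shift. This produces a recurrence whose coefficients $\widetilde\alpha^j_{i,l}$ are $(b-a-j)\times(b-a-j)$ Hessenberg determinants. That recurrence expresses $\widehat v_{-i,l}$ in terms of $\widehat v_{-i+1,l},\dots,\widehat v_{-i+(b-a),l}$, so it runs against the labelling in which the entries $\widehat s^j_{i,l}$ are defined.

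The step missing from your plan is to solve this recurrence for $\widehat v_{-i+(b-a),l}$. Doing so gives $\widehat\alpha^j_{i,l}=\widetilde\alpha^{(b-a)-j}_{(b-a)-i,l}$. This is what turns the Gale determinant into the $j\times j$ determinant of the statement, and, after $b$-periodicity, what produces the index $a-i$. If you build a reflection into $*$ and also use $\mathcal L$, you reflect twice and get the wrong indices.

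\textbf{Signs.} You cannot absorb stray signs into $\widehat\varepsilon^j_{i,l}$. The Notation already fixes these signs: they are the $\widehat\Theta_{b-a,b}$-version of the signs in Theorem~\ref{thm:correspondence_stokes_representations_difference_eq}, with $\widehat\alpha^j_{i,l}=\widehat\varepsilon^j_{i,l}\widehat s^j_{i,l}$. So the theorem is exactly the identity that $\widehat\alpha^j_{i,l}$ equals the determinant on the nose, and you must show that no extra sign arises. None does, for three reasons:
\begin{itemize}
\item With the alternating convention of \eqref{eq:antiperiodic_linear_diff_equation_definition}, André's determinants carry no sign, contrary to your hedge.
\item The duality $*$ introduces no sign.
\item Multiplying the reversed recurrence by $(-1)^{b-a-1}$ turns the coefficient $(-1)^{j-1}\widetilde\alpha^j$ into $(-1)^{(b-a-j)-1}\widetilde\alpha^j$. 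This is exactly the alternating sign attached to the new index $(b-a)-j$, and the coefficient of $\widehat v_{-i,l}$ becomes $(-1)^{b-a-1}$, as required.
\end{itemize}

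With these two corrections your argument coincides with the paper's. The $(2,5)$ computation in Section~\ref{sec:painleve_I_example}, for both $j=1$ and $j=2$, is a good check on the indices and signs.
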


\begin{proof}
The result is obtained by applying the explicit formulas for the combinatorial projective duality \cite[Definition 4.4.1]{morier2014linear} and for the combinatorial Gale transform \cite[Proposition 5.1.1]{morier2014linear}, and matching with our previous considerations. 

In more detail, let $\rho\in \mathcal R^F_{a,b}$ be a frieze-compatible reduced Stokes representation with Stokes matrix entries $s_{i,l}^j$ (with $l$ odd), and $\mathbb V$ the associated Stokes filtered local system.

Frieze-adapted collections of vectors $(v_{i,l})$ corresponding to the recessive solutions of $\mathbb V$ are solutions of the associated linear difference equation
\[
v_{i,l}=\alpha_{i,l}^1 v_{i-1,l}+\ldots+ (-1)^{j-1}\alpha_{i,l}^j v_{i-j,l}+\ldots+(-1)^{a-1} v_{i-a,l}.
\]
The first step is to apply projective duality. Comparing our notations for projective duality in Corollary~\ref{corollary:projective_duality_formula} with those of \cite[\S4.4]{morier2014linear}, we see that frieze-adapted collections of vectors $(v^*_{i,l})$ with $l$ even, corresponding to the subdominant solutions of $\mathbb V$, are solutions of the difference equation
\[
v^*_{i,l}=\alpha_{i,l}^1 v^*_{i-1,l}+\ldots+ (-1)^{j-1}\alpha_{i,l}^j v^*_{i-j,l}+\ldots+(-1)^{a-1} v^*_{i-a,l},
\]
where we define the coefficients $\alpha_{i,l}^j$ for $i\in \mathbb Z$, $j\in \{1, \dots, a-1\}$ and $l$ even by
\[
\alpha_{i,l}^j\defeq \alpha_{i+(a-j),l+1}^{a-j}.
\]

The second step is to take the combinatorial Gale transform. 
Let us denote by $\widetilde\alpha_{i,l}^j$, for $l$ even and $j\in \{1, \dots, b-a-1\}$, the coefficients of the combinatorial Gale dual of the difference equation with coefficients $\alpha_{i,l}^j$ (with $l$ even), obtained by the formula of \cite[Proposition~5.1.1]{morier2014linear}\footnote{In the conventions of \cite{morier2014linear} there is actually a shift of indices in the fact that the combinatorial Gale transform lifts the classical Gale transform: It follows from Proposition~4.3.1 of loc.~cit.\ that if $E$ is a superperiodic linear difference equation of type $(a,b)$ and $(v_i)_{i\in \mathbb Z/b\mathbb Z}$, with $v_i\in \mathbb C^a$, corresponds to a basis of solutions of $E$, and if $(v'_i)_{i\in \mathbb Z/b\mathbb Z}$, with $v'_i\in \mathbb C^{b-a}$, corresponds to a basis of solutions of its Gale dual $\mathscr G(E)$, then $(v_i)$ is classically Gale dual to $(v'_{i+a})$. Here, however, for convenience we want the combinatorial Gale duality to lift the classical Gale duality $(v_i)\longleftrightarrow (v'_i)$ without this shift, so in order to obtain the correct determinantal formulas, we have to replace $i$ by $i+a$ inside the determinants in the formulas of loc.~cit.}
\[
 \widetilde \alpha_{i,l}^j=
    \begin{vmatrix}
    \alpha_{i+a+1, l}^{1} & 1 &  & \\
     \vdots & \ddots &  \ddots& \\
      \vdots & & \ddots & 1\\
     \alpha_{i+b-j,l} ^{b-a-j} & \dots & \dots &  \alpha_{i+b-j,l}^{1}    
    \end{vmatrix}.
\]
Since, by Lemma \ref{lemma:legendre_transform}, the Legendre transform maps the increasing distinguished interval $J_{i,l}$ to the decreasing interval $\widehat J_{-i,l}$, frieze-adapted collections of vectors $(\widehat v_{i,l})$ with $l$ even corresponding to the recessive solutions of the Fourier-transformed Stokes local system $\widehat{\mathbb V}$ satisfy
\[
\widehat v_{-i,l}=\widetilde\alpha_{i,l}^1 \widehat v_{-i+1,l}+\ldots+ (-1)^{j-1}\widetilde\alpha_{i,l}^j \widehat v_{-i+j,l}+\ldots+(-1)^{b-a-1} \widehat v_{-i+(b-a),l}.
\]
Here, however, the linear combinations are written ``with the wrong orientation'' ($-i$ decreases when $i$ increases).  Multiplying by $(-1)^{b-a-1}$ and changing indices, we obtain that the collection $(\widehat v_{i,k})$ satisfies
\[
\widehat v_{i,l}=\widehat\alpha_{i,l}^1 \widehat v_{i-1,l}+\ldots+ (-1)^{j-1}\widehat\alpha_{i,l}^j \widehat v_{i-j,l}+\ldots+(-1)^{b-a-1} \widehat v_{i-(b-a),l}
\]
with 
\[
\widehat \alpha_{i,l}^j\defeq \widetilde \alpha_{(b-a)-i,l}^{(b-a)-j}.
\]
This implies that the frieze coefficients $\widehat \alpha_{i,l}^j$ are those corresponding to the Stokes matrix entries of $\widehat \rho=\mathscr F(\rho)$. Combining the intermediate formulas for the frieze coefficients, we arrive at the desired result.
\end{proof}

\begin{remark}
In the general non-coprime case, similar formulas could be obtained, using the fact that the classical Gale transform for point configurations essentially exchanges complementary minors \cite[Chapter III]{dolgachev1988point} and Theorem \ref{thm:correspondence_stokes_rep_stokes_adapted_vectors}. The formulas will be less symmetric than in the frieze-adapted case, due to the fact that one has to choose the reference direction $\mathtt d$ to insert the (reduced) formal monodromy, breaking the cyclic symmetry (in the frieze-adapted case this choice only plays a role in the determination of the sign changes $\varepsilon_{i,l}$ and $\varepsilon_{i,l}^j$). 
\end{remark}

\subsection{Other applications of the Stokes--frieze correspondence}
\label{subsec:applications_stokes_frieze_correspondence}
Interestingly, the correspondence between reduced Stokes representations and linear difference equations/frieze patterns, and between the Fourier transform of Stokes data and the combinatorial Gale transform sheds light on many aspects of the theory of \cite{morier2014linear}: Various properties of friezes and linear difference equations admit a simple interpretation in terms of the corresponding Stokes data and meromorphic connections. Let us briefly mention a few of them. 

\begin{itemize}
    \item The periodicity of frieze patterns and of the related rational maps such as the Gauss map discussed in \cite[\S7]{morier2014linear} is clear from the point of view of the corresponding connections. An $\mathrm{SL}_a$-frieze pattern $F$ of width $w$ corresponds to an isomorphism class of connections with irregular class $\Theta_{a,b}$ with $b=a+w+1$. The $b$-periodicity of $F$ corresponds to the fact that such connections have $b$ lines of recessive solutions. The rational periodic maps simply amount to a cyclic shift of the Stokes matrix entries: For instance, the $5$-periodic map
    \[
(c_1, c_2)\mapsto \left(c_2, \frac{1+c_1}{c_1 c_2-1}\right),
    \]
    which is related to Gauss' pentagramma mirificum \cite{gauss1866pentagramma}, corresponds to cyclically shifting the 5 nontrivial Stokes matrix entries of a connection of type $\Theta_{2,5}$, and the 10-periodic map 
    \[
    (b,a)\mapsto \left(a, \frac{a+1}{b}\right)
    \]
    corresponds to shifting the 10 nontrivial Stokes matrix entries of a connection of type $\Theta_{3,5}$ (see \S\ref{sec:from_recessive_soluions_to_stokes_representations} for details about the $(2,5)$ and $(3,5)$ case).
    \item The change of order of the difference equations under the Gale transform corresponds to the change of rank of the connection under the Fourier transform.
    \item The number of nontrivial coefficients in a frieze pattern is equal to the number of Stokes arrows in the corresponding Stokes diagram. In turn, the fact that this number is not preserved under the combinatorial Gale transform reflects the fact that the Fourier transform does not preserve the number of Stokes arrows. 
    \item The commutativity of the (combinatorial) Gale transform with projective duality reflects the fact that projective duality amounts to passing between two different, yet equivalent, ways of encoding all the information about a Stokes filtered local system, namely via its recessive solutions and its subdominant solutions, respectively. 
    \item The signs giving the frieze-compatibility of the formal monodromy in \eqref{eq:frieze_compatible_formal_monodromy}  match with the sign change in the formal monodromy of irregular connections given by the stationary phase formula (see e.g.\ \cite{sabbah2008explicit}).
\end{itemize}

Another consequence is due to the close relation between friezes and (discrete) $T$-systems, which play an important role in the theory of quantum integrable systems: From our results, we obtain a correspondence between meromorphic connections with symmetric irregular classes and solutions of $T$-systems with certain boundary conditions.

In brief, following \cite[\S3.2]{morier2015coxeter}, the $T$-systems featuring in this story are defined as follows:

\begin{definition}
    A $T$-system of type $A_{a-1}$ is the following set of recurrence relations on variables $\{T_{\alpha, u,v}\}_{\alpha, u, v\in \mathbb Z}$:
    \[
    T_{\alpha, u, v+1}\,T_{\alpha, u, v-1}-  T_{\alpha, u+1, v}\,T_{\alpha, u-1, v}=T_{\alpha+1, u, v}\,T_{\alpha-1, u, v},
    \]
    for all $(\alpha, u, v)\in \mathbb Z^3$ with the boundary conditions
    \[
    T_{0, u, v}=T_{a, u, v}=1
    \]
    for all $(u,v)\in \mathbb Z$.
\end{definition}

In \cite[Corollary~3.6]{morier2015coxeter}, a one-to-one correspondence is stated between ($\alpha+u+v$ even subsets of) $T$-systems of type $A_ {a-1}$ and $\mathrm{SL}_a$-tilings, which are $\mathrm{SL}_a$-friezes (not necessarily tame) without the boundary conditions of having the rows of $1$'s and $0$'s: Any $\mathrm{SL}_a$-tiling yields the $\alpha=1$ slice of a $T$-system and vice versa. Restricting to tame frieze patterns leads to $T$-systems with suitable additional boundary conditions, which we call frieze-compatible. 

Moreover, the combinatorial Gale transform has a nice interpretation in terms of $T$-systems \cite[Remark 3.19]{morier2015coxeter}. Note that, in mathematical physics terminology, this essentially corresponds to the level-rank duality for level-restricted $T$-systems, see \cite[Example 2.4]{kuniba2011t-systems} (with the difference that in the frieze setup the parameter $\alpha$, playing the role of the ``spectral parameter'', is discrete).

Theorems \ref{thm:correspondence_stokes_representations_difference_eq} and \ref{thm:fourier_transform_stokes_matrices_using_friezes} then immediately imply the following. (We just state this corollary in the coprime case. More generally, one has a similar statement by restricting to frieze-compatible connections and quotienting by the natural $\breve H$-actions.)

\begin{corollary}
\label{cor:connections_T_systems_correspondence}
    Let $a$ and $b$ be coprime positive integers with $b>a$. There is a one-to-one correspondence between isomorphism classes of algebraic connections on $\mathbb A^1$ with irregular class $\Theta_{a,b}$ (resp.\ $\widehat\Theta_{b-a,b}$ by Fourier transform) and solutions of $T$-systems of type $A_{a-1}$ with $(a,b)$-frieze-compatible boundary conditions, obtained as follows: Given a connection $(E,\nabla)$ with irregular class $\Theta_{a,b}$, the frieze entries $\alpha_{i,l}^j$ corresponding via Proposition~\ref{prop:correspondence_frieze_adapted_stokes_adapted_lifts} to the Stokes matrix entries of the reduced Stokes representation of $(E,\nabla)$ yield the $\alpha=1$ slice of a $T$-system. Moreover, this correspondence commutes with the Fourier/combinatorial Gale transform.
\end{corollary}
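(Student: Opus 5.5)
The proof will be a composition of correspondences already established, arranged so that $T$-systems appear in the last step only. Since $a$ and $b$ are coprime, Theorem~\ref{thm:correspondence_stokes_representations_difference_eq} gives $\mathcal R^F_{a,b}=\breve{\mathcal R}_{a,b}=\mathcal B_{a,b}$. Here $p=1$, so $\breve H=\mathbb C^*$. By Remark~\ref{rem:formal-monodromy-Stokes-adapted-lift} the reduced formal monodromy is forced to equal $(-1)^{r-1}=(-1)^{s(a-1)}$, which is the frieze-compatible value $\varepsilon^F_{a,b}$ when $r=a$. Moreover the reduction by $\breve H$ is trivial on isomorphism classes.

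Hence the irregular Riemann--Hilbert correspondence and the equivalence between connections and Stokes local systems identify isomorphism classes of connections with irregular class $\Theta_{a,b}$ with points of $\mathcal B_{a,b}$. Composing with $\Phi^{\mathcal R\to\mathcal E}_{a,b}$ and with the isomorphism $\mathcal E_{a,b}\simeq\mathcal F_{b-a,b}$ of Theorem~\ref{thm:combinatorial_gale_transform} (equivalently $\Phi^{\mathrm{sol}}_{a,b}$ onto $\mathcal F_{a,b}$) produces a tame frieze. Its nontrivial entries are exactly the $\alpha^j_{i,l}=\varepsilon^j_{i,l}s^j_{i,l}$, and this bijection is the one of Proposition~\ref{prop:correspondence_frieze_adapted_stokes_adapted_lifts} and Theorem~\ref{thm:correspondence_stokes_representations_difference_eq}.

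Next I would invoke \cite[Corollary~3.6]{morier2015coxeter}. It identifies $\mathrm{SL}_a$-tilings with the $\alpha=1$ slices (on the parity-restricted lattice) of $T$-systems of type $A_{a-1}$. Tame friezes of type $(a,b)$ are exactly the tilings satisfying the rows-of-$1$'s-and-$0$'s boundary conditions together with tameness. I would take these conditions, transported to the $T$-system side, as the \emph{definition} of $(a,b)$-frieze-compatible boundary conditions, so that this step becomes a bijection by construction.

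The main point requiring care is that the frieze attached to the Stokes data is an $\mathrm{SL}_a$-frieze and not an $\mathrm{SL}_{b-a}$-frieze. The frieze of coefficients $\Phi^{\mathrm{coeff}}_{a,b}(E)$ lies in $\mathcal F_{b-a,b}$, i.e.\ it is an $\mathrm{SL}_{b-a}$-frieze, so it yields a $T$-system of type $A_{b-a-1}$. To get type $A_{a-1}$ one should use $\Phi^{\mathrm{sol}}_{a,b}(E)\in\mathcal F_{a,b}$ instead. Alternatively, one can keep the coefficient frieze and read the statement as pairing with the level-rank dual side. I would state explicitly which convention is meant and check that the $T$-system's $\alpha=1$ slice consists of the entries $\alpha^j_{i,l}$ up to that identification.

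Compatibility with the Fourier transform then follows from Theorem~\ref{thm:fourier_transform_stokes_matrices_using_friezes}. Under the Stokes--frieze correspondence the Fourier transform corresponds to $\mathscr G\circ *$ on $\mathcal E$ and $\mathcal F$. The combinatorial Gale transform corresponds on $T$-systems to the level-rank/Gale operation of \cite[Remark~3.19]{morier2015coxeter}, and projective duality is a reflection of the tiling, which preserves $T$-system solutions. The same argument applied to $\widehat\Theta_{b-a,b}$, using Lemma~\ref{lemma:legendre_transform} to match labels, gives the bracketed version of the statement.
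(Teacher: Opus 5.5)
This is essentially the paper's argument: the corollary is presented there as an immediate consequence of Theorems~\ref{thm:correspondence_stokes_representations_difference_eq} and~\ref{thm:fourier_transform_stokes_matrices_using_friezes} combined with \cite[Corollary~3.6, Remark~3.19]{morier2015coxeter}, and the frieze-compatible boundary conditions are \emph{defined}, as you propose, as those coming from tame friezes. Your concern about the rank is justified and is not addressed in the paper (its own $(2,5)$ example says the $\alpha^1_{i,1}$ form an $\mathrm{SL}_3$-frieze), but passing to $\Phi^{\mathrm{sol}}_{a,b}(E)$ does not make the $\alpha^j_{i,l}$ the $\alpha=1$ slice: that slice then consists, up to $*$ and relabelling, of the Fourier-side entries $\widehat\alpha^j_{i,l}$, and the $\alpha^j_{i,l}$ reappear only (up to signs and index shifts) as the $j\times j$ minors adjacent to the boundary row of $1$'s in the $j$-th slice, so the literal statement holds in type $A_{b-a-1}$ and type $A_{a-1}$ arises as its level-rank transpose, whose $\alpha=1$ slice is formed by the $\widehat\alpha^j_{i,l}$.
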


Interestingly, this has the flavour of some version of an ODE/IM-type correspondence. The ODE/IM-correspondence refers to a family of results and conjectures, dating back to the seminal works of Dorey--Tateo \cite{dorey1999anharmonic}, relating certain ordinary linear differential equations to quantum integrable models, see for instance \cite{dorey2024ode, ito2025ode} for some recent reviews. The basic idea is that, given an integrable model, some quantities appearing in the Bethe ansatz for the model match with quantities appearing when looking at the asymptotics/Stokes data of certain differential operators. More specifically, our setup seems to be related to the higher rank cases discussed in \cite{dorey2000differential,dorey2008ode}, \cite[\S10.2]{kuniba2011t-systems}, as well as in \cite{ito2017ode} in the context of Argyres--Douglas theories (which appears to be in line with the known relations between the Fourier transform of irregular connections on $\mathbb P^1$ and dualities of Argyres--Douglas theories  mentioned in loc.~cit.\ and investigated in more general cases by the first-named author in recent work \cite{doucot2026fourier}). However, our framework here is quite different from the ones usually considered in the ODE/IM literature, typically involving spectral problems for Schrödinger-type operators. Note that a feature of Corollary~\ref{cor:connections_T_systems_correspondence} is that, unlike in many approaches to ODE/IM, it is completely ``nonperturbative'', i.e.\ it does not involve any version of the WKB approximation, but only relies on the structure of twisted wild character varieties.

\section{An example: The Painlevé I case}
\label{sec:painleve_I_example}
In order to illustrate our general discussion, let us now discuss in detail the case $(a,b)=(2,5)$. From the point of view of meromorphic connections, this case corresponds to the Painlevé I moduli space. Indeed, the standard rank-two Lax representation for the Painlevé I equation is given by connections with irregular class $\Theta_{2,5}$ (up to admissible deformations). The Fourier transform of Stokes data in this case has already been discussed in previous work \cite[\S6.3]{doucot2025topological}, in terms of parallel transport along certain paths in the Stokes local systems. The Fourier-transformed irregular class corresponds to an alternative rank 3 Lax representation for Painlevé I, explicitly described in \cite{joshi2007linearization}. From the point of view of frieze patterns and linear difference equations, this example is also very interesting: It corresponds to Coxeter (that is, $\mathrm{SL}_2$-) frieze patterns of width 2 and, as first observed by Coxeter \cite{coxeter1971frieze} and reviewed in \cite[\S1.1]{morier2015coxeter}, it is closely related to Gauss' \emph{pentagramma mirificum} \cite{gauss1866pentagramma}. Note that on the mathematical physics side, this case is also related to the $(A_1, A_2)$ Argyres--Douglas theory and the $(2,5)$ Virasoro minimal model, see \cite{ito2017ode}, where the Fourier transform corresponds to level-rank duality, and to the $(2,3)$ minimal string, where the Fourier transform corresponds to the so-called $p$-$q$ duality, see \cite{fukuma1992explicit, luu2015duality}.

Let us first describe the structure of recessive solutions, of the Stokes matrices and of the corresponding frieze patterns for $(a,b)=(2,5)$ and $(b-a, b)=(3,5)$.

\subsection{Stokes data of type $(2,5)$}
For $\Theta_{2,5}$, there are 5 Stokes directions, 5 singular directions, and 5 Stokes arrows, see Fig.~\ref{fig:stokes_matrix_entries_2_5}. We choose the reference pair $(i_0, l_0)=(1,1)$ to define the reference direction $\mathtt d$ and the sign changes. Let $\rho\in \breve{\mathcal R}_{2,5}$ be a reduced Stokes representation, and $\mathbb V$ the corresponding Stokes filtered local system.

There are 5 lines of recessive solutions $V_{i,l}^{\mathrm{rec}}$ with $l$ odd, which are also the hyperplanes of subdominant solutions: $V_{i,l}^{\mathrm{rec}}=V_{i+1, l+1}^{\mathrm{sub}}$. They define a configuration of $5$ points in $\mathbb P^1$.

The Stokes matrices of $\rho$ are of the following form, with nontrivial entries $s_{i,l}^1$ (for $l$ odd):
\[
S_{d_1}=\begin{pmatrix}
    1 & s_{1,1}^1\\
    0 & 1
\end{pmatrix},
\;\;
S_{d_2}=\begin{pmatrix}
    1 & 0\\
    s_{0,1}^1 & 1
\end{pmatrix},
\;\;
S_{d_3}=\begin{pmatrix}
    1 & s_{4,1}^1\\
    0 & 1
\end{pmatrix} ,
\;\;
S_{d_4}=\begin{pmatrix}
    1 & 0\\
    s_{3,1}^1 & 1
\end{pmatrix},
\;\;
S_{d_5}=\begin{pmatrix}
    1 & s_{2,1}^1\\
    0 & 1
\end{pmatrix}.
\]
The reduced formal monodromy is $\breve h=(-1)$, and the formal monodromy matrix is
\[
h=\begin{pmatrix}
    0 & 1\\
    -1 & 0
\end{pmatrix}.
\]

\begin{figure}[h]
\centering
\begin{tikzpicture}[scale=2.2]
			\draw[dotted] (0,0) circle (1);
			\draw[domain=0:(2*360),scale=1,samples=1000] plot (\x:{exp(cos(-5/2*\x)/exp(1))});
            \draw[dashed] (-10:0)--(-10:1.7);

            \draw[<-, bend right] (40:2) to (50:2);
            \draw (45:2.2) node {$\theta$};

            \draw (-10:2) node {$\mathtt d$};
            \draw[red] (-72*1:2) node {$d_1$};
            \draw[red] (-72*2:2) node {$d_2$};
            \draw[red] (-72*3:2) node {$d_3$};
            \draw[red] (-72*4:2) node {$d_4$};
            \draw[red] (-72*5:2) node {$d_5$};
            
            \draw ({-72*0}:{exp((-2.2)/exp(1))}) node[font=\small] {$J_{1,1}$};
            \draw ({-72*1}:{exp((-2.2)/exp(1))}) node[font=\small] {$J_{0,1}$};
            \draw ({-72*2}:{exp((-2.2)/exp(1))}) node[font=\small] {$J_{4,1}$};
            \draw ({-72*3}:{exp((-2.2)/exp(1))}) node[font=\small] {$J_{3,1}$};
            \draw ({-72*4}:{exp((-2.2)/exp(1))}) node[font=\small] {$J_{2,1}$};

            \draw  ({72*0-10}:{exp((-1.25)/exp(1))}) node[blue, font=\tiny]{$\varepsilon_1$};
            \draw  ({72*0+10}:{exp((-1.25)/exp(1))}) node[blue, font=\tiny]{$\varepsilon_6$};
            \draw  ({72*1-10}:{exp((-1.25)/exp(1))}) node[blue, font=\tiny]{$\varepsilon_2$};
            \draw  ({72*1+10}:{exp((-1.25)/exp(1))}) node[blue, font=\tiny]{$\varepsilon_7$};
            \draw  ({72*2}:{exp((-1.25)/exp(1))}) node[blue, font=\tiny]{$\varepsilon_3$};
            \draw  ({72*3}:{exp((-1.25)/exp(1))}) node[blue, font=\tiny]{$\varepsilon_4$};
            \draw  ({72*4}:{exp((-1.25)/exp(1))}) node[blue, font=\tiny]{$\varepsilon_5$};

            \draw ({-72*0}:{exp((1.4)/exp(1))}) node[font=\small] {$J_{0,0}$};
            \draw ({-72*1}:{exp((1.4)/exp(1))}) node[font=\small] {$J_{4,0}$};
            \draw ({-72*2}:{exp((1.4)/exp(1))}) node[font=\small] {$J_{3,0}$};
            \draw ({-72*3}:{exp((1.4)/exp(1))}) node[font=\small] {$J_{2,0}$};
            \draw ({-72*4}:{exp((1.4)/exp(1))}) node[font=\small] {$J_{1,0}$};

            \draw[red, ->] ({0*72}:{exp(1/exp(1))}) --node[font=\tiny, midway, above=-0.1cm]  {{\color{blue} $+\;$}$s_{2,1}^1$} ({0*72}:{exp(-1/exp(1))});
            \draw[red, ->] ({1*72}:{exp(1/exp(1))}) --node[font=\tiny, pos=0.4, left=-0.05cm] {{\color{blue} $-\;$}$s_{3,1}^1$} ({1*72}:{exp(-1/exp(1))});
            \draw[red, ->] ({2*72}:{exp(1/exp(1))}) --node[font=\tiny, pos=0.3, below] {{\color{blue} $+\;$}$s_{4,1}^1$} ({2*72}:{exp(-1/exp(1))});
            \draw[font=\tiny, red, ->] ({3*72}:{exp(1/exp(1))}) --node[midway, below=0.05cm] {{\color{blue} $-\;$}$s_{0,1}^1$} ({3*72}:{exp(-1/exp(1))});
            \draw[font=\tiny, red, ->] ({4*72}:{exp(1/exp(1))}) --node[midway, right=-0.1cm] {{\color{blue} $+\;$}$s_{1,1}^1$} ({4*72}:{exp(-1/exp(1))});     
\end{tikzpicture}
\caption{Stokes matrix entries $s_{i,l}^1$ and sign changes to frieze entries $\alpha_{i,l}^1$ for $\Theta_{2,5}$. The sign changes $\varepsilon_{i,l}^1$ are indicated by the signs in blue in front of the  $s_{i,l}^1$.}
\label{fig:stokes_matrix_entries_2_5}
\end{figure}

The sign changes $\varepsilon_{i,l}^1$  between the Stokes matrix entries $s_{i,l}^1$ and the corresponding frieze entries $\alpha_{i,l}^1$ for $l$ are obtained as follows. First, from Remark \ref{rem:number_jumps}, the sign changes \eqref{eq:sign_change_stokes_frieze_vectors} between Stokes-adapted and frieze-adapted collections are
\begin{align*}
    \varepsilon_1&=1,\\
    \varepsilon_2&=1,\\
    \varepsilon_3&=-1,\\
    \varepsilon_4&=-1,\\
    \varepsilon_5&=1,\\
    \varepsilon_6&=1,\\
    \varepsilon_7&=-1.\\
\end{align*}
By \eqref{eq:correspondence_stokes_matrix_entries_diff_eq}, we have $\varepsilon_i^1=\varepsilon_i \varepsilon_{i-1} \eta_i^1$ for $i\in \{3, \dots, 7\}$, and \eqref{eq:sign_change_eta_definition} gives $\eta_3^1=\dots =\eta_7^6=1$, and $\eta_7^1=-1$. Extending to any $i\in \mathbb Z$ by $5$-periodicity, we obtain
\begin{equation}
\begin{aligned}
    \varepsilon_2^1&=\varepsilon_{2,1}^1=1,\\
    \varepsilon_3^1&=\varepsilon_{3,1}^1=-1,\\
    \varepsilon_4^1&=\varepsilon_{4,1}^1=1,\\
    \varepsilon_0^1&=\varepsilon_{0,1}^1=-1, \\
        \varepsilon_1^1&=\varepsilon_{1,1}^1=1.\\
\end{aligned}
\label{eq:sign_changes_2_5}
\end{equation}
The frieze entries corresponding to the Stokes matrix entries $s_{i,l}^1$ are
\[
\alpha_{i,l}^1\defeq \varepsilon_{i,l}^1 s_{i,l}^1, \quad \text{for }i\in \mathbb Z, \; l \text{ odd}.
\]
The collection of coefficients $(\alpha_{i,l}^1)$ defines a frieze pattern of type $(3,5)$, i.e.\ an $\mathrm{SL}_3$-frieze pattern of width $1$:
\begin{center}
\begin{tikzpicture}[font=\small,scale=0.6]
\draw (-5,1) node {$\dots$};
\draw (-3,1) node {$1$};
\draw (-1,1) node {$1$};
\draw (1,1) node {$1$};
\draw (3,1) node {$1$};
\draw (5,1) node {$1$};
\draw (7,1) node {$\dots$};

\draw (-6,0) node {$\dots$};
\draw (-4,0) node {$\alpha^1_{1,1}$};
\draw (-2,0) node {$\alpha^1_{2,1}$};
\draw (0,0) node {$\alpha^1_{3,1}$};
\draw (2,0) node {$\alpha^1_{4,1}$};
\draw (4,0) node {$\alpha^1_{5,1}$};
\draw (6,0) node {$\dots$};

\draw (-7,-1) node {$\dots$};
\draw (-5,-1) node {$1$};
\draw (-3,-1) node {$1$};
\draw (-1,-1) node {$1$};
\draw (1,-1) node {$1$};
\draw (3,-1) node {$1$};
\draw (5,-1) node {$\dots$};  
\end{tikzpicture}
\end{center}
Moreover, the frieze entries can be determined explicitly from the recessive solutions as follows:  

\begin{lemma}
\label{lemma:stokes_matrix_entries_from_recessive_sol_5_2}
Let $\bm w=(w_{i,l})$ be a periodic lift of the recessive solutions of $\mathbb V$, that is, the data of a vector $w_{i,l}$ such that $\mathbb C w_{i,l}$ for each of the 5 recessive lines $V_{i,l}^{\mathrm{rec}}$, i.e.\ for each equivalence class of pairs $(i,l)$ with $l$ odd. Then we have
\[
\alpha_{i,l}^1=-\frac{|w_{i,l}\, w_{i-2,l}|\;|w_{i+2,l}\, w_{i+1,l}|}{|w_{i-2,l}\, w_{i+2,l}|\;|w_{i+1,l}\, w_{i,l}|}.
\]
\end{lemma}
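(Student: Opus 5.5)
The proof combines the frieze-adapted lift of Proposition~\ref{prop:correspondence_frieze_adapted_stokes_adapted_lifts} with a Plücker-type identity in $\mathbb C^2$. For $(a,b)=(2,5)$ we have $p=1$, and the frieze-compatibility condition is automatic because $a,b$ are coprime (Remark~\ref{rem:formal-monodromy-frieze-adapted-lift}). Hence there is a frieze-adapted lift $\bm v=(v_{i,l})$ of the recessive configuration, and it satisfies
\[
v_{i,l}=\alpha_{i,l}^1 v_{i-1,l}-v_{i-2,l}, \qquad v_{i+5,l}=-v_{i,l}.
\]
By Theorem~\ref{thm:correspondence_stokes_representations_difference_eq} and the notation of \S\ref{sec:fourier_from_combinatorial_gale}, these $\alpha_{i,l}^1$ are exactly the frieze entries $\varepsilon_{i,l}^1 s_{i,l}^1$.

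First, taking the determinant of the recurrence with $v_{i-2,l}$ gives
\[
\alpha_{i,l}^1=\frac{|v_{i,l}\,v_{i-2,l}|}{|v_{i-1,l}\,v_{i-2,l}|}.
\]
Second, we need the normalisation of consecutive determinants. Taking the determinant of the recurrence with $v_{i-1,l}$ gives $|v_{i,l}\,v_{i-1,l}|=|v_{i-1,l}\,v_{i-2,l}|$, so $D\defeq|v_{i,l}\,v_{i-1,l}|$ is independent of $i$. Third, we express $\alpha_{i,l}^1$ as a ratio that is invariant under rescaling each vector independently, which lets us replace $v$ by the arbitrary periodic lift $w$.

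The candidate is
\[
-\frac{|v_{i,l}\,v_{i-2,l}|\;|v_{i+2,l}\,v_{i+1,l}|}{|v_{i-2,l}\,v_{i+2,l}|\;|v_{i+1,l}\,v_{i,l}|}.
\]
Each index $i-2,i,i+1,i+2$ appears exactly once in the numerator and once in the denominator. The expression is therefore homogeneous of degree zero in each vector, hence invariant under $v_{j,l}\mapsto \lambda_j v_{j,l}$. It is also unchanged by the sign $v_{j+5,l}=-v_{j,l}$, since that is a rescaling by $-1$. Consequently its value computed from $\bm v$ equals its value computed from $\bm w$, with indices read modulo $5$ through the equivalence relation \eqref{eq:equivalence_relation_intervals}.

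It remains to check that the candidate equals $\alpha_{i,l}^1$ when evaluated on $\bm v$. We have $|v_{i+2,l}\,v_{i+1,l}|=D$ and $|v_{i+1,l}\,v_{i,l}|=D$, so these two factors cancel. We also have $|v_{i-1,l}\,v_{i-2,l}|=D$. The claim therefore reduces to $|v_{i-2,l}\,v_{i+2,l}|=-D$.

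This last identity is the main obstacle: it requires antiperiodicity, because $i+2\equiv i-3 \pmod 5$. Since $v_{i+2,l}=-v_{i-3,l}$, we get
\[
|v_{i-2,l}\,v_{i+2,l}|=-|v_{i-2,l}\,v_{i-3,l}|=-D,
\]
which is exactly what is needed. The sign bookkeeping at this step (antiperiodicity $(-1)^{a-1}=-1$ combined with the orientation of the determinant) must be tracked carefully, since it produces the overall minus sign in the lemma.

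One should also confirm that the lift used indexes the lines consistently with the pairs $(i,l)$ of \S\ref{sec:fourier_from_combinatorial_gale}, in particular with the reference pair $(i_0,l_0)=(1,1)$. Given the index identification there, this is routine.
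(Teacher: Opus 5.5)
Your proof is correct, but it reaches the formula by a different route from the paper.

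The paper works with the arbitrary periodic lift $\bm w$ throughout. It first writes
$\alpha_{i,l}^1=\frac{\lambda_{i-1,l}}{\lambda_{i,l}}\frac{|w_{i,l}\,w_{i-2,l}|}{|w_{i-1,l}\,w_{i-2,l}|}$
in terms of the frieze-adapted rescaling factors. It then computes $\lambda_{i,l}/\lambda_{i-1,l}$ explicitly by chasing the path $i-1\to i+1\to i+3\to i+5\to i$, as in the proof of Proposition~\ref{prop:correspondence_frieze_adapted_stokes_adapted_lifts}. This uses three forward $2$-steps, each via $\lambda_{i+2,l}=\frac{|w_{i+2,l}\,w_{i+1,l}|}{|w_{i+1,l}\,w_{i,l}|}\lambda_{i,l}$, and one backward $5$-step via $\lambda_{i+5,l}=-\lambda_{i,l}$.

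You instead note that the target expression has degree zero in each vector, so it suffices to evaluate it on the frieze-adapted lift itself. There, the constancy of $|v_{j,l}\,v_{j-1,l}|$ and a single use of antiperiodicity in $|v_{i-2,l}\,v_{i+2,l}|$ do all the work.

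The two computations have the same content. The paper's rescaling rule for $\lambda$ is exactly the statement that consecutive determinants of $\bm v$ are constant, and its backward $5$-step is your antiperiodicity.

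Your version is shorter and makes the origin of the overall minus sign transparent. The paper's version is a derivation rather than a verification: it produces the formula without knowing it in advance. It is also the same jump-path mechanism used for the $(3,5)$ analogue (Lemma~\ref{lemma:stokes_matrix_entries_from_recessive_sol_5_3}) and for general $(a,b)$.
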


\begin{proof}
We reason exactly as in the proof of Proposition \ref{prop:correspondence_frieze_adapted_stokes_adapted_lifts}: Let us denote by $v_{i,l}=\lambda_{i,l}^{-1} w_{i,l}$, with $i\in \mathbb Z$ and $l$ odd and for suitable $\lambda_{i,l}\in \mathbb C^*$, the frieze-adapted rescaling of the $w_{i,l}$. The $v_{i,l}$ satisfy
\[
v_{i,l}=\alpha_{i-1,l}^1 v_{i-1,l}^{-1}-v_{i-2, l},
\]
hence
\[
\alpha_{i,l}^1=\frac{|v_{i,l}\, v_{i-2,l}|}{|v_{i-1,l}\, v_{i-2,l}|}=\frac{\lambda_{i-1,l}}{\lambda_{i,l}}\frac{|w_{i,l}\, w_{i-2,l}|}{|w_{i-1,l}\, w_{i-2,l}|}.
\]
Moreover, the scaling factors $\lambda_{i,l}$ satisfy 
\[
\lambda_{i+2,l}=\frac{|w_{i+2,l}\, w_{i+1,l}|}{|w_{i+1,l}\, w_{i,l}|}\lambda_{i,l}, \quad \lambda_{i+5, l}=-\lambda_{i,l},
\]
for all $i$ and $l$ odd. Hence, since $i=(i-1)+3\times 2-1\times 5$,
\[
\frac{\lambda_{i,l}}{\lambda_{i-1,l}}=- \frac{|w_{i,l} \, w_{i-1,l}|}{|w_{i-1,l} \, w_{i-2,l}|} \frac{|w_{i-2,l} \, w_{i-3,l}|}{|w_{i-3,l} \, w_{i-4,l}|} \frac{|w_{i+1,l} \, w_{i,l}|}{|w_{i,l} \, w_{i-1,l}|},
\]
and the result follows.
\end{proof}

\subsection{Stokes data of type $(3,5)$}
On the other hand, for $\widehat\Theta_{3,5}$, there are 10 Stokes directions, 10 singular directions, and 10 Stokes arrows, see Fig.~\ref{fig:stokes_matrix_entries_3_5}. We choose the reference pair $(\widehat i_0,\widehat l_0)=(0,0)$ to define the reference direction $\widehat{\mathtt d}$ and the sign changes. 

There are $5$ lines of recessive solutions $\widehat V_{i,l}^{\mathrm{rec}}$ (with $l$ even), defining a configuration of $5$ points in $\mathbb P^2$, and 5 hyperplanes of subdominant solutions. 

The Stokes matrices are of the following form, with nontrivial entries $s_{i,l}^j$ for $l$ even and $j\in \{1, 2\}$:

\[
\widehat S_{\widehat d_1}=\begin{pmatrix}
    1 & 0 & \widehat s^1_{-4,0} \\
    0 & 1 & 0 \\
    0 & 0 & 1
\end{pmatrix},
\quad
\widehat S_{\widehat d_2}=\begin{pmatrix}
    1 & 0 & 0 \\
    0 & 1 & \widehat s^2_{-4,0} \\
    0 & 0 & 1
\end{pmatrix},
\quad 
\widehat S_{\widehat d_3}=\begin{pmatrix}
    1 & 0 & 0 \\
    \widehat s^1_{0,0} & 1 & 0 \\
    0 & 0 & 1
\end{pmatrix},
\]
\[
\widehat S_{\widehat d_4}=\begin{pmatrix}
    1 & 0 & 0 \\
    0 & 1 & 0 \\
    \widehat s^2_{0,0} & 0 & 1
\end{pmatrix}, 
\quad
\widehat S_{\widehat d_5}=\begin{pmatrix}
    1 & 0 & 0 \\
    0 & 1 & 0 \\
    0 & \widehat s^1_{-1,0} & 1
\end{pmatrix},
\quad
\widehat S_{\widehat d_6}=\begin{pmatrix}
    1 & \widehat s^2_{-1,0} & 0 \\
    0 & 1 & 0 \\
    0 & 0 & 1
\end{pmatrix},
\]
\[
\widehat S_{\widehat d_7}=\begin{pmatrix}
    1 & 0 & \widehat s^1_{-2,0} \\
    0 & 1 & 0 \\
    0 & 0 & 1
\end{pmatrix},
\quad 
\widehat S_{\widehat d_8}=\begin{pmatrix}
    1 & 0 & 0 \\
    0 & 1 & \widehat s^2_{-2,0} \\
    0 & 0 & 1
\end{pmatrix},
\quad 
\widehat S_{\widehat d_9}=\begin{pmatrix}
    1 & 0 & 0 \\
    \widehat s^1_{-3,0} & 1 & 0 \\
    0 & 0 & 1
\end{pmatrix},
\]
\[
\widehat S_{\widehat d_{10}}=\begin{pmatrix}
    1 & 0 & 0 \\
    0 & 1 & 0 \\
    \widehat s^2_{-3,0} & 0 & 1
\end{pmatrix}.
\]
The reduced formal monodromy is $\breve{\widehat{h}}=(1)$, and the formal monodromy matrix is
\[
\widehat h=\begin{pmatrix}
    0 & 0 & 1 \\
    1 & 0 & 0 \\
    0 & 1 & 0
\end{pmatrix}.
\]

\begin{figure}[h]
\centering
\begin{tikzpicture}[scale=2.2]      
			\draw[dotted] (0,0) circle (1);
			\draw[domain=0:(3*360),scale=1,samples=1000] plot (\x:{exp(-cos(-5/3*\x)/exp(1))});
            \draw[dashed] (-10:0)--(-10:1.7);
            
            \draw[<-, bend right] (40:2.2) to (50:2.2);
            \draw (45:2.4) node {$\widehat\theta$};

            \draw (-10:1.85) node {$\widehat{\mathtt d}$};
            \draw[red] (18-36*1:1.7) node {$\widehat d_1$};
            \draw[red] (18-36*2:1.7) node {$\widehat d_2$};
            \draw[red] (18-36*3:1.7) node {$\widehat d_3$};
            \draw[red] (18-36*4:1.7) node {$\widehat d_4$};
            \draw[red] (18-36*5:1.7) node {$\widehat d_5$};
            \draw[red] (18-36*6:1.7) node {$\widehat d_6$};
            \draw[red] (18-36*7:1.7) node {$\widehat d_7$};
            \draw[red] (18-36*8:1.7) node {$\widehat d_8$};
            \draw[red] (18-36*9:1.7) node {$\widehat d_9$};
            \draw[red] (18-36*10:1.7) node {$\widehat d_{10}$};
            
            \draw ({-72*0}:{exp((-2.3)/exp(1))-0.025}) node {$\widehat J_{0,0}$};
            \draw ({-72*1}:{exp((-2.3)/exp(1))-0.025}) node {$\widehat J_{-1,0}$};
            \draw ({-72*2}:{exp((-2.3)/exp(1))-0.05}) node {$\widehat J_{-2,0}$};
            \draw ({-72*3}:{exp((-2.3)/exp(1))-0.1}) node {$\widehat J_{-3,0}$};
            \draw ({-72*4}:{exp((-2.3)/exp(1))}) node {$\widehat J_{-4,0}$};

            \draw  ({72*0-10}:{exp((-1.25)/exp(1))}) node[blue, font=\tiny]{$\widehat\varepsilon_1$};
            \draw  ({72*0+10}:{exp((-1.25)/exp(1))}) node[blue, font=\tiny]{$\widehat\varepsilon_6$};
            \draw  ({72*1-10}:{exp((-1.25)/exp(1))}) node[blue, font=\tiny]{$\widehat\varepsilon_2$};
            \draw  ({72*1+10}:{exp((-1.25)/exp(1))}) node[blue, font=\tiny]{$\widehat\varepsilon_7$};
            \draw  ({72*2-10}:{exp((-1.25)/exp(1))-0.025}) node[blue, font=\tiny]{$\widehat\varepsilon_3$};
            \draw  ({72*2+10}:{exp((-1.25)/exp(1)-0.05}) node[blue, font=\tiny]{$\widehat\varepsilon_8$};
            \draw  ({72*3}:{exp((-1.25)/exp(1))-0.025}) node[blue, font=\tiny]{$\widehat\varepsilon_4$};
            \draw  ({72*4}:{exp((-1.25)/exp(1))-0.025}) node[blue, font=\tiny]{$\widehat\varepsilon_5$};
            
            \draw ({108-72*1}:{exp((1.5)/exp(1))}) node {$\widehat J_{-3,1}$};
            \draw ({108-72*2}:{exp((1.5)/exp(1))}) node {$\widehat J_{-4,1}$};
            \draw ({108-72*3}:{exp((1.5)/exp(1))}) node {$\widehat J_{0,1}$};
            \draw ({108-72*4}:{exp((1.5)/exp(1))}) node {$\widehat J_{-1,1}$};
            \draw ({108-72*5}:{exp((1.5)/exp(1))}) node {$\widehat J_{-2,1}$};

            \draw[red, ->] ({-18+1*36}:{exp(-cos(-5/3*90)/exp(1))}) --node[font=\tiny, midway, above]  {{\color{blue} $-\;$}$\widehat s_{-3,0}^2$} ({-18+1*36}:{exp(-cos(-5/3*18)/exp(1))});
            \draw[red, ->] ({-18+2*36}:{exp(-cos(-5/3*90)/exp(1))}) --node[font=\tiny,pos=0.4, left=0.11cm]  {{\color{blue} $+\;$}$\widehat s_{-3,0}^1$} ({-18+2*36}:{exp(-cos(-5/3*18)/exp(1))});
            \draw[red, ->] ({-18+3*36}:{exp(-cos(-5/3*90)/exp(1))}) --node[font=\tiny, pos=0.4, left=-0.1cm]{{\color{blue} $-\;$}$\widehat s_{-2,0}^2$} ({-18+3*36}:{exp(-cos(-5/3*18)/exp(1))});
            \draw[red, ->] ({-18+4*36}:{exp(-cos(-5/3*90)/exp(1))}) --node[font=\tiny, pos=0.45, left=-0.15cm]  {{\color{blue} $+\;$}$\widehat s_{-2,0}^1$} ({-18+4*36}:{exp(-cos(-5/3*18)/exp(1))});
            \draw[red, ->] ({-18+5*36}:{exp(-cos(-5/3*90)/exp(1))}) --node[font=\tiny, pos=0.3, below]  {{\color{blue} $-\;$}$\widehat s_{-1,0}^2$} ({-18+5*36}:{exp(-cos(-5/3*18)/exp(1))});
            \draw[red, ->] ({-18+6*36}:{exp(-cos(-5/3*90)/exp(1))}) --node[font=\tiny, midway, below]  {{\color{blue} $+\;$}$\widehat s_{-1,0}^1$} ({-18+6*36}:{exp(-cos(-5/3*18)/exp(1))});
            \draw[red, ->] ({-18+7*36}:{exp(-cos(-5/3*90)/exp(1))}) --node[font=\tiny, pos=0.3, right=-0.1cm]  {{\color{blue} $-\;$}$\widehat s_{0,0}^2$} ({-18+7*36}:{exp(-cos(-5/3*18)/exp(1))});  
            \draw[red, ->] ({-18+8*36}:{exp(-cos(-5/3*90)/exp(1))}) --node[font=\tiny, pos=0.65, right=-0.05cm]  {{\color{blue} $+\;$}$\widehat s_{0,0}^1$} ({-18+8*36}:{exp(-cos(-5/3*18)/exp(1))});  
            \draw[red, ->] ({-18+9*36}:{exp(-cos(-5/3*90)/exp(1))}) --node[font=\tiny, midway, right=-0.1cm]  {{\color{blue} $-\;$}$\widehat s_{-4,0}^2$} ({-18+9*36}:{exp(-cos(-5/3*18)/exp(1))});
            \draw[red, ->] ({-18+10*36}:{exp(-cos(-5/3*90)/exp(1))}) --node[font=\tiny, midway, above]  {{\color{blue} $+\;$}$\widehat s_{-4,0}^1$} ({-18+10*36}:{exp(-cos(-5/3*18)/exp(1))});
    	\end{tikzpicture}
\caption{Stokes matrix entries $\widehat s_{i,l}^j$ and sign changes to frieze entries $\widehat \alpha_{i,l}^j$ for $\widehat\Theta_{3,5}$. The sign changes $\widehat\varepsilon_{i,l}^j$ are indicated by the signs in blue in front of the  $\widehat s_{i,l}^1$.}
\label{fig:stokes_matrix_entries_3_5}
\end{figure}

For the sign changes, since $b-a=3$ is odd, this time we have no sign change between Stokes-adapted and frieze-adapted collections, i.e.\ $\widehat{\varepsilon}_i=1$ for all $i\in \{1, \dots, 8\}$, and from \eqref{eq:sign_change_eta_definition} we have $\eta_i^{1}=\eta_i^2=1$ for $i\in \{4, \dots, 8\}$. Consequently, the sign changes between Stokes matrix entries $\widehat s_{i,l}^j$ for $l$ even and $j\in \{1, 2\} $ and frieze entries are given by
\begin{equation}
\begin{aligned}
    \widehat\varepsilon_{i}^1&=\widehat\varepsilon_{i,l}^1=1,\\
    \widehat\varepsilon_{i}^2&=\widehat\varepsilon_{i,l}^2=-1,
\end{aligned}
\label{eq:sign_changes_3_5}
\end{equation}
for all $i\in \mathbb Z$.

The frieze entries corresponding to the Stokes matrix entries $\widehat s_{i,l}^j$ are
\[
\widehat\alpha_{i,l}^j\defeq \widehat \varepsilon_{i,l}^j \widehat s_{i,l}^j, \quad \text{for }i\in \mathbb Z, \; l \text{ even}, \; j\in \{1, 2\}.
\]
The collection of coefficients $(\widehat\alpha_{i,l}^j)$ for $j\in \{1,2\}$ defines a frieze pattern of type $(2,5)$, that is an $\mathrm{SL}_2$-frieze pattern of width $2$:

\begin{center}
\begin{tikzpicture}[font=\small,scale=0.6]
\draw (-5,1) node {$\dots$};
\draw (-3,1) node {$1$};
\draw (-1,1) node {$1$};
\draw (1,1) node {$1$};
\draw (3,1) node {$1$};
\draw (5,1) node {$1$};
\draw (7,1) node {$\dots$};

\draw (-6,0) node {$\dots$};
\draw (-4,0) node {$\widehat\alpha^1_{-5,0}$};
\draw (-2,0) node {$\widehat\alpha^1_{-4,0}$};
\draw (0,0) node {$\widehat\alpha^1_{-3,0}$};
\draw (2,0) node {$\widehat\alpha^1_{-2,0}$};
\draw (4,0) node {$\widehat\alpha^1_{-1,0}$};
\draw (6,0) node {$\dots$};

\draw (-7,-1) node {$\dots$};
\draw (-5,-1) node {$\widehat\alpha^2_{-5,0}$};
\draw (-3,-1) node {$\widehat\alpha^2_{-4,0}$};
\draw (-1,-1) node {$\widehat\alpha^2_{-3,0}$};
\draw (1,-1) node {$\widehat\alpha^2_{-2,0}$};
\draw (3,-1) node {$\widehat\alpha^2_{-1,0}$};
\draw (5,-1) node {$\dots$};

\draw (-8,-2) node {$\dots$};
\draw (-6,-2) node {$1$};
\draw (-4,-2) node {$1$};
\draw (-2,-2) node {$1$};
\draw (0,-2) node {$1$};
\draw (2,-2) node {$1$};
\draw (4,-2) node {$\dots$};  
\end{tikzpicture}
\end{center}
Moreover, the frieze entries can be determined explicitly from the recessive solutions as follows.

\begin{lemma}
\label{lemma:stokes_matrix_entries_from_recessive_sol_5_3}
Let $\widehat{\bm w}=(\widehat w_{i,l})$ be a periodic lift of the recessive solutions of $\widehat{\mathbb V}$, i.e.\ the data of a vector $\widehat w_{i,l}$ such that $\mathbb C \widehat w_{i,l}$ for each of the 5 recessive lines $\widehat V_{i,l}^{\mathrm{rec}}$, that is, for each equivalence class of pairs $(i,l)$ with $l$ even. We have
\[
\widehat \alpha_{i,l}^1=\widehat\alpha_{i-2,l}^2=\frac{|\widehat w_{i,l}\, \widehat w_{i-2,l}\, \widehat w_{i-3,l}|\;| \widehat w_{i+1,l}\,\widehat w_{i,l}\,\widehat  w_{i-1,l}|}{|\widehat w_{i,l}\,\widehat w_{i-1,l}\,\widehat  w_{i-2,l}|\;|\widehat w_{i+2,l}\,\widehat w_{i+1,l}\,\widehat  w_{i,l}|}.
\]
\end{lemma}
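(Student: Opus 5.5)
I would follow the proof of Lemma~\ref{lemma:stokes_matrix_entries_from_recessive_sol_5_2}, now with $a=3$, $b=5$ (so $p=1$, $r=3$, $s=5$). Write $v_{i}=\lambda_i^{-1}\widehat w_{i,l}$ for the frieze-adapted rescaling of the periodic lift; here $l$ is even and fixed, and I suppress it. By Proposition~\ref{prop:correspondence_frieze_adapted_stokes_adapted_lifts}, this rescaling satisfies the recursion
\[
v_i=\widehat\alpha_{i}^1v_{i-1}-\widehat\alpha_{i}^2v_{i-2}+v_{i-3}.
\]
(I will fix the index convention consistently with the paper's frieze notation.) It also satisfies the periodicity condition $v_{i+5}=v_i$, since $(-1)^{a-1}=1$ for $a=3$. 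Cramer's rule gives
\[
\widehat\alpha_i^1=\frac{|v_i\,v_{i-2}\,v_{i-3}|}{|v_{i-1}\,v_{i-2}\,v_{i-3}|}
=\frac{\lambda_{i-1}}{\lambda_i}\,\frac{|\widehat w_i\,\widehat w_{i-2}\,\widehat w_{i-3}|}{|\widehat w_{i-1}\,\widehat w_{i-2}\,\widehat w_{i-3}|},
\]
and $\widehat\alpha_i^2$ has an analogous formula with the roles of $v_{i-1}$ and $v_{i-2}$ swapped.

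Next I would determine the ratio $\lambda_{i-1}/\lambda_i$. The rescaling condition \eqref{eq:condition_rescaling_stokes_adapted}, with the frieze sign convention and $a=3$, gives
\[
\lambda_{i+3}=\frac{\Delta_{i+3,i+2,i+1}}{\Delta_{i+2,i+1,i}}\lambda_i,\qquad \lambda_{i+5}=\lambda_i.
\]
Since $\gcd(3,5)=1$, I can write $i=(i-1)+2\cdot3-1\cdot5$, that is, two forward $3$-jumps and one backward $5$-jump. This yields
\[
\frac{\lambda_i}{\lambda_{i-1}}=\frac{\Delta_{i+2,i+1,i}}{\Delta_{i+1,i,i-1}}\cdot\frac{\Delta_{i+5,i+4,i+3}}{\Delta_{i+4,i+3,i+2}}.
\]
Reducing indices mod $5$ rewrites the second factor as $\Delta_{i,i-1,i-2}/\Delta_{i-1,i-2,i-3}$; this reduction costs only signs coming from cyclic reorderings of three columns, which are even permutations. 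After substituting, the factor $\Delta_{i-1,i-2,i-3}$ cancels and I obtain
\[
\widehat\alpha_i^1=\frac{|\widehat w_i\,\widehat w_{i-2}\,\widehat w_{i-3}|\;|\widehat w_{i+1}\,\widehat w_i\,\widehat w_{i-1}|}{|\widehat w_i\,\widehat w_{i-1}\,\widehat w_{i-2}|\;|\widehat w_{i+2}\,\widehat w_{i+1}\,\widehat w_{i}|},
\]
which is the claimed formula. Every step uses only earlier results, so nothing needs to be proved from scratch.

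For the equality $\widehat\alpha_i^1=\widehat\alpha_{i-2}^2$, I would either repeat the computation for $\widehat\alpha^2_{i-2}$ and compare, or, more cleanly, invoke the frieze structure. In an $\mathrm{SL}_2$-frieze of width $2$ the glide symmetry identifies the second row with a shift of the first. Equivalently, for the dual equation of order $2$ the coefficients satisfy exactly this relation, by Theorem~\ref{thm:combinatorial_gale_transform} applied with $(a,b)=(2,5)$. A direct determinant check is also available: using the Plücker relation among the five $3\times3$ minors of five vectors in $\mathbb C^3$, both expressions reduce to the same ratio.

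I expect the main obstacle to be sign and index bookkeeping: matching the paper's convention for which index $\widehat\alpha^j_{i}$ carries relative to the recursion, and checking that the reduction of indices mod $5$ introduces no stray sign. I would verify these against a concrete periodic configuration before finalising.
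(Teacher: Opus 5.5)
Your proposal is correct and is essentially the paper's proof, which only says the argument is the same as for Lemma~\ref{lemma:stokes_matrix_entries_from_recessive_sol_5_2}: frieze-adapted rescaling, Cramer's rule, and computing $\lambda_i/\lambda_{i-1}$ via $i=(i-1)+2\cdot 3-5$. (Your index reduction involves no column reordering, since $\widehat w$ is $5$-periodic.) For $\widehat\alpha^1_{i,l}=\widehat\alpha^2_{i-2,l}$ you need no Plücker relation: Cramer gives $\widehat\alpha^2_k=|v_k\,v_{k-1}\,v_{k-3}|/|v_{k-1}\,v_{k-2}\,v_{k-3}|$ (a plain swap of $v_{k-1}$ and $v_{k-2}$ would flip the sign), and using $\lambda_{k-2}=\lambda_{k+3}$ with one forward $3$-jump, then setting $k=i-2$ and reducing indices mod $5$, gives literally your expression for $\widehat\alpha^1_i$.
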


\begin{proof}
Completely analogous to the proof of Lemma~\ref{lemma:stokes_matrix_entries_from_recessive_sol_5_2}, reasoning again as in the proof of Proposition \ref{prop:correspondence_frieze_adapted_stokes_adapted_lifts}.
\end{proof}

\subsection{Fourier transform}
As shown above, the Fourier transform of Stokes data from type $(2,5)$ to type $(3,5)$ can be obtained by (combinatorial) Gale transform. Let $\rho\in \breve{\mathcal R}_{2,5}$ be a reduced Stokes representation, $\mathbb V$ the corresponding Stokes filtered local system, $\widehat \rho\defeq \mathscr F(\rho)\in \breve{\mathcal R}_{3,5}$ the Fourier-transformed Stokes representation, and $\widehat{\mathbb V}$ its Stokes filtered local system.

In terms of recessive/subdominant solutions, by Theorem \ref{thm:fourier=gale} the recessive solutions of $\widehat{\mathbb V}$ 
\[
(\widehat V_{-4,0}^{\mathrm{rec}}, \widehat V_{-3,0}^{\mathrm{rec}}, \widehat V_{-2,0}^{\mathrm{rec}}, \widehat V_{-1,0}^{\mathrm{rec}},\widehat V_{0,0}^{\mathrm{rec}})
\]
are Gale dual to the subdominant solutions of $\mathbb V$
\[
(V_{4,0}^{\mathrm{sub}},  V_{3,0}^{\mathrm{sub}}, V_{2,0}^{\mathrm{sub}}, V_{1,0}^{\mathrm{sub}},V_{0,0}^{\mathrm{sub}})=(V_{5,1}^{\mathrm{rec}},  V_{4,1}^{\mathrm{rec}}, V_{3,1}^{\mathrm{rec}},V_{2,1}^{\mathrm{rec}},V_{1,1}^{\mathrm{rec}}).
\]

In terms of Stokes matrix entries of $\rho$ and $\widehat \rho$, by Theorem \ref{thm:fourier_transform_stokes_matrices_explicit_formula}, using the sign changes \eqref{eq:sign_changes_2_5} and \eqref{eq:sign_changes_3_5}, we obtain
\[
\widehat s_{i,l}^1=\widehat \alpha_{i,l}^1=\alpha_{2-i, l+1}^{1}=\varepsilon_{2-i,l+1}^1 s_{2-i,l+1}^1,
\]
and 
\[
\widehat s_{i,l}^2=-\widehat \alpha^2_{i,l}=
-\begin{vmatrix}
    \alpha_{2-i, l+1}^1 & 1\\
    1 & \alpha_{3-i, l+1}^1
\end{vmatrix}
=-(\alpha_{2-i, l+1}^1 \alpha_{3-i, l+1}^1-1)=-\alpha^1_{-i, l+1},
\]
where the last equality follows from properties of friezes of type $(2,5)$ and $(3,5)$, see \cite[\S1.1]{morier2015coxeter}.

In summary, if we set $\alpha_i\defeq \alpha^1_{i,1}$ for all $i\in \mathbb Z/5\mathbb Z$ so that the frieze in $\mathcal F_{3,5}$ with coefficients $\alpha_{i,l}^j$ (with $l$ odd) reads
\begin{center}
    \begin{tikzpicture}[font=\small,scale=0.47]
\begin{scope}
\draw (-5,1) node {$\dots$};
\draw (-3,1) node {$1$};
\draw (-1,1) node {$1$};
\draw (1,1) node {$1$};
\draw (3,1) node {$1$};
\draw (5,1) node {$1$};
\draw (7,1) node {$\dots$};

\draw (-6,0) node {$\dots$};
\draw (-4,0) node {$\alpha^1_{1,1}$};
\draw (-2,0) node {$\alpha^1_{2,1}$};
\draw (0,0) node {$\alpha^1_{3,1}$};
\draw (2,0) node {$\alpha^1_{4,1}$};
\draw (4,0) node {$\alpha^1_{5,1}$};
\draw (6,0) node {$\dots$};

\draw (-7,-1) node {$\dots$};
\draw (-5,-1) node {$1$};
\draw (-3,-1) node {$1$};
\draw (-1,-1) node {$1$};
\draw (1,-1) node {$1$};
\draw (3,-1) node {$1$};
\draw (5,-1) node {$\dots$};  

\draw (8,0) node {$=$};
\end{scope}

\begin{scope}[xshift=16cm]
\draw (-5,1) node {$\dots$};
\draw (-3,1) node {$1$};
\draw (-1,1) node {$1$};
\draw (1,1) node {$1$};
\draw (3,1) node {$1$};
\draw (5,1) node {$1$};
\draw (7,1) node {$\dots$};

\draw (-6,0) node {$\dots$};
\draw (-4,0) node {$\alpha_1$};
\draw (-2,0) node {$\alpha_2$};
\draw (0,0) node {$\alpha_3$};
\draw (2,0) node {$\alpha_4$};
\draw (4,0) node {$\alpha_5$};
\draw (6,0) node {$\dots$};

\draw (-7,-1) node {$\dots$};
\draw (-5,-1) node {$1$};
\draw (-3,-1) node {$1$};
\draw (-1,-1) node {$1$};
\draw (1,-1) node {$1$};
\draw (3,-1) node {$1$};
\draw (5,-1) node {$\dots$};  
\end{scope}
\end{tikzpicture} 
\end{center}
then the frieze in $\mathcal F_{2,5}$ with coefficients $\widehat \alpha_{i,l}^j$  (with $l$ even) is given by
\begin{center}
\begin{tikzpicture}[font=\small,scale=0.47]
\begin{scope}
\draw (-5,1) node {$\dots$};
\draw (-3,1) node {$1$};
\draw (-1,1) node {$1$};
\draw (1,1) node {$1$};
\draw (3,1) node {$1$};
\draw (5,1) node {$1$};
\draw (7,1) node {$\dots$};

\draw (-6,0) node {$\dots$};
\draw (-4,0) node {$\widehat\alpha^1_{-5,0}$};
\draw (-2,0) node {$\widehat\alpha^1_{-4,0}$};
\draw (0,0) node {$\widehat\alpha^1_{-3,0}$};
\draw (2,0) node {$\widehat\alpha^1_{-2,0}$};
\draw (4,0) node {$\widehat\alpha^1_{-1,0}$};
\draw (6,0) node {$\dots$};

\draw (-7,-1) node {$\dots$};
\draw (-5,-1) node {$\widehat\alpha^2_{-5,0}$};
\draw (-3,-1) node {$\widehat\alpha^2_{-4,0}$};
\draw (-1,-1) node {$\widehat\alpha^2_{-3,0}$};
\draw (1,-1) node {$\widehat\alpha^2_{-2,0}$};
\draw (3,-1) node {$\widehat\alpha^2_{-1,0}$};
\draw (5,-1) node {$\dots$};

\draw (-8,-2) node {$\dots$};
\draw (-6,-2) node {$1$};
\draw (-4,-2) node {$1$};
\draw (-2,-2) node {$1$};
\draw (0,-2) node {$1$};
\draw (2,-2) node {$1$};
\draw (4,-2) node {$\dots$};  

\draw (8,-0.5) node {$=$};
\end{scope}

\begin{scope}[xshift=17cm]
\draw (-5,1) node {$\dots$};
\draw (-3,1) node {$1$};
\draw (-1,1) node {$1$};
\draw (1,1) node {$1$};
\draw (3,1) node {$1$};
\draw (5,1) node {$1$};
\draw (7,1) node {$\dots$};

\draw (-6,0) node {$\dots$};
\draw (-4,0) node {$\alpha_2$};
\draw (-2,0) node {$\alpha_1$};
\draw (0,0) node {$\alpha_5$};
\draw (2,0) node {$\alpha_4$};
\draw (4,0) node {$\alpha_3$};
\draw (6,0) node {$\dots$};

\draw (-7,-1) node {$\dots$};
\draw (-5,-1) node {$\alpha_5$};
\draw (-3,-1) node {$\alpha_4$};
\draw (-1,-1) node {$\alpha_3$};
\draw (1,-1) node {$\alpha_2$};
\draw (3,-1) node {$\alpha_1$};
\draw (5,-1) node {$\dots$};

\draw (-8,-2) node {$\dots$};
\draw (-6,-2) node {$1$};
\draw (-4,-2) node {$1$};
\draw (-2,-2) node {$1$};
\draw (0,-2) node {$1$};
\draw (2,-2) node {$1$};
\draw (4,-2) node {$\dots$};     
\end{scope}
\end{tikzpicture} 
\end{center}
and we have
\begin{equation*}
\begin{aligned}
    s_{1,1}^1 &=\widehat s_{-4,0}^1 =-\widehat s_{-1,0}^2  =\alpha_1,\\
    s_{2,1}^1&=\widehat s_{-5,0}^1 =-\widehat s_{-2,0}^2 =\alpha_2,\\
    -s_{3,1}^1&=\widehat s_{-1,0}^1 =-\widehat s_{-3,0}^2 =\alpha_3,\\
    s_{4,1}^1&=\widehat s_{-2,0}^1 =-\widehat s_{-4,0}^2 =\alpha_4,\\
    -s_{5,1}^1 &=\widehat s_{-3,0}^1 =-\widehat s_{-5,0}^2 =\alpha_5.\\
\end{aligned}
\end{equation*}

Note that these formulas are consistent with those obtained in \cite[\S6.3]{doucot2025topological} (up to the fact that we discuss the Fourier transform from $\Theta_{2,5}$ to $\widehat{\Theta}_{3,5}\neq \Theta_{3,5}$ here, and from  $\Theta_{3,5}$ to $\widehat\Theta_{2,5}=\Theta_{2,5}$  there, and to the difference of parametrisation for the Stokes representations). 

\begin{remark}
These relations are also consistent with the formulas of Lemmas~\ref{lemma:stokes_matrix_entries_from_recessive_sol_5_2} and \ref{lemma:stokes_matrix_entries_from_recessive_sol_5_3} expressing the frieze entries in terms of the recessive solutions. Indeed, the classical Gale transform sends any $a\times a$ minor of $\bm w$ to the complementary $(b-a)\times (b-a)$ minor of $\widehat{\bm w}$, up to a proportionality factor, see \cite[Chapter III]{dolgachev1988point}.
\end{remark}

\bibliographystyle{alphaabbr}
\bibliography{biblio_commune}

\end{document}